\newtheorem{theorem}{Theorem}[section]
\newtheorem{hypothesis}[theorem]{Theorem}
\newtheorem{claim}[theorem]{Claim}
\newtheorem{corollary}[theorem]{Corollary}
\newtheorem{example}[theorem]{Example}
\newtheorem{lemma}[theorem]{Lemma}
\newtheorem{proposition}[theorem]{Proposition}
\newcommand{\BZ}{{\mathbb{Z}}}
\newcommand{\BN}{{\mathbb{N}}}
\newcommand{\BR}{{\mathbb{R}}}
\newcommand{\BT}{{\mathbb{T}}}
\newcommand{\BH}{{\mathbb{H}}}
\newcommand{\Min}{\mathrm{Min}}
\newcommand{\gd}{\delta}
\newcommand{\gC}{\Gamma}
\newcommand{\gc}{\gamma}
\newcommand{\gep}{\epsilon}
\newcommand{\inj}{\mathrm{inj}}
\newcommand{\rank}{\text{rank}}
\newcommand{\vol}{\text{vol}}
\newcommand{\diam}{\text{diam}}
\newtheorem{prop}{Proposition}[section]
\theoremstyle{definition}
\newtheorem{rem}[prop]{Remark}
\newcommand\RR{{\mathcal{R}}}
\long\def\@savemarbox#1#2{\global\setbox#1\vtop{\hsize\marginparwidth 
%%%%%  \@parboxrestore #2}}
  \@parboxrestore\tiny\raggedright #2}}
\title{Convergence of normalized Betti numbers in nonpositive curvature}
\author{Miklos Abert}
\address{MTA Renyi Institute, Realtanoda 13-15, 1053 Budapest, Hungary}
\email{abert@renyi.hu}
\author{Nicolas Bergeron}
\address{ENS, D\'epartement de Math\'ematiques et Applications, F-75005, Paris, France}
\email{nicolas.bergeron@ens.fr}
\author{Ian Biringer}
\address{Boston College, Department of Mathematics, 140 Commonwealth Ave, Chestnut Hill, MA 02467}
\email{ian.biringer@gmail.com}
\author{Tsachik Gelander}
\address{The Weizmann Institute of Science, Faculty of Mathematics and Computer Science, Rehovot 7610001, Israel}
\email{tsachik.gelander@gmail.com}
\begin{document}

\begin{abstract}
We study the convergence of volume-normalized Betti numbers in Benjamini-Schramm convergent sequences of non-positively curved manifolds with finite volume.  In particular, we show that if $X$ is an irreducible symmetric space of noncompact type, $X \neq \BH^3$, and $(M_n)$ is any Benjamini-Schramm convergent sequence of finite volume $X$-manifolds, then the normalized Betti numbers $b_k(M_n)/vol(M_n)$ converge for all $k$.

As a corollary, if $X$ has higher rank and $(M_n)$ is any sequence of distinct, finite volume $X$-manifolds, the normalized Betti numbers of $M_n$ converge to the $L^2$ Betti numbers of $X$. This extends our earlier work with Nikolov, Raimbault and Samet in \cite{Abertgrowth}, where we proved the same  convergence result for uniformly thick sequences of compact $X$-manifolds.
One of the novelties of the current work is that it applies to all quotients $M = \Gamma \backslash X$ where $\Gamma$ is arithmetic; in particular, it applies when $\Gamma$ is isotropic.

%In \cite{Abertgrowth} the convergence of normalized Betti numbers was proved only for uniformly thick sequences of manifolds (equivalently the associated lattices were uniformly discrete, and in particular uniform). The current work allows us to dismiss this assumption and in particular, in contrast to \cite{Abertgrowth}, our new results apply also to sequences of non-compact manifolds.
\end{abstract}
\maketitle

\tableofcontents

\section{Introduction}
\label{intro}
We begin with a fair amount of general motivation, mostly from Elek \cite{elek2010betti} and Bowen \cite{Bowencheeger}. The well-versed reader can skip ahead to \S \ref{results}  for the statements of our results.

\vspace{2mm}

The \emph {normalized Betti numbers} of a space $X$ are the quotients $$b_k(X)/\vol(X), \ \text{ where } \ b_k(X) := \dim H_k(X,\BR).$$ All spaces in this paper will be either Riemannian manifolds or simplicial complexes. In the latter case, volume should be interpreted as the number of vertices.

Fix $d>0$. A simplicial complex $K$ has \emph {degree at most $d$} if every vertex in $K$ is adjacent to at most $d$ edges. In \cite{elek2010betti}, Elek shows that the normalized Betti numbers of finite simplicial complex $K$ with degree at most $d$ are \emph{testable}, meaning that there is a way to read  off approximations of the normalized Betti numbers while only looking at bounded random samples of $K$. More precisely, given $\epsilon>0$, there is some $R(\epsilon)$ as follows. Given $K$, select $R$ vertices of $K$ at random and look at the $R$-neighborhood of each in $K$.  Testability means there is a way to guess from this data what the normalized Betti  numbers of $K$ are, that is correct up to an error of $\epsilon$ with probability $1-\epsilon$.

This is really a continuity result, in the following sense. Consider the topological space 
$$\mathcal K = \big \{ \text{connected, pointed finite degree simplicial complexes } (K,p) \big \} / \sim,$$
where each $p\in K$ is a vertex, two pointed complexes are equivalent if they are isomorphic via a map that takes basepoint to basepoint, and where two complexes are close if for large $R$, the $R$-balls around their basepoints are isomorphic. Each finite (even possibly disconnected) complex $K$ induces a finite measure $\mu_K$ on $\mathcal K$, defined by pushing forward the counting measure on the vertex set $V(K)$ under the map
$$V(K) \longrightarrow \mathcal K, \ \ p \mapsto [(K_p,p)],$$ 
where $K_p\subset K$ is the connected component of $p$.
One then says that a sequence $(K_n)$ in $ \mathcal K$ \emph{Benjamini-Schramm (BS) converges}\footnote{Benjamini-Schramm convergence of graphs was first studied in their paper \cite{benjamini2011recurrence}. See also Aldous--Lyons~\cite{Aldousprocesses} for a broader picture of BS-convergence in the case of  graphs.} if the  probability measures $\mu_{K_n}/\vol(K_n)$ weakly converge to some probability measure on $\mathcal K$. One can then reformulate the testability of normalized Betti numbers above as saying:

\begin{theorem}[{Elek \cite[Lemma 6.1]{elek2010betti}}]\label{elek}
If $(K_n)$ is a BS-convergent sequence of  finite, simplicial complexes, each with degree at most $d$, the normalized Betti numbers $b_k(K_n)/\vol(K_n)$ converge for all $k$.
\end{theorem}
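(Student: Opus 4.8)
The plan is to translate the topological statement into one about the spectrum of the combinatorial Laplacian and then run a Lück-type approximation argument. Fix $k$ and, for each $n$, let $\Delta_k = \Delta_k(K_n) = \partial_{k+1}\partial_{k+1}^\top + \partial_k^\top\partial_k$ be the combinatorial Laplacian on real $k$-chains of $K_n$; by discrete Hodge theory for finite complexes, $\beta_k(K_n) = \dim_\BR \ker\Delta_k(K_n)$. The boundary maps $\partial_\bullet$ have entries in $\{0,\pm1\}$, and because $K_n$ has degree at most $d$ there is a constant $C = C(d,k)\geq 2$ such that each row and column of $\Delta_k(K_n)$ has at most $C$ nonzero entries; hence $\|\Delta_k(K_n)\|\leq C$, and the number of $k$-simplices of $K_n$ is at most $C\,\vol(K_n)$. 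Record the spectrum in the normalized spectral measure $\nu_n = \vol(K_n)^{-1}\sum_i \delta_{\mu_i(n)}$ on $[0,C]$, the sum running over the eigenvalues of $\Delta_k(K_n)$ with multiplicity; then $\nu_n(\{0\}) = \beta_k(K_n)/\vol(K_n)$ is precisely the normalized Betti number we must control.

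First I would show $\nu_n$ converges weakly. Its $j$-th moment is $\int\lambda^j\,d\nu_n = \vol(K_n)^{-1}\,\mathrm{tr}\,\Delta_k(K_n)^j = \vol(K_n)^{-1}\sum_\sigma (\Delta_k^j)_{\sigma\sigma}$, summed over $k$-simplices $\sigma$, and the diagonal entry $(\Delta_k^j)_{\sigma\sigma}$ depends only on the isomorphism type of the radius-$O(j)$ ball around $\sigma$ in $K_n$. Splitting the weight of each $k$-simplex equally among its $k+1$ vertices expresses $\int\lambda^j\,d\nu_n$ as $\int_{\mathcal K} f_j\,d(\mu_{K_n}/\vol(K_n))$ for a bounded, locally constant — hence continuous — function $f_j$ on $\mathcal K$. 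Thus BS-convergence of $(K_n)$ forces every moment $\int\lambda^j\,d\nu_n$ to converge; since all the $\nu_n$ are supported in the fixed compact interval $[0,C]$ and have uniformly bounded mass, moment convergence upgrades to weak convergence $\nu_n\to\nu$ for a finite measure $\nu$ on $[0,C]$.

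The hard part is a uniform bound ruling out spectral mass escaping to $0$: I claim $\nu_n\big((0,\epsilon)\big)\leq \kappa/\log(1/\epsilon)$ for all $n$ and all $\epsilon\in(0,\tfrac12)$, where $\kappa = \kappa(d,k)$. This is where integrality enters. Writing $\Delta_k(K_n) = DD^\top$ with $D = (\partial_{k+1}\ \ \partial_k^\top)$ an integer matrix, the product $\operatorname{det}'\Delta_k(K_n)$ of the nonzero eigenvalues equals $\operatorname{det}'(D^\top D)$, which by the Cauchy–Binet formula applied to compound matrices is a sum of squares of integer minors of $D$, hence a positive integer — this is the arithmetic heart of Lück's approximation theorem. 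Therefore $\sum_{\mu_i(n)>0}\log\mu_i(n)\geq 0$; bounding the terms with $\mu_i(n)\in(0,\epsilon)$ above by $\log\epsilon$, bounding the remaining terms above by $\log C$ and their number by $C\,\vol(K_n)$, and rearranging yields the claimed estimate. Without integrality — e.g. with real boundary maps — one could have positive eigenvalues decaying to $0$ while carrying nontrivial mass, and the theorem would be false.

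Finally I would combine the two ingredients to pass the atom at $0$ to the limit. Since $\{0\}$ is closed in $[0,C]$, the portmanteau theorem gives $\limsup_n\nu_n(\{0\})\leq\nu(\{0\})$. For the reverse inequality, fix $\epsilon\in(0,\tfrac12)$; the set $[0,\epsilon)$ is open in $[0,C]$, so $\liminf_n\nu_n([0,\epsilon))\geq\nu([0,\epsilon))\geq\nu(\{0\})$, and therefore $\liminf_n\nu_n(\{0\}) = \liminf_n\big(\nu_n([0,\epsilon)) - \nu_n((0,\epsilon))\big)\geq \nu(\{0\}) - \kappa/\log(1/\epsilon)$ by the uniform estimate of the previous step. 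Letting $\epsilon\to0$ gives $\liminf_n\nu_n(\{0\})\geq\nu(\{0\})$, so $\nu_n(\{0\})\to\nu(\{0\})$; that is, $\beta_k(K_n)/\vol(K_n)$ converges, to $\nu(\{0\})$ — the $k$-th $L^2$-Betti number of the BS-limit. Everything but the logarithmic spectral estimate is soft — Hodge theory, the locality of normalized moments, the portmanteau theorem — so I expect Step~3 (the integrality-based control of the spectrum near $0$) to be the one real obstacle.
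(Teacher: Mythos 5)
Your proof is correct, and since the paper offers no argument for this statement---it is quoted directly from Elek's paper---the right comparison is with Elek's own proof, which is exactly this L\"uck-approximation scheme: pass to the combinatorial Laplacian, use locality of the normalized traces $\vol(K_n)^{-1}\mathrm{tr}\,\Delta_k^j$ to get weak convergence of the empirical spectral measures from BS-convergence, and use integrality of $\Delta_k$ (the product of its nonzero eigenvalues is a positive integer) to get the uniform $\kappa/\log(1/\epsilon)$ bound on spectral mass near $0$, so that the atom at $0$ passes to the limit. All steps, including the portmanteau bookkeeping for the non-probability measures $\nu_n$ (whose total masses converge as zeroth moments), check out.
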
 

Informally, the relationship with testability is that if we fix $R>0$ and take $n,m >>0$, convergence says the measures associated to the two complexes $K_n,K_m$ will be close. So by the definition of the topology on $\mathcal K$,  we will have that for large $R$, the distribution of randomly sampled $R$-balls in $K_n$ will be almost the same as that in $K_m$, so having a way to accurately guess the normalized Betti numbers from these (nearly identical) data sets means that the normalized Betti numbers of $K_n$ and $K_m$  must be close.

\vspace{2mm}

Recently, a number of authors, see e.g.\ \cite{Abertgrowth,Abertunimodular,biringer2017ends,Bowencheeger,Namazidistributional}, have studied the analogous version of BS-convergence for Riemannian manifolds. Adopting the language of \cite{Abertunimodular}, set $$\mathcal M = \{\text{pointed Riemannian } \text{manifolds } (M,p)\} / \text{pointed isometry},$$ 
 endowed with the topology of pointed smooth convergence. See \S \ref{smoothtop}. Here and below,  Riemannian  manifolds are  always assumed to be  connected and complete. Really, all of the results below hold for disconnected manifolds, just as Theorem \ref{elek} applies to disconnected complexes, but it seems unnecessarily confusing to continue working in this generality.

A finite volume (connected, complete) Riemannian manifold $M$ induces a finite measure $\mu_M$ on $\mathcal M$, by pushing forward the Riemannian measure on $M$ via the map $p \mapsto [(M,p)]$, and we say  that a sequence $(M_n)$ \emph{Benjamini-Schramm (BS) converges} if the measures $\mu_{M_n}/\vol(M_n)$ weakly converge to some probability measure.  In full generality, the Riemannian analogue of Theorem \ref{elek} is not true, since if no geometric constraints are imposed, we can pack as much homology as desired into a part of a manifold with negligible volume. For example: connect sum a small volume genus $g(n)$ surface, say with volume $1$, somewhere on a round radius-$n$ sphere. The resulting  surfaces will BS-converge to an atomic measure on the single point $[(\BR^2,p)] \in \mathcal M$, where $p\in \BR^2$ is any basepoint. But by choosing $g(n)$ appropriately, we can make the first Betti numbers whatever we like.

 In the above example, the real problem is injectivity radius. For a Riemannian manifold $M$ and a point $x \in M$ we denote the injectivity radius of $M$ at $x$ by $\inj_M (x)$. Given $\epsilon>0$,  the {\it $\epsilon$-thick part} and the {\it $\epsilon$-thin part} of $M$ are
$$M_{\geq \epsilon} = \{ x \in M \; : \; \inj_M (x) \geq \epsilon /2 \} \mbox{ and } M_{< \epsilon} = M \setminus M_{\geq \epsilon}.$$ 

One says that $M$ is \emph{$\epsilon$-thick} if $M=M_{\geq \epsilon}$. Now, under geometric constraints like curvature bounds, there is a standard way to model an $\epsilon$-thick manifold $M$ by a simplicial complex $K(M)$ with comparable volume and bounded degree: one selects an $\epsilon$-net $S$ in $M$, and lets $N(S)$ be the nerve of the covering of $M$ by $\epsilon$-balls.  One can then show:
%Using an idea suggested by Elek, Bowen \cite{Bowencheeger} used this simplicialization procedure to show:

\begin{theorem}[Elek, Bowen + ABBG\footnote{By (ABBG) we refer to the current paper.}]\label{bowenthm}
	If $(M_n)$ is a BS-convergent sequence of  compact, $\epsilon $-thick Riemannian manifolds with upper and lower curvature bounds, then the normalized Betti numbers $b_k(M_n)/\vol(M_n)$  converge.
\end{theorem}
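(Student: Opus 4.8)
The plan is to model each $M_n$ by a bounded-degree simplicial complex with the same Betti numbers and comparable volume, and then to invoke Elek's Theorem~\ref{elek}. First, since the $M_n$ share fixed curvature bounds and are all $\epsilon$-thick, their convexity radii are bounded below by a constant $r_0>0$ depending only on these data; fix a scale $r\in(0,r_0)$ which is also smaller than $\epsilon/8$, so that in every $M_n$ all metric balls of radius $\le 2r$ are embedded and strongly geodesically convex. For a maximal $r$-separated set $S\subset M$ the balls $\{B(s,r):s\in S\}$ then cover $M$, the balls $\{B(s,r/2):s\in S\}$ are embedded and pairwise disjoint, and every nonempty finite intersection of the $B(s,r)$ is strongly convex, hence contractible; so $\{B(s,r)\}_{s\in S}$ is a good cover and the nerve lemma produces a homotopy equivalence $M\simeq N(S)$, giving $b_k(N(S))=b_k(M)$ for all $k$. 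Standard volume comparison (G\"unther for lower bounds on $\vol B(s,r/2)$, Bishop--Gromov for upper bounds on $\vol B(s,5r/2)$ and on $\vol B(s,r)$) then yields a uniform degree bound $D$ for $N(S)$ and a two-sided estimate $v_0|S|\le\vol(M)\le V_0|S|$, with $D,v_0,V_0$ depending only on $r$ and the curvature bounds.

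\textbf{Transferring BS-convergence.} The one subtle point is that $N(S)$, and in particular the density $|S|/\vol(M)$, depends on the choice of $S$; so I would choose $S$ by a \emph{local randomized rule} in order to make the construction compatible with BS-convergence: drop a Poisson point process on $M$ of large fixed intensity, give its points i.i.d.\ uniform labels, and greedily extract a maximal $r$-separated subset $\mathbf S$ in order of labels (or use any stabilizing variant with fast decay of dependence). The crucial feature to establish is locality: up to an event of probability decaying fast in $\rho$, the combinatorial $R$-ball of $N(\mathbf S)$ around the $\mathbf S$-point nearest a given $p\in M$, together with the volume of its Voronoi cell, should depend only on the restriction of $M$ and of the process to the $\rho$-ball about $p$. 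Granting this, if $(M_n)$ BS-converges then the law of this local data converges; moreover a second-moment estimate — points of $M_n$ separated by more than twice the stabilization radius contribute independently — shows that the empirical distribution of pointed $R$-balls of $N(\mathbf S_n)$ and the normalized vertex count $|\mathbf S_n|/\vol(M_n)$ concentrate around their (convergent) expectations. One can then fix, for each $n$, a single realization $S_n$ whose empirical $R$-ball distribution lies within $1/n$ of the expectation for all $R\le n$ and for which $|S_n|/\vol(M_n)$ lies within $1/n$ of its mean. Then $K_n:=N(S_n)$ is a BS-convergent sequence of degree-$\le D$ simplicial complexes with $|V(K_n)|/\vol(M_n)=|S_n|/\vol(M_n)\to c$ for some $c\in[1/V_0,1/v_0]$.

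\textbf{Conclusion, and the hard part.} Applying Theorem~\ref{elek} to $(K_n)$ gives that $b_k(K_n)/|V(K_n)|$ converges, say to $L_k$, and then
$$ \frac{b_k(M_n)}{\vol(M_n)} \;=\; \frac{b_k(K_n)}{|V(K_n)|}\cdot\frac{|V(K_n)|}{\vol(M_n)} \;\longrightarrow\; L_k\,c. $$
The hard part will be the locality/stabilization estimate of the second paragraph: the greedy maximal-$r$-separated rule admits arbitrarily long chains of dependence, so justifying both the convergence and the concentration will require either a stabilization argument in the style of Penrose--Yukich to control the tail of the dependence, or a genuinely finite-range approximate rule together with a bound on the resulting error; once past this, the remaining ingredients (comparison geometry, the nerve lemma, Theorem~\ref{elek}) are routine.
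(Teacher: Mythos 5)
Your skeleton is the same as the paper's: random nets built from Poisson processes, nerve complexes justified by comparison geometry and the Nerve Lemma, and Elek's Theorem~\ref{elek} to conclude. But the two steps you defer are where essentially all of the content lies, and one of them is exactly where the published proof of this statement (Bowen's Theorem 4.1) contains an error, so they are not loose ends but the heart of the matter.

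The stabilization gap you flag for the greedy maximal-$r$-separated rule is real, and the paper resolves it by the second of your two suggested alternatives: it runs only \emph{finitely many} rounds of a finite-range Poisson acceptance rule, producing an $r_0$-separated set that need not cover. Proposition~\ref{uncoveredprop} shows that for $j$ large (depending only on the geometric constants and a target $\epsilon$) the expected number of points needed to complete this ``almost-net'' to a genuine net is at most $\epsilon\cdot\vol(M_n)$, a Mayer--Vietoris argument shows completion changes $b_k$ of the nerve by only $O(\epsilon\cdot\vol(M_n))$, and one lets $\epsilon\to 0$ at the end. The second, more serious gap is the vertex-weighting issue: Elek's theorem requires the \emph{uniform-on-vertices} empirical distribution of $R$-balls of $K_n$, whereas locality only hands you the law of the $R$-ball seen from a $\vol$-random point of $M_n$; converting one to the other is a size-biasing correction by the Voronoi cell volume. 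You include that volume in your local data but never perform the correction, and derandomizing does not make it go away, since $\tfrac{1}{|S_n|}\sum_{s}\mathbf{1}[\,\cdot\,]$ is a ratio of two random sums. In the paper the correction is done by conditioning the basepoint to lie in a fixed-volume ball around a net point (Claim~\ref{translation claim}); the price is that the denominator of the resulting limit becomes $E[|S_n|^2]$ rather than $E[|S_n|]$, so one must prove $E[|S_n|^2]/E[|S_n|]^2\to 1$. That is the variance estimate of Lemma~\ref{variancelem} --- precisely the step Bowen omits, and one that is tractable for the finite-round almost-nets but not obviously so for the greedy maximal rule. Your ``second-moment estimate'' is the right instinct, but it must be aimed at this specific ratio; until these two steps are carried out, the argument is a correct plan rather than a proof.
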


A word is in order about the attributions: it was originally conceived by Elek, and then written up and published by Bowen  \cite[Theorem 4.1]{Bowencheeger}, but this writeup was not complete, and we (ABBG) provide a  slightly different argument that avoids this gap in \S \ref{normbmm}. Briefly, the idea is to superimpose a bunch of Poisson processes on $M_n$, discarding points that are too close together, until enough points are laid down so that the nerve complex $N_n$ associated to a collection of balls around these points sees the Betti numbers of $M_n$ up to a small error.  One then proves that the constructed sequence of (random) nerve complexes BS-converges. By (a slight generalization of) Theorem \ref{elek} above, the expected normalized Betti numbers $E[b_k(N_n))/\vol(N_n)]$ will converge, from which one can deduce convergence of the normalized Betti numbers $b_k(M_n)/vol(M_n)$.

\medskip

Theorem \ref{bowenthm} is really a special case of a more general result, see \S \ref{normbmm}. 
Indeed, the essence of the current work is that we deal with general manifolds with no assumptions on the injectivity radius. The thick part $M_{\geq \epsilon}$ is then a proper submanifold with boundary and we rely on Gelander's techniques \cite{Gelanderhomotopy} in order to associate a random simplicial compex to the thick part. As shown in \cite{bader2016homology} the boundary of the thick part corresponds to a sub-simplicial complex. This allows as to consider the thick and the thin parts separately.

\subsection{Main results}\label{results} 

Our interest in this paper is whether for certain manifolds of nonpositive curvature, one can control the thin parts well enough so that BS-convergence implies convergence of normalized Betti numbers, without any assumption of thickness.

Although almost all of the real work in this paper is done more generally,  we start as follows. Let $X$ be an irreducible symmetric space of noncompact type. An \emph{$X$-manifold} is a complete Riemannian manifold whose  universal cover is isometric to $X$.

\begin{theorem}\label{mainsym}
	 Suppose that $\dim(X)\neq 3$ and $(M_n)$ is a BS-convergent sequence of finite volume $X$-manifolds.  Then for all $k$, the sequence $b_k(M_n)/\vol(M_n)$  converges.
\end{theorem}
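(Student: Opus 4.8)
The plan is to reduce Theorem~\ref{mainsym} to the thick case already handled by Theorem~\ref{bowenthm} by analyzing the thin parts of the $M_n$ separately. The starting point is the Margulis lemma: for each $\epsilon$ smaller than the Margulis constant $\epsilon(X)$, the thin part $(M_n)_{<\epsilon}$ decomposes into standard pieces — tubes around short geodesics, cusp neighborhoods, and more generally components whose fundamental group injects into $\pi_1(M_n)$ as a virtually nilpotent subgroup. The first step is to establish a Mayer--Vietoris style bound expressing $b_k(M_n)$ in terms of $b_k$ of the $\epsilon$-thick part $(M_n)_{\geq \epsilon}$, the $b_k$ of the finitely many thin components, and the $b_{k-1},b_k$ of the separating hypersurfaces (the boundary tori/cusp cross-sections). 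The key analytic input is that each of these thin components, and each separating hypersurface, is itself a (nonpositively curved, or nilmanifold-type) manifold of dimension $\leq \dim X - 1$, so its Betti numbers are controlled by its volume with constants depending only on $X$; this is where the hypothesis $\dim X \neq 3$ enters, since in dimension $2$ the cross-sections are circles with $b_*\leq 1$ and the combinatorics closes up, whereas for $\BH^3$ one would be looking at $2$-tori in a rank-one thin part and the bookkeeping of the boundary cohomology genuinely fails to be negligible after normalization.

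Second, I would show that after volume-normalization the thin part contributes a \emph{continuous} functional of the BS-limit. Concretely, the number of thin components of each topological type, weighted by volume, and the total volume of the thin part, are all quantities of the form $\mu_{M_n}(\mathcal U)/\vol(M_n)$ for suitable open/closed sets $\mathcal U \subset \mathcal M$ (e.g. the set of pointed $X$-manifolds whose basepoint lies in a thin component of a given combinatorial type with $\inj$ in a given range). Since BS-convergence is weak convergence of $\mu_{M_n}/\vol(M_n)$, these normalized counts converge provided the relevant boundary sets are null for the limiting measure — which one arranges by choosing $\epsilon$ outside a countable ``bad set'' of values, a now-standard trick. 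Thus $b_k((M_n)_{<\epsilon})/\vol(M_n)$ and the normalized Betti numbers of the separating hypersurfaces converge along the sequence.

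Third, the remaining term is $b_k((M_n)_{\geq\epsilon})/\vol(M_n)$. The thick part is not compact without boundary, but one can double it, or cap off its boundary components, to produce closed $\epsilon'$-thick manifolds $\widehat{M_n}$ (with $\epsilon'$ a fixed fraction of $\epsilon$) whose Betti numbers differ from those of $(M_n)_{\geq\epsilon}$ by at most the already-controlled boundary contribution. One then checks that if $(M_n)$ BS-converges then so does the sequence of capped thick parts $(\widehat{M_n})$ — again because passing to the thick part and capping only modifies $\mu_{M_n}$ on a set of controlled normalized measure — and applies Theorem~\ref{bowenthm} to conclude that $b_k(\widehat{M_n})/\vol(\widehat{M_n})$ converges. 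Combining the three pieces via the Mayer--Vietoris bound from Step~1 and letting $\epsilon \to 0$ along good values yields convergence of $b_k(M_n)/\vol(M_n)$.

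The main obstacle I anticipate is Step~1 together with the precise geometric description of the thin part: one needs a uniform (in $n$, depending only on $X$ and $\epsilon$) structural classification of thin components — including the noncompact cusp-like ones in the finite-volume setting — and uniform linear-in-volume bounds on their Betti numbers and on the Betti numbers of their boundary hypersurfaces. Handling the finite-volume (cusped) case rather than the compact case, controlling how the thick part meets the thin part combinatorially, and verifying that the capping operation interacts correctly with BS-convergence are the technical heart; the actual convergence, once everything is phrased as weak convergence of normalized measures of nice subsets of $\mathcal M$, is comparatively formal given Theorem~\ref{bowenthm}.
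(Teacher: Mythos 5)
Your overall architecture (thick--thin decomposition, Mayer--Vietoris, Bowen--Elek for the thick part) matches the paper's, but your Step~2 contains the essential gap: the contribution of the thin part is \emph{not} a continuous functional of the BS-limit, and this is precisely the phenomenon behind the $\BH^3$ counterexample. From the viewpoint of all but a vanishing proportion of basepoints, a Margulis tube with a very short core is indistinguishable from a cusp neighborhood, yet the two contribute differently to $b_k(M_n)$ under Mayer--Vietoris (deleting a cusp neighborhood preserves the homotopy type, while deleting a tube can change $b_k$ by $1$). So the set $\mathcal U \subset \mathcal M$ of pointed manifolds ``whose basepoint lies in a thin component of a given combinatorial type'' is not one to which the Portmanteau theorem applies, and the normalized count of thin components of each type need not converge; Example~\ref{3dimex} exhibits two sequences with the same BS-limit and different thin topology. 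What actually rescues the argument when $d\geq 4$ --- and what your explanation of the role of $\dim X\neq 3$ misses --- is a quantitative volume bound (Proposition~\ref{shortgeodesics}): a Margulis tube whose core geodesic has length $\ell$ has volume at least $C(\ell)$, with $C(\ell)\to\infty$ as $\ell\to 0$. Hence for small $\epsilon$ the number of $\epsilon$-thin tubes is at most $\vol(M_n)/C(\epsilon)$ with $C(\epsilon)\to\infty$, so their total effect on $b_k(M_n)/\vol(M_n)$ is at most $1/C(\epsilon)$, which one kills by letting $\epsilon\to 0$ at the very end. You do not need the thin contribution to converge; you need it to be negligible, and your ``constants depending only on $X$'' bound (which does not improve as $\epsilon\to 0$) is too weak to give that.

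Two further points. First, your argument is really only set up for the rank-one (pinched negative curvature) case, where the thin components are tubes and cusps. In higher rank the thin part has no such simple description; the paper instead invokes \cite[Theorem 4.4]{Abertgrowth} to conclude that the BS-limit must be $X$ itself, so the thin part has volume $o(\vol(M_n))$, and then uses the Ballmann--Gromov--Schroeder analysis (requiring analyticity and the absence of Euclidean de Rham factors) to bound the Betti numbers of the thin part and of its frontier linearly in the volume of a slightly enlarged thin part. Second, in your Step~3, doubling or capping the thick part does not obviously preserve curvature bounds or BS-convergence (the geometry near $\partial (M_n)_{\geq\epsilon}$ after such a surgery is not locally determined by bounded-radius samples); the paper avoids this by running the nerve/Poisson argument on a shrinking of the thick part viewed inside the ambient manifold (the extended mm-spaces of \S\ref{mmsec}), with Gelander's machinery (Proposition~\ref{gelanderprop}) guaranteeing that all the resulting nerves have the homotopy type of $(M_n)_{\geq\epsilon}$.
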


Here, the only three-dimensional  irreducible symmetric spaces of noncompact type are scales of $\BH^3$. In fact, the conclusion of Theorem \ref{mainsym} is  false when $X=\BH^3$. As an example, let $K \subset S^3$ be a knot such that the complement $M = S^3 \setminus K$ admits a hyperbolic metric, e.g.\ the figure-8 knot. Using meridian--longitude coordinates, let $M_n$ be obtained by Dehn filling $M$ with slope $(1,n)$;  then each $M_n$ is a homology $3$-sphere. The manifolds $M_n \to M$ geometrically, see \cite[Ch E.6]{Benedettilectures}, so the measures $\mu_{M_n}$ weakly converge to $\mu_M$ (c.f.\ \cite[Lemma 6.4]{bader2016homology}) and the volumes $\vol(M_n) \to \vol(M)$. However, $b_1(M_n)=0$ while $b_1(M) =1$, so the normalized Betti numbers of the BS-convergent sequence $M_1,M,M_2,M,\ldots$ do not converge. See also Example~\ref{3dimex} for a similar counterexample in which volume goes to infinity. In fact, there is a real sense in which the \emph{only} counterexamples come from Dehn filling. See \S \ref{pncsec}. %See also Example~\ref{3dimex},  The idea is that one can construct sequences of finite volume hyperbolic $3$-manifolds $(M_n)$ in which the number of cusps in $M_n$ is proportional to $\vol(M_n)$, and that Dehn filling these cusps does not alter a weak limit, but can drastically change the first Betti number.

To illustrate a special case of Theorem \ref{mainsym}, let's say that $(M_n)$ \emph{BS-converges to $X$} when the measures $\mu_{M_n}$ weakly converge to   the atomic probability measure on the point $$[(X,x)] \in \mathcal M,$$ where $x\in X$  is any basepoint. Now any $X$ as above admits a (compact, even) $X$-manifold $M$, by a theorem of Borel \cite[Theorem 14.1]{Raghunathandiscrete}. A theorem of Mal'cev \cite{mal1983homomorphisms} says that $\pi_1 M$ is residually finite. So, we can take a tower of regular covers $$\cdots \to  M_2 \to M_1 \to M$$ corresponding to a nested sequence of normal subgroups of $\pi_1 M$  with trivial intersection, and such a sequence $(M_n) $ will BS-converge to $X$, see \cite{Abertgrowth} for details. Moreover, if $M$ is compact then DeGeorge--Wallach \cite{DeGeorgelimit} showed that the normalized Betti numbers of $(M_n)$ converge to the \emph{$L^2$-Betti numbers} $b_k^{(2)}(X)$ of $X$. See \cite{Abertgrowth,Luckapproximating} for more information about $L^2$-Betti numbers, and for a more general result. 

In fact, any sequence of manifolds that BS-converges to $X$ can be interleaved with a tower of covers of a compact $X$-manifold as in the example above, and the result still BS-converges. So, Theorem \ref{mainsym} and the result of DeGeorge-Wallach \cite{DeGeorgelimit} above give:

\begin{corollary}\label {l2cor}
	 Suppose that $(M_n)$  is a sequence of finite volume $X$-manifolds that BS-converges to $X$. Then for all $k\in \BN$, we have
$b_k(M_n) / \vol(M_n) \to b_k^{(2)}(X).$
\end{corollary}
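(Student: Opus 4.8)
The plan is to deduce Corollary~\ref{l2cor} from Theorem~\ref{mainsym} by interleaving the sequence $(M_n)$ with a reference sequence whose limiting normalized Betti numbers are already understood. Theorem~\ref{mainsym} guarantees that the normalized Betti numbers converge along \emph{any} BS-convergent sequence, but it is silent about the value of the limit; that value will be pinned down by the reference sequence.

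First I would construct the reference sequence. As recalled in the Introduction, Borel's theorem produces a compact $X$-manifold $N$, and Mal'cev's theorem a descending chain of finite-index normal subgroups $\pi_1 N = \Gamma_0 \supseteq \Gamma_1 \supseteq \Gamma_2 \supseteq \cdots$ with $\bigcap_m \Gamma_m = \{1\}$; let $(N_m)$ be the associated tower of finite regular covers of $N$. Then $(N_m)$ BS-converges to $X$ (injectivity radius tends to $\infty$ uniformly on the covers, so the normalized measures $\mu_{N_m}/\vol(N_m)$ weakly converge to the atomic probability measure at $[(X,x)]$), and by DeGeorge--Wallach \cite{DeGeorgelimit} (or L\"uck's approximation theorem \cite{Luckapproximating}), together with proportionality of $L^2$-Betti numbers to identify $\beta_k^{(2)}(N)$ with $\vol(N)\,\beta_k^{(2)}(X)$, one gets $b_k(N_m)/\vol(N_m) \to \beta_k^{(2)}(X)$ for every $k$.

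Next I would interleave: set $L_1 = M_1,\ L_2 = N_1,\ L_3 = M_2,\ L_4 = N_2,\dots$. For a fixed bounded continuous $f$ on $\mathcal M$, the quantities $\int f\, d(\mu_{L_j}/\vol(L_j))$ restricted to even indices and to odd indices both converge to $f([(X,x)])$, since both $(M_n)$ and $(N_m)$ BS-converge to $X$; hence $(L_j)$ BS-converges to $X$. Applying Theorem~\ref{mainsym} to $(L_j)$ --- this is the step that uses $\dim X \neq 3$ --- shows that $b_k(L_j)/\vol(L_j)$ converges. Its even-indexed subsequence $(N_j)$ tends to $\beta_k^{(2)}(X)$ by the previous paragraph, so the common limit equals $\beta_k^{(2)}(X)$; in particular the odd-indexed subsequence $(M_j)$ tends to $\beta_k^{(2)}(X)$, which is the assertion.

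The substantive obstacle is Theorem~\ref{mainsym} itself --- the main theorem of the paper --- after which the corollary is a soft deduction. The minor points to nail down are that the reference tower genuinely BS-converges to the point mass at $X$ and the invocation of proportionality above. I note that the argument genuinely requires $\dim X \neq 3$: for $X = \BH^3$ Theorem~\ref{mainsym} fails, and in that (low-rank) case one must instead appeal to the methods of \cite{Abertgrowth}; in any event $\beta_k^{(2)}(\BH^3) = 0$ for all $k$, so there is no tension with the stated conclusion.
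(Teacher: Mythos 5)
Your proposal is correct and takes essentially the same route as the paper: interleave $(M_n)$ with a residual tower of covers of a compact $X$-manifold (Borel, Mal'cev), apply Theorem \ref{mainsym} to the interleaved BS-convergent sequence, and identify the common limit via DeGeorge--Wallach. Your closing remark about $X=\BH^3$ also matches the paper's, which handles that case by the analytic methods of \cite{Abertgrowth} rather than Theorem \ref{mainsym}.
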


With Nikolov, Raimbault and Samet, we proved this in \cite{Abertgrowth} for sequences of compact, $\epsilon$-thick manifolds, using analytic methods. One could also prove it in the thick case by using  Theorem \ref{bowenthm} above (the Bowen--Elek simplicial approximation technique) and interleaving with a covering tower. In the thin case, we were able to push our analytic methods far enough to give a proof for $X=\BH^d$, see \cite[Theorem 1.8]{Abertgrowth}. Hence,  there is no problem in allowing $X=\BH^3$ in Corollary \ref{l2cor}, even though Theorem \ref{mainsym}  does not apply. 

While we were finishing this paper, Alessandro Carderi sent us an interesting preprint where, among other things, he proves the same result as Corollary \ref{l2cor} if either $k=1$, or $k$ is arbitrary and the symmetric space $X=G/K$ is of higher rank and $M_n$ is non compact, or in most cases when $X$ is of rank $1$. His proof is quite different, he considers the ultraproduct of the sequence of actions of $G$ on $G/\Gamma_n$. He then identifies the $L^2$-Betti numbers of the resulting $G$-action with the $L^2$-Betti numbers of the group $G$. 

Corollary \ref{l2cor} is particularly powerful when $X$ has real rank at least two. In this case, we proved with Nikolov, Raimbault and Samet that \emph{any}  sequence of distinct finite volume $X$-manifolds BS-converges to $X$, see \cite[Theorem 4.4]{Abertgrowth}. So, Corollary \ref{l2cor} implies:

\begin{corollary}\label {hrcor}
	 Suppose that $\rank_\BR X \geq 2$ and $(M_n)$  is any sequence of distinct finite volume $X$-manifolds. Then for all $k\in \BN$, we have
$b_k(M_n) / \vol(M_n) \to b_k^{(2)}(X).$
\end{corollary}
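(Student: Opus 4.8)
The plan is to obtain Corollary \ref{hrcor} as essentially a one-line deduction from Corollary \ref{l2cor} together with the higher-rank Benjamini--Schramm rigidity we proved with Nikolov, Raimbault and Samet. Indeed, Corollary \ref{l2cor} already gives $b_k(M_n)/\vol(M_n)\to\beta_k^{(2)}(X)$ for \emph{every} sequence of finite volume $X$-manifolds that BS-converges to $X$, so the only thing to check is that, when $\rank_\BR X\geq 2$, the hypothesis ``$(M_n)$ is a sequence of \emph{distinct} finite volume $X$-manifolds'' already forces BS-convergence to $X$. This is precisely \cite[Theorem 4.4]{Abertgrowth}. So: given distinct $M_n$, that theorem yields $\mu_{M_n}/\vol(M_n)\to\delta_{[(X,x)]}$ weakly, and then Corollary \ref{l2cor} finishes the proof. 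One preliminary point worth recording: Corollary \ref{l2cor} rests on Theorem \ref{mainsym}, whose hypothesis $\dim X\neq 3$ is automatic here, since any irreducible symmetric space of noncompact type with $\rank_\BR\geq 2$ has dimension at least $5$ (the smallest being $\SL_3(\BR)/\SO(3)$).

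It is worth recalling where the content of \cite[Theorem 4.4]{Abertgrowth} actually sits, since for the present corollary it is used as a black box. Writing $M_n=\Gamma_n\backslash X$ and $G=\Isom(X)^{\circ}$, one passes to the invariant random subgroups $\mu_n$ of $G$ defined by the lattices $\Gamma_n$, and shows every weak-$*$ accumulation point $\mu$ of $(\mu_n)$ is the trivial IRS $\delta_{\{1\}}$, which is exactly BS-convergence to $X$. The two ingredients are: the Kazhdan--Margulis theorem, giving a uniform positive lower bound on $\vol(M_n)$ so that no mass can escape into arbitrarily small quotients; and the Stuck--Zimmer / Nevo--Stuck--Zimmer rigidity theory for invariant random subgroups of higher-rank semisimple groups, which together with Wang-type finiteness of lattices of bounded covolume forces $\mu$ to be trivial once concentration on a single fixed lattice is ruled out --- and it is exactly here that distinctness of the $M_n$ is used.

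Consequently, the only ``work'' in proving Corollary \ref{hrcor} itself is bookkeeping: verifying that the $M_n$ are finite-volume irreducible $X$-manifolds with $\rank_\BR X\geq 2$, hence $\dim X\neq 3$, so that both \cite[Theorem 4.4]{Abertgrowth} and Corollary \ref{l2cor} apply, and then chaining them. The genuine obstacles live upstream --- in Theorem \ref{mainsym} of this paper (the control of thin parts, where essentially all of our effort goes) and in the IRS rigidity of \cite{Abertgrowth} --- and I would not expect any difficulty in the deduction itself. (In the special case of a residually finite covering tower over a compact, $\epsilon$-thick $X$-manifold one could instead invoke DeGeorge--Wallach \cite{DeGeorgelimit}, but the point of the present route is that it needs neither thickness nor a tower.)
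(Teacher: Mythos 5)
Your proposal is correct and is exactly the paper's argument: Corollary \ref{l2cor} combined with \cite[Theorem 4.4]{Abertgrowth}, which says that in higher rank any sequence of distinct finite volume $X$-manifolds BS-converges to $X$. The additional remarks (that $\dim X\neq 3$ is automatic in higher rank, and the sketch of where the IRS rigidity enters) are accurate but not needed beyond what the paper already records.
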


In the two corollaries above, we can identify the limit of the normalized Betti numbers when the BS-limit is $X$. In general, one can think of Theorem \ref{mainsym} as giving a definition of `$L^2$-Betti numbers' for arbitrary limits of BS-convergent sequences. The measures on $\mathcal M$ that arise as such limits have a special property called \emph{unimodularity}, see \cite{Abertunimodular}, and it would be interesting to find a good intrinsic definition of the `$L^2$-Betti numbers' of a unimodular measure that is compatible with Theorem \ref{mainsym}. %In the dual algebraic language of \emph{invariant random subgroups}, we do this for measures supported on $\epsilon$-thick locally symmetric spaces in \cite{Abertgrowth}, but such a theory should be available in much greater generality.

\subsection{The proof, and generalities in nonpositive curvature} To prove  Theorem \ref{mainsym}, we split into cases depending on $\rank_\BR X$. When the rank is one, we need to deal with general BS-convergent sequences, but the thin parts of rank one locally symmetric spaces are easy to understand. And when the rank is at least two, the only possible BS-limit we need to consider is $X$.  We now give two theorems that handle these two cases. We state them very generally, without any assumption of symmetry.

\begin{theorem}[Pinched negative curvature, arbitrary BS-limits]\label {pnc}
Let $(M_n )$ be a BS-convergent sequence of  finite volume Riemannian $d$-manifolds, with $d\neq 3$, and  with sectional curvatures in the interval $[-1,\delta]$, for some $-1 \leq \delta < 0$. Then the normalized Betti numbers $b_k(M_n)/\vol(M_n)$  converge for all $k$.
\end{theorem}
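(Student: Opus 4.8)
The plan is to push the Margulis thick--thin decomposition through a Mayer--Vietoris argument, reducing convergence of $b_k(M_n)/\vol(M_n)$ to three inputs: (i) convergence of the normalized Betti numbers of the bounded-geometry thick parts, supplied by the general form of Theorem~\ref{bowenthm} proved in \S\ref{normbmm}; (ii) convergence of volume-normalized counts of the finitely many ``types'' of thin components, supplied by Benjamini--Schramm convergence; and (iii) Poincar\'e duality on $M_n$ together with the Gauss--Bonnet--Chern formula, to reach the degrees the Mayer--Vietoris sequence leaves uncontrolled. We may assume the $M_n$ orientable (a routine reduction via orientation double covers and local systems), and fix once and for all an $\epsilon$ below the Margulis constant for dimension $d$ and curvature in $[-1,0)$, chosen so that $\{x:\inj_{M_n}(x)=\epsilon/2\}$ is a hypersurface and a continuity set for the Benjamini--Schramm limit.

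First I would set up the decomposition. By the Margulis lemma in pinched negative curvature, each component of the $\epsilon$-thin part of $M_n$ is either a \emph{tube} --- a metric neighborhood of a short closed geodesic, diffeomorphic to a $D^{d-1}$-bundle over $S^1$ --- or a \emph{cusp neighborhood}, diffeomorphic to $N\times[0,\infty)$ for a closed $(d-1)$-dimensional infranilmanifold $N$; the number of thin components is $O(\vol(M_n))$. The facts to record are: a cusp neighborhood $P$ deformation retracts onto its boundary, so $\partial P\hookrightarrow P$ is a real-homology isomorphism; and for every thin component $P$ both $\dim_\BR H_*(P)$ and $\dim_\BR H_*(\partial P)$ are bounded by a constant $C(d)$ --- for cusp cross-sections this is Nomizu's theorem bounding the real cohomology of a nilmanifold --- so that $\sum_i\dim_\BR H_k(P_i)$ and $\sum_i\dim_\BR H_k(\partial P_i)$, divided by $\vol(M_n)$, converge, being (up to a bounded-below weighting) integrals over $M_n$ of bounded Benjamini--Schramm-continuous functionals. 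Finally, since $\{\inj\geq\epsilon/2\}$ is a continuity set, the thick parts $T_n:=(M_n)_{\geq\epsilon}$ Benjamini--Schramm converge; they are compact, with curvature in $[-1,\delta]$ and injectivity radius bounded below, so the general result of \S\ref{normbmm} gives convergence of $b_k(T_n)/\vol(T_n)$, and since $\vol(T_n)/\vol(M_n)$ converges, so does $b_k(T_n)/\vol(M_n)$.

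Now apply Mayer--Vietoris to the cover of $M_n$ by a thickening of $T_n$ and the thin part, with intersection homotopy equivalent to $\partial_n=\bigsqcup_i\partial P_i$. The crucial observation is that \emph{for $k\leq d-3$} the connecting map $H_k(\partial_n)\to H_k(T_n)\oplus H_k(\bigsqcup_iP_i)$ is injective: it is block-diagonal over the $P_i$; for a cusp component the block $H_k(\partial P_i)\to H_k(P_i)$ is an isomorphism; and for a tube component, whose boundary is an $S^{d-2}$-bundle over $S^1$, the Serre spectral sequence gives $H_k(\partial P_i;\BR)=0$ for $2\leq k\leq d-3$ while in degrees $0,1$ the block $H_k(\partial P_i)\to H_k(P_i)\cong H_k(S^1)$ is again an isomorphism --- and this last point is exactly where $d\neq3$ enters, since the meridian sphere $S^{d-2}$ contributes to $H_1$ only when $d=3$. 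Hence the Mayer--Vietoris sequence collapses to
$$b_k(M_n)=b_k(T_n)+\sum_i\bigl(\dim_\BR H_k(P_i)-\dim_\BR H_k(\partial P_i)\bigr),\qquad k\leq d-3,$$
and every term on the right converges after dividing by $\vol(M_n)$. For the remaining degrees I would use Poincar\'e duality $b_k(M_n)=b_{d-k}(M_n)$, which together with the range $k\le d-3$ exhausts all degrees once $d\geq5$ (as $(d-1)/2\leq d-3$), and for $d=4$ leaves only $k=2$, which then follows from $\sum_k(-1)^kb_k(M_n)=\chi(M_n)$ and the fact that $\chi(M_n)/\vol(M_n)$ converges --- by Gauss--Bonnet--Chern, $\chi(M_n)$ is the integral over $M_n$ of a universal bounded function of the pointwise curvature, hence a Benjamini--Schramm-continuous functional. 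The case $d=2$ is immediate: $b_0$, $b_2$ and $\chi$ are Benjamini--Schramm-continuous and $b_1=b_0+b_2-\chi$.

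The step I expect to be the real obstacle is this ``collapse'' of the Mayer--Vietoris sequence. The full formula also carries the ranks of the connecting maps $H_j(\partial_n)\to H_j(T_n)$ in the top degrees $j=d-2,d-1$, and these are \emph{not} locally computable: whether the linking sphere of a short geodesic is nullhomologous in the thick part is equivalent to that geodesic being rationally nullhomologous in $M_n$, a condition Benjamini--Schramm convergence cannot detect. The scheme above is designed to never meet these maps, by using Mayer--Vietoris only in degrees $k\leq d-3$ and reaching the rest through duality and the Euler characteristic; making this airtight --- checking that the two ranges really cover all degrees for every $d\neq3$, and seeing that $d=3$ is precisely the excluded case, since there the formula is already uncontrolled for $b_1$ (the meridian of a short geodesic is nontrivial in the boundary torus but nullhomotopic in its tube) and Poincar\'e duality only pairs $b_1$ with $b_2$ and $b_0$ with $b_3$, so gains nothing --- is the heart of the argument. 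Secondary technical points, which I expect to be routine given \S\ref{normbmm}, are the precise bounded-geometry statement applicable to the manifolds-with-boundary $T_n$, and the comparison of Benjamini--Schramm convergence for $(M_n)$ and for $(T_n)$ across $\{\inj=\epsilon/2\}$.
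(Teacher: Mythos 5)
Your Mayer--Vietoris analysis in degrees $k\leq d-3$ is correct (and in fact the correction term $\sum_i(\dim H_k(P_i)-\dim H_k(\partial P_i))$ vanishes identically there, so you do not even need input (ii) for that range -- which is fortunate, because input (ii) as you justify it is false: the functional assigning to $x$ the normalized homology of the thin component containing $x$ is \emph{not} BS-continuous, precisely because a tube with a very short core is locally indistinguishable from a cusp at most of its points). The genuine gap is the step that reaches the remaining degrees. Poincar\'e duality in the form $b_k(M_n)=b_{d-k}(M_n)$ holds only for closed manifolds; the theorem allows finite-volume noncompact $M_n$, and there duality reads $\dim H_k(M_n)=\dim H^{d-k}_c(M_n)$, which differs from $\dim H^{d-k}(M_n)$ by terms governed by the cohomology of the cusp cross-sections and by the ranks of the maps $H^*(\text{ends})\to H^*(M_n)$. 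Since the number of cusps can be proportional to $\vol(M_n)$, this defect is of the same order as the quantity you are trying to control, and the ranks of those maps are exactly the non-locally-computable data your scheme was built to avoid. The same objection applies to the Gauss--Bonnet step for $d=4$ (where one additionally needs the Cheeger--Gromov version of Gauss--Bonnet--Chern for finite-volume manifolds). So as written the argument only proves the theorem for closed $M_n$ in degrees $k\geq d-2$.

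The paper's proof uses $d\neq 3$ through a different and more robust mechanism, which is worth comparing. Proposition \ref{shortgeodesics} shows that for $d\geq 4$ a Margulis tube whose core geodesic has length $\ell$ has volume at least $C(\ell)$ with $C(\ell)\to\infty$ as $\ell\to 0$; hence for each $\epsilon$ the number of components of the $\epsilon$-thin part is at most $\vol(M_n)/C(\epsilon)$. Since deleting a cusp neighborhood does not change the homotopy type and deleting a tube changes each Betti number by a bounded amount, one gets $|b_k((M_n)_{\geq\epsilon})-b_k(M_n)|\leq \vol(M_n)\cdot O(1/C(\epsilon))$ \emph{in every degree} $k$, with no appeal to duality, orientability, or compactness; one then proves convergence of $b_k((M_n)_{\geq\epsilon})/\vol(M_n)$ for each fixed generic $\epsilon$ and lets $\epsilon\to 0$. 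Note also that your "secondary technical point" -- applying \S\ref{normbmm} to the thick parts -- is not routine: the boundary of $(M_n)_{\geq\epsilon}$ is only a topological hypersurface, balls intersected with the thick part need not be contractible, and the paper needs all of \S\ref{thickapproxsec} (Gelander's nerve machinery, Proposition \ref{gelanderprop}, the extended mm-space formalism in which nerves are built from ambient balls, and the ball-volume estimate of Lemma \ref{volumeballs}) to verify hypotheses (1)--(3) of Corollary \ref{Bowen3}. Your outline would need both of these pieces supplied before it closes.
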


\begin{theorem}[Nonpositive curvature, with a thick BS-limit]\label{npc}
Let $\epsilon>0$ and let $(M_n)$  be a sequence of real analytic, finite volume Riemannian $d$-manifolds with sectional curvatures in the interval $[-1,0]$, and assume the universal covers of the $M_n$ do not have Euclidean de Rham-factors. If $(M_n)$ BS-converges to a  measure $\mu$ on $\mathcal M$  that is supported on $\epsilon$-thick  manifolds, the normalized Betti numbers $b_k(M_n)/\vol(M_n)$  converge for all $k$.
\end{theorem}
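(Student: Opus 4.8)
The plan is to adapt the Bowen--Elek simplicial approximation argument (Theorem \ref{bowenthm}) to a setting where the sequence $(M_n)$ itself may have thin parts, but the BS-limit $\mu$ is concentrated on $\epsilon$-thick manifolds. The key point is that for such a sequence, BS-convergence to $\mu$ forces the \emph{relative volume of the thin part to vanish}: $\vol((M_n)_{<\epsilon'})/\vol(M_n)\to 0$ for every $\epsilon' < \epsilon$. Indeed, the condition $\inj_M(x) < \epsilon'/2$ is an open condition on $[(M,x)]\in\mathcal M$ that is $\mu$-null, so weak convergence of $\mu_{M_n}/\vol(M_n)$ gives $\limsup_n \mu_{M_n}((M)_{<\epsilon'})/\vol(M_n) \le \mu(\{\text{thin}\}) = 0$. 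So most of the volume of $M_n$ lives in a uniformly thick region, and we want to say that the homology is controlled by that region. The real-analyticity and no-Euclidean-factor hypotheses are there precisely to control the topology of the thin part: by the thick-thin decomposition in nonpositive curvature (Ballmann--Gromov--Schroeder, and the Margulis lemma), each component of $(M_n)_{<\epsilon}$ is, up to homotopy, an aspherical piece whose fundamental group is virtually nilpotent; real-analyticity rules out pathological accumulation and gives that the number of thin components, and their topological complexity, is controlled linearly in $\vol(M_n)$ via a Bishop--Gromov-type bound. Crucially, a virtually nilpotent group that is not virtually cyclic has vanishing first $L^2$-Betti number and, more relevantly here, any thin component contributes homology only through its boundary up to a controlled error — this is the nonpositively curved analogue of the fact that in $\BH^d$, $d\ge 4$, cusps are topologically trivial enough not to create extra homology, which is exactly why $d=3$ (where thin parts are solid tori) is excluded in Theorem \ref{pnc} but need not be excluded here since the limit is thick.

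The concrete steps: First, fix $\epsilon' \in (0,\epsilon)$ and set $M_n' := (M_n)_{\ge \epsilon'}$, a compact manifold with boundary whose volume satisfies $\vol(M_n')/\vol(M_n)\to 1$. Second, run the Poisson/random-net construction of \S \ref{normbmm} on $M_n'$: superimpose Poisson processes to build a random $\epsilon'/4$-net $S_n\subset M_n'$, form the nerve $N(S_n)$ of the cover by $\epsilon'/4$-balls; uniform two-sided curvature bounds on $M_n'$ make $N(S_n)$ a bounded-degree complex of volume comparable to $\vol(M_n')$, homotopy equivalent to $M_n'$ by the Nerve Lemma, and the induced random measures on $\mathcal K$ BS-converge (this is where one uses that the BS-limit of $(M_n)$, restricted to the thick part, is a well-defined limit). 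Hence $E[b_k(N(S_n))]/|S_n| \to c_k$ for each $k$ by (the generalization of) Theorem \ref{elek}, so $b_k(M_n')/\vol(M_n')$ converges. Third — the key comparison step — bound $|b_k(M_n) - b_k(M_n')|$ by a quantity that is $o(\vol(M_n))$. For this, use the long exact sequence of the pair $(M_n, M_n')$, or equivalently Mayer--Vietoris for $M_n = M_n' \cup (M_n)_{<\epsilon}$ glued along $\partial M_n'$: the error terms are controlled by the Betti numbers of the thin part and of $\partial M_n'$, and each thin component — being homotopy equivalent to a bundle over a circle with infranilmanifold fiber (or an infranilmanifold times a ray), by the structure theory — has bounded total Betti number, while the number of such components is $O(\vol(M_n))$ but, more importantly, by the vanishing-relative-volume statement combined with a uniform isoperimetric/separation estimate (each thin component is "thin" in the metric sense and contributes boundary volume that is a vanishing fraction of $\vol(M_n)$), their total Betti number is $o(\vol(M_n))$. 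Concluding: $b_k(M_n)/\vol(M_n)$ has the same limit as $b_k(M_n')/\vol(M_n')$, namely $c_k$.

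The main obstacle I expect is the third step: showing that the thin part contributes only $o(\vol(M_n))$ to the Betti numbers. Unlike the pinched-negative-curvature case (Theorem \ref{pnc}), in nonpositive curvature the thin part can have Euclidean-type directions and its components can be geometrically complicated (products of lower-dimensional thin pieces, flats, etc.); the no-Euclidean-de-Rham-factor hypothesis on the universal cover prevents the worst global degeneration but one still has to handle local flat pieces inside $M_n$. The tool to push through is a careful thick-thin decomposition with the Margulis constant, giving that each thin component retracts onto a compact "core" that is an infranilmanifold fiber bundle over a base of controlled topology, plus a Bishop--Gromov volume lower bound on each thin component that makes the number of them $O(\vol(M_n))$ with a constant that can be absorbed — but the genuinely delicate estimate is the isoperimetric one showing that the \emph{boundary} contribution $\sum b_k(\partial M_n')$ is itself $o(\vol(M_n))$, which requires that the thin part, having vanishing relative volume, also has vanishing relative boundary area; this follows from a coarea argument choosing $\epsilon'$ generically in $(0,\epsilon)$ so that $\partial M_n'$ has area bounded by $\vol((M_n)_{<\epsilon})/(\epsilon - \epsilon')$, which is $o(\vol(M_n))$ as desired. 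Real-analyticity enters to guarantee that this level-set argument is valid (the injectivity radius function is sufficiently tame) and that Betti numbers of the pieces are finite and uniformly bounded.
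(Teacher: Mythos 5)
Your overall architecture matches the paper's: the observation that BS-convergence to a thick limit forces $\vol((M_n)_{<\epsilon'})/\vol(M_n)\to 0$ (this is exactly Claim \ref{thinpartsmall}), the Bowen--Elek nerve approximation of the thick part, and a Mayer--Vietoris comparison controlled by the Betti numbers of the thin part and of the gluing locus. But the step you yourself flag as the main obstacle --- showing $b_k(\text{thin part})$ and $b_k(\partial(\text{thick part}))$ are $o(\vol(M_n))$ --- is a genuine gap, and the tools you propose for it do not work. First, in nonpositive (as opposed to pinched negative) curvature the components of the thin part are \emph{not} homotopy equivalent to infranil bundles with uniformly bounded Betti numbers; their topology can be arbitrarily complicated, and there is no a priori bound on the number of components in terms of volume. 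The paper's substitute is Proposition \ref{thinbettis}, which bounds $b_k(M_-)$ by $C\cdot\vol(M_{<2\epsilon})$ (note: by the volume of the \emph{thin} part, not of $M_n$); proving this requires reopening the Morse-theoretic proof of Gromov's linear Betti number bound in Ballmann--Gromov--Schroeder and running it only over the critical values in $(0,\infty)$, together with the refinement that the essential volumes of the relevant critical submanifolds are supported in $M_{<\epsilon}$. Second, your coarea argument bounds the \emph{area} of $\partial M_n'$, but area does not control Betti numbers of a hypersurface; the paper instead bounds $b_k(\partial M_-)$ by covering the boundary by controlledly many convex balls and applying Poincar\'e--Lefschetz duality to a regular neighborhood (Lemma \ref{atlem}).

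Two further technical points you elide. The level set $\{\inj = \epsilon'/2\}$ need not be a topological submanifold (injectivity radius is only Lipschitz, and real-analyticity of the metric does not tame it), so the pair $(M_n, (M_n)_{\geq\epsilon'})$ is not directly usable; the paper replaces the standard decomposition by the ``stable'' thick--thin decomposition of \cite{Ballmannmanifolds}, defined via displacement functions of $J$-stable elements, precisely so that $M_\pm$ are topological submanifolds with compact common boundary (Lemma \ref{itsamanifold}) and so that the boundary is locally a graph of a maximum of smooth functions. Also, the balls of your net centered in $M_n'$ protrude into the thin part, so the Nerve Lemma gives a complex homotopy equivalent to the \emph{union of balls}, not to $M_n'$; the paper handles this by estimating separately the image and kernel of $H_k(U_n)\to H_k(M_n)$ (Lemma \ref{lem3} and Claim \ref{diagram}), rather than identifying the nerve's homology with $H_k(M_n')$. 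Your proposal correctly identifies where the difficulty lies but does not supply the argument that closes it.
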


Let's see how to deduce  Theorem \ref{mainsym} from these results. Suppose $X$ is an irreducible symmetric space of noncompact type, $\dim(X)\neq 3$. When $X$ has rank one, $X$ has pinched negative curvature, so therefore Theorem \ref{mainsym} follows from Theorem \ref{pnc}. When $X$ has higher rank, \cite[Theorem 4.4]{Abertgrowth} says that any BS-convergent sequence $(M_n)$ of $X$-manifolds BS-converges to $X$, as mentioned above. Since $X$ is actually $\epsilon$-thick for any $\epsilon$, Theorem~\ref{npc} applies, and Theorem \ref{mainsym} follows.

\vspace{2mm}

The reader may wonder where we use $d\neq 3$ in the proof of Theorem \ref{pnc}. When $d=2$, one can deduce the claim from Gauss--Bonnet. In general, the point is that the boundary of a Margulis tube is homeomorphic to an $S^{n-2}$-bundle over $S^1$.  When $d\geq 4$, this bundle is not aspherical, so it can be distinguished from a cusp cross section, which prevents one from doing Dehn filling as in our problematic $3$-dimensional example. More to the point, one can show that when $d\geq 4$, Margulis tubes  with very short cores have boundaries with large volume, see Proposition \ref{shortgeodesics}, which implies that the number of Margulis tubes with short cores one can see in a manifold is sublinear in volume. Hence, the contribution of the tubes to homology cannot affect the normalized Betti numbers much.

The key to Theorem \ref{npc} is  a celebrated theorem of Gromov, see \cite[Theorem 2]{Ballmannmanifolds}, that bounds the Betti numbers of an analytic manifold with  sectional curvatures in $[-1,0]$ and no local Euclidean deRham factors linearly in terms of its volume. Delving into its proof, one can show that in the setting of Theorem \ref{npc}, the Betti numbers of the thin parts of the $M_n$ grow sublinearly with $\vol(M_n)$. One can then combine the proof of Theorem \ref{bowenthm} (the Bowen--Elek simplicial approximation argument), which handles the thick parts of the $M_n$, making use of the techniques from \cite{Gelanderhomotopy} and \cite{bader2016homology} to control the complexity of the boundary, where the thick and thin parts are glued, with Mayer--Vietoris sequence to get Theorem ~\ref{npc}.

\begin{rem}
Recently, the work \cite{Abertgrowth} has been extended by Gelander and Levit to analytic groups over non-archimedean local fields 
\cite{gelander-levitIRS}. For non-archimedean local fields of characteristic $0$ the uniform discreteness assumption holds automatically for the family of all lattices and more generally all discrete IRS. However this is not the case in positive characteristic. We conjecture that the analogue of the stronger results concerning Betti numbers obtained in the current work can be extended to general analytic groups over non-archimedean local fields.
%, but that some new techniques are required.
\end{rem}

\subsection{Acknowledgments}
M.A. was supported by the ERC Consolidator Grant 648017 and the NKFIH grant K109684. I.B. was partially supported by NSF grant DMS-1611851 and CAREER Award DMS-1654114. T.G was partially supported by ISF-Moked grant
2095/15. We would like to thank the referees for their careful reading of the paper and their many helpful suggestions.

\section{Spaces of spaces and simplicial approximation}
\label {simpsec}
In this section, we discuss the topology on $\mathcal M$ and a similar topology on the space $\mathbb M$  of all pointed metric measure spaces. We then state and prove a generalization of the Bowen--Elek theorem on the convergence of Betti numbers of thick spaces,  which was stated in a weak form in the introduction as Theorem \ref{bowenthm}.

\subsection{The smooth topology}\label{smoothtop}
In the introduction, we introduce the space
$$\mathcal M = \{\text{pointed, connected, complete Riemannian } \text{manifolds } (M,p)\} / \text{pointed isometry},$$ 
 endowed with the topology of pointed smooth convergence.  Here, a sequence $(M_n,p_n)$ converges \emph{smoothly} to $(M_\infty,p_\infty)$ if there  is a sequence of smooth embeddings
\begin{equation} \phi_n:B_{M_\infty}(p_\infty, R_n)\longrightarrow M_n \label {fig}\end {equation} with $R_n \rightarrow \infty $ and $ \phi_n (p_\infty) = p_n $,  such that $\phi_n ^*g_n\rightarrow g_\infty $ in the $C ^\infty $-topology, where $g_n $ are the Riemannian metrics on $M_n $.  We call $(\phi_n)$ a \emph{sequence of almost isometric maps  coming from smooth convergence}. Note that each  metric $\phi_n ^*g_n$  is only partially defined on $M_\infty$, but their  domains of definition exhaust $M_\infty $, so  it still makes sense to say that  $\phi_n ^*g_n\rightarrow g_\infty $  on all of $M_\infty $, even if the language is a bit abusive. 
\'Alvarez L\'opez, Barral Lij\'o and Candel \cite{Alvarezuniversal}  have shown that $\mathcal M$, with the  smooth topology, is a Polish space. See also the appendix of Abert-Biringer \cite{Abertunimodular} for a slightly simpler proof.

\subsection{Metric measure spaces}\label {mmsec}A \emph {metric measure space} (or \emph{mm}-space) is  a proper, separable metric space $M$ equipped with a Radon measure $\vol$. Let $$\mathbb M = \{\text {pointed mm-spaces } (M,\vol,p) \} / \text{pointed measure preserving isometry}.$$

 Following Bowen \cite[Definitions 28 and 29]{Bowencheeger}, an \emph{$(\epsilon,R)$-relation} between  pointed mm-spaces $\mathfrak M_1 = (M_1,\vol_1,p_1)$ and $\mathfrak M_2 = (M_2,\vol_2,p_2)$ is a pair of isometric embeddings $$ M_i \longrightarrow Z, \ \ i=1,2$$ into some common metric space $Z$  having the following properties:
\begin {enumerate}[label=(\alph*)]
	\item $d_Z( p_1,p_2) < \epsilon$,
\item $ B_{M_1}(p_1,R) \subset (M_2)_\epsilon$ and $B_{M_2}(p_,R) \subset ( M_1)_\epsilon$,
\item  for all Borel subsets $F_i \subset {B_{M_i}(p_i,R)}$, we have $$\vol_1 (F_1) < (1+\epsilon)\vol_2( \, (F_1)_\epsilon \, ) + \epsilon, \ \ \vol_2 (F_2) < (1+\epsilon)\vol_1( \, (F_2)_\epsilon \, ) + \epsilon.$$
\end {enumerate}
Here, if $F$ is a subset of a metric space, the notation $(F)_\epsilon$ refers to the $\epsilon$-neighborhood of $F$. See also \S \ref{thickapproxsec}. The multiplicative factors of $(1+\epsilon)$ in (c) are not really necessary, and are not present in \cite{Bowencheeger}. However, some of our statements, e.g.\ Lemma \ref{lipschitz Lemma} below, are simpler because of them.

For each $\mathfrak M = (M,\vol,p) \in \mathbb M$ and $\epsilon,R>0$, define the \emph {$(\epsilon,R)$-neighborhood} of $\mathfrak M$ to be the set $\mathcal N_{\epsilon,R}(\mathfrak M)$ of all $\mathfrak M' \in \mathbb M$ that are $(\epsilon',R')$-related to $\mathfrak M$ for some $\epsilon'<\epsilon$ and $R' > R$. Note that if $\mathfrak M' \in \mathcal N_{\epsilon,R}(\mathfrak M)$, then for all sufficiently small $\delta>0$ and large $r>0$, we have
\begin{equation}
\label{inside!}\mathcal N_{\delta,r}(\mathfrak M') \subset \mathcal N_{\epsilon,R}(\mathfrak M).
\end{equation}
This follows from the fact that one can `concatenate' a relation between $\mathfrak M_1$ and $\mathfrak M_2$ with one between $\mathfrak M_2$ and $\mathfrak M_3$, by gluing the two metric spaces $Z$ together along $M_2$.

If we endow $\mathbb M$  with the topology generated by all $(\epsilon,R)$-neighborhoods, then the neighborhood nesting property referenced in \eqref{inside!} implies that 
$$\mathfrak M_i \to \mathfrak M_\infty \iff \exists \epsilon_i\to 0, R_i\to \infty \text{ such that } \mathfrak M_i \text{ is } (\epsilon_i,R_i)\text{-related to } \mathfrak M_\infty.$$ 
The next lemma will help us relate smooth convergence of Riemannian manifolds to their convergence as metric measure spaces.

\begin {lemma}\label {lipschitz Lemma}
Suppose that $(M_i,p_i)$, $i=1,2 $, are  pointed Riemannian $d$-manifolds and for some $R>0$ there is an  embedding $\phi: B_{M_1}(p_1,R) \longrightarrow M_2$  with $ \phi(p_1)=p_2$ and \begin{equation}
 	(1-\delta)|v|\leq  |d\phi(v)| \leq (1+\delta)|v|, \ \ \forall v \in TB_{M_1}(p_1,R) . \label {lipschitz}
 \end{equation}
Then if $\delta=\delta( \epsilon,d)$ is small,  the triples $(M_i,\vol_i,p_i)$ are $(\epsilon,R)$-related, where  $\vol_i$  is the Riemannian measure on $M_i$.
\end {lemma}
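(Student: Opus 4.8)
The plan is to use the bi-Lipschitz embedding $\phi$ to construct an explicit common metric space $Z$ into which both $M_1$ and $M_2$ embed isometrically, and then verify conditions (a), (b), (c) of the definition of an $(\epsilon,R)$-relation by hand, choosing $\delta$ small in terms of $\epsilon$ and $d$ at each stage. Concretely, I would form $Z$ as the quotient of the disjoint union $M_1 \sqcup M_2$ by gluing along $\phi$ with a small ``cylinder'' of length $\eta$ (depending on $\delta$) between corresponding points, i.e. take the metric on $M_1 \sqcup M_2$ generated by $d_{M_1}$, $d_{M_2}$, and the relations $d_Z(x,\phi(x)) = \eta$ for $x \in B_{M_1}(p_1,R')$ for a slightly larger radius $R' > R$; one then checks this infimal-path pseudometric is an honest metric restricting to the original metrics on each piece. (Alternatively, and perhaps more cleanly, one can avoid the cylinder by identifying $x \sim \phi(x)$ outright and taking the maximal pseudometric bounded above by $\min$ of the two metrics through the identification — but one must be careful that distortion does not cause distances within a single $M_i$ to shrink, which is where the bi-Lipschitz bound, not just a one-sided bound, is used.)

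First I would fix $R' = R - 1$ (say), so that $B_{M_1}(p_1, R')$ together with an $\eta$-neighborhood still sits inside the domain $B_{M_1}(p_1,R)$ of $\phi$. Then (a) is immediate: $d_Z(p_1,p_2) = d_Z(p_1, \phi(p_1)) = \eta < \epsilon$ once $\eta$ is small. For (b), I would show that every point of $B_{M_2}(p_2, R)$ — or rather of a ball of radius slightly less than $R$, which is all condition (b) needs after absorbing constants — lies within $\epsilon$ in $Z$ of a point of $M_1$: any such point is joined to $p_2$ by a short path in $M_2$, which one pushes back through $\phi^{-1}$ (using the lower Lipschitz bound $|d\phi(v)| \geq (1-\delta)|v|$ to control that the preimage stays in the domain) to see it is $\eta$-close to a point of $M_1$; symmetrically for $B_{M_1}(p_1,R)$, which maps into $M_2$ under $\phi$ directly. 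For (c), the point is that $\phi$ distorts Riemannian volume by a factor between $(1-\delta)^d$ and $(1+\delta)^d$: given a closed $F_1 \subset B_Z(p_1,R)$, its ``$M_1$-part'' $F_1 \cap M_1$ has $\phi$-image inside $(F_1)_\eta \cap M_2 \subset (F_1)_\epsilon$ with $\vol_2(\phi(F_1 \cap M_1)) \geq (1-\delta)^d \vol_1(F_1 \cap M_1)$, while its ``$M_2$-part'' is literally contained in $(F_1)_\epsilon$; combining and choosing $\delta$ so that $(1-\delta)^{-d} < 1+\epsilon$ and $\eta < \epsilon$ gives $\vol_1(F_1) < (1+\epsilon)\vol_2((F_1)_\epsilon) + \epsilon$, and the symmetric inequality follows the same way using the upper Lipschitz bound and the fact that $\phi$ is a diffeomorphism onto its image.

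The main obstacle I anticipate is purely in the construction and verification that $Z$ is a genuine metric space restricting isometrically to both $(M_i, d_{M_i})$ — in particular that the glued pseudometric does not shorten distances inside an individual $M_i$ by routing paths through the other factor. This is exactly the place the two-sided estimate \eqref{lipschitz} is essential: a detour from $x$ to $y$ within $M_1$ that dips into $M_2$ costs at least $2\eta$ in crossing cylinders plus the $M_2$-length of the middle portion, and the $M_2$-length is at least $(1-\delta)$ times the $M_1$-length of its $\phi$-preimage path by the lower bound, so for $\eta, \delta$ small no such detour beats the direct $M_1$-geodesic. Once this is pinned down, everything else is the routine bookkeeping of absorbing the loss of radius ($R \leadsto R'$) and the additive/multiplicative slack into the single parameter $\delta = \delta(\epsilon, d)$, and one can cite Bowen's equivalence of topologies on $\mathbb{M}$ to know that the particular choice of $Z$ does not matter.
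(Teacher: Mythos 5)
Your proposal is essentially the paper's proof: the paper also takes $Z = M_1 \sqcup M_2$ glued along $\phi$ with a small separation $\delta$ (writing the cross-distance explicitly as $d(x,y)=\inf_{x'}\{d(x,x')+\delta+d(\phi(x'),y)\}$ rather than as a quotient path metric, which simply sidesteps the detour issue you raise), and then verifies (a)--(c) exactly as you do, with the $(1\pm\delta)^d$ Jacobian bound giving (c). The only point where you are, if anything, more candid than the paper is the bookkeeping of the radius loss ($R\leadsto R'$ versus the paper's silent ``the remaining parts follow similarly''), so the proposal is correct and needs no further comparison.
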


\begin {proof}
Take $\delta<\epsilon$ and let $\phi$  be as in the statement of the lemma. We want to produce an $(\epsilon,R)$ relation between $M_1$ and $M_2$. Define the common space $Z$ as the disjoint union  $$Z = M_1 \sqcup M_2,$$ endowed with a metric that restricts to the given metrics on $M_1,M_2$, and where for $x\in M_1,y\in M_2$,
$$
 d(x,y) = \inf \{d(x,x')+\delta + d(\phi(x'),y) \ | \ x' \in B_{M_1}(p_1,R+1) \}.
$$ 
We now verify that $Z$ gives an $(\epsilon,R)$-relation. First, $d_Z(p_1,p_2)=\delta<\epsilon$. Second, if $x\in M_1 \cap B_Z(p_1,R) = B_{M_1}(p_1,R)$, then $d(x,\phi(x))= \delta<\epsilon$, so $x \in (M_2)_\epsilon$. Third, if $F_1 \subset B_Z(p_1,R)$ is a Borel subset, then we have $$\vol_1(F_1) =\vol_1(F_1 \cap M_1) \leq (1+\delta)^{d} \vol_2 (\phi(F_1 \cap M_1)) \leq (1+\delta)^{d} \vol_2 ( \, (F_1)_\delta \, ),$$
where the first inequality follows from \eqref{lipschitz}, and the second follows from the fact that $d(x,\phi(x))=\delta$. So, as long as $\delta$ is small, the right side will be at most $(1+\epsilon)\vol_2( (F_1)_\epsilon )$. The two remaining parts of  properties (a) and (b) follow similarly.
\end {proof}

 As an immediate corollary, we get the following:

\begin{corollary}
	\label {cnts}	The natural inclusion $\mathcal M \longrightarrow \mathbb M$ from the space of pointed Riemannian manifolds (with the smooth topology) to the space of  pointed mm-spaces is continuous.
\end{corollary}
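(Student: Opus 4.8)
The plan is to deduce this directly from Lemma \ref{lipschitz Lemma}, essentially by unwinding the definition of the smooth topology on $\mathcal M$. First I would take a convergent sequence $(M_n,p_n) \to (M_\infty,p_\infty)$ in $\mathcal M$, and fix a target $(\epsilon,R)$-neighborhood of the image $[(M_\infty,\vol_\infty,p_\infty)] \in \mathbb M$; the goal is to show that for all large $n$, the triple $(M_n,\vol_n,p_n)$ lies in this neighborhood, i.e.\ is $(\epsilon,R)$-related to $(M_\infty,\vol_\infty,p_\infty)$.

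By definition of smooth convergence, there are smooth embeddings $\phi_n \colon B_{M_\infty}(p_\infty, R_n) \to M_n$ with $R_n \to \infty$, $\phi_n(p_\infty)=p_n$, and $\phi_n^* g_n \to g_\infty$ in $C^\infty$ on $M_\infty$. The next step is to translate the $C^\infty$-convergence of pulled-back metrics into the bi-Lipschitz bound \eqref{lipschitz}: since $\phi_n^* g_n \to g_\infty$ uniformly on the compact ball $\overline{B_{M_\infty}(p_\infty,R)}$ (which is contained in the domain of $\phi_n$ once $R_n > R$), for any $\delta>0$ we have, for all large $n$,
$$(1-\delta)|v| \le |d\phi_n(v)| \le (1+\delta)|v|, \quad \forall v \in TB_{M_\infty}(p_\infty,R),$$
where the norm on the left is with respect to $g_\infty$ and on the right with respect to $g_n$; this is just the statement that the ratio of the two quadratic forms is close to $1$ on the relevant compact set. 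Choosing $\delta = \delta(\epsilon,d)$ as in Lemma \ref{lipschitz Lemma}, and restricting $\phi_n$ to $B_{M_\infty}(p_\infty,R)$, the hypotheses of that lemma are met, so $(M_n,\vol_n,p_n)$ and $(M_\infty,\vol_\infty,p_\infty)$ are $(\epsilon,R)$-related for all large $n$. Since $\epsilon, R$ were arbitrary, this shows $[(M_n,\vol_n,p_n)] \to [(M_\infty,\vol_\infty,p_\infty)]$ in $\mathbb M$, and the inclusion is continuous. (One should also remark that the map is well-defined — a pointed isometry of Riemannian manifolds is in particular a pointed measure-preserving isometry — and injective, though only continuity is asserted.)

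The only mild subtlety, and the step I would be most careful about, is the bookkeeping with the domains of definition: $\phi_n$ is defined only on $B_{M_\infty}(p_\infty,R_n)$, so one must first pass to $n$ large enough that $R_n > R+1$ so that $B_{M_\infty}(p_\infty,R+1)$ (the set appearing in the distance formula defining $Z$ in the proof of Lemma \ref{lipschitz Lemma}) lies in the domain, and then invoke uniform $C^\infty$-closeness on that fixed compact ball. There is no real obstacle here — this corollary is genuinely immediate from the lemma — but the statement does require that $\mathbb M$'s topology is the one generated by $(\epsilon,R)$-relations, which is exactly how it was set up above.
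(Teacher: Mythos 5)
Your proposal is correct and is exactly the argument the paper intends: the corollary is stated as an immediate consequence of Lemma \ref{lipschitz Lemma}, obtained by converting the $C^\infty$-convergence of the pulled-back metrics on a fixed compact ball into the bi-Lipschitz bound \eqref{lipschitz} and then invoking the lemma. Your attention to the domain bookkeeping (taking $R_n > R+1$) and the implicit use of metrizability of both spaces to reduce to sequential continuity are fine and match the paper's setup.
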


\subsubsection{Extended mm-spaces}

 We will need a  slight variant of $\mathbb M$  for our work below. Let
$$\mathbb {M}^{ext} = \{(M,\vol,p,E) \ | \ (M,\vol,p) \in \mathbb{M}, \  E \supset M \text{ a super-metric space}\}/ \sim,$$
 where a super-metric space is just a proper, separable metric space that contains $M$ as a submetric space. We call  a quadruple  $(M,\vol,p,E) $ an \emph {extended}  pointed mm-space; two quadruples are identified in $\mathbb {M}^{ext}$ if there is a pointed isometry between the super-metric spaces $E$ that restricts to a measure preserving isometry from one mm-space $M$ to the other.  The topology on $\mathbb {M}^{ext}$ is similar to that on $\mathbb M$: we say that $(M_i,\vol_i,p_i,E_i)$, $i=1,2$, are \emph{$(\epsilon,R)$-related} if there are isometric embeddings $$E_i \longrightarrow Z, \ \ i=1,2$$
that restrict to give an $(\epsilon,R)$-relation  between the triples $(M_i,\vol_i,p_i)$, and where also
\begin {equation}
	 B_{E_1}(p_1,R) \subset (E_2)_\epsilon, \ \ B_{E_2}(p_2,R) \subset ( E_1)_\epsilon. \label {hausE}
\end {equation}
One then defines $(\epsilon,R)$-neighborhoods just as before and the topology on $\mathbb {M}^{ext}$ is that generated by these neighborhoods, in which $\mathfrak M_i \to \mathfrak M_\infty$ if and only if there are $\epsilon_i\to 0$ and $R_i\to \infty$ such that $M_i$ is $(\epsilon_i,R_i)$-related to $M_\infty$ for all $i$.
 
We then have the following variant of Lemma \ref{lipschitz Lemma}.

\begin {lemma}\label {lipschitz Lemma2}
Suppose that $(M_i,p_i)$, $i=1,2 $, are  pointed Riemannian $d$-manifolds with  distinguished subsets $T_i \subset M_i$ and that for some $R>0$ there is an  embedding $$\phi: B_{M_1}(p_1,R) \longrightarrow M_2$$  with $ \phi(p_1)=p_2$  that satisfies the following three properties:
\begin {enumerate}
	\item[(i)]  	$(1-\delta)|v|\leq  |d\phi(v)| \leq (1+\delta)|v|, \ \ \forall v \in TB_{M_1}(p_1,R) .$
\item[(ii)] $\phi^{-1}(T_2) \subset (T_1)_\delta,$ and $\phi(T_1 \cap B_{M_1}(p_1,R)) \subset (T_2)_\delta$,
\item[(iii)] $\vol_1 (\phi^{-1}(T_2) \, \triangle \, T_1)<\delta,$
\end {enumerate}
where  $\triangle$  is the symmetric difference.
Then if $\delta=\delta(\epsilon,d)$ is  sufficiently  small, the  quadruples $(T_i,\vol_i |
_{T_i},p_i,M_i)$ are $(\epsilon,R)$-related, where  here $\vol_i$  is the Riemannian measure on $M_i$.
\end {lemma}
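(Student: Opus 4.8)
The plan is to imitate the proof of Lemma \ref{lipschitz Lemma} essentially verbatim, letting the extra hypotheses (2) and (3) carry the conclusions there from the ambient manifolds $M_i$ to the subsets $T_i$. Concretely, I would take as the common space the same glued disjoint union $Z = M_1 \sqcup M_2$ constructed in Lemma \ref{lipschitz Lemma}, in which $M_1$ and $M_2$ sit isometrically and $d_Z(x,\phi(x)) = \delta$ for every $x \in B_{M_1}(p_1,R)$; the super-metric spaces witnessing the extended $(\epsilon,R)$-relation will be $E_i = M_i$ with these inclusions. Since $Z$ is literally the space from Lemma \ref{lipschitz Lemma}, the extra condition \eqref{hausE} — namely $B_{E_1}(p_1,R) \subset (E_2)_\epsilon$ and symmetrically — is exactly property (b) of an ordinary $(\epsilon,R)$-relation between the triples $(M_i,\vol_i,p_i)$, and is already verified there (one direction from $d_Z(x,\phi(x)) = \delta$, the other from the geodesic-lifting/almost-surjectivity of the bi-Lipschitz embedding $\phi$ once $\delta$ is small relative to $R$). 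So the real work is to check that this same $Z$ realizes an ordinary $(\epsilon,R)$-relation between the triples $(T_i, \vol_i|_{T_i}, p_i)$ — where one reads $p_i \in T_i$ into the statement — and this is where (2) and (3) enter.

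For the base-point and Chabauty conditions I would argue as follows. Condition (a) is immediate: $d_Z(p_1,p_2) = \delta < \epsilon$. For the Chabauty condition (b): if $x \in B_{T_1}(p_1,R) = T_1 \cap B_{M_1}(p_1,R)$ then the second half of (2) gives $\phi(x) \in (T_2)_\delta$, so some $t \in T_2$ satisfies $d_Z(x,t) \le d_Z(x,\phi(x)) + d_{M_2}(\phi(x),t) < 2\delta < \epsilon$; conversely, given $y \in T_2 \cap B_{M_2}(p_2,R)$, almost-surjectivity of $\phi$ puts $x = \phi^{-1}(y)$ in $B_{M_1}(p_1,R)$, and the first half of (2) gives $x \in \phi^{-1}(T_2) \subset (T_1)_\delta$, so $y$ again lies within $2\delta < \epsilon$ of a point of $T_1$.

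The substantive point is the measure condition (c). For a closed $F_1 \subset B_Z(p_1,R)$, I would trade $T_1$ for $\phi^{-1}(T_2)$ at additive cost $\delta$ using (3), push forward through $\phi|_{B_{M_1}(p_1,R)}$ — which distorts $d$-volume by a factor $\to 1$ as $\delta \to 0$ and carries $F_1$ into $(F_1)_\delta$ since $d_Z(x,\phi(x)) = \delta$ — and obtain
\[ \vol_1|_{T_1}(F_1) = \vol_1(F_1 \cap T_1) \le \vol_1\big(F_1 \cap \phi^{-1}(T_2)\big) + \delta \le (1-\delta)^{-d}\,\vol_2|_{T_2}\big((F_1)_\delta\big) + \delta, \]
which is $\le (1+\epsilon)\vol_2|_{T_2}((F_1)_\epsilon) + \epsilon$ once $\delta = \delta(\epsilon,d)$ is small. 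For the symmetric inequality, given closed $F_2 \subset B_Z(p_2,R)$, I would first note $F_2 \cap M_2 \subset B_{M_2}(p_2,R) \subset \phi(B_{M_1}(p_1,R))$ for $\delta$ small (same lifting argument), so $A := \phi^{-1}(F_2 \cap T_2) \subset \phi^{-1}(T_2)$ is well defined with $\phi(A) = F_2 \cap T_2$; then $\vol_2(F_2 \cap T_2) = \vol_2(\phi(A)) \le (1+\delta)^d \vol_1(A)$, hypothesis (3) gives $\vol_1(A \setminus T_1) \le \vol_1(\phi^{-1}(T_2) \triangle T_1) < \delta$, and $A \cap T_1 \subset \phi^{-1}(F_2) \cap T_1 \subset (F_2)_\delta \cap T_1$ because $d_Z(x,\phi(x)) = \delta$; combining gives $\vol_2|_{T_2}(F_2) \le (1+\delta)^d\big(\vol_1|_{T_1}((F_2)_\delta) + \delta\big) \le (1+\epsilon)\vol_1|_{T_1}((F_2)_\epsilon) + \epsilon$.

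I do not expect a serious obstacle. All the analytic content — constructing $Z$, checking it is a metric space restricting correctly to the $M_i$, and the geodesic lifting that makes $\phi$ almost surjective — is inherited from Lemma \ref{lipschitz Lemma}, and (2) and (3) only serve to transport its conclusions onto $T_i$ with controlled error. The one place demanding a little care is the second half of (c), where one must pull $F_2 \cap T_2$ back through $\phi^{-1}$ before comparing volumes, and so one genuinely needs both that $\phi$ is almost onto (so the pullback exists) and that $\phi^{-1}(T_2)$ agrees with $T_1$ up to small $\vol_1$-measure — which is exactly why (3) is phrased as a symmetric-difference bound rather than a one-sided inclusion.
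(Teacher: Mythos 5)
Your proposal is correct and follows essentially the same route as the paper: reuse the glued space $Z=M_1\sqcup M_2$ from Lemma \ref{lipschitz Lemma}, note that \eqref{hausE} is inherited verbatim, and then verify (a), (b), (c) for the subsets $T_i$, with hypothesis (2) giving the Chabauty condition and hypothesis (3) supplying the additive $\delta$ error when trading $T_1$ for $\phi^{-1}(T_2)$ in the measure estimate. The paper simply states the reverse direction of (b) and of (c) as ``similar,'' whereas you spell them out; your extra care with the pullback of $F_2\cap T_2$ is exactly what that omitted symmetric argument requires.
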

\begin {proof}
 The proof is similar to that of Lemma \ref{lipschitz Lemma}. With $Z=M_1 \sqcup M_2$ and $d$  the metric defined in Lemma \ref{lipschitz Lemma}, equation \eqref{hausE} above follows exactly as before  as long as $\delta<\epsilon$.   So, we just need to verify that $Z$ gives an $(\epsilon,R) $-relation between the subsets $T_1,T_2$.   Property (a)  is immediate from the definition of the metric on $Z$. For (b), if $x\in B_{T_1}(p_1,R_1)$ then $\phi(x) \subset (T_2)_\delta$, so $d_Z(x,T_2) < 2\delta$. So, (b) holds if $\delta\leq \epsilon/2$, as the proof of the other part is similar. For (c),  suppose $F \subset B_Z(p_1,R)$ is  Borel. Then 
\begin {align*}
\vol_1|_{T_1}(F_1) &= \vol_{1} (F_1 \cap T_1) \\
&\leq  \vol_1(F_1 \cap \phi^{-1}(T_2)) + \vol_1(\phi^{-1}(T_2) \, \triangle \, T_1) \\
&<  (1+\delta)^{d}\vol_{2}( \phi(F_1) \cap T_2))+ \delta\\
 &= (1+\delta)^{d}\vol_{2}|_{T_2}( \phi(F_1))+ \delta 
\end {align*}
So since $\phi(F_1)\subset (F_1)_\delta$, (c) holds if $(1+\delta)^d \leq (1+\epsilon)$. The other part of (c) is similar.
\end {proof}

\subsubsection{Extended mm-spaces with multiple measures or distinguished subsets} Let 
\[\mathbb M^{n,ext} := \{(M,\vol_1,\vol_2,\ldots,\vol_n,p,E)\} / \sim,\]
where here $(M,p)$ is a pointed metric space embedded in some super metric space $E \supset M$, the $\vol_i$ are Radon measures on $M$, and the equivalence relation is pointed isometry that preserves all the measures. The space $\mathbb M^{n,ext}$ comes equipped with projection maps 
$$\pi_i : \mathbb M^{ext,n} \longrightarrow \mathbb M^{ext},  \ \  (M,\vol_1,\vol_2,\ldots,\vol_n,p,E) \longmapsto (M,\vol_i,p,E)$$
for each $i=1,\ldots n$, and we say that two tuples $$\mathfrak M = (M,\vol_1,\vol_2,\ldots,\vol_n,p,E), \ \  \mathfrak M' = (M',\vol_1',\vol_2',\ldots,\vol_n',p',E)$$ are $(\epsilon,R)$-related if there are fixed embeddings $E \hookrightarrow Z, E' \hookrightarrow Z$ of the two super metric spaces into some common metric space $Z$ that induce  $(\epsilon,R)$-relations between the projections $\pi_i(\mathfrak M), \pi_i(\mathfrak M')$ for all $i$. The \emph{$(\epsilon,R)$-neighborhood} $\mathcal N_{\epsilon,R}(\mathfrak M)$ of $\mathfrak M \in \mathbb M^{ext,n}$ is again defined to be the set of all  $\mathfrak M'$ that are $(\epsilon',R')$-related to $\mathfrak M$ for some $\epsilon'<\epsilon $ and $R' >R$, and we endow $\mathfrak M^{ext,n}$ with the topology generated by these neighborhoods, in which $\mathfrak M_i \to \mathfrak M$ when there are $\epsilon_i\to 0$ and $R_i\to \infty$ such that $\mathfrak M_i, \mathfrak M$ are $(\epsilon_i,R_i)$-related for large $i $. %Note that this is not the coarsest topology such that all the projection maps $\pi_i$ are continuous: indeed, if $\mu_x$ denotes an atomic measure at $x$, in the latter topology the points $(\BR,\mu_{1}, \mu_{-1},0,\BR)$ and $(\BR,\mu_{1},\mu_{1},0,\BR)$ are indistinguishable.

We also consider the space
\[\mathbb {MS}^{ext} := \{(M,\vol,p,E,S)\} / \sim,\]
of pointed, extended mm-spaces equipped with locally finite subsets $S \subset M$. The topology is defined so that the natural map $\mathbb {MF}^{ext} \longrightarrow \mathbb M^{2,ext}$ that interprets a locally finite set $S$ as the atomic Radon measure $1_S$ is a homeomorphism onto its image. Finally, we let
\[\mathbb {MF}^{ext} := \{(M,\vol,p,E,S,f)\} / \sim,\]
 be the space of pointed, extended mm-spaces equipped with locally finite subsets $S$ that come weighted with functions $f : S \longrightarrow [0,1]$. We topologize $\mathbb {MF}^{ext}$ so that the natural map $\mathbb {MF}^{ext} \longrightarrow \mathbb M^{3,ext}$ is a homeomorphism onto its image; here, the three measures on the image of $(M,\vol,p,E,S,f)$ are $\vol$, the atomic Radon measure $1_S$ determined by $S$, and the atomic Radon measure $1_f$ where points $s\in S$ have mass $f(s)$ instead of unit weight. Note that the natural projection $\mathbb {MF}^{ext} \longrightarrow \mathbb {MS}^{ext}$ is continuous, and that there is also an embedding $\mathbb {MS}^{ext} \longrightarrow \mathbb {MF}^{ext}$ obtained by letting $f$ be the constant function whose values are all $1$. With this embedding in mind, we state most results below just for $ \mathbb {MF}^{ext}$, knowing that they also apply to the subspace $\mathbb {MS}^{ext}$. Finally, an \emph{$(\epsilon,R)$-relation} between two elements of $\mathbb {MS}^{ext}$, or between two elements of $\mathbb {MF}^{ext},$ is just an $(\epsilon,R)$-relation between their images in $\mathbb M^{2,ext}$, or in $\mathbb M^{3,ext}$. The topologies on $\mathbb {MS}^{ext}$ and $\mathbb {MF}^{ext}$ can then also be described via these relations, just as above.
 
 \medskip
 
 One difficulty that arises when working with $(\epsilon,R)$-relations is that you have a different pair of embeddings for each relation. In order to work with probability measures on sets of pointed mm-spaces, it is more convenient to have all our spaces be subsets of a fixed metric space. So, let $Z$ be some proper separable metric space. A pointed, extended mm-space, possibly with a distinguished discrete set and a function, is \emph{embedded in $Z$} if the extended space $E$ is a subset of $Z$, and we write the associated spaces of such spaces as
 $$\mathbb M^{ext}(Z), \ \mathbb {MS}^{ext}(Z), \ \mathbb {MF}^{ext}(Z).$$ We say that two spaces are \emph{$(\epsilon,R)$-related within $Z$} if their inclusions into $Z$ induce an $(\epsilon,R)$-relation, and we equip the spaces of spaces above with the topologies generated by $(\epsilon,R)$-relations within $Z$. In particular, we say that \emph{$\mathfrak M_i \to \mathfrak M_\infty$ within $Z$} if for any $(\epsilon,R)$ we have that for large $i$, $\mathfrak M_i,\mathfrak M_\infty$ are $(\epsilon,R)$-related within $Z$. 
 
A sequence of Radon measures $\mu_i$ on $Z$ \emph{weak* converges} to $\mu_\infty$ if $ \int f \, d\mu_i \to \int f \, d\mu_\infty$ for all continuous functions $f : Z \longrightarrow \BR$ with compact support\footnote{In this paper weak* convergence involves integrating against continuous functions with compact support, while weak convergence integrates against bounded continuous functions. Bowen uses weak* convergence in \cite{Bowencheeger} when defining Benjamini-Schramm convergence on $\mathbb M$, but it really should be weak convergence. Indeed, $\mathbb M$ is not locally compact at any point, so there are no nonzero continuous functions with compact support on $\mathbb M$.}.  When a sequence of mm-spaces with weighted subsets is embedded in a single $Z$, convergence of the weighted subsets can be interpreted as weak* convergence. 
 
 \begin{lemma}[c.f.\ Lemma A.2 of \cite{Bowencheeger}]\label{weakrelations}
Suppose that $\mathfrak M_i = (M_i,\vol_i,p_i,E_i,S_i,f_i)  \in \mathbb {MF}^{ext}(Z)$, where $i=1,2,\ldots,\infty$. Then $\mathfrak M_i \to \mathfrak M_\infty$ within $Z$ if and only if the embedded extended pointed mm-spaces $(M_i,\vol_i,p_i,E_i) \to (M_\infty,\vol_\infty,p_\infty,E_\infty)$ and the measures $1_{S_i}$ and $1_{f_i}$ converge in the weak* topology to $1_{S_\infty}$ and $1_{f_\infty}$.
 \end{lemma}

In fact, every convergent sequence in $\mathbb {MF}^{ext}$ can be embedded in some $Z$.

\begin{lemma}\label{Zspace}
Suppose that $\mathfrak M_i = (M_i,\vol_i,p_i,E_i,S_i,f_i)  \in \mathbb {MF}^{ext}$, where $i=1,2,\ldots,\infty$, and $\mathfrak M_i \to \mathfrak M_\infty$. Then there is a proper, separable metric on $$Z = \bigsqcup_{i=1,2,\ldots,\infty} E_i$$
such that $\mathfrak M_i \to \mathfrak M_\infty$ within $Z$. Furthermore, we can assume that for all $i,j$, $$d_Z(E_i,E_j) \geq 1/i + 1/j.$$\end{lemma}

Note that this lemma also applies to sequences of extended mm-spaces without weighted subsets, just by taking $S_i = \emptyset$. The proof is a modification of Lemma B.2 in \cite{Bowencheeger}.

\begin{proof}
	For each $i$, choose an $(\epsilon_i,R_i)$-relation between $\mathfrak M_i$ and $\mathfrak M_\infty$, where $\epsilon_i\to 0$ and $R_i\to \infty$. Instead of writing this relation as a pair of embeddings of $E_i,E_\infty$ into some third metric space, we can consider it as a pseudometric on the disjoint union $E_i \sqcup E_\infty$ that restricts to the original metrics on $E_i$ and $E_\infty$. We can then change each such pseudometric into a metric $d$ by adding $1/i$ to the distance between any point in $E_i$ and any point in $E_\infty$, and combine all of them into a single partially defined metric $d$ on the disjoint union
	$$Z = \bigsqcup_{i \in \BN\cup \{\infty\}} E_i,$$
	which we extend to a (fully defined) metric $d$ by setting $$d(x_i,x_j) := \inf_{x_\infty \in E_\infty} d(x_i,x_\infty)+d(x_j,x_\infty)$$
	for all finite $i,j$ and $x_i\in E_i, x_j\in E_j$. The reader can verify the desired properties.
\end{proof}

In order to talk about convergence of measures on $\mathbb {MF}^{ext}(Z)$ we will need an explicit basis of neighborhoods. Of course, one could just take the sets $\mathcal N_{\epsilon,R}(\mathfrak M)$ of all $\mathfrak M'$ that are $(\epsilon',R')$-related to $\mathfrak M$ within $Z$, but then it is a little unclear exactly what condition this places on the weighted discrete subsets. The following system of neighborhoods is more convenient in that respect.

\begin{lemma}\label{bijectionlem}
Suppose that $\mathfrak M_1 := (M_1,\vol_1,p_1,E_1,S_1,f_1)  \in \mathbb {MF}^{ext}(Z)$. For $\epsilon,R>0$, let $\mathcal B_{\epsilon,R} \subset \mathbb {MF}^{ext}$ be the set of all $\mathfrak M_2 := (M_2,\vol_2,p_2,E_2,S_2,f_2) \in \mathbb {MF}^{ext}(Z)$ such that
\begin{itemize}
	\item the pointed extended mm-spaces $(M_i,p_i,\vol_i,E_i)$ are $(\epsilon',R')$-related in $Z$, for some $\epsilon'<\epsilon$ and $R<R'$, and where
	\item there is a bijection $$ \phi : S_1 \cap B_{M_1}(p_1,R) \longrightarrow S_2 \cap B^\bullet_{M_2}(p_2,R)$$ 
	such that $d_Z(s,\phi(s))<\epsilon$ and $|f_1(s) - f_2(\phi(s))| < \epsilon$ for all $s \in S_1 \cap B_{M_1}(p_1,R)$.
\end{itemize}
Then there is a family of `admissible' pairs $(\epsilon,R)$ such that the sets $\mathcal B_{\epsilon,R}$ form a basis of open neighborhoods of  $\mathfrak M_1 \in \mathbb {MF}^{ext}(Z)$. Moreover, for every $R_0$, there is some $R>R_0$ such that $(\epsilon,R)$ is admissible for all sufficiently small $\epsilon$.
\end{lemma}

 Here, $B^\bullet_*(*,*)$ denotes the \emph{closed} ball of the given center and radius, while $B_*(*,*)$ is the open ball. A pair $(\epsilon,R)$ is \emph{admissible} if the following conditions hold:
 \begin{enumerate}
\item $d(s,t) > 3\epsilon$ for all $s,t\in S_1 \cap B(p_1, R)$, and
\item there are no points $s\in S_1$ with $d(p_1,x) \in (R-2\epsilon,R+2\epsilon).$
 \end{enumerate}
Since $S_1$ is locally finite, for any given $R$ condition (1) holds whenever $\epsilon$ is sufficiently small, and if we perturb $R$ so that there are no $s\in S_1$ with $d(p_1,s)=R$ and then shrink $\epsilon $ further, we can ensure that (2) holds as well. This justifies the last line of the lemma.

Note also that if we drop the condition on $f_1,f_2$ from $\phi$, then the above gives a description of a neighborhood basis for a point in $\mathbb {MS}^{ext}(Z)$ rather than in $\mathbb {MF}^{ext}(Z)$.

 \begin{proof}
 	Suppose $(\epsilon,R)$ is an admissible pair as defined above. Below, one should consider all relations as taken within $Z$.

 	We first want to show that $\mathcal B_{\epsilon,R}$ is open. If $\mathfrak M_2 := (M_2,\vol_2,p_2,E_2,S_2,f_2) \in \mathcal B_{\epsilon,R}$, it suffices to find some $\delta, T$ such that any $\mathfrak M_3$ that is $(\delta,T)$-related to $\mathfrak M_2$ lies in $\mathcal B_{\epsilon,R}$. So, let $E_i \hookrightarrow Z$, $i=1,2$, $\epsilon',R'$ and $\phi$ be the data witnessing that $\mathfrak M_2 \in \mathcal B_{\epsilon,R}$, and let $\delta$ be very small and $T$ be very large. Take some $\mathfrak M_3$ that is $(\delta,T)$-related to $\mathfrak M_2$.

Given $s_1 \in  S_1 \cap B_{M_1}(p_1,R)$, as long as $T$ is large we can apply the definition of a $(\delta,T)$-relation to get that $$1=|\{\phi(s_1)\} \cap S_1| < (1+\delta)| S_3 \cap (\{\phi(s_1)\})_{\delta}|+\delta.$$ As long as $\delta< 1$ this  implies that there is at least one element $s_3 \in S_3$ that is within $\delta$ of $\phi(s_1)$. Since $d(s_1,\phi(s_1))<\epsilon$, we then have $d(s_1,s_3)<\epsilon$ as well as long as $\delta$ is small. (The set of all $s_1$ is finite, so $\delta$ can be chosen small enough that this works for all $s_1$ simultaneously.) Now if we had two elements $s_3,s_3' \in S_3$ within $\epsilon$ of $s_1$, we have $2= |S_3 \cap \{s_3,s_3'\}| < (1+\delta) |S_2 \cap (\{s_3,s_3'\})_{\delta}| + \delta$, so there are at least two elements $s_2,s_2'$ of $S_2$ within $\delta+\epsilon$ of $s_1$. As long as $\delta $ is small, property (2) in the definition of admissibility implies that these two points lie in the image of $\phi$, so $\phi^{-1}(s_2),\phi^{-1}(s_2')$ both lie within $\delta + 2\epsilon$ of $s_1$, contradicting property (1) of admissibility. So, if we let $\psi(s_1)$ be the unique element of $S_3$ with $d(s_1,\phi(s_3))<\epsilon$, we get a map $$\psi : S_1 \cap B_{M_1}(p_1,R) \longrightarrow S_3$$ such that $d_Z(s_1,\psi(s_1))<\epsilon$ for all $s\in S_1 \cap B_{M_1}(p_1,R)$. By property (2) of admissibility, the image of $\psi$ lies in $S_3 \cap B_{M_3}(p_3,R)$.   The argument above shows that $\phi$ is an injection. And if $s_3 \in B^\bullet_{M_{3}}(p_{3},R)$, we have that $$(1+\delta)|S_2 \cap (\{s_3\})_{\delta}|+\delta \geq |S_3 \cap \{s_3\}|=1,$$ implying there's some $s_2\in S_2$ within a distance of $\delta$ of $s_3$. If $\delta$ is very small relative to the minimum distance from an element of $S_2 \setminus B^\bullet_{M_2}(p_2,R)$ to $B^\bullet_{M_2}(p_2,R)$, we can assume that this $s_2 \in S_2 \cap B^\bullet_{M_2}(p_2,R)$, so that $s_2 = \phi(s_1)$ for some $s_1$. Taking $\delta$ small again, we have $d_Z(s_3,s_1)<\epsilon$, so $s_3$ is in the image of $\psi$ as desired. This proves $\psi$ is a bijection. The fact that $|f_1(s_1) - f_2(\psi(s_1))|<\epsilon$ if $\delta$ is small follows from similar techniques. This verifies that $\mathfrak M_3 \in \mathcal B_{\epsilon,R}$, so the set $\mathcal B_{\epsilon,R}$ is open. Note also that condition (2) in the definition of admissibility implies that $\mathfrak M_1 \in \mathcal B_{\epsilon,R}$, so $\mathcal B_{\epsilon,R}$ is an open neighborhood as required.

 Next, we need to show that the sets $\mathcal B_{\epsilon,R}$ with $(\epsilon,R)$ admissible form a neighborhood basis for $\mathfrak M_1 $. For this, it suffices to fix $(\delta,T)$ and show that for sufficiently small $\epsilon$ and large $R$, any $\mathfrak M_2 \in \mathcal B_{\epsilon,R}$ is $(\delta,T)$-related to $\mathfrak M_1$. By choosing $\epsilon< \delta$ and $T<R$, we get automatically that the embeddings $E_i \hookrightarrow Z$ that verify that $\mathfrak M_2 \in \mathcal B_{\epsilon,R}$ induce $(\delta,T)$-relations of the corresponding pointed extended mm-spaces. If $F \subset B_{M_1}(p_1,T)$ is Borel, then for any $s_1\in F \cap S_1$ we have $\phi(s_1) \in (F)_\delta \cap S_2$ and hence 
 $$|S_1 \cap F| \leq |S_2 \cap (F)_\delta|.$$

 Moreover, as long as $\epsilon < \delta/|S_1 \cap B_{M_1}(p_1,T)|,$ we have
 $$\sum_{s_1 \in S_1 \cap F} (f_1(s_1) - \epsilon) < \sum_{s_2 \in S_2 \cap (F)_\delta} f_2(s_2) \implies \sum_{s_1 \in S_1 \cap F} f_1(s_1) < \sum_{s_2 \in S_2 \cap (F)_\delta} f_2(s_2) + \delta. $$
The two inequalities associated to a subset $F \subset B_{M_2}(p_2,T)$ are proved similarly, using $\phi^{-1}$ instead of $\phi$, so we have a $(\delta,T)$-relation between $\mathfrak M_1$ and $\mathfrak M_2$.
 \end{proof}

\subsubsection{Poisson processes on mm-spaces}

The reason we introduce so many spaces of spaces above is that we need to make precise the notion that the Poisson process on a pointed mm-space varies continuously with the space. 

Let $(M,\vol) $ be a mm-space and let $\mathcal S$ be the set of all locally finite subsets of $M$. Regarding a locally finite subset $S$ as an atomic Radon measure $\mu_S$ on $M$, we endow $\mathcal S$ with the weak* topology, where measures are tested against continuous functions with compact support, as discussed before Lemma \ref{weakrelations}. The \emph {Poisson process of $M $} (of intensity $1$) is the unique Borel probability measure $\rho_M$ on $\mathcal S$ such that the following hold.
\begin {itemize}
\item When  $A_1, \ldots, A_n$ are disjoint Borel subsets of $M $, the random variables that record the sizes of the intersections $S\cap A_i$ are independent.
\item If $A \subset M$ is Borel, the size of $S \cap A$ is a random variable having a Poisson distribution with expectation $\vol(A)$.
\end {itemize}
For a finite volume subset $A\subset M$ and $n\in \mathbb{N}$, we have 
\begin {equation} \label {jnnos}  \mathrm {Prob}\left (\substack{\text {for } (x_1,\ldots,x_n)\in A^n, \text{ we have } D\cap A =\{x_1,\ldots,x_n\}, \\  \text { given  that } D\cap A \text { has } n \text { elements.}}\right)=d\vol^n(x_1,\ldots,x_n).\end {equation}
In other words, if $D$ is chosen randomly, the elements of $D\cap A$ are distributed within $A$ independently according to $\vol $. 
See  \cite[Example 7.1(a)]{Daleyintroduction} for details on Poisson processes in $\RR^n$. The general case is similar. In fact, every mm-space is measure-isomorphic modulo null sets to the union of an interval in $\RR$ with a countable set of atoms, c.f.\ \cite{rokhlin1949fundamental}, so as the definition of the Poisson process is totally measure theoretic, most analyses of it can be performed on the latter space.

Suppose now that $\mathfrak M = (M,\vol,p,E) $ is a pointed, extended mm-space. Push forward the Poisson process on $M$ to a measure $\rho_{\mathfrak M}$ on $\mathbb {MS}^{ext}$, using the map \begin{equation}\label{pmap}
	\mathcal S \longrightarrow \mathbb {MS}^{ext}, \ \  S \subset M \longmapsto (M,\vol,p,E,S).
	\end{equation}
Note that the map in \eqref{pmap} is continuous: if $S $ is weakly close to $ S'$, the identity inclusions $E \hookrightarrow E$ generate an $(\epsilon,R)$-relation between $(M,\vol,p,E,S)$ and $(M,\vol,p,E,S')$.

The following is the main result of this subsection. A variant of it is claimed, but not proved, in the proof of Claim 1 on pg 582 in Bowen \cite{Bowencheeger}.

\begin{lemma}[Poisson processes vary continuously with the mm-space]\label{poissonlemma}
	The map $$\mathbb M^{ext} \longrightarrow \mathcal P(\mathbb {MS}^{ext}), \ \ \mathfrak M \longmapsto \rho_{\mathfrak M}$$ is continuous.
\end{lemma}

\begin{proof}
Suppose that we have $\mathfrak M_i = (M_i,\vol_i,p_i,E_i) \in \mathbb M^{ext}$ and $\mathfrak M_i \to \mathfrak M_\infty$. By Lemma \ref{Zspace}, we can assume that all $E_i$ are embedded in some fixed $Z$, and that the convergence happens within $Z$. Let $\mathcal S(Z)$ be the set of all $S \subset Z$ that are locally finite subsets of $M_i$ for some $i=i(S)$; endow $\mathcal S(Z)$ with the weak* topology. Then for each $i$, the Poisson process on $M_i$ can be considered as a probability measure $\rho_i $ on $\mathcal S(Z)$. By Lemma \ref{weakrelations}, the map \begin{equation}\label{zsmap}
	\mathcal S(Z) \longrightarrow \mathbb {MS}^{ext}, \ S \longmapsto (M_{i(S)},\vol_{i(S)},p_{i(S)},E_{i(S)},S_{i(S)}), 
\end{equation}
is continuous, and each $\rho_i$ pushes forward under this map to the measure $\rho_{\mathfrak M_i}$ on $\mathbb {MS}^{ext}$. So, to prove the lemma it suffices to show that $\rho_i\to \rho_\infty$ weakly.

Let $T \in \mathcal S(Z)$ with $T \subset M_\infty$, let $\epsilon,R>0$ and let $\mathcal B_{\epsilon,R}(T)$ be the set of all $S\in \mathcal S(Z)$ such that there is a bijection $$ f : T \cap B_{M_\infty}(p_\infty,R) \longrightarrow S \cap B^\bullet_{M_{i(S)}}(p_{i(S)},R)$$
such that $d(t,f(t))<\epsilon$ for all $t$. Lemmas \ref{Zspace} and \ref{bijectionlem} imply that for admissible pairs $(\epsilon,R)$, the sets $\mathcal B_{\epsilon,R}(T)$ form a basis of neighborhoods for $T \in \mathcal S(Z)$. So by the Portmanteau theorem and the fact that $\rho_\infty$ is supported on subsets of $M_\infty$, it suffices to show that 
\begin{equation}\liminf_i \rho_i(\mathcal B) \geq \rho_\infty(\mathcal B)\label{portman}\end{equation}
for all $\mathcal B := \mathcal B_{\epsilon,R}(T)$, where $T,\epsilon,R$ are as  above. 

Fixing some such $\mathcal B$, let $t \in T \cap B_{M_\infty}(p_\infty,R)$ and define $$V_i(t) :=  \vol_i(B_{Z}(t,\epsilon)).$$ By the definition of the Poisson process  and the fact that the points of $T \cap B_{M_\infty}(p_\infty,R)$ are $3\epsilon$-separated when $(\epsilon,R)$ is an admissible pair, we have \begin{align}\label{firstpart}\rho_i(\mathcal B) \ &= \left (\prod_{t } V_i(t) e^{-V_i(t)} \right )\cdot e^{- \big (\vol_i(B^\bullet_{M_i}(p_i,R)) - \sum_t V_i(t)\big ) }\\ &=  \left (\prod_{t } V_i(t) \right )\cdot e^{- \vol_i(B^\bullet_{M_i}(p_i,R))},\nonumber
\end{align}
where $t \in T \cap B_{M_\infty}(p_\infty,R)$ and $V_i(t) = \vol_i(B_{Z}(t,\epsilon) \cap M_i)$. For a $\rho_i$-random $S$, the product in the first line of \eqref{firstpart} is the probability that there is exactly one point of $S$ within $\epsilon$ of each $t$, and the second factor is the probability that there are no points of $S \cap B^\bullet_{M_i}(p_i,R)$ other than those within $\epsilon$ of the various $t$.

Recall that the inclusions of $E_i$ and $E_\infty$ into $Z$ form an $(\epsilon_i,R_i)$ relation where $\epsilon_i\to 0$ and $R_i\to \infty$. 
Pick any $0< \epsilon'<\epsilon$ and apply property (c) in the definition of an $(\epsilon_i,R_i)$-relation to $B_{M_\infty}(t,\epsilon')$. Then if $i$ is large enough so that $\epsilon'+\epsilon_i< \epsilon$, we have for all $t$ that
\begin{align*}
\vol_\infty({B_{M_\infty}(t,\epsilon')}) < (1+\epsilon_i)\vol_i( \, ({B_{M_\infty}(t,\epsilon')})_{\epsilon_i} \, ) + \epsilon_i  < (1+\epsilon_i)\vol_i( B_{M_\infty}(t,\epsilon) ) + \epsilon_i 
\end{align*}
By taking  $\epsilon'$ close enough to $\epsilon$, we can make $\vol_\infty({B_{M_\infty}(t,\epsilon')})$ arbitrarily close to $\vol_\infty({B_{M_\infty}(t,\epsilon)}$. Combining this with the fact that $\epsilon_i\to 0$, we get that 
\begin{equation}\label{liminf!}
\vol_\infty({B_{Z}(t,\epsilon)}) =  \vol_\infty({B_{M_\infty}(t,\epsilon)}) \leq \liminf_i \vol_i( B_{Z}(t,\epsilon) ) .
\end{equation}
We now apply property (c) in the definition of an $(\epsilon_i,R_i)$-relation to $B^\bullet_{M_{i}}(p_i,R)$, giving
$$\vol_i(B^\bullet_{M_{i}}(p_i,R)) < (1+\epsilon_i) \vol_\infty((B^\bullet_{M_{i}}(p_i,R))_{\epsilon_i}) + \epsilon_i .$$
But since $d(p_i,p_\infty)<\epsilon_i$, we have $(B^\bullet_{M_{i}}(p_i,R))_{\epsilon_i} \cap M_\infty \subset B_{M_{\infty}}(p_\infty,R+2\epsilon_i),$
which implies 
$$\vol_i(B^\bullet_{M_{i}}(p_i,R)) < (1+\epsilon_i) \vol_\infty(B_{M_{\infty}}(p_\infty,R+2\epsilon_i)) + \epsilon_i .$$
As $i\to \infty$, the right hand side converges to $\vol_\infty(B^\bullet_{M_{\infty}}(p_\infty,R))$, so we get
\begin{equation}\label{limsup!}
\limsup_i \vol_i(B^\bullet_{M_{i}}(p_i,R)) \leq \vol_\infty(B^\bullet_{M_{\infty}}(p_\infty,R)).
\end{equation}
Combining \eqref{liminf!} and \eqref{limsup!} proves \eqref{portman}, so we are done.
\end{proof}

\subsection{Normalized Betti numbers of mm-spaces} \label{normbmm} If $(M,\vol)$ is a finite volume mm-space, let  $\mu_{(M,\vol)}$  be the measure on  $\mathbb M$  obtained by pushing forward  the $\vol$ under $$M \longrightarrow \mathbb M, \ p \longmapsto (M,\vol,p).$$
A sequence of finite volume mm-spaces $(M_n,\vol_n)$ \emph {Benjamini-Schramm (BS) converges} if the  associated  sequence of probability measures $\mu_{(M_n,\vol_n)}/\vol_n(M_n)$ weakly converges to some  limit probability measure on $\mathbb M$.

An mm-space $M$ is \emph {special} if $M$ has finitely many path components\footnote{Bowen requires $M$ to be path connected in his definition of special, but finitely many components suffices everywhere below.}, the measure $\vol$ is non-atomic and fully supported, and metric spheres have measure zero. In \cite{Bowencheeger}, Bowen claims the following result, and justifies it by fleshing out an argument of Elek.

\begin{hypothesis}[Compare {\cite[Theorem 4.1]{Bowencheeger}}]\label{Bowennnn}
	Suppose $(M_n,\vol_n)$  is a BS-convergent sequence of finite volume special mm-spaces  and that there are constants $r,v_0,v_1$  such that
\begin{enumerate}
	\item all $r/2$-balls in $M_n$ have volume at least $v_0$, 
\item all $20r$-balls have volume at most $v_1$,
\item all $\rho$-balls in $M_n$ with $\rho<10r$ are \emph {strongly convex},  meaning that for any two points $x,y$ in a $\rho$-ball $B$, there is a unique point $z\in B$ with $d(x,z)=d(y,z)=1/2d(x,y).$
\end{enumerate}
 Then the normalized Betti numbers $b_k(M_n)/\vol(M_n)$  converge  for all $k$.
\end{hypothesis}

  As mentioned in the introduction, Bowen's proof of the Theorem \ref{Bowennnn} is not quite complete. Briefly, the Elek/Bowen argument is to construct, for each $n$, a random $\epsilon$-net $S_n\subset M_n$, i.e.\ a set of point that are $\epsilon/2$-separated in $M_n$ and where every point in $M_n$ is within $\epsilon$ of a point of $S_n$. Letting $N_n$  be the nerve complex associated to the cover of $M_n$ by $\epsilon$-balls centered at the points of $S_n$, they then say that the random complexes $N_n$ BS-converge, and then they use Elek's Theorem \ref{elek} to conclude that the expected normalized Betti numbers of the $N_n$ converge. By the strong convexity in   condition (3) above and the Nerve Lemma (c.f.\ \cite[Corollary 4G.3]{Hatcheralgebraic}), each $N_n$  is homotopy equivalent to $M_n$, so $b_k(N_n)=b_k(M_n)$. One can also relate the number of vertices of $N_n$ to the volume of $M_n$, so this implies the convergence of the normalized Betti numbers of $M_n$.

 Above, the random nets $S_n$ are constructed  as subsets of the union of infinitely many randomly chosen discrete subsets of $M_n$, each of which is chosen according to a \emph{Poisson process}.  In order to ensure separation of the net, Elek/Bowen enumerate all the discrete subsets and their points, and add them into $S_n$ one by one, throwing out the points that are too close to the previously added points.  The problem with this is that it is very hard to prove that such random nets vary continuously when the underlying space is changed, which is essential for BS-convergence of the associated nerve complexes. In \cite{Bowencheeger}, this issue is not really addressed. The construction of these subsets is the content of Lemma 4.2 of \cite{Bowencheeger}, and the last line of the proof (see the end of the first paragraph of pg 584) seems to indicate that continuity of the $\epsilon$-nets follows immediately from continuity of the `almost nets' one would obtain by superimposing only a fixed number of Poisson processes,  instead of infinitely many of them. However, this is not true; it is like saying that the limit of continuous functions is always continuous. The question of whether the Elek/Bowen random nets do vary continuously with the underlying mm-space seems quite subtle in general, and while we do not have a counterexample, we think that a proof of this would be more difficult than the alternative approach we take in the current paper.

Below, we will prove a slightly more (and less) general result, Theorem \ref{Bowen2}. While it does not strictly imply Theorem \ref{Bowennnn}, it can be used in all Bowen's applications.  The proof essentially follows the Elek/Bowen argument, but we get around the continuity issue by only superimposing a fixed finite number of Poisson processes, creating an $\epsilon/2$-separated `almost net' $S_n\subset M_n$. While $S_n$ may not be a net, we show that it can be completed to a net using a small number of points, so the Betti numbers of the associated nerve complex still approximates that of  $M_n$, allowing us to run the rest of the Elek/Bowen argument.

To motivate the  statement of the more general result,  look again at the  statement of Theorem \ref{Bowennnn}. Condition (3)  is only used to say that the nerve complex is homotopy  equivalent to $M_n$, so we should be able to  state a version of Theorem \ref{Bowennnn} in which (3) is omitted, if we talk about the Betti numbers of the nerve complexes  directly instead of the Betti numbers of the $M_n$.  Next, to make a result that is compatible with the machinery of Gelander described in \S \ref{thickapproxsec}, it is also important for us to take nets in the $M_n$, but  construct the corresponding nerves using balls in larger spaces $E_n$.  In other words, we need to work with the \emph {extended mm-spaces} of \S \ref {mmsec}.  %Finally, we will need to work with {convex combinations} of mm-spaces instead of just with mm-spaces themselves, since in \S \ref{pncsec} it is easier to get a sequence of convex combinations to weakly converge.

 To that end, we say that an  extended mm-space $\mathfrak M = (M,\vol,E)$ is \emph {finite volume} or \emph{special} if the mm-space $M$ is. When $\mathfrak M$  has finite volume,  we can construct a finite measure $\mu_{\mathfrak M}$ on $\mathbb M^{ext}$ by pushing forward $\vol$ under the map $$p\in M \mapsto (M,\vol, p,E).$$
If $\mathfrak M_n=(M_n,\vol_n,E_n)$ is a sequence of  extended mm-spaces, then we say that $(\mathfrak M_n)$ \emph{BS-converges} if the sequence of measures $\mu_{\mathfrak M_n}/\vol_n(M_n)$ weakly converges.

We define an \emph{$(r_0,r_1)$-net} in $\mathfrak M$ to be a subset  $S \subset M$  such that 
\begin{enumerate}
\item $S$ is \emph{$r_0$-separated,} i.e.\ $d(x,y)> r_0$  for all $x\neq y\in S$,
\item  $S$ \emph{$r_1$-covers} $M$, i.e.\  for every $p\in M$,  there is some $x\in S$ with $d(p,x)<r_1,$
\end{enumerate}
and an \emph{$[r_2,r_3]$-weighted $(r_0,r_1)$-net} is a $(r_0,r_1)$-net $S$ with a function $$\rho : S \longrightarrow [r_2,r_3],$$ where here $r_0 < r_1 \leq r_2 < r_3.$
Given any weighted net $(S,\rho)$ in $\mathfrak M$, we let $N_{E}(S,\rho)$ be the nerve complex associated to the collection of $E$-balls $B_{E}(x,\rho(x)),$ where $x\in S$. 

% Finally, a \emph{convex combination} of  extended mm-spaces is a finite formal sum $$\sum_i t^i \cdot \mathfrak M^i, \ \text{ where } t^i \in [0,1] \ \forall i, \ \ \sum_i t^i =1.$$
%We say that  a sequence $\sum_i t^i_n \cdot \mathfrak M^i_n$  of such combinations \emph {weakly  converges} if the   measures \begin{equation}\label {quoooootient!}
% 	\frac{\sum_i t^i_n \cdot \mu_{\mathfrak M_n^i}}{\sum_i t^i_n \cdot \vol_n^i(M_n^i)}
% \end{equation}
% weakly converge. Note that there are always finitely many indices $i$  for each fixed $n$, but this number may be unbounded as $n\to \infty$. In \eqref{quoooootient!}, note that we are dividing by the total weighted sum of the volumes, rather than taking the weighted sum of the quotients $\mu_{\mathfrak M_n^i} / \vol_n^i(M_n^i)$. This makes sense: if the $t^i_n$ are integers, we can view each $\sum_i t^i_n \cdot \mathfrak M^i_n$ as a disconnected space containing $t^i_n$ copies of each $\mathfrak M^i_n$, and  \eqref{quoooootient!} is the  natural generalization of the probability measure on $\mathbb M^{ext}$ associated to a connected extended mm-space.
%
% 
\begin{theorem}\label{Bowen2}Fix $k$, let $\mathfrak M_n=(M_n,\vol_n,E_n)$  be a BS-convergent sequence of extended finite volume special mm-spaces and suppose we have constants $v_{min}>0$, $r_1 >r_0 >0$, and  $r_3 >r_2 \geq 2r_1, $ a function $v_{max} : \BR_+ \longrightarrow \BR_+$  such that 
\begin{enumerate}
	\item all $r_0/2$-balls in every $M_n $ have volume at least $v_{min}$, 
\item for all $r \in \BR_+$, every $r$-ball in $M_n$ has volume at most $v_{max}(r)$.
\end{enumerate}
Now suppose that we have a sequence $B_n$ of positive numbers such that
\begin{enumerate}
\item[(3)] for any sequence of  $[r_2,r_3]$-weighted $(r_0,2r_1)$-nets $(S_n,\rho_n)$ in $ M_n$, $$\frac{\big | b_k(N_{E_n}(S_n,\rho_n)) - B_n \big | }{\vol_n(M_n)}\to 0.$$
\end{enumerate}
Then the ratios $B_n/\vol_n(M_n)$  converge.
\end{theorem}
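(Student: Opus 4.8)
The plan is to reduce Theorem \ref{Bowen2} to a random-sampling version of Elek's theorem (the generalization of Theorem \ref{elek} needed here) applied to the nerve complexes, and then to use hypothesis (3) to transfer convergence from the nerves to the numbers $B_n$. First I would build, for each $n$, a \emph{random} $[r_2,r_3]$-weighted $(r_0,2r_1)$-net $(S_n,\rho_n)$ in $M_n$ by a Poisson-type construction: superimpose independent Poisson point processes on $M_n$ at an appropriate intensity, then greedily thin the resulting point set to make it $r_0$-separated while keeping it $2r_1$-covering (this is possible by hypotheses (1)--(2), which give two-sided volume control on balls, exactly as in the Bowen--Elek argument sketched after Theorem \ref{bowenthm}), and assign to each surviving point an i.i.d.\ weight $\rho_n(x)\in[r_2,r_3]$ drawn from a fixed continuous distribution. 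The point of randomization is that the nerve complex $N_{E_n}(S_n,\rho_n)$ together with the distinguished measure (counting measure on $S_n$, suitably normalized) gives, in expectation, a probability measure on the space $\mathcal K$ of pointed bounded-degree complexes, and the Poisson construction is ``local'' enough that BS-convergence of the extended mm-spaces $\mathfrak M_n$ forces BS-convergence of these random nerve complexes. The bounded-degree property uses hypotheses (1)--(2) again, since the number of net points whose $E_n$-balls meet a given $E_n$-ball is bounded in terms of $v_{\min}$ and $v_{\max}(r_3)$.

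The second step is to invoke the appropriate generalization of Theorem \ref{elek}: for a BS-convergent sequence of finite bounded-degree complexes, the normalized $k$-th Betti numbers converge; applying this to the random nerves and taking expectations, $\mathbb{E}\bigl[b_k(N_{E_n}(S_n,\rho_n))\bigr]/|S_n|$ converges, and since $|S_n|$ is comparable to $\vol_n(M_n)$ (again by (1)--(2)), the normalized expectations $\mathbb{E}\bigl[b_k(N_{E_n}(S_n,\rho_n))\bigr]/\vol_n(M_n)$ converge to some limit $\ell$. Now hypothesis (3) enters: it says that for \emph{every} such weighted net, hence in particular for every realization of our random net, $\bigl|b_k(N_{E_n}(S_n,\rho_n))-B_n\bigr|/\vol_n(M_n)\to 0$. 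Averaging over the randomness (the quantities are uniformly bounded, e.g.\ by the total number of $k$-simplices, which is $O(\vol_n(M_n))$, so dominated convergence applies) gives $\bigl|\mathbb{E}[b_k(N_{E_n}(S_n,\rho_n))]-B_n\bigr|/\vol_n(M_n)\to 0$. Combining with the convergence of the normalized expectations yields $B_n/\vol_n(M_n)\to\ell$, which is the conclusion.

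I should be slightly careful about one subtlety: hypothesis (3) quantifies over $(r_0,2r_1)$-nets while the Poisson construction most naturally produces a net with some a priori separation and covering radius, so I need to make sure the thinned random point set genuinely satisfies the stated separation $r_0$ and covering radius $2r_1$ with probability one (or, if it only covers with radius slightly larger, to choose the Poisson intensity and the target parameters so that the final object lies in the class to which (3) applies). This is a standard but fiddly point. I also need the random nerve construction to be set up so that the limiting object is genuinely a probability measure on $\mathcal K$ — i.e.\ the construction must be equivariant/local with respect to the $(\epsilon,R)$-relations defining the topology on $\mathbb{M}^{ext}$, which is why we work with extended mm-spaces and build the nerves from $E_n$-balls rather than $M_n$-balls.

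The main obstacle is the first step: proving that BS-convergence of the $\mathfrak M_n$ passes to BS-convergence of the random nerve complexes. This requires showing that the law of the pointed $R$-ball in $N_{E_n}(S_n,\rho_n)$ around a uniformly random net point depends continuously (in an appropriate quantitative sense, uniformly in $n$) only on the $(\epsilon,R')$-relation type of a bounded neighborhood in $\mathfrak M_n$ — i.e.\ that the Poisson-and-thin construction is a measurable local factor of the geometry. This is essentially the content that Bowen attributes to Elek and works out in \cite{Bowencheeger}; the error we are fixing is precisely in how the conclusion about the $M_n$ (rather than the nerves) is extracted at the very end, which is why formulating the theorem in terms of $b_k$ of the nerves — and letting hypothesis (3) do the final transfer — circumvents it cleanly. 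The remaining steps (bounded degree, comparability $|S_n|\sim\vol_n(M_n)$, dominated convergence) are routine consequences of the two-sided volume bounds (1)--(2).
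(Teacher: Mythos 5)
There is a genuine gap, and it is exactly the one this theorem exists to repair. Your step ``applying [the random-complex version of Elek's theorem] to the random nerves and taking expectations, $\mathbb{E}[b_k(N_{E_n}(S_n,\rho_n))]/|S_n|$ converges'' is not what that theorem gives you. To apply Elek's result one must package the random nerves as a weakly convergent sequence of probability measures on $\mathcal K$ in which every basepoint of a complex is equally likely; the only way to do this is to condition the random basepoint $p\in M_n$ to land in a fixed-volume ball around a net point. That conditioning size-biases the law of $S_n$ by the factor $|S_n|$, so what Elek's theorem actually shows converges is
$$\frac{E\big[b_k(N_{E_n}(S_n,\rho_n))\cdot |S_n|\big]}{E\big[|S_n|^2\big]},$$
not $E[b_k(N)]/E[|S_n|]$. (This is precisely Bowen's error; it has nothing to do with passing from nerves to the $M_n$, so reformulating in terms of nerve Betti numbers does not circumvent it, contrary to your last paragraph.) Feeding hypothesis (3) into the numerator then leaves you needing $E[|S_n|^2]/E[|S_n|]^2\to 1$, i.e.\ a concentration estimate for $|S_n|$, before you can extract convergence of $B_n/\vol_n(M_n)$. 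Nothing in your proposal supplies this.

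That second-moment estimate is the hard part and dictates the structure of the actual proof. A construction that produces a genuine $(r_0,2r_1)$-net almost surely (Bowen's infinitely many Poisson rounds, or your ``greedy thinning'') is not local enough to control $\mathrm{Var}[|S_n|]$. The fix is to run only finitely many Poisson rounds, producing an $r_0$-separated ``almost-net'' $S^{\leq j}$ whose inclusion rule depends only on an $r_1$-neighborhood of each candidate point; this yields independence of $S^{\leq j}\cap A$ and $S^{\leq j}\cap B$ for well-separated $A,B$, hence $\mathrm{Var}[|S^{\leq j}|]=O(\vol_n(M_n))$ and the needed ratio $E[|S_n|^2]/E[|S_n|]^2\to 1$. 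The price is that $S^{\leq j}$ need not cover, so hypothesis (3) does not apply to it directly; one must separately show the expected number of points needed to complete it to a true $(r_0,2r_1)$-net is at most $\epsilon\cdot\vol_n(M_n)$, and then use Mayer--Vietoris to see that this completion changes $b_k$ of the nerve by only $O(\epsilon\cdot\vol_n(M_n))$. Your proposal, by insisting on genuine nets so that (3) applies verbatim, forecloses the route that makes the variance controllable.
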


In our two applications, Theorem \ref{pnc} and Theorem \ref{npc}, the numbers $B_n$ will be the Betti numbers $b_k(M_n)$ and the Betti numbers $b_k(E_n)$, respectively. We state it as above to have a single unified statement that applies in both situations. Note that when applying Theorem~\ref{Bowen2}, one has to show that the Betti numbers of the nerve complexes associated to \emph{all} nets in (3) are approximated by a single sequence $B_n$. This usually requires an argument that goes through the Nerve Lemma at some point.

Given a sequence $(M_n,\vol_n)$ of finite volume special mm-spaces, we can apply Theorem~\ref{Bowen2} to the extended mm-spaces $(M_n,\vol_n,M_n)$, with $B_n = b_k(M_n)$, to get a slightly weaker version of Theorem~\ref{Bowennnn}. The difference is that hypothesis (2) in Theorem~\ref{Bowen2} is formally stronger than it is in Theorem~\ref{Bowennnn}, but in basically all applications, upper bounds on ball volumes come from curvature lower bounds, which imply both versions of (2). Note, however, that by the Nerve Lemma the nerve of any covering of $M_n$ by strongly convex balls is homotopy equivalent to $M_n$, so condition (3) in Theorem \ref{Bowennnn} implies condition (3) in Theorem \ref{Bowen2} with $B_n = b_k(M_n)$, after adjusting the constants appropriately.

\medskip

  %Most of the proof generally follows Bowen's  outline for \cite[Theorem 4.1]{Bowencheeger}.  However, fixing the significant error requires restructuring  the entire argument, so we will have to go through basically every step in his proof.  When possible, though, we will just cite lemmas from his paper instead of  repeating their proofs verbatim.  See  Remark \ref{error}  at the end of the section for a detailed explanation of the error in Bowen's paper, and the differences between our argument and his.

 Before starting the proof, we also record two brief lemmas. First,  as mentioned above Elek's Theorem \ref{elek} is crucial in the proof below. Here is a formal consequence of his result with a more general  sounding statement. 

\begin{lemma}\label {elekimproved}
Suppose that for $n=1,2,\ldots$, we have  a probability measure $\eta_n$  on the space of pointed complexes $\mathcal K$  that is of the form $$\eta_n = \frac{\sum_{m=1}^{M_n} t_{n,m} \mu_{X_{n,m}} }{\sum_{m=1}^{M_n} t_{n,m} \vol(X_{n,m})},$$
where $X_{n,m}$ are finite complexes with  universally bounded degree, and $\mu_{X_{n,m}}$ is the measure on $\mathcal K$ obtained by pushing forward the counting measure on the vertex set of $X_{n,m}$, as in \S \ref{intro}.  Then if  the measures $\eta_n$ weakly converge, the ratios $$\frac{\sum_{m=1}^{M_n} t_{n,m} b_k(X_{n,m})}{\sum_{m=1}^{M_n} t_{n,m} \vol(X_{n,m})},$$ converge for all $k$.
\end{lemma}

 The reader can compare this with Lemma 2.2 in Bowen \cite{Bowencheeger}, although that lemma is incorrectly stated\footnote{In Lemma 2.2 of \cite{Bowencheeger}, Bowen sets $\eta_i =\sum t_{i,j} \mu_{K_{i,j}}$, where in his paper the $\mu_{K_{i,j}}$ are the \emph{normalized probability measures} associated to the $K_{i,j}$, but in order to get his conclusion you need to normalize to a probability measure \emph{after} taking the convex combination, like we do in our lemma.  Also, it is worth noting that in his proof of Lemma 2.2, Bowen wedges complexes together in order to create a connected complex, but as Elek's theorem actually applies to disconnected complexes, he  could have just taken the disjoint union instead of the wedge, which makes his hypothesis on the sizes of the complexes unnecessary.}.  %Informally, one should consider Lemma 2.6 a version of Elek's Theorem for random (unpointed) complexes. That is, if we have some random way of constructing finite (unpointed) complexes, and a sequence of such random complexes BS-converges, then the ratios of the expected Betti numbers to the expected volumes converge as well.

\begin{proof}
 As in Bowen's proof of \cite[Lemma 2.2]{Bowencheeger},  it suffices to prove the lemma when the coefficients $t_{n,m}$ are rational, so we can assume we have integers $D_n$ such that $D_n t_{n,m} \in \BN$ for all $n,m.$ Then we can create a complex $Y_n $ by taking the disjoint union of $D_n t_{n,m}$ copies of each $X_{n,m}$. Since $\mu_{Y_n}/\vol(Y_n) = \eta_n$, these $Y_n$ BS-converge, so by Elek's Theorem \ref{elek} their  normalized Betti numbers converge, and the conclusion of the lemma follows. 
 %Note that here it is important that Elek's Theorem holds for disconnected complexes, which is why we wrote it that way in the introduction.
\end{proof}

Note that in the argument above it is important that Elek's Theorem holds for disconnected complexes, which is why we wrote it that way in the introduction.

%
%\marginpar{\tiny FYI, the statement of Lemma 2 in the lemma.tex file  isn't quite right. (This Lemma replaces it.) Consider for instance $A=\cup_{i=1}^n [i-1/n,i+1/n]$, $n>>0$. Then $\vol(A)=2$,  but for any fixed $c$, for large $n$  there are no points of $A$  with $\vol(B_r(x)\cap A) \geq c$.  One problem is that when you are computing $\nu(W)$, you say $\lambda(B_{r/2}(x) \cap A) \geq \lambda(B_{r/2}(x))$,  but it's the other way around.
%}

 Finally, we record the following elementary measure theoretic lemma.

\begin{lemma}
Suppose that $(M,\vol)$ is a mm-space and that every ball in $M$ with radius in the interval $[r_0/2,r_0]$ has  volume between $v_{min}$ and $v_{max}$. Set $c=v_{min}^2/(2v_{max}),$ $c' =   v_{min}^2/(2v_{max}^2)$. Then for every measurable  subset $A\subseteq M$, we have that
\[
\vol \big (\left\{  x\in A\  \Big | \  \vol(B_{r_0}(x)\cap A)\geq c \cdot \frac{\vol(A)}{\vol(M)} \right\}  \big ) \geq c' \cdot \frac{\vol(A)}{\vol(M)} \cdot \vol(A).%
\]
\label {lemlem}
\end{lemma}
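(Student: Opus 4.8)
The plan is to prove this by a double-counting / averaging argument, estimating the $\vol \times \vol$ measure of the set
\[
P = \{ (x,y) \in A \times A \ : \ d(x,y) \leq r_0 \}
\]
in two different ways. On one hand, by Fubini, $(\vol \times \vol)(P) = \int_A \vol(B_{r_0}(x) \cap A)\, d\vol(x)$. On the other hand, I want a \emph{lower} bound on $(\vol\times\vol)(P)$, which I would get from a covering argument: take a maximal $r_0/2$-separated subset $\{z_i\}$ of $A$ (or of the points of $A$ realizing positive local density), so that the balls $B_{r_0/2}(z_i)$ cover $A$. Then $\vol(A) \leq \sum_i \vol(B_{r_0/2}(z_i))$, but each such ball has volume at most $v_{max}$ by hypothesis, so there are at least $\vol(A)/v_{max}$ indices $i$ for which $B_{r_0/2}(z_i)$ meets $A$ in positive measure; in fact $\sum_i \vol(B_{r_0/2}(z_i) \cap A) \geq \vol(A)$ since the half-balls cover $A$. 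For each such $i$, if $x,y \in B_{r_0/2}(z_i) \cap A$ then $d(x,y) < r_0$, so $B_{r_0/2}(z_i)\cap A$, squared, lands inside $P$; and the balls $B_{r_0/2}(z_i)$ being pairwise disjoint up to the $r_0/2$-separation (here I should instead use that $B_{r_0/4}(z_i)$ are disjoint, or just crudely bound), these squares are disjoint. Summing, $(\vol\times\vol)(P) \geq \sum_i \vol(B_{r_0/2}(z_i)\cap A)^2 \geq \big(\sum_i \vol(B_{r_0/2}(z_i)\cap A)\big)^2 / (\text{number of nonempty } i) \geq \vol(A)^2 / (\vol(A)/v_{min}\cdot\text{something})$ — more carefully, by Cauchy–Schwarz $\sum_i a_i^2 \geq (\sum_i a_i)^2 / \#\{i : a_i > 0\}$ with $a_i = \vol(B_{r_0/2}(z_i)\cap A)$, giving $(\vol\times\vol)(P) \geq \vol(A)^2 / \#\{i\}$, and $\#\{i\} \leq \vol(M)/v_{min}$ since the $r_0/2$-balls around the $z_i$ are disjoint in a crude sense — actually the $z_i$ are only $r_0/2$-separated so $B_{r_0/4}(z_i)$ are disjoint, each of volume $\geq v_{min}$, forcing $\#\{i\} \leq \vol(M)/v_{min}$. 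Hence $(\vol \times \vol)(P) \geq v_{min}\,\vol(A)^2/\vol(M)$.

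Now combine: $\int_A \vol(B_{r_0}(x)\cap A)\,d\vol(x) \geq v_{min}\,\vol(A)^2/\vol(M)$. Let $E = \{ x \in A : \vol(B_{r_0}(x)\cap A) \geq c\cdot \vol(A)/\vol(M)\}$ with $c$ a constant to be fixed. Splitting the integral over $E$ and $A \setminus E$: on $A \setminus E$ the integrand is $< c\,\vol(A)/\vol(M)$, contributing at most $c\,\vol(A)^2/\vol(M)$; on $E$ the integrand is at most $\vol(B_{r_0}(x)\cap A) \leq \vol(B_{r_0}(x)) \leq v_{max}$ (using the upper volume bound on $r_0$-balls). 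Therefore
\[
v_{min}\frac{\vol(A)^2}{\vol(M)} \leq v_{max}\,\vol(E) + c\,\frac{\vol(A)^2}{\vol(M)},
\]
so $\vol(E) \geq (v_{min} - c)\,\vol(A)^2 / (v_{max}\,\vol(M))$. Choosing $c = v_{min}/2$ gives $\vol(E) \geq v_{min}\,\vol(A)^2/(2 v_{max}\,\vol(M)) = c'\cdot \frac{\vol(A)}{\vol(M)}\cdot\vol(A)$ with $c' = v_{min}/(2 v_{max})$, and indeed $c = v_{min}/2 \geq v_{min}^2/(2 v_{max})$ as in the statement since $v_{min} \leq v_{max}$ (any ball of the relevant radius has volume $\geq v_{min}$ and $\leq v_{max}$), so the set $E$ is only enlarged if we use the smaller threshold $c = v_{min}^2/(2v_{max})$ from the statement; and $c' = v_{min}^2/(2v_{max}^2)$ works since $v_{min}/(2v_{max}) \geq v_{min}^2/(2v_{max}^2)$. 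This matches the claimed constants.

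The main obstacle I anticipate is bookkeeping with the radii and the two volume bounds: the hypothesis only controls balls of radius in $[r_0/2, r_0]$, so I must be careful that every ball I invoke — the $r_0/2$-separation half-balls, the $r_0/4$-disjointness balls, and the $r_0$-balls in the final upper bound — has radius in the allowed range, and the $r_0/4$-ball trick needs $r_0/4 \geq r_0/2$, which is false. The fix is to not insist on disjointness of subballs but instead bound $\#\{i\}$ directly: a maximal $r_0/2$-separated set in $M$ has the property that the balls $B_{r_0/2}(z_i)$ cover $M$, hence $\#\{i\}$ could be large, but what I actually need is $\#\{i : B_{r_0/2}(z_i)\cap A \neq \emptyset\}$, and for those $i$ the balls $B_{r_0/4}(z_i)$ are disjoint — so I should instead take a maximal $r_0$-separated set, making $B_{r_0/2}(z_i)$ disjoint (radius exactly $r_0/2$, allowed) each of volume $\geq v_{min}$ hence $\#\{i\} \leq \vol(M)/v_{min}$, while the balls $B_{r_0}(z_i)$ (radius $r_0$, allowed) still cover $A$, so $\sum_i \vol(B_{r_0}(z_i)\cap A) \geq \vol(A)$ and pairs within $B_{r_0}(z_i)$ are at distance $< 2r_0$. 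That forces me to track pairs at distance $< 2r_0$ rather than $< r_0$; to land on $B_{r_0}(x)$ in the final statement I should instead take the separation constant $r_0/2$: a maximal $r_0/2$-separated set gives $B_{r_0/4}(z_i)$ — not allowed. The cleanest route, which I would adopt, is to run the whole argument with $P = \{(x,y) : d(x,y) < r_0\}$ and cover $A$ by balls $B_{r_0/2}(z_i)$ from a maximal $r_0/2$-separated set; disjointness is not needed for the lower bound on $(\vol\times\vol)(P)$ via Cauchy–Schwarz, only an upper bound on $\#\{i : B_{r_0/2}(z_i)\cap A\neq\emptyset\}$, and that I get from the observation that each point of $A$ lies in boundedly many $B_{r_0}(z_i)$ — or, simplest of all, just use $\#\{i\} \le \vol(M)/v_{min}$ by noting the $B_{r_0/2}(z_i)$ with radius $r_0/2 \in [r_0/2,r_0]$ have volume $\ge v_{min}$ and, being centered at an $r_0/2$-separated set, the balls $B_{r_0/4}(z_i)$ — wait, still $r_0/4$. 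I will resolve this in the writeup by taking the separation to be exactly $r_0$ and accepting pairs at distance $<2r_0$, then noting $B_{2r_0}$ is still covered by hypothesis if we simply rescale $r_0$ at the outset; since the statement is invariant under such relabeling this costs nothing. Everything else is the routine Fubini–Cauchy–Schwarz computation above.
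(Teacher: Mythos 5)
Your overall architecture is the right one, and your final step---lower-bounding $\int_A \vol(B_{r_0}(x)\cap A)\,d\vol(x)$ by a constant times $\vol(A)^2/\vol(M)$ and then running the Chebyshev argument with the pointwise bound $\vol(B_{r_0}(x))\le v_{max}$---is exactly how the paper concludes, with correct constant bookkeeping. The gap is in the lower bound on the measure of the pair set $P$, and it is precisely the issue you circle for the second half of the writeup without resolving. A maximal $r_0$-separated net gives the count $\#\{i\}\le \vol(M)/v_{min}$ (disjoint $r_0/2$-balls of volume $\ge v_{min}$) but only covers $A$ by $r_0$-balls, so you capture pairs at distance $<2r_0$; a maximal $r_0/2$-separated net captures pairs at distance $<r_0$ but bounding its cardinality requires a volume lower bound for balls of radius $r_0/4$, which the hypothesis does not supply. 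Your proposed escape---``rescale $r_0$ at the outset; the statement is invariant under such relabeling''---is false for exactly this reason: the hypothesis controls only radii in $[r_0/2,r_0]$, so replacing $r_0$ by $r_0/2$ destroys the lower bound $v_{min}$ on the separation balls (and an upper bound on $\vol(B_{2r_0}(x))$ is likewise unavailable in the other direction). One can construct mm-spaces satisfying the hypotheses in which the number of pairwise $r_0/2$-separated points vastly exceeds $\vol(M)/v_{min}$, e.g.\ by concentrating almost all the mass so that every ball of radius $r_0/2$ has nearly full measure while many $r_0/2$-separated points exist; so the count you need genuinely fails. Note also that settling for $B_{2r_0}$ in the conclusion cannot be repaired afterwards, since $B_{r_0}(x)\cap A\subset B_{2r_0}(x)\cap A$ makes the resulting superlevel set a \emph{superset} of the one in the statement, so the implication runs the wrong way.

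The paper's proof runs your double count continuously and thereby never needs a net. By Fubini, $\int_A\vol(B_{r_0}(x)\cap A)\,dx$ equals $\vol^2\{(a,b)\in A^2 : d(a,b)<r_0\}$, and this is at least $\frac 1{v_{max}}\vol^3\{(x,a,b)\in M\times A^2 : d(a,x)<r_0/2 \text{ and } d(x,b)<r_0/2\}$, because for each admissible pair $(a,b)$ the set of witnesses $x$ has volume at most $\vol(B_{r_0/2}(a))\le v_{max}$, while the two half-conditions force $d(a,b)<r_0$ by the triangle inequality. The triple integral equals $\int_M\vol(B_{r_0/2}(x)\cap A)^2\,dx$, which by Cauchy--Schwarz over $(M,\vol)$ is at least $\frac 1{\vol(M)}\bigl(\int_M\vol(B_{r_0/2}(x)\cap A)\,dx\bigr)^2 = \frac 1{\vol(M)}\bigl(\int_A\vol(B_{r_0/2}(a))\,da\bigr)^2\ge \frac{v_{min}^2\vol(A)^2}{\vol(M)}$. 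Only radii $r_0/2$ and $r_0$ ever appear, which is why the hypothesis suffices; the price is the weaker constant $v_{min}^2/v_{max}$ in place of your claimed $v_{min}$, which is exactly why the lemma's threshold is $c=v_{min}^2/(2v_{max})$ rather than $v_{min}/2$. If you replace your net-based lower bound with this continuous one, the rest of your argument goes through verbatim.
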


Here, the ball $B_{r_0}(x)$  is the metric ball in $M$. Note that if $\vol(A)/\vol(M)$  is bounded away from zero, the lemma says  that a definite proportion of $A$  is taken up by points $x\in A$ such that $A$  takes up a definite proportion of $B_{r_0}(x)$. 
 
\begin{proof}
Note that 
\begin{align*}
	\int_A\vol(B_{r_0}(x) \cap A) \, dx &= \vol( \{(a,b) \in A^2 \ | \ d(a,b)<r_0\}) \\
& \geq \frac 1{v_{max}} \vol( \{(x,a,b) \in M \times A^2 \ | \ d(a,x)<r_0/2 \text{ and } d(a,b) < r_0\}) \\
& \geq  \frac 1{v_{max}} \vol( \{(x,a,b) \in M \times A^2 \ | \ d(a,x)<r_0/2 \text{ and } d(x,b) < r_0/2\}) \\
& =\frac 1{v_{max}} \int_M \vol(B_{r_0/2}(x) \cap A )^2 \, dx \\
&\geq  \frac 1{v_{max}\vol(M)} \left (\int_M \vol(B_{r_0/2}(x) \cap A)  \, dx \right )^2\\
&=  \frac 1{v_{max}\vol(M)} \left (\int_A \vol(B_{r_0/2}(x))  \, dx \right )^2\\
&\geq  \frac {v_{min}^2\vol(A)^2}{v_{max}\vol(M)} \\
&= 2 \cdot \left (c \cdot \frac {\vol(A)}{\vol(M)}\right ) \cdot \vol(A).
\end{align*}
From this the lemma follows immediately, since if $f : A \longrightarrow [0,max]$ is a function then
$$\int_A f \geq 2\cdot \epsilon \cdot \vol(A) \implies \vol\{x \in A\ | \ f(x)\geq \epsilon \}\geq \frac \epsilon{max}\vol(A) .\qedhere $$

\end{proof}

\subsection{The proof of Theorem \ref{Bowen2}} We now begin the proof. Recall from the previous section  that the general strategy is to show how to construct a random `almost net' in a given finite volume space such that the associated nerve complex $N_n$ has Betti numbers close to those of the nerve complex associated to an actual net. Then one uses Elek's Theorem (or rather, Lemma \ref{elekimproved}) to show that the normalized Betti numbers of these nerve complexes $N_n$ converge, and finally one deduces from this and property (3) in the statement of Theorem~\ref{Bowen2} that the ratios $B_n/\vol_n(M_n)$ converge.

\subsubsection{Random `almost nets'.} \label{almostnets}Fix a finite volume special mm-space $(M,\vol)$ and real numbers $0<r_0<r_1$.  For each $j \in \BN$, let $P^j$ be a Poisson process on $M$ with intensity $1$, and let $f^j : P^j \longrightarrow [0,1]$ be a random function whose values are chosen independently according to Lebesgue measure. 
%(Just for convenience in notation, let's assume below that almost surely, $P^i \cap P^j = \emptyset$ for $i\neq j$: this is the case if the measure $\vol$ is atomless, which is always the case in our applications.)
Each function $f^j$ is almost surely injective, and when it is, it induces a linear order $``<"$ on $P^j$ via $s < t \iff f^j(s) < f^j(t).$ Set $$P^j({<s}) = \{t \in P^j \ | \ t < s\}.$$

Pick some $r$ with $r_0<r<r_1$ and choose a continuous function $$\phi : [0,\infty) \longrightarrow [0,1], \text{ where }\  \phi(t) = 0 \text{ if } t \leq r_0, \  \phi(t)=1 \text{ if } t \geq r.$$ Let $P=\sqcup_j P_j$ be the disjoint union, and for each pair $s,t\in P$, let $X(s,t)$ be a Lebesgue-random element of $[0,1]$, where the $X(s,t)$ are independent as $s,t$  are varied.
%Finally, define for $s \in P^i$ and $t \in P^j$ that `$s \lhd t$' if $$ f^i(s) \leq f^j(t) \text{ and } \phi(d(s,t)) \geq X(s,t).$$
%Note that $X(s,t) > 0$ almost surely, so $s\lhd t$ implies $d(s,t)<r_1$  almost surely. Note that the condition on $d(s,t)$ is obviously symmetric, while the condition that $f^i(s) \leq f^j(t)$ is essentially a random choice of whether $s \lhd t$ or $t \lhd s$.
 We now recursively define subsets $$S^j\subset P^j, \ \ \ S^{\leq j} := S^1 \cup \cdots \cup S^j, \ \ \ S^{<j} := S^1 \cup \cdots \cup S^{j-1},$$ where given $S^1,\ldots,S^{j-1}$, the rule is that for $s\in P^j$, we say that $s \in S^j$ if 
$$ \text{ for all } t \in P^j({<s}) \cup   S^{<j}, \ \ \phi(d(s,t))\geq X(s,t).$$
In other words, go through all the elements $s$ of a Poisson process $P^1$ one by one, in some random order. For each $s$, backtrack through all previously considered  $t$, flip for each a $[0,1]$-valued coin, and add $s$ to $S^1$ if for each $t$, the value $\phi(d(s,t))$ is bigger than the result of the coin flip. After finishing with all available $s$, switch over to a new Poisson process, and add points to $S^2$  using a similar rule, comparing them against previous points in $P^2$ and also against all points in $S^1$. Then  repeat this with a third Poisson process to define $S^3$, and a fourth to define $S^4$, etc...   

For later use, we record:

\begin{claim}\label {boundbelow}
There is some $c=c(r_0,r_1,v_{min},v_{max})>0$  such that for all $j$, if $M$  satisfies conditions (1) and (2)  in the statement of the theorem, we have $E[|S^{\leq j}|] \geq c\cdot \vol(M)$.
\end{claim}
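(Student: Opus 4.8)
The plan is to evaluate $E[|S^j|]$ by the Campbell--Mecke formula for the intensity-$1$ Poisson process $P^j$, and then to extract a location-uniform lower bound on the chance that a planted point is accepted. First I would condition on everything not tied to $P^j$: the previously built configuration $S^{<j}$ (a function of $P^1,\dots,P^{j-1}$) and the coins $X(\cdot,\cdot)$. Mecke's equation then reads
\[
E\big[\,|S^j|\ \big|\ S^{<j}\,\big]\ =\ \int_M \Pr\big[x \text{ accepted into } S^j\big]\, d\vol(x),
\]
the probability being over $P^j$ with an extra point planted at $x$ carrying a fresh uniform mark $f^j(x)=u$, over that mark, and over the coins.

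Next, fix $x$ and condition also on $f^j(x)=u$. Because the $X(x,t)$ are i.i.d.\ uniform on $[0,1]$, independent of the point configurations, and $\phi\equiv 1$ on $[r,\infty)$, the conditional probability that $x$ is accepted factors as
\[
\Big(\prod_{t\in S^{<j}}\phi(d(x,t))\Big)\cdot E_{P^j}\Big[\prod_{t\in P^j({<x})}\phi(d(x,t))\Big].
\]
By the thinning property, $P^j({<x})$ is a Poisson process of intensity $u$, so the probability generating functional of a Poisson process evaluates the inner expectation to $\exp(-u\,a(x))$ with $a(x):=\int_M\big(1-\phi(d(x,y))\big)\,d\vol(y)$. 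Since $1-\phi$ is supported in $B(x,r)$ and bounded by $1$, and $r<r_1$, hypothesis (2) gives $a(x)\le \vol(B(x,r))\le \vol(B(x,r_1))\le v_{max}(r_1)$, whence $\int_0^1 e^{-u\,a(x)}\,du=\tfrac{1-e^{-a(x)}}{a(x)}\ge \tfrac{1-e^{-v_{max}(r_1)}}{v_{max}(r_1)}=:c_1>0$, a constant depending only on the data. Integrating over $x$ and taking expectations,
\[
E[\,|S^j|\,]\ \ge\ c_1\, E\Big[\int_M \prod_{t\in S^{<j}}\phi(d(x,t))\, d\vol(x)\Big].
\]

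It remains to bound the right-hand integral below by a fixed fraction of $\vol(M)$. When $j=1$ the empty product is $\equiv 1$, giving $E[|S^1|]\ge c_1\vol(M)$ outright; since $S^1\subseteq S^{\le j}$ this also settles the running net $S^{\le j}$ for every $j$. For general $S^j$ one wants, uniformly in $j$, that the surviving-probability density $x\mapsto\prod_{t\in S^{<j}}\phi(d(x,t))$ has $L^1$-norm a definite multiple of $\vol(M)$. The input here is that $S^{<j}$ is automatically $r_0$-separated -- if two of its points lay within distance $r_0$, the later one would have failed its test against the other, as $\phi$ vanishes on $[0,r_0]$ -- together with hypotheses (1) and (2), which cap the number of points of $S^{<j}$ in any ball (disjoint $r_0/2$-balls of volume $\ge v_{min}$ pack into an $(r+\tfrac{r_0}{2})$-ball of volume $\le v_{max}(r+\tfrac{r_0}{2})$). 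Converting this sparseness of $S^{<j}$ into the required $L^1$ bound on $\prod_{t\in S^{<j}}\phi(d(x,t))$ is the one delicate point; everything preceding it is the standard Mecke/generating-functional computation.
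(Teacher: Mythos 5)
Your computation is correct as far as it goes, and it takes a genuinely different route from the paper. The paper's proof is entirely elementary: it fixes a maximal collection of disjoint $r_0$-balls $B_1,\dots,B_k$ (so $k\geq c_1\cdot\vol(M)$ by (1) and (2)) and observes that, with probability bounded below, $P^1$ meets the $r_1$-neighborhood of $B_i$ in a single point lying in $B_i$; such an isolated point is at distance $\geq r_1>r$ from every other point of $P^1$, so $\phi=1$ on all its comparisons and it is accepted unconditionally, giving $E[|S^1\cap B_i|]\geq c_2$ and the claim by linearity of expectation. Your Mecke/generating-functional argument reaches the same $j=1$ bound with an explicit constant $(1-e^{-v_{max}(r_1)})/v_{max}(r_1)$ and uses only hypothesis (2); the paper's packing argument needs both hypotheses but avoids any Poisson-process machinery. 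Either is acceptable.

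On the point you flag as delicate: do not try to close it, because the paper does not either, and the literal statement for $E[|S^j|]$ with $j\geq 2$ is not what is actually needed. The paper's proof opens with ``it suffices to set $j=1$,'' which is only valid under the reading that the quantity of interest is $E[|S^{\leq j}|]$ --- and that is exactly how the claim is invoked later (the lower bound $E[|S_n|]\geq c\cdot\vol_n(M_n)$ is applied to $S_n=S^{\leq j}\supseteq S^1$). Your remark that $S^1\subseteq S^{\leq j}$ settles the running net is therefore precisely the paper's implicit reduction. The missing $L^1$ lower bound on $x\mapsto\prod_{t\in S^{<j}}\phi(d(x,t))$ cannot be supplied in general: since $\phi\equiv 0$ on $[0,r_0]$, this product vanishes identically whenever $S^{<j}$ happens to $r_0$-cover $M$, so $E[|S^j|\,|\,S^{<j}]$ need not be bounded below for $j\geq 2$. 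The right resolution is to prove the bound for $j=1$ (as you and the paper both do) and state the conclusion for $S^{\leq j}$ by monotonicity.
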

\begin{proof}
	 Certainly, it suffices to set $j=1$. Let $B_1, \ldots, B_k$ be a  maximal collection of disjoint $r_0$-balls in $M$, and note that $k \geq c_1 \cdot \vol(M)$ for some  uniform $c_1$, by (1) and (2). For each $i$,  there is a probability bigger than some fixed constant that the Poisson process $P_1$ will intersect the $r_1$-neighborhood of $B_i$ in a single point $x$ that lies in $B_i$.  When this happens, this $x$ will automatically be included in $S^1$. So,  $E[|S^1 \cap B_i|] \geq c_2$  for some uniform $c_2>0$. Hence, $E[|S^1|] \geq c_1 \cdot c_2 \cdot \vol(M)$ by linearity of expectation. 
\end{proof}
By the definition of $\phi$, we will almost never add $s$ to $S^j$ if $d(s,t) \leq r_0$  for some previously  considered $t$. So, for each $j$, the subset $S^{\leq j}$ is almost surely $r_0$-separated.   On the other hand, we cannot ensure that any particular $S^{\leq j}$ is an $(r_0,r_1)$-net, since it may not $r_1$-cover $M$. (You do get a net if you take $j=\infty$, as in Bowen's proof.) However, set 
$$n^j = \min \, \big \{ \, |T \setminus S^{\leq j} | \ \big | \ T \text{ is a } (r_0,2r_1)\text{-net in } M \text{ with } T \supset S^{\leq j}\big \}, $$
a random  integer associated to each  choice of $M$ and $j$. We prove:
\begin{proposition}\label{uncoveredprop}
	Given $\epsilon>0$, there is some $j=j(\epsilon,r_0,r_1,v_{min},v_{max})$ such that for any $M$  satisfying (1) and (2) in the statement of the theorem, we have $$\frac{E[n^j]}{\vol(M)} < \epsilon.$$ 
\end{proposition}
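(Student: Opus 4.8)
The key point is that the probability a given point $p\in M$ fails to be $r_1$-covered by $S^{\leq j}$ decays exponentially in $j$, uniformly over all $M$ satisfying (1) and (2). Granting this, $E[n^j]/\vol(M)$ will be small once $j$ is large, because an uncovered point contributes to $n^j$ only through the extra net points needed near it, and the number of such extra points is controlled by a packing bound depending only on $r_0,r_1,v_{min},v_{max}$.

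\textbf{Step 1: the number of missing net points is controlled by the uncovered volume.} Fix $M$ and a realization of $S^{\leq j}$, which is almost surely $r_0$-separated. Let $U = U^j \subset M$ be the set of points not $r_1$-covered by $S^{\leq j}$, i.e.\ $U = \{p \in M : d(p, S^{\leq j}) \geq r_1\}$. I claim $n^j \leq C_1 \cdot \vol(U)/v_{min}$ for a constant $C_1 = C_1(r_0,r_1,v_{max})$. Indeed, take a maximal $r_0$-separated subset $T_0 \subset U$; then $S^{\leq j} \cup T_0$ is $r_0$-separated (any $t\in T_0$ has $d(t,S^{\leq j})\geq r_1 > r_0$) and $2r_1$-covers $M$: any $p\in M$ is either within $r_1$ of $S^{\leq j}$, or lies in $U$ and hence is within $r_0 < 2r_1$ of $T_0$ by maximality. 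So $n^j \leq |T_0|$. Since the balls $B_{r_0/2}(t)$, $t\in T_0$, are disjoint and each has volume $\geq v_{min}$ while also sitting inside $U_{r_0/2}$, and $\vol(U_{r_0/2})$ is bounded by a packing argument in terms of $\vol(U)$ (cover $U$ by finitely many $r_0$-balls and use (2))... more simply, the $B_{r_0/2}(t)$ are disjoint and contained in the $(r_0/2)$-neighborhood of $U$; bounding that neighborhood's volume by $C_1'\vol(U)$ via (1),(2) gives $|T_0| \cdot v_{min} \leq C_1' \vol(U)$, hence $n^j \leq (C_1'/v_{min})\vol(U^j)$. Taking expectations, $E[n^j] \leq (C_1'/v_{min}) \cdot E[\vol(U^j)] = (C_1'/v_{min})\int_M \Prob(p \in U^j)\, dp$ by Fubini.

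\textbf{Step 2: pointwise exponential decay of the non-coverage probability.} It remains to show $\Prob(d(p,S^{\leq j}) \geq r_1) \leq \theta^j$ for some $\theta = \theta(r_0,r_1,v_{min},v_{max}) < 1$, uniformly in $p$ and $M$; then $E[n^j]/\vol(M) \leq (C_1'/v_{min})\theta^j$, which is $<\epsilon$ for $j$ large, proving the proposition. To see the decay, fix $p$ and work on a single round $j$, conditioning on everything from rounds $1,\dots,j-1$. If $p$ is already $r_1$-covered we are done; otherwise consider the event $A_j$ that the Poisson process $P^j$ places exactly one point $s$ in the ball $B_{r_0'}(p)$ and no points in the annulus $B_r(p) \setminus B_{r_0'}(p)$, where $r_0 < r_0' < r < r_1$ is chosen so that $\phi \equiv 1$ on $[r,\infty)$ and we have room. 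Conditioned on everything so far, $\Prob(A_j)$ is bounded below by a constant $\eta = \eta(r_0,r_1,v_{min},v_{max})>0$: the intensity-$1$ Poisson measure of $B_{r_0'}(p)$ lies in $[v_{min}, v_{max}]$ by (1),(2), and similarly for the annulus, so $\Prob(A_j) \geq v_{min}e^{-v_{max}}\cdot e^{-v_{max}} =: \eta$. On $A_j$, I claim $s$ is added to $S^j$ with probability $1$: for every earlier-considered $t \in P^j(<s) \cup S^{<j}$, either $t = s'$ with $s'$ some point of $P^j$ — but $A_j$ forces all points of $P^j$ in $B_r(p)$ to equal $s$, so any such $t\ne s$ has $d(s,t)\geq d(t,p)-d(s,p)$; since $t\in P^j$ lies outside $B_r(p)$ we get $d(s,t) \geq r - r_0' $, and by shrinking we may instead just demand the cleaner bound by choosing the annulus out to radius $r + r_0'$ so that $d(s,t)\geq r$, whence $\phi(d(s,t)) = 1 \geq X(s,t)$; and for $t \in S^{<j}$, non-coverage of $p$ by $S^{<j}$ at scale $r_1$ gives $d(p,t)\geq r_1$, so again $d(s,t) \geq r_1 - r_0' \geq r$ (choosing $r_0'$ small), giving $\phi(d(s,t))=1$. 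Hence on $A_j$ the point $s\in B_{r_0'}(p)\subset B_{r_1}(p)$ joins $S^{\leq j}$ and $p$ becomes covered. Therefore $\Prob(p \text{ not covered after round } j \mid \text{not covered after round } j-1) \leq 1-\eta$, and inductively $\Prob(d(p,S^{\leq j})\geq r_1) \leq (1-\eta)^j =: \theta^j$.

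\textbf{Main obstacle.} The delicate point is the conditioning in Step 2: one must argue that the favorable event $A_j$ on the $j$-th Poisson process, together with the coin flips $X(s,t)$, is genuinely independent of (or at least has probability bounded below conditionally on) the configuration $S^{<j}$ and the exploration order within $P^j$, and that the inequalities $d(s,t)\geq r$ really do hold for \emph{every} comparison point $t$ — both those in $S^{<j}$ (handled by the assumed non-coverage of $p$) and those in $P^j(<s)$ (handled by the emptiness of the annulus around $p$ on the event $A_j$). Getting the radii $r_0 < r_0' < r < r_1$ and the annulus width bookkeeping exactly right, so that all these distance inequalities force $\phi(d(s,t)) = 1$ simultaneously, is the only real subtlety; everything else is packing estimates and Fubini.
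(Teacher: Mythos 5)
Your route is genuinely different from the paper's. The paper bounds $n^j \leq \vol(R^j)/v_{min}$ (where $R^j$ is the complement of the $r_1$-neighborhood of $S^{\leq j}$) and then proves a \emph{conditional} contraction $E[\vol(R^{j+1})\mid R^j]\leq \delta\vol(R^j)$ valid only when $\vol(R^j)/\vol(M)\geq \epsilon/2$, using the double-counting Lemma \ref{lemlem} to locate a positive-proportion subregion of $R^j$ where a fresh Poisson point lands, is isolated, and is far from $S^{\leq j}$; a recursion on conditional expectations then finishes. Your pointwise bound $\Prob(p \text{ uncovered after round } j)\leq (1-\eta)^j$ followed by Fubini is cleaner and, if the details are right, strictly stronger (no $\epsilon/2$ threshold). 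The probabilistic core of Step 2 -- $A_j$ depends only on the locations of $P^j$, hence is independent of $\mathcal F_{j-1}$, and on $A_j$ the lone point is accepted deterministically because every relevant $\phi$-value equals $1$ -- is sound.

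There are, however, two concrete gaps in the write-up. First, in Step 1 the inequality $\vol(U_{r_0/2})\leq C_1'\vol(U)$ is not a consequence of (1) and (2) and is false for general mm-spaces: $U=\{p: d(p,S^{\leq j})\geq r_1\}$ can be a thin sliver of negligible measure whose $r_0/2$-neighborhood has definite volume, so conditions (1),(2) give you no comparison. The fix is the paper's: take the maximal $r_0$-separated set inside $\{p: d(p,S^{\leq j})\geq 2r_1\}$ (which still extends $S^{\leq j}$ to an $(r_0,2r_1)$-net), so that the disjoint $r_0/2$-balls are contained in $\{p: d(p,S^{\leq j})\geq 2r_1-r_0/2\}$ and you get $n^j\leq \vol(\{d(\cdot,S^{\leq j})\geq 2r_1-r_0/2\})/v_{min}$ directly. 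Second, the radius bookkeeping you flag as ``the only real subtlety'' does not close as stated: you need $r_0'\leq r_1-r$ so that $d(s,t)\geq r$ for $t\in S^{<j}$, but you also need $\vol(B_{r_0'}(p))\geq v_{min}$, which condition (1) only gives for $r_0'\geq r_0/2$; since $r$ is fixed in the construction subject only to $r_0<r<r_1$, the window $[r_0/2,\,r_1-r]$ may be empty. Both problems disappear simultaneously if you track the covering radius $2r_1-r_0/2$ from the first fix: conditioned on $d(p,S^{<j})\geq 2r_1-r_0/2$, every point of $B_{r_0/2}(p)$ is at distance $\geq 2r_1-r_0\geq r_1>r$ from $S^{<j}$, so you may take $r_0'=r_0/2$, keep your annulus of outer radius $r+r_0/2$ for the $P^j(<s)$ comparisons, and the uniform lower bound $\eta$ and the exponential decay follow. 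With those adjustments your argument is complete and is a legitimate alternative to the paper's proof.
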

\begin{proof}
For each $j$, write $R^{j}$ for the complement of the $r_1$-neighborhood of $S^{\leq j} \subset M$, and let $R^j_2$ be the  complement of the $2r_1$-neighborhood.  If $X$ is any maximal $r_0$-separated set in the space $R^j_2$, then the $M$-balls $B_{r_0}(x), x\in X$ are disjoint and contained in $R^j$, so
$$v_{min} \cdot |X| \leq   \vol (R^j).$$ Since the union $X \cup S^j$ is a $(r_0,2r_1)$-net in $M$, this means $n^j \leq \vol(R^j) / v_{min},$  so to prove the  proposition it suffices (after adjusting $\epsilon$) to find $j$ such that 
\begin{equation} \label{whatwewant}\frac{E[\vol(R^j)]}{\vol(M)} < \epsilon.	
\end{equation}

\begin{claim}
 There is some $\delta=\delta(\epsilon,r_0,r_1,v_{min},v_{max})<1$ as follows. Suppose a fixed $S^{\leq j} $, and hence $R^j$, is given, and that $\vol(R^j)/\vol(M) \geq \epsilon/2$. Then \label{shrinkclaim}
$$E[\vol(R^{j+1}) \, | \, R^j] \leq \delta \cdot \vol(R^{j}).$$

\end{claim}
Here, we write $E[\vol(R^{j+1}) \, | \, R^j]$  to indicate that this is the expected volume of $R^{j+1}$, conditioned on our particular choice of a fixed $R^j$.  This is to remove some ambiguity when we apply the claim later. In the \emph{proof} of Claim \ref{shrinkclaim}, though, we will always consider $R^j$ as fixed and just write $E[\ \cdot \ ]$, omitting any reference to $R^j$. Also, to  avoid a proliferation of constants in the following proof, we  will use the notation $x \preceq y$ to mean that $x \leq C y $ for some constant $C > 0$ depending only on $\epsilon,r_0,r_1,v_{min},v_{max}.$
\begin{proof}
Fix $c,c'$ as in Lemma \ref{lemlem}. Let $R^{j}_\circ$  be the subset of $R^j$  consisting of all points $x\in R^j$  such that $\vol(B_{r_1}(x) \cap R^j)\geq c \cdot \epsilon/2$. Then Lemma \ref{lemlem} says that \begin{equation}
 	\vol(R^j_\circ) \succeq \vol(R^j).\label{succcceq}
 \end{equation}
Let $R^{j}_{\circ\circ}$ be the  further subset consisting of all points $x\in R^j_{\circ}$  such that $\vol(B_{r_1}(x) \cap R^j_{\circ})\geq c \cdot \epsilon/2$.  Applying Lemma \ref{lemlem} and using \eqref{succcceq}, we also have 
$$\vol(R^j_{\circ\circ}) \succeq \vol(R^j_\circ) \succeq \vol(R^j). $$
Fix a maximal $r_0$-separated (say) subset  $Z \subset R^j_{\circ\circ}$.  By  assumption,  there is an upper bound for the volumes of all the $r_0$-balls around points $z\in Z$, so since $R^j_{\circ\circ}$ is contained in the union of all such balls, our lower bound on the volume of $R^j_{\circ\circ}$  implies that
\begin {equation}
	|Z| \succeq \vol(R^j). \label{fofofo}
\end {equation}
  For each $z\in Z \subset R^j_{\circ\circ}$, the volume of $B_{r_1}(z) \cap R^j_{\circ}$ is  bounded below and the volume of $B_{2r_1}(z)$ is bounded above and below. So if $P^{j+1}$  is the Poisson  process used in defining $S^{j+1}$, we have \begin{equation}
 	P^{j+1} \cap B_{r_1}(z) \cap R^j_{\circ} \neq \emptyset \text{ and } |P^{j+1} \cap B_{2r_1}(z)|=1 \label {connndition}
 \end{equation}
 with probability bigger than some fixed constant. So by linearity of expectation and \eqref{fofofo}, 
$$E\Big [ \, \big |\{z \in Z \ | \ \eqref{connndition} \text { holds for } z \} \big | \, \Big ] \succeq \vol(R^j),$$
where the expectation is taken over the Poisson process $P^{j+1}$. 
But for every $z$ such that \eqref{connndition} holds, the single point of $P^{j+1}$ that  is in $B_{r_1}(z) \cap R^j_{\circ}$ is at least $r_1$-away from every other point of $P^{j+1}$, and is also at least $r_1$-away from $S^{\leq j}$, since $z \in R^j$. Hence, this single point lies not just in $P^{j+1}$, but in $S^{j+1}$.  It follows that
\begin{equation}
	E\big [ \, | \{ x \in S^{j+1} \cap R^j_{\circ} \ | B_{r_1}(x) \cap S^{j+1}=\{x\} \}\, | \big ] \succeq \vol(R^j).\label{finall}
\end{equation}

Note that $R^{j+1} = R^j \setminus \bigcup_{x \in S^{j+1}} B_{r_1}(x).$ By  definition of $R^j_\circ$, the $r_1$-ball around each $x \in  S^{j+1}\cap R^j_{\circ}$ intersects $R^j$ in a  set with volume bounded below, and if  we only look at those $x$  where $B_{r_1}(x)\cap S^{j+1} = \{x\}$, all the balls $B_{r_1}(x)$  are disjoint. So by \eqref{finall}, $$E\left [ \vol \left (R^j \, \cap \bigcup_{x \in S^{j+1}} B_{r_1}(x) \right) \right ] \succeq \vol(R^j),$$
 and the claim follows. 
\end{proof}

We  now complete the proof of the proposition. Let $\sigma^j$ be the law of $S^{\leq j}$. Then conditioning on whether $\vol(R^j)/\vol(M) \geq \epsilon/2$ or not, we have 
\begin{align}\label{condition}
	\frac{E[\vol(R^{j+1})] }{\vol(M)}
&\leq  \frac{\epsilon}{2} +  \int_{\vol(R^j)/\vol(M) \geq  \epsilon/2} 
\frac{E[ \vol(R^{j+1}) \, |\, R^j]}{\vol(M)} \, d\sigma^{j}.% \\ &\leq  \frac{\epsilon}{2} +  \delta \cdot \int_{\vol(R^j)/\vol(M) \geq  \epsilon/2}  \frac{\vol(R^j)}{\vol(M)} \, d\sigma^{j}, \text{ by Claim } \ref{shrinkclaim},
\end{align}
Let's call the second term on the right in \eqref{condition}  $X^{j+1}$. Then by Claim \ref{shrinkclaim}, we have
$$X^{j+1} \leq  \delta \cdot \int_{\vol(R^j)/\vol(M) \geq  \epsilon/2}  \frac{\vol(R^j)}{\vol(M)} \, d\sigma^{j} \leq \delta \cdot \int_{\vol(R^{j-1})/\vol(M) \geq  \epsilon/2} \frac{E[\vol(R^j)]}{\vol(M)} \, d\sigma^{j-1} \leq \delta X^j  $$
for all $j$, where the middle inequality uses  the inclusion $R^j \subset R^{j-1}$ to say that the condition on $\vol(R^j)$ is  \emph {at least} as restrictive as the condition on $\vol(R^{j-1})$.  Since $\delta <1$  is fixed, there is some uniform $j=j(\epsilon,r_0,r_1,v_{min},v_{max})$  such that $X^j < \epsilon/2$, and then $$\frac{E[\vol(R^{j})] }{\vol(M)} < \epsilon/2 + \epsilon/2 = \epsilon$$
as desired in \eqref{whatwewant}.
\end{proof}

%\begin{claim}\label {independence claim}
%	 Suppose that $A,B \subset M$ and that $d(A,B) \geq 2\cdot (j+1) \cdot r_1$. Then the random subsets $S^{\leq j} \cap A$ and $S^{\leq j} \cap B$ are independent.
%\end{claim}
%\begin {proof}
%For $s,t$ as in \eqref{defeqqq}, if $d(s,t) \geq r_1$ then $\phi(d(s,t))=1 \geq X(s,t) \in [0,1]$. So, \emph{the decision whether to include $s$ in $S^{ j}$ only depends upon  the points $t$ that lie in an $r_1$-neighborhood of $s$.}   So, suppose $d(A,B)\geq (j+1) \cdot r_1$. The intersections $P^{j} \cap N_{r_1}(A)$ and $P^{j} \cap N_{r_1}(B)$  are independent, since $P^{j}$ is a Poisson process, and the order of the elements in the first set is also independent of the order in the second, since the order on $P^{j}$ is determined by picking  independent, random values in $[0,1]$ for each $s \in P^{j}$.  Assuming inductively that $S^{<j} \cap N_{r_1}(A)$ and $S^{<j} \cap N_{r_1}(B)$ are independent, the result follows for $j$.
%\end {proof}
%

\subsubsection{Continuity of the random almost nets $S^{\leq j}$}\label{contnet}
If $\mathfrak M=(M,\vol,p,E)$ is an extended special, pointed mm-space and $j$ is fixed, we can choose a random element of $\mathbb {MS}^{ext} $ by choosing a random $S^{\leq j} \subset M$ as above. The law of this random element is a measure $\sigma^{\leq j}= \sigma^{\leq j}(\mathfrak M)$ on $\mathbb {MS}^{ext} $ depending on $\mathfrak M$, and this defines a function 
\begin{equation}
\mathbb M_{sp}^{ext} \longrightarrow \mathcal P(\mathbb {MS}^{ext}), \ \ \mathfrak M \longmapsto \sigma^{\leq j}(\mathfrak M) \label{sigmamap}
\end{equation}
where as usual $\mathcal P( \cdot )$ denotes the space of probability measures. 

\begin{lemma}\label{contclaim}
The function in \eqref{sigmamap} is continuous for all $j$.
\end{lemma}

 This proof is suggested by Claim 2 on pg 583 of \cite{Bowencheeger}, but Bowen only does the $j=1$ case, so for completeness we will give the argument here. Note that Bowen also does not prove continuity of the Poisson process, which we did above.

\begin{proof}
In Lemma \ref{poissonlemma}, we showed that the map 
\begin{equation}\label{poissonmap}
\mathbb M^{ext} \longrightarrow \mathcal P(\mathbb {MS}^{ext}), \ \  \mathfrak M \longmapsto \rho_{\mathfrak M}
\end{equation} that associates to an extended mm-space its Poisson process is continuous. Consider the map \begin{equation}\label{unif}
	\mathbb {MS}^{ext} \longrightarrow \mathcal P(\mathbb {MF}^{ext})
\end{equation}
 that takes a tuple $\mathfrak M = (M,\vol,p,E,P)$ to the measure $\nu_{\mathfrak M}$ whose random element is of the form $(M,\vol,p,E,P,f)$, where the values of $f : P \longrightarrow \BR$ are chosen independently and uniformly from $[0,1]$. We claim that the map \eqref{unif} is continuous. For suppose $(M_i,\vol_i,p_i,E_i,P_i)\to (M,\vol,p,E,P)$ in $\mathbb {MS}^{ext}$. Then Lemma \ref{Zspace} allows us to realize the convergence within  $\mathbb {MS}^{ext}(Z)$ for some $Z$. Fixing some $f : P \longrightarrow [0,1]$ and an admissible pair $(\epsilon,R)$ for $(M,\vol,p,E,P,f)$, consider the neighborhood $\mathcal B_{\epsilon,R}$ given by Lemma \ref{bijectionlem}. For large $i$, the hypothesized convergence gives bijections $$ \psi_i : P \cap B_{M}(p,R) \longrightarrow P_i \cap B^\bullet_{M_i}(p_i,R)$$ 
such that $d_Z(P,\psi(s))<\epsilon$ for all $s$, and a tuple $(M_i,\vol_i,p_i,E_i,P_i,f_i)$ is in $\mathcal B_{\epsilon,R}$ exactly when $|f(s) - f_i(\psi(s))| < \epsilon$ for all $s$. This event has $\nu_{\mathfrak M_i}$-measure $\epsilon^{|P \cap B_{M}(p,R)|}$ for $i=1,2,\ldots,\infty$, so by the Portmanteau theorem we have $\nu_{\mathfrak M_i} \to \nu_{\mathfrak M}$ as desired.

It follows from the above that the composition
$$\mathbb M^{ext} {\longrightarrow} \mathcal P(\mathbb {MS}^{ext}) \longrightarrow \mathcal P(\mathcal P(\mathbb {MF}^{ext})) \longrightarrow \mathcal P(\mathbb {MF}^{ext})$$
is continuous, where the first map is \eqref{poissonmap}, the second is the weakly continuous map induced by \eqref{unif}, and the third is the expectation map. Given $\mathfrak M \in \mathbb M^{ext}$, a random element of the associated measure on $\mathcal P(\mathbb {MF}^{ext})$ is exactly a $[0,1]$-weighted Poisson process $(P,f)$. 

Let 
$\mathbb {MF}^{j,ext}$ be the set of all pointed, extended mm-spaces $(M,p,\vol,E)$ that come equipped with $j$ weighted locally finite subsets $(P^i,f^i)$, $i=1,\ldots, j$, which we topologize by regarding it as a subset of $\mathbb {M}^{2j+1,ext}$. Consider the map 
\begin{equation}\label {somanymaps}
\mathbb M^{ext} \longrightarrow \mathcal P(\mathbb {MF}^{j,ext})
\end{equation} that takes $(M,\vol,p,E)$ to the measure whose random element is given by selecting $j$ weighted Poisson processes randomly on $M$, as above. Then the same arguments as above show that \eqref{somanymaps} is continuous. Let $\mathbb {MF}^{j,ext}_{inj} \subset \mathbb {MF}^{j,ext}$ be the subset consisting of tuples where all the $f^i$ are injective and consider the map 
\begin{equation}\label{sjconstrmap}
\mathbb {MF}^{j,ext}_{inj} \longrightarrow \mathcal P(\mathbb {MS}^{ext})
\end{equation}
that takes $(M,\vol,p,E, P^1,f^1,\ldots,P^j,f^j)$ to the measure whose random element is the tuple $(M,\vol,p,E,S^{\leq j})$ constructed from the given data as in the previous section. (This last element of randomness comes from the need to pick a random value $X(s,t)\in [0,1]$ for every pair of elements $s,t \in P := \cup_i P^i$.) Since \eqref{somanymaps} is continuous and any measure in its image gives   $\mathbb {MF}^{j,ext}_{inj}$ full mass, it suffices now to show that \eqref{sjconstrmap} is continuous.

So, suppose that $\mathfrak M_i = (M_i,\vol_i,p_i,E_i, P^1_i,f^1_i,\ldots,P^j_i,f^j_i) \in \mathbb {MF}^{j,ext}_{inj}$, where $i=1,\ldots,\infty$, and that $\mathfrak M_i \to \mathfrak M_\infty$. Applying the appropriate analogue of Lemma \ref{Zspace}, we can assume that this convergence happens within $\mathbb {MF}^{j,ext}_{inj}(Z)$ for some fixed $Z$. Then the construction in \eqref{sjconstrmap} gives a sequence of probability measures $\nu_i$ on $\mathbb{MS}^{ext}(Z)$, where a $\nu_i$-random element is $(M_i,\vol_i,p_i,E_i,S^{\leq j}_i)$, with $S^{\leq j}_i$ constructed from $(P^j_i,f^j_i)$ as in the previous section. We want to show that the measures $\nu_i$ weakly converge.

Set $P_i := \cup_{k=1}^j P_i^k$. After discarding finitely many $i $,  Lemma \ref{bijectionlem} says that there is a sequence $(\epsilon_i,R_i)$ of pairs, where $\epsilon_i\to 0$ and $R_i\to \infty$ and where each pair is admissible with respect to $(M_\infty,\vol_\infty,p_\infty,E_\infty,P_\infty)$, and bijections
$$ \phi_i : P_\infty \cap B_{M_\infty}(p_\infty,R_i) \longrightarrow P_i \cap B^\bullet_{M_i}(p_i,R_i)$$ 
such that $d_Z(s,\phi_i(s))<\epsilon_i$ and $|f_\infty(s) - f_i(\phi_i(s))|<\epsilon_i$ for all $s$. The following claim is in some sense the heart of the proof of this lemma\footnote{The proof of Claim \ref{probability claim} is what fails if you let $j=\infty$ as in Bowen's paper \cite{Bowencheeger}. For in that case, the decision to include an element $s \in S^{\leq \infty}_i$ does not depend just on the part of $P_i$ that lies in a neighborhood of fixed radius around $s$, but potentially on the entire $P_i$.}.

\begin{claim} \label {probability claim} For any fixed $s_\infty \in P_\infty$, the $\nu_i$-probability that $\phi_i(s_\infty) \in S^{\leq j}_i$ converges to the $\nu_\infty$-probability that $s_\infty \in S^{\leq j}_\infty$. 
\end{claim}

\begin {proof}
Let us recall the definition of $S^{j}_i$. Fix some function $\rho : [0,\infty] \longrightarrow [0,1]$ with  $\rho(t) = 0 \text{ if } t \leq r_0, \  \rho(t)=1 \text{ if } t \geq r_1$; we called this function $\phi$ before. Select random elements $X(s,t)\in [0,1]$ for all $s,t\in P_i$. Then $S^{j}_i \subset P_i^j$ and $S^{\leq j}_i = S_i^1 \cup \cdots \cup S_i^j$ are defined recursively with respect to $j$, where an element $s \in P_i^j$ is added to $S^j_i$ exactly when $$\rho(d(s,t))\geq X(s,t)$$ for all $t \in P^j$ with $f_i(t)<f_i(s)$, and for all $t\in S^{\leq j-1}_i$. Note that since $ \rho(t)=1 \text{ if } t \geq r_1$, the decision to include $s \in S^j_i$ only depends on points in the $r_1$-neighborhood of $s$. So, the decision to include a point $s$ in the set $S^{\leq j}_i$ depends only on the $j\cdot r_1$-neighborhood of $s$. 

Fix some $R> d_Z(s_\infty,p_\infty) + j\cdot r_1$. Since $f_\infty$ is injective and $P_\infty^R := P_\infty \cap B_{M_\infty}(p_\infty,R)$ is finite, for all large $i$ we have
$$f_\infty(s) < f_\infty(t) \ \ \iff \ \ f_i(\phi_i(s)) < f_i(\phi_i(t))$$
for all $s,t\in P_\infty^R$. Then the probability that $s_\infty$ in $S^{\leq j}_\infty$ can be calculated using the same computation from the order on and the distance between elements of $P_\infty^R$. For large $i$ the probability that $\phi_i(s_\infty)$ in $S^{\leq j}_i$ is calculated in the same way from 
$\phi_i(P_\infty^R)$, which contains all points of $P_i$ within a distance of $j\cdot r_1$ from $\phi_i(s_\infty)$. The order on $P_\infty^R$ agrees with that on its $\phi_i$-image, and for large $i$ the distances between points of $P_\infty^R$ almost agree with the distances between their $\phi_i$-images. From this, the claim follows.
\end {proof}

Returning to the proof of the lemma, the measure $\nu_\infty$ is supported on points of the form \begin{equation}(M_\infty,\vol_\infty,p_\infty,E_\infty,S_\infty) \in \mathbb{MS}^{ext}(Z), \ \ S_\infty \subset P_\infty. \label{supported on points}\end{equation}
If $(\epsilon,R)$ is an admissible pair for such a point, let $\mathcal B_{\epsilon,R}(S_\infty)$ be the neighborhood that is constructed in Lemma \ref{bijectionlem}, i.e.\ the set of all $(M,\vol,p,E,S) \in \mathbb{MS}^{ext}(Z)$ such that there is a bijection from $S_\infty \cap B_{M_\infty}(p_\infty,R)$ to $S \cap B^\bullet_{M}(p,R)$ that matches points that are within $\epsilon$ of each other in $Z$. As $(\epsilon,R)$ varies, these sets form a neighborhood basis for the point in \eqref{supported on points}. So by the Portmanteau Theorem, to prove $\nu_i \to \nu_\infty$ it suffices to show that \begin {equation}
\nu_i(\mathcal B_{\epsilon,R}(S_\infty)) \to \nu_\infty(\mathcal B_{\epsilon,R}(S_\infty)), \label{nuconv}
\end{equation}
But for large $i$, we have that $$(M_i,\vol_i,p_i,E_i,S_i) \in \mathcal B_{\epsilon,R}(S_\infty) \ \ \iff \ \ S_i \cap B^\bullet_{M_i}(p_i,R) = \phi_i(S_\infty \cap B_{M_\infty}(p_\infty,R)),$$ since for such tuples and large $i$ the bijection in the definition of $\mathcal B_{\epsilon,R}(S_\infty)$ must be the restriction of $\phi_i$. This is equivalent to saying that for all $s_\infty \in P_\infty \cap B_{M_\infty}(p_\infty,R)$, we have $s_\infty \in S_\infty$ if and only if $\phi_i(s_\infty) \in S_i$. But then \eqref{nuconv} follows from Claim \ref{probability claim} via inclusion-exclusion.
 \end{proof}

\subsubsection{Random nerve complexes.} \label{randomnerve}Above, we defined the space $\mathbb{MS}^{ext}$ of pointed, extended special mm-spaces  with distinguished discrete subsets. Here, we explain how to construct a random nerve complex from certain elements of $\mathbb{MS}^{ext}$, in a way that depends continuously on the input. Consider the subset $$(\mathbb {MS}^{ext})' \subset \mathbb{MS}^{ext}$$ consisting of all tuples $(M,\vol,p,E,S)$  such that  \emph{there is a unique element of $S$  that is closest to $p$}. Given some  such tuple, construct a simplicial complex by choosing independently and Lebesgue-randomly a number $\rho(x)\in[r_2,r_3]$ for each $x\in S$ and taking the nerve $N_E(S,\rho)$ of the collection of balls $B_{E}(x,\rho(x))$, where $x\in S$. Note that the balls are in $E$, not in $M$. The unique element of $S$  closest to $p$  is a natural base point for the nerve, and if $K$ is  the \emph{connected component} of $N_E(S,\rho)$  containing $p$ then we have a map
\begin{equation}
	\label {eq22} (\mathbb {MS}^{ext})' \longrightarrow \mathcal P(\mathcal K), %\ \ (M,\vol,p,E,S) \mapsto \nu_{(M,\vol,p,E,S)},
\end{equation}
where $\mathcal K$  is the space of all pointed, finite degree simplicial complexes, $\mathcal P(\cdot)$ denotes the space of probability measures, and the map sends $(M,\vol,p,E,S)$ to the (law of the) random pointed complex $(K,p) $ described above. Note that properties (1) and (2) in the statement of the theorem imply that there is a universal degree bound for all constructed $K$, that is independent of $n$.

\begin{claim}\label{eq22cont}
	The map in \eqref{eq22} is continuous.
\end{claim}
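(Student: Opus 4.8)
The plan is to prove continuity of the map $(\mathbb{MS}^{ext})' \to \mathcal P(\mathcal K)$ by reducing it to the continuity statement \eqref{contclaim} that we have already established (via \cite[Lemma 4.2]{Bowencheeger}), together with the observation that once a discrete subset $S \subset M$ is fixed, the combinatorial structure of the nerve $N_E(S,\rho)$ near the basepoint is governed by finitely much metric data that varies continuously under $(\epsilon,R)$-relations. Concretely, I would fix a convergent sequence $(M_n,\vol_n,p_n,E_n,S_n) \to (M_\infty,\vol_\infty,p_\infty,E_\infty,S_\infty)$ in $(\mathbb{MS}^{ext})'$ and show that the pushforward measures on $\mathcal K$ converge weakly; since $\mathcal K$ is metrizable and the target complexes have a uniform degree bound (hence the relevant $\sigma$-algebra is generated by the cylinder events "the $R$-ball around the basepoint is isomorphic to a fixed finite complex $L$"), it suffices to show that for each such cylinder event the probabilities converge.

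First I would use the convergence in $\mathbb{MS}^{ext}$ to fix, for large $n$, almost-isometric identifications between a large ball around $p_\infty$ in $M_\infty$ (and in $E_\infty$) and the corresponding ball in $M_n$ (resp. $E_n$), under which $S_\infty$ and $S_n$ match up: because $S_\infty$ is discrete and $r_0$-separation is not assumed here, I would work on a ball of radius $R_0 \gg r_3$ large enough that the connected component $K$ of the nerve containing the basepoint is determined by the net points and $E$-balls meeting that region. The key local finiteness input is that the volume lower bound (condition (1) of the theorem, which holds on all the $M_n$ uniformly) forces $S_n \cap B(p_n, R_0)$ to have cardinality bounded independently of $n$, so only finitely many net points are in play; under the $(\epsilon,R)$-relation these finitely many points converge, and the pairwise $E$-distances $d_{E_n}(x,y)$ converge to $d_{E_\infty}(x,y)$.

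Next I would handle the radii. For a fixed finite configuration of net points $x_1,\dots,x_m$ with limiting mutual $E$-distances $d_{ij}$, the nerve is the Čech complex of the balls $B(x_i,\rho(x_i))$, i.e.\ a simplex on $\{i_1,\dots,i_s\}$ appears iff $\bigcap B(x_{i_\ell},\rho(x_{i_\ell})) \neq \emptyset$. The event that a given finite complex $L$ arises is thus a finite Boolean combination of events of the form "$\{\rho(x_{i_\ell})\}_\ell$ lies in a certain region of $[r_2,r_3]^m$ cut out by the inequalities determining nonemptiness of intersections of balls." Since $\rho$ is chosen by independent Lebesgue-uniform samples, the probability of this event is a continuous (in fact piecewise-polynomial) function of the geometric data, provided the \emph{boundary} regions — where some intersection of balls is exactly tangent, or where $p_n$ is equidistant from two net points — have Lebesgue measure zero in the $\rho$-variables. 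This is where the hypothesis $(M,\vol,p,E,S) \in (\mathbb{MS}^{ext})'$ (unique closest net point to $p$) and the "special" hypothesis (metric spheres have measure zero) enter, as in Bowen's treatment: the degenerate locus is a finite union of lower-dimensional subsets of $[r_2,r_3]^m$, so the probability is given by integrating an indicator that is continuous off a null set, and dominated convergence gives convergence of the cylinder probabilities.

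The main obstacle, I expect, is the bookkeeping at the \emph{frontier} of the region $B(p_\infty,R_0)$: a net point just outside the ball could contribute a ball $B_E(x,\rho(x))$ of radius up to $r_3$ that reaches back in and alters a simplex incident to the basepoint, so one must choose $R_0$ (and the radius of convergence of the almost-isometries) large enough — comfortably more than $r_3$ beyond the combinatorial "reach" of the connected component $K$ — to be sure that $K$ depends only on data inside the identified region, and one must argue that with probability $1$ no net point sits exactly at the critical distance where it would marginally matter. Apart from this, the argument is a routine continuity-of-finite-dimensional-integrals statement, and, as the excerpt already notes for \eqref{contclaim}, the presence of the extension $E$ changes nothing beyond notation — distances to net points are simply measured in $E$ rather than in $M$.
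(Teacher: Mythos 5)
Your proposal is correct in substance, but it takes a different route from the paper only in the sense that the paper does not actually carry out this argument: its ``proof'' of the claim is a two-sentence deferral to Bowen's Claim 1 in \cite{Bowencheeger}, together with the remarks that (i) replacing $[5r_0,6r_0]$ by a general $[r_2,r_3]$ changes nothing and (ii) the presence of the super-space $E$ changes nothing because the topology on $\mathbb{M}^{ext}$ imposes the same pointed Hausdorff control on $E$ as on $M$. What you have written is essentially a self-contained reconstruction of the argument being cited: reduce weak convergence in $\mathcal P(\mathcal K)$ to convergence of cylinder probabilities for $R$-balls around the basepoint, localize to finitely many net points using the separation/volume bounds, and observe that the probability of realizing a fixed finite complex is an integral over $[r_2,r_3]^m$ of an indicator that converges off a Lebesgue-null degenerate locus. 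Your attention to the frontier issue (choosing $R_0$ well beyond $r_3$ plus the combinatorial reach of the $R$-ball of the nerve) is exactly the right precaution, and is consistent with the fact that only bounded neighborhoods of the basepoint matter for the topology on $\mathcal K$.

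Two small imprecisions are worth fixing if you write this up. First, nonemptiness of $\bigcap_\ell B_{E}(x_{i_\ell},\rho(x_{i_\ell}))$ in a general metric space is \emph{not} determined by the pairwise distances $d_E(x_i,x_j)$ alone; you need the pointed Hausdorff convergence of the ambient spaces $E_n$ (condition \eqref{hausE} in the definition of the extended $(\epsilon,R)$-relation) to transfer witnesses of nonempty intersections between $E_n$ and $E_\infty$, up to an error absorbed by the radii. Second, the degenerate locus in $[r_2,r_3]^m$ is not in general a finite union of lower-dimensional algebraic pieces; the correct reason it is Lebesgue-null is that the event ``$\bigcap_\ell B_E(x_{i_\ell},\rho_{i_\ell})\neq\emptyset$'' is monotone increasing in each $\rho_{i_\ell}$, so its boundary is the graph of a monotone function and hence null. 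Neither point affects the validity of your approach.
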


\begin{proof} 
In his Claim 1 at the beginning of the proof of Theorem 4.1, Bowen shows that a variant of \eqref{eq22} is continuous. Here are the discrepancies with our version. First, Bowen works only with $[r_2,r_3]=[5r_0,6r_0],$ but his argument obviously generalizes.  He also does not use extended mm-spaces, and so the map he constructs is from the subset $\mathbb {MS'} \subset\mathbb{MS}$ of all $(M,p,\vol,S)$ where there is a unique closest point in $S$ to $p$, and the balls he uses in constructing the nerve are just in $M$, not in some larger space.  However, the quick proof of continuity works verbatim  in our  extended setting: basically all that is used is that the topology comes from pointwise Hausdorff convergence of the mm-spaces $M$, so since we are putting the same topology on the super-sets $E$, the argument extends. 
\end{proof}

\subsubsection{Convergence of  normalized Betti numbers.} \label {conversion section} We now begin on the main argument for the proof of Theorem \ref{Bowen2}. Most of the ideas below are from \cite{Bowencheeger}, altered so that we use our $S^{\leq j}$  instead of the Elek/Bowen random nets.

 Let $ \mathfrak M _n=(M_n ,\vol_n ,E_n )$ be as in the theorem statement, and fix $\epsilon>0$. We will show 
\begin{equation}\label{thegoal}\limsup_{n\to \infty} \frac{B_n}{\vol_n(M_n)} - \liminf_{n\to \infty}  \frac{B_n}{\vol_n(M_n)} \leq C \cdot \epsilon	
\end{equation}
for some $C$ depending only on the constants in the statement of the theorem. Since $\epsilon$ is arbitrary,  this will suffice to prove convergence of $B_n/\vol_n(M_n)$.

 Pick $j=j(\epsilon,r_0,r_1,v_{min},v_{max})$  as in Proposition \ref{uncoveredprop}, and as in \S \ref{contnet} let  $\sigma_n$ be the law of the random almost-net  $S^{\leq j} \subset M_n$  constructed in \S \ref{almostnets}.  For simplicity in notation, we'll drop the superscript $j$ below and just write $S_n$ for a $\sigma_n$-random subset of $M_n$.  Note that by our choice of $j$, the almost-net $S_n$  can  always be extended to a $(r_0,2r_1)$-net $T_n \subset M_n$  in such a way that  for every $n$, we have \begin{equation}\label{morepoints}E\big [|T_n\setminus S_n| \big ] \leq \epsilon \cdot \vol_n(M_n).\end{equation}

Fix some almost-net $S_n$ and an extension $T_n$ as above. If $\rho_n$ is a function that assigns some $\rho_n(x)\in [r_2,r_3]$ to each $x\in S_n$, let $N_{E_n}(S_n,\rho_n)$ be the  nerve complex of the collection of balls $B_{E_n}(x,\rho_n(x)),$ where for each $x\in S_n$ the radius  is chosen Lebesgue-randomly, just as in \S \ref{randomnerve}.  Extending $\rho_n$ to $T_n$ arbitrarily, define $N_{E_n}(T_n,\rho_n)$  similarly. Then $N_{E_n}(S_n,\rho_n)$ is a full subcomplex of $N_{E_n}(T_n,\rho_n)$. By conditions (1) and (2) in the statement of the theorem, \emph{the degrees of these complexes are bounded by some universal constant $D$}. (Indeed, any $x\in T_n$ represents a vertex of the nerve that is connected only to vertices $y\in T_n$ such that $d(x,y) \leq 2r_3$. Since points in $T_n$ are $r_0$-separated, the degree of $T_n$ is then bounded by the number of $r_0/2$-balls one can pack into a $(2r_3+r_0/2)$-ball in $M_n$. The volume bounds in (1) and (2)  imply that this packing constant is  universally bounded.) Since $S_n $  is a full subcomplex of $T_n$, and $deg(T_n)$ is universally bounded,  it follows that the  total number of \emph{simplices} in $T_n\setminus S_n$ is at most $C\cdot  |T_n\setminus S_n|$, for some fixed constant $C$ depending only on the constants in the statement of the theorem. Hence,  we also have
\begin{equation}
\label{closebetti}	\big | b_k(N_{E_n}(T_n,\rho_n)) - b_k(N_{E_n}(S_n,\rho_n)) \big | \leq C\cdot |T_n\setminus S_n|
\end{equation}
 for such a  constant $C$, by Mayer-Vietoris.

Now condition (3) in the statement of the theorem implies there are $\delta_n \to 0$ such that \begin{equation}\label{new3}\big | b_k(N_{E_n}(T_n,\rho_n)) - B_n \big | \leq \delta_n \vol_n(M_n)\end{equation}
for every sequence of nets $T_n$ as above, and any radii $\rho_n$. (A priori, maybe this looks like a stronger statement, but  if  it were not true, we could apply (3) to a sequence of weighted nets with Betti numbers maximally different from $B_n$ to get a contradiction.) It follows from \eqref{closebetti} and \eqref{new3} that for any $S_n$ and any choice of radii $\rho_n$, we have
\begin{equation}
\label{almost!}
	|b_k(N_{E_n}(S_n,\rho_n))-B_n| \leq C\cdot |T_n\setminus S_n| + \delta_n \vol_n(M_n).
\end{equation}
 So, if we $\sigma_n$-randomly chooses the almost-nets $S_n$ and randomly choose the radii $\rho_n(x) \in [r_2,r_3]$  independently and uniformly for each $x\in S_n$, then we have
\begin{align*}\label{expestimate} 
\big | E\big [b_k(N_{E_n}(S_n,\rho_n))\big ] -B_n \big | &\leq  E\big [\big | b_k(N_{E_n}(S_n,\rho_n)) -B_n\big |\big ] \\ 
& \overset{}{\leq} C \cdot E[|T_n\setminus S_n|] +\delta_n \vol_n(M_n) \\
& \leq C\cdot \epsilon \cdot \vol_n(M_n)+\delta_n \vol_n(M_n). \end{align*}
where the second inequality is \eqref{almost!} and the third is \eqref{morepoints}. So, to prove that the liminf and limsup of $B_n/\vol_n(M_n)$ are within $C\epsilon$ of each other,  which is the goal we set in \eqref{thegoal}, it suffices to prove the following claim.

\begin{claim}\label {thegoal2}
The ratio $E\big [b_k(N_{E_n}(S_n,\rho_n))\big ]/\vol_n(M_n)$ converges as $n\to \infty$.
\end{claim} 

We now show how to use Elek's Theorem \ref{elek} to reduce Claim \ref{thegoal2} to two other convergence claims.  For simplicity, let $\tau_n$ be  the law of a $\sigma_n$-random $S_n \subset M_n$ equipped with uniformly and independently chosen radii $\rho_n(x)\in [r_2,r_3]$ at each $x\in S_n$.  So, the expectation in \eqref{almost!}  is taken with respect to $\tau_n$.  As in the introduction, let $\mathcal K$  be the space of pointed complexes and  consider the probability measure \begin{equation}
 	\eta_n:= \frac{ \int \mu_{N_{E_n}(S_n,\rho_n)} \, d\tau_n}{\int |S_n| \, d\sigma_n} \in \mathcal P(\mathcal K).\label{etan}
 \end{equation}
 Note that an $\eta_n$-random pointed complex is \emph{not} produced by $\tau_n$-randomly choosing $(S_n,\rho_n)$ and then choosing  a base point for the corresponding nerve complex uniformly randomly.  For the  cardinalities $|S_n|$ can vary  depending on the particular subset, and the nerve complex of a particular $S_n$  appears more often with respect to $\eta_n$ if $|S_n|$ is larger.  Intuitively, one should think that $\eta_n$ assigns `equal weights' to all the vertices in all the different complexes, rather than weighting the (unpointed) complexes themselves `equally'.

Now, the randomly produced nerve complexes $N_{E_n}(S_n,\rho_n)$ all have volume at most some universal constant times $\vol_n(M_n)$. (Indeed, the points of $S_n$ are $r_0$-separated and by condition (2) in the statement of the theorem, we have a lower bound on the  volume of every $r_0/2$-ball in $M_n$.)  So, for a fixed $n$ the nerve complex takes on only finitely many  isomorphism types.  In other words, the measure $\eta$  is really a \emph{finite} linear combination of the measures $\mu_K$ associated to certain unpointed complexes $K$,  as in Lemma \ref{elekimproved}, which was a slightly more general version of Elek's Theorem \ref{elek}.

\begin{claim} \label {thing1}
 The probability measures $\eta_n$ weakly converge in $\mathcal P(\mathcal K)$.
\end{claim}

 Assuming this for a moment, we conclude from Lemma \ref{elekimproved} that the ratios
$$\frac{E[b_k(N_{E_n}(S_n,\rho_n))]}{E[|S_n|]}$$
converges for all $k$.  We will also show the following.

\begin{claim}\label {thing2}
	The ratio $E[|S_n|]/\vol_n(M_n)$ converges.
\end{claim}

Assuming both Claims \ref{thing1} and \ref{thing2}, we then have that the ratio
$$\frac{E\big [b_k(N_{E_n}(S_n,\rho_n))\big ]}{\vol_n(M_n)} = \frac{E[b_k(N_{E_n}(S_n,\rho_n))]}{E[|S_n|]} \cdot \frac{E[|S_n|]}{\vol_n(M_n)} $$
 converges, as desired, proving Claim \ref{thegoal2} and hence Theorem \ref{Bowen2}.

\medskip

Claims \ref{thing1} and \ref{thing2} will be proved simultaneously. For Claim \ref{thing1}, the point is to use that the extended mm-spaces $\mathfrak M_n$ BS-converge, and to translate that into convergence of the measures $\eta_n$.  Since BS-convergence relies on randomly picking base points, one would like to relate randomly chosen basepoints in $M_n$ to randomly chosen points of $S_n$, which can be used as base points of the associated nerve complexes.  Intuitively, the idea is just to associate to a point $p\in M_n$ the point of $S_n$ closest to $p$. However, there might not be a unique such closest point, and even if there is, the set of points in $M_n$ closest to some $p\in S_n$ may have much different volume from the set of points closest to some other $q\in S_n$, so a Lebesgue-random basepoint in $M_n$ may not correspond to a uniformly random point in $S_n$. The idea, then, is to show that we still obtain a weakly convergent sequence of measures if instead of choosing Lebesgue-random basepoints from $M_n$, we only choose them from small fixed-volume balls around the points of a random $S_n$. This convergence will translate directly into convergence of the measures $\eta_n$ above. 

To formalize this idea above, we work measure theoretically instead of probabilistically. For each $n$, let $\sigma_n$ be the law of $S_n$ and let $\lambda_n $  be the measure on $\mathbb {MS}^{ext}$ (see \S \ref{contnet}) obtained by   pushing forward the product measure $(\vol_n /\vol_n(M_n)) \times \sigma_{n }$ under the map \begin{equation}
 \label {productmeas}	M_n  \times \{ \text{ discrete } S  \subset M_n \ \} \longrightarrow \mathbb {MS}^{ext}, \ \ \  (p,S) \mapsto (M_n,\vol_n,p,E_n,S).
 \end{equation}
This $\lambda_n $ can also be obtained by pushing forward $\mu_{\mathfrak M_n } / \vol_n(M_n)$ via the continuous map $$\ \mathbb M_{sp}^{ext} \longrightarrow \mathcal P(\mathbb {MS}^{ext})$$ from \eqref{sigmamap} then taking the expected value. In symbols,
$$\mu_{\mathfrak M_n }\in  \mathcal P(\mathbb M_{sp}^{ext}) \overset{\eqref{sigmamap}}{\longrightarrow} \mathcal P( \mathcal P(\mathbb {MS}^{ext})) \overset{E[\cdot ]}{\longrightarrow} \mathcal P(\mathbb {MS}^{ext}) \ni  \lambda_n  $$
 Since both the maps in this composition are continuous and $(\mu_{\mathfrak M_n})$ is weakly convergent, it follows that $(\lambda_n)$ converges weakly to some  probability measure $\lambda_\infty$ on $\mathbb {MS}^{ext}$.

Fix some $v$ with $0 < v < v_{min}$, where $v_{min}$ is a lower bound for the volume of any $r_0/2$-ball in any $M_n$, as in condition (1) of the theorem. If $S \subset M$ is a discrete set, let $S(v)$ be the union of all volume-$v$  closed balls in $M$ that are centered at points of $S$. Let $$\mathbb{MS}^{ext}(v)\subset \mathbb{MS}^{ext}$$ be the closed subset consisting of all  tuples $(M,\vol,p,S,E)$ such that $p\in S(v)$. 

Since $v<v_{min}$, the volume-$v$ balls around the points of any \emph{$r_0$-separated} set like $S_n \subset M_n $  are all disjoint, so we have $\vol_n(S_n(v)) = v |S_n |.$ Given $S_n$,  the probability then that a randomly chosen $p\in M_n$ ends up in $S_n(v)$ is $$v|S_n|/\vol_n(M_n).$$  Integrating over $S_n$, we have for $n=1,2,\ldots$ that
\begin{equation}\label{lnmeas}
	\lambda_n (\mathbb{MS}^{ext}(v)) = v\cdot \frac {E[ |S_n | ]}{\vol_n(M_n)},
\end{equation}
where $S_n$ is chosen $\sigma_n$-randomly.  
\begin{claim}\label {primeconverge}
We have $\lambda_n (\mathbb{MS}^{ext}(v)) \to \lambda_\infty(\mathbb{MS}^{ext}(v))$.
\end{claim}

\begin{proof} Fix some very small $\epsilon>0$. As mentioned above, the sets $S_n$ are $r_0$-separated, so the ratios $|S_n|/\vol_n(M_n)$ are bounded above by some $C$ depending on the constants in properties (1) and (2) in the theorem. So if we set $a = v + \epsilon/C$, it follows from \eqref{lnmeas} that  \begin{align*}
\lambda_n\big (\mathbb{MS}^{ext}(a) \big ) &\leq a \frac{E[ |S_n | ]}{\vol_n(M_n)} \\ & \leq \frac{v_{min}}{2} \frac{E[ |S_n | ]}{\vol_n(M_n)} + \epsilon \\ 
&= \lambda_n\big (\mathbb{MS}^{ext}(v) \big ) + \epsilon.
\end{align*}
Since $\mathbb{MS}^{ext}(v)$ is closed and the interior of $\mathbb{MS}^{ext}(a)$ contains $\mathbb{MS}^{ext}(v)$, it then follows from the Portmanteau theorem that
\begin{align*}\limsup \lambda_n\big (\mathbb{MS}^{ext}(v) \big ) &\leq \lambda_\infty(\mathbb{MS}^{ext}(v)) \\ & \leq \liminf \lambda_n\big (\mathbb{MS}^{ext}(a) \big )\\ & \leq \liminf \lambda_n\big (\mathbb{MS}^{ext}(v) \big ) + \epsilon.
\end{align*}	
And then the claim follows, since $\epsilon$ was arbitrary.
\end{proof}

Now let $\lambda'_n$  be the probability measure on $ \mathbb{MS}^{ext}(v)$  that we get by normalizing the restriction of $\lambda_n$.  %Since  the restriction of $\lambda_n$ to $\mathbb{MS}^{ext}(v)$ is obtained by integrating the Lebesgue measure on $S_n(v)$ against $\sigma_n$, and $\vol_n(S_n(v))=v|S_n|$, a $\lambda_n' $-random $5$-tuple $(M_n,\vol_n,p,S_n,E_n)\in \mathbb{MS}^{ext}(v)$ is obtained by choosing a $|\cdot |\sigma_n$-random $S_n\subset M_n$ and then choosing a point Lesbesgue-randomly from $S_n(v)$. Here, $|\cdot |\sigma_n$  is the measure on  the set of discrete subsets of $M_n$ obtained by integrating the cardinality function against $\sigma_n$. 
If $C \subset \mathbb{M S}^{ext}(v)$ is closed, then $C$ is also closed in $\mathbb{MS}^{ext}$. So by Claim~\ref{primeconverge} and the Portmanteau Theorem,  we have that
$$\limsup \lambda'_n( C ) = \limsup \frac{\lambda_n( C )}{\lambda_n(\mathbb{MS}^{ext}(v))} \leq \frac {\lambda_\infty(C)}{\lambda_\infty(\mathbb{MS}^{ext}(v))} = \lambda_\infty'(C).$$
Applying the Portmanteau Theorem again, we see that $\lambda_n' \to \lambda_\infty'$ weakly.

Since all the $S_n$ are $r_0$-separated, if $v$ is sufficiently small relative to the constants in properties (1) and (2) in the theorem, the basepoint $p$ of $\lambda_n'$-almost every $5$-tuple $$(M_n,\vol_n,p,S_n,E_n)\in \mathbb{MS}^{ext}(v)$$ is closest in $M_n$ to the element  $q\in S_n$ that is the center of the volume-$v$ ball in which $p$ lies. So, if $\mathcal K$ is the space of pointed complexes, the random nerve complex map $$\mathbb {MS}^{ext}(v) \longrightarrow \mathcal P(\mathcal K)$$
 one gets by restricting the map in \eqref{eq22} takes the $5$-tuple above to a measure whose random element is obtained by picking a random $\rho_n$ and then taking the connected component of $N_{E_n}(S_n,\rho_n)$ that is rooted at the vertex $q\in S_n$ in whose volume-$v$ ball $p$ lies. 

Now, the restriction of $\lambda_n$ to $\mathbb{MS}^{ext}(v)$ is obtained by integrating the Lebesgue measure on $S_n(v)$ against $\sigma_n$. So, the image of $\lambda_n'$ under the composition
\begin{equation}\label{compppp}
	\mathcal P(\mathbb {MS}^{ext}(v)) \longrightarrow \mathcal P(\mathcal P(\mathcal K)) \overset{E[\cdot]}{\longrightarrow} \mathcal P(\mathcal K)
\end{equation}
is a probability measure on $\mathcal K$ obtained by integrating the counting measure on the vertices of $N_{E_n}(S_n,\rho_n)$ against the measure $\tau_n$ that is the law of the random weighted nets $(S_n,\rho_n)$. In other words, $\lambda_n'$ pushes forward to the measure $\eta_n$ from \eqref{etan}. By Claim \ref{eq22cont}, the composition \eqref{compppp} is continuous, so the fact that the $\lambda_n'$ weakly converge means that the $\eta_n$ also weakly converge. This proves Claim \ref{thing1}. Claim \ref{thing2} follows immediately from \eqref{lnmeas} and Claim \ref{primeconverge}, so our proof of Theorem \ref{Bowen2} is done. \qed

\subsection{A variation of Theorem \ref{Bowen2}}
The following variant of Theorem \ref{Bowen2} will serve us in the sequel:

\begin{corollary}
\label{Bowen3}Fix $k$, let $\mathfrak M_n=(M_n,\vol_n,E_n)$  be a sequence of extended finite volume special mm-spaces and  assume that for some  sequence of constants $V_n$, the measures $\mu_{\mathfrak M_n}/V_n$ weakly converge to some finite measure $\mu$ on $\mathbb{M}^{ext}$. Pick constants $v_{min}>0$, $r_1 >r_0 >0$, and  $r_3 >r_2 \geq 2r_1, $ and a function $v_{max} : \BR_+ \longrightarrow \BR_+$  such that 
\begin{enumerate}
	\item all $r_0/2$-balls in every $M_n $ have volume at least $v_{min}$, 
\item for all $r \in \BR_+$, every $r$-ball in $M_n$ has volume at most $v_{max}(r)$,
\item for every sequence of  $[r_2,r_3]$-weighted $(r_0,2r_1)$-nets $(S_n,\rho_n)$ in $ M_n$, $$\frac{\big | b_k(N_{E_n}(S_n,\rho_n)) - B_n \big | }{V_n}\to 0.$$
\end{enumerate}
Then the ratios $B_n/V_n$  converge.
\end{corollary}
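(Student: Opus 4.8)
The plan is to reduce Corollary~\ref{Bowen3} to Theorem~\ref{Bowen2} by rescaling the measures. Set $L := \mu(\mathbb{M}^{ext}) \in [0,\infty)$. Testing the weak convergence $\mu_{\mathfrak M_n}/V_n \to \mu$ against the bounded continuous function $1$ and using that $\mu_{\mathfrak M_n}(\mathbb{M}^{ext}) = \vol_n(M_n)$, we obtain
$$\frac{\vol_n(M_n)}{V_n} \longrightarrow L .$$
The argument then splits according to whether $L>0$ or $L=0$.

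Suppose first that $L>0$. Then $\vol_n(M_n)/V_n$ is eventually bounded above and below by positive constants, so the probability measures
$$\nu_n := \frac{\mu_{\mathfrak M_n}}{\vol_n(M_n)} = \frac{V_n}{\vol_n(M_n)}\cdot \frac{\mu_{\mathfrak M_n}}{V_n}$$
converge weakly to $\mu/L$, which is a probability measure; hence $(\mathfrak M_n)$ BS-converges in the sense of Theorem~\ref{Bowen2}. Hypotheses (1) and (2) of Theorem~\ref{Bowen2} coincide with hypotheses (1) and (2) here, and, since $V_n/\vol_n(M_n)$ tends to the finite nonzero limit $1/L$, for any sequence of $[r_2,r_3]$-weighted $(r_0,2r_1)$-nets $(S_n,\rho_n)$ the condition $|b_k(N_{E_n}(S_n,\rho_n)) - B_n|/V_n \to 0$ is equivalent to $|b_k(N_{E_n}(S_n,\rho_n)) - B_n|/\vol_n(M_n) \to 0$; so hypothesis (3) here implies hypothesis (3) of Theorem~\ref{Bowen2} for the same sequence $B_n$. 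Theorem~\ref{Bowen2} then gives that $B_n/\vol_n(M_n)$ converges, and multiplying by the convergent sequence $\vol_n(M_n)/V_n$ shows that $B_n/V_n$ converges.

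It remains to treat the degenerate case $L=0$, which is not covered by Theorem~\ref{Bowen2} but is elementary. Fix any sequence of $[r_2,r_3]$-weighted $(r_0,2r_1)$-nets $(S_n,\rho_n)$ in $M_n$; such nets exist, e.g.\ take $S_n$ to be a maximal $r_0$-separated subset (which is automatically a $(r_0,2r_1)$-net since $r_0<2r_1$) and $\rho_n\equiv r_2$. Since $S_n$ is $r_0$-separated, the balls $B_{r_0/2}(x)$, $x\in S_n$, are pairwise disjoint and each has volume at least $v_{min}$ by (1), whence $|S_n|\le \vol_n(M_n)/v_{min}$; moreover a standard packing estimate using (1) and (2) bounds the number of points of $S_n$ in any ball of bounded radius, hence bounds the degree of $N_{E_n}(S_n,\rho_n)$ uniformly in $n$. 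Consequently the number of $k$-simplices of $N_{E_n}(S_n,\rho_n)$, and therefore $b_k(N_{E_n}(S_n,\rho_n))$, is $O(\vol_n(M_n))$, which is $o(V_n)$ because $\vol_n(M_n)/V_n\to 0$. Combining this with hypothesis (3) forces $B_n/V_n\to 0$, so the ratios converge in this case as well. The only point in the whole argument that requires any care is the bookkeeping with the scaling factor $\vol_n(M_n)/V_n$ together with this separate, easy treatment of $L=0$; all the real content lies in Theorem~\ref{Bowen2}.
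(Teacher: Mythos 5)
Your argument is correct and is essentially identical to the paper's own proof: both compute $\lim_n \vol_n(M_n)/V_n = \mu(\mathbb{M}^{ext})$ by testing the weak convergence against the constant function, then split into the cases $\mu(\mathbb{M}^{ext})>0$ (rescale to probability measures and invoke Theorem~\ref{Bowen2}) and $\mu(\mathbb{M}^{ext})=0$ (bound the nerve's Betti numbers by $O(\vol_n(M_n))=o(V_n)$ and use hypothesis (3)). Your write-up merely fills in a few more details in the degenerate case.
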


Note that here, the measures $\mu_{\mathfrak M_n}/V_n$ and their limit may not be probability measures.

\begin{proof}[Proof of  Corollary \ref{Bowen3} given Theorem \ref{Bowen2}]
 Let $\mu$  be the weak limit of $\mu_{\mathfrak M_n}/V_n$.  Then $$\lim_{n\to \infty} \frac { \vol_n(M_n)}{V_n} = \lim_{n\to \infty} \int 1 \ d(\mu_{\mathfrak M_n}/V_n) = \mu(\mathbb{M}^{ext}) \in [0,\infty).$$

Suppose first that $\mu(\mathbb M^{ext})=0$. By (1) and (2),  the number of points in any $(r_0,2r_1)$-net in $M_n$  is comparable to $\vol_n(M_n)$, so the Betti numbers in (3) are  $O(\vol_n(M_n))$. Combining (3) and the triangle inequality,   we have $B_n/V_n \to 0$.

If $\mu(\mathbb M^{ext})>0$, then $V_n/\vol_n(M_n)$ has a finite limit, so the  probability measures $$\frac{\mu_{\mathfrak M_n}}{\vol_n(M_n) }= \frac{V_n}{\vol_n(M_n)}  \cdot \frac{\mu_{\mathfrak M_n}}{V_n}\to \frac \mu{\mu(\mathbb M^{ext})} .$$  In other words, the extended mm-spaces $\mathfrak M_n$ BS-converge. Also, $\vol_n(M_n)$ and $V_n$ are of bounded ratio, so (3) holds with $\vol_n(M_n)$  instead of $V_n$. Theorem \ref{Bowen2} then  says that $B_n/\vol_n(M_n)$  converges, from which it follows that $B_n/V_n$  converges too.
\end{proof}

\section{Pinched negative curvature and Theorem \ref{pnc}}
\label{pncsec}

In this section, we consider only $d$-manifolds $M$   with  sectional curvature $$-1 \leq K \leq -a^2 <0,$$ and we let $\epsilon(d)$ be the corresponding  $d$-dimensional Margulis constant. For any $\epsilon\leq \epsilon(d)$, each component of the $\epsilon$-thin part $(M_n )_{\leq \epsilon}$ is either:
\begin{itemize}
\item a {\it Margulis tube}, which is (topologically) a tubular neighborhood of a closed geodesic, and so is homeomorphic to a ball bundle over the circle, or
\item a {\it cusp neighborhood}, which is homeomorphic to $S \times [0,\infty)$  for some compact aspherical $(d-1)$-manifold $S$ with virtually nilpotent fundamental group.
\end{itemize}
See for instance \cite[\S 8]{Ballmannmanifolds} for a proof.

 In the introduction, we explained how to produce  BS-convergent sequences $(M_n)$ of hyperbolic $3$-manifolds where the normalized Betti numbers do not converge, using Dehn filling.  In the example we gave, the volumes $\vol(M_n)$  were bounded, but one can construct similar examples with unbounded volumes by filling  the complements of links with unboundedly many components, instead of a fixed knot complement.  Instead of doing the details of this approach, though, we'll briefly describe a similar example in which the BS-limit is easier to understand.

\begin {example}\label {3dimex}
Let $M$ be the mapping torus of a homeomorphism $\phi : S \longrightarrow S$,  where $\phi$ is a pseudo-Anosov homeomorphism of some closed surface $S$ with genus at least $2$.  So, $M$  comes with a fibration $M \longrightarrow S^1$. Identify $S$  with a fiber  of this fibration, and let $\gamma $ be a simple closed curve on $S$. By Thurston's Hyperbolization Theorem \cite{Kapovichhyperbolic}, the manifold $$M(\infty) :=M \setminus \gamma$$ admits a finite volume hyperbolic metric. 

Let $M(k)$ be  the closed $3$-manifold obtained from $M(\infty)$ by $(1,k)$-Dehn filling\footnote{Here, we use meridian-longitude coordinates to parametrize the boundary of a cusp neighborhood, where the meridian is the curve that was homotopically trivial before we drilled out $\gamma$.}. For large $k$,  Thurston's Dehn Filling Theorem \cite{Benedettilectures}  implies that $M(k)$  admits a hyperbolic metric;  moreover, as $k\to\infty$ the manifolds $M(k)\to M(\infty)$ geometrically.  Note that since we are doing $(1,k)$ filling, each $M(k)$ is also a genus $g$ mapping torus. Indeed,  if $T_\gamma$  is a Dehn twist around $\gamma$, the monodromy map of $M(k)$ is $T_\gamma^k \circ \phi$.

For $k\in \BN\cup \{\infty\}$, let $M_n(k)$ be the degree $n$ cyclic cover of $M(k)$  corresponding to the subgroup of $\pi_1 M(k)$ that is the preimage of $n\BZ \leq \BZ \cong \pi_1(S^1)$  under the map induced by $$M(k) \hookrightarrow M \longrightarrow S^1.$$ Then for every $n$ and $k<\infty$, the manifold $M_n(k)$ is a  mapping torus over a genus $g$ surface, and hence $$b_1(M_n(k))\leq 2g+1.$$
On the other hand, setting $k=\infty$ the manifold $M_n(\infty)$ has $n$  cusps, so we have $$b_1(M_n(\infty)) \geq n.$$  

Now set $k=n$. As $n\to \infty$, the sequences $M_n(n)$ and $M_n(\infty)$ both BS-converge to the same limit measure $\mu$ on $\mathcal M$. This $\mu$ is supported on pointed manifolds isometric to the {infinite} cyclic cover $M_\infty(\infty)$ of $M(\infty)$  corresponding to the kernel of the map on fundamental groups induced by $M(\infty)\longrightarrow S^1$; more carefully, $\mu$ is the push forward of the  normalized Riemannian  measure on $M(\infty)$ under the map
$$M(\infty) \longrightarrow \mathcal M, \ \ p \longmapsto [(M_\infty(\infty),p_\infty)],$$
where $p_\infty$  is any point that projects to $p$  under the covering map $M_\infty(\infty) \longrightarrow M(\infty)$. (This is a special case of the construction in Example 2.4 in \cite{Abertunimodular}.) 
However, 
$$b_1(M_n(n)) /\vol(M_n(n)) \to 0, \ \ b_1(M_n(\infty)) /\vol(M_n(\infty)) \not \to 0.$$
\end {example}

 Essentially, the reason why Dehn filling is problematic is that from the perspective of most points in a manifold, a Margulis tube with very small core length can look nearly identical to a rank two cusp.  (One can only see the difference if one is close enough to be able to distinguish the core geodesic of the tube, and when the core length is small, the set of points a bounded distance from the core has very small volume.)   This coincidence is particularly three-dimensional, though.  For instance, note that the  boundary of a $d$-dimensional Margulis tube is a $S^{d-2}$-bundle over $S^1$, while the boundary of a cusp neighborhood is a Euclidean $(d-1)$-manifold. If $d=3$, the torus $T^2$  satisfies both descriptions, but when $d\geq 4$, $S^{d-2}$-bundles over $S^1$ are not aspherical, so cannot be Euclidean.

\vspace{2mm}

 The plan for the rest of \S \ref{pncsec} is as follows. In \S \ref{lower bounds}  we show that Margulis tubes with short cores have large volume in dimension at least $4$.  In \S \ref{thickapproxsec} we adapt some of  Gelander's work in \cite{Gelanderhomotopy}, showing that one can approximate (shrinkings of) the $\epsilon$-thick parts of manifolds with pinched negative curvature with certain nerve complexes. And then finally, in \S \ref{pfsec} we  prove  Theorem \ref{pnc}.

\subsection{Lower volume bounds for Margulis tubes} \label{lower bounds} As mentioned above, the basic idea in Theorem \ref{pnc} is to show that the number of Margulis tubes with very short cores that appear in a manifold with pinched negative curvature is a very small fraction of its volume.  To  verify this, we  will use the following proposition.

\begin{prop}[Short geodesics imply large volume] Let $d\geq 4$
and let $M $ be a  complete Riemannian $d$-manifold with  sectional curvatures in the interval $[-1,-a^2]$, where $a>0$. Suppose that $T \subset M_{\leq \epsilon}$ is a component of the $\epsilon$-thin part of $M $ whose core geodesic has length $\ell$. Then $\vol(T) \geq C:=C(d,a,\epsilon,\ell),$
where $C \to \infty$ as $\ell \to 0$.\label {shortgeodesics}
\end{prop}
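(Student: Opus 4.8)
The plan is to estimate the volume of the Margulis tube $T$ from below by understanding the geometry of its boundary torus-like hypersurface, and in particular by using that in dimension $d \geq 4$ the boundary $\partial T$ is an $S^{d-2}$-bundle over $S^1$, so it cannot collapse in all directions as $\ell \to 0$. Concretely, $T$ is a tubular neighborhood of a closed geodesic $\gamma$ of length $\ell$, and we may take it to consist of all points within some radius $r = r(\epsilon)$ of $\gamma$ (the upper bound on the injectivity radius forces $r$ to be bounded below by a constant depending only on $d$, $a$, $\epsilon$ once $\ell$ is small, since otherwise points near $\gamma$ would be in the thick part). The idea is to foliate $T \setminus \gamma$ by the geodesic spheres $\partial T_s$ of radius $s \in (0,r]$ about $\gamma$ and integrate their volumes: $\vol(T) = \int_0^r \vol(\partial T_s)\, ds$, so it suffices to bound $\vol(\partial T_s)$ from below for a definite range of $s$.

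First I would analyze the normal geometry: lift to the universal cover $\widetilde M$, where $\gamma$ lifts to a geodesic line $\tilde\gamma$ stabilized by a hyperbolic isometry $g$ of translation length $\ell$ along $\tilde\gamma$ and some rotational part on the normal $(d-1)$-sphere. The tube $T$ is then the quotient of a metric neighborhood of $\tilde\gamma$ by $\langle g \rangle$. A point of $\partial T_s$ corresponds to a point at distance $s$ from $\tilde\gamma$, and the volume of $\partial T_s$ is, up to bounded multiplicative error coming from the curvature pinching $[-1,-a^2]$ (Rauch/Bishop--Gromov comparison on the normal exponential map of a geodesic), equal to $\ell$ times the $(d-2)$-volume of the normal sphere of radius $s$, which grows like $\sinh^{d-2}(s)$ up to pinching constants. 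Naively this only gives $\vol(T) \succeq \ell$, which goes to $0$; the point is that $s$ cannot range over a \emph{fixed} interval independent of $\ell$ — rather, the tube radius $r$ itself must grow as $\ell \to 0$. This is the crux: the Margulis lemma forces the product of the translation length with the ``width'' to stay bounded below in a suitable sense, so that for $x \in \partial T_r$ to be in the thin part one needs $r$ large when $\ell$ is small. Quantitatively, $x \in M_{\leq \epsilon}$ means there is a nontrivial element of $\pi_1(T)$ moving $\tilde x$ by at most $\epsilon$; writing this element as $g^k$, its displacement at distance $s$ from $\tilde\gamma$ is at least $\max(k\ell, \text{(something growing in } s))$ by the standard displacement formula in pinched curvature, so on the boundary sphere $s=r$ the displacement is exactly $\approx \epsilon$, which pins down $r$ as a function of $\ell$ with $r \to \infty$ as $\ell \to 0$ (roughly $r \sim \log(1/\ell)$).

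Putting these together: $\vol(T) = \int_0^{r(\ell)} \vol(\partial T_s)\, ds \succeq \ell \int_0^{r(\ell)} \sinh^{d-2}(s)\, ds \succeq \ell \cdot \sinh^{d-2}(r(\ell)) / (d-2)$, and since $\sinh(r(\ell)) \succeq e^{r(\ell)} \succeq 1/\ell$ (from $r(\ell) \sim \log(1/\ell)$), this gives $\vol(T) \succeq \ell \cdot \ell^{-(d-2)} = \ell^{-(d-3)}$, which tends to $\infty$ precisely when $d \geq 4$ (and is where the hypothesis $d \geq 4$, equivalently $d \neq 3$, is used — when $d = 3$ the exponent is $0$ and one only gets a lower bound bounded away from $0$, matching the failure of the theorem). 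The main obstacle I expect is making the relation $r = r(\ell)$ precise with the correct dependence on the pinching constant $a$: one must be careful that with curvature only bounded above by $-a^2 < 0$ rather than exactly $-1$, the displacement function of $g^k$ at distance $s$ grows like $\cosh(as)$ rather than $\cosh(s)$, which changes constants and the precise rate $r(\ell) \sim (1/a)\log(1/\ell)$ but not the qualitative conclusion; similarly one needs the Margulis constant and the standard structure theory of thin parts (as cited to \cite[\S 8]{Ballmannmanifolds}) to guarantee $T$ really is a genuine tube around a short geodesic and that the element realizing thinness is a power of $g$. The actual computation of the tube volume integral and the comparison-geometry estimates on $\vol(\partial T_s)$ are routine once this is set up.
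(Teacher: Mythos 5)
There is a genuine gap in your quantitative claim that the tube radius satisfies $r(\ell)\sim\log(1/\ell)$, so that $\sinh(r(\ell))\succeq 1/\ell$. This is the crux of your computation and it is not justified — and in general it is false. The radius out to which \emph{some} power $g^k$ has displacement $\leq\epsilon$ depends delicately on the rotational part $r\in O(d-1)$ of $g$, not just on $\ell$: writing the displacement of $g^k$ at distance $s$ from the axis as roughly $k\ell\cosh(s)+\theta_k\sinh(s)$ (where $\theta_k$ measures how far $r^k$ is from the identity), one must find a single $k$ making \emph{both} terms small, and the best one can guarantee is a pigeonhole bound of the form $\cosh(r(\ell))\succeq \ell^{-1/(m+1)}$ for some $m$ depending on $d$ (the paper's version of this, in the displayed inequalities \eqref{firstdist}--\eqref{seconddist}, uses $m=\dim O(d-1)$). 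The $(1,n)$-Dehn-filling tubes in dimension $3$ already have $\sinh(r)\asymp\ell^{-1/2}$, not $\ell^{-1}$. Plugging the guaranteed radius into your integral $\vol(T)\succeq\ell\sinh^{d-2}(r(\ell))$ then gives $\ell$ to a power that is \emph{positive} (so the bound tends to $0$) unless one carries out the pigeonhole with the optimal exponent $m=\lfloor(d-1)/2\rfloor$ and redoes the arithmetic; your exponent $-(d-3)$, and in particular the way it "explains" the failure at $d=3$, is an artifact of the unjustified rate.

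Relatedly, your proposal never actually uses $d\geq 4$ structurally, whereas the paper's proof hinges on it via a separate lemma: an abelian subgroup of $O(n)$ with $n\geq 3$ acts on $S^{n-1}$ with quotient of diameter at least $\pi/2$ (Lemma \ref{onlem}). This is what fails for $d=3$, where $\langle r\rangle$ can act with dense orbits on $S^1$. The paper does not integrate boundary volumes at all; instead it uses the pigeonhole bound only to show the $L(\ell)$-neighborhood of the core lies in the thin part with $L(\ell)\to\infty$, then uses Lemma \ref{onlem} to place $\approx L(\ell)/\epsilon$ points on the distance-$L(\ell)$ sphere whose $\langle g\rangle$-orbits are pairwise $\epsilon/3$-separated (via the lower comparison bound $d(\alpha(t),\beta(t))\geq\theta t$ and radial projection), producing that many disjoint embedded $\epsilon/3$-balls inside $T$. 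This yields only a logarithmic-in-$1/\ell$ lower bound on $\vol(T)$, but it is a correct one. To repair your argument you would need to (i) prove a sharp enough lower bound on $r(\ell)$ in terms of the closure of $\langle r\rangle$ in $O(d-1)$, and (ii) verify that the resulting exponent is negative exactly when $d\geq 4$ — at which point you would be redoing, in disguise, the representation-theoretic input of Lemma \ref{onlem}.
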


By Wang's finiteness theorem \cite{Wangtopics}, for $d\geq 4$ a finite volume hyperbolic $d$-manifold $M$ can only have a very short geodesic if its volume is very large.   So  for hyperbolic  manifolds, one can think of the above as a strengthening of this statement that says that the large volume has to come from the Margulis tube around the short geodesic.

One could probably  prove (at least a version of)  the proposition  using a geometric limit argument informed by the above discussion on Dehn filling. We assume there is a sequence of manifolds and Margulis tubes $T_n \subset M_n$ where the core length $\ell_n \to 0$, but where $\sup \vol(T_n) < \infty.$ Take base points $p_n \in \partial T_n$ and extract pointed Gromov-Hausdorff limits of everything, giving $T_\infty \subset M_\infty$ and $p_\infty \in \partial T_\infty$. Since $\ell_n \to 0$, this $T_\infty$  is a cusp neighborhood, rather than a Margulis tube. And since $\sup \vol(T_n) < \infty$, one can argue that the  diameter of $\partial T_n$ is bounded, which means that $\partial T_\infty$ should actually be homeomorphic to $\partial T_n$. But as mentioned above, this is impossible since the boundary of a cusp neighborhood is always aspherical, but the boundary of a Margulis tube is not if $d\geq 4$. 

 We chose not to use the geometric limit approach because pushing through the limiting arguments requires control over higher order derivatives of the metric tensors, which we do not necessarily  want to include in the statement of Theorem \ref{pnc}.  Also, the proof we give below is attractive in that one could use it to write down an explicit formula for $C$.

\vspace{2mm}

 Before starting the proof of Proposition \ref{shortgeodesics}, we establish the following simple lemma.

\begin {lemma}\label {onlem}
Suppose that $n\geq 3$ and $A \leq O(n)$ is an abelian subgroup, which we consider as acting on the unit sphere $S^{n-1}$ by isometries. Then $\diam(G \backslash S^{n-1}) \geq \pi/2$. \end {lemma}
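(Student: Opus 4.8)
Suppose $n \geq 3$ and $A \leq O(n)$ is abelian, acting on $S^{n-1}$ by isometries; then $\operatorname{diam}(A \backslash S^{n-1}) \geq \pi/2$. (The excerpt writes $G$ for $A$ in the displayed conclusion; I will treat them as the same group.)

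**Plan.** The key point is that a commuting family of orthogonal transformations can be simultaneously put into a standard block form, and then one exhibits two points of $S^{n-1}$ whose entire $A$-orbits stay mutually far apart. Concretely, I would first invoke the structure theory for abelian subgroups of $O(n)$: over $\BR$, the elements of $A$ are simultaneously block-diagonalizable into $1\times 1$ blocks ($\pm 1$) and $2\times 2$ rotation blocks, because $A$ is a commuting family of normal (orthogonal) operators. Equivalently, $\BR^n$ decomposes as an orthogonal direct sum $\BR^n = \bigoplus_i W_i$ of $A$-invariant subspaces, each of dimension $1$ or $2$, with $A$ acting on each $W_i$ through $O(1)$ or $SO(2)$ — in particular, on each $W_i$, every element of $A$ acts as an \emph{isometry of the sphere $W_i \cap S^{n-1}$}, and that sphere is either two antipodal points or a circle. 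The crucial consequence is that if $v \in W_i$ is a unit vector, then the whole orbit $A \cdot v$ is contained in $W_i$.

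**Main step.** Pick any two distinct summands $W_i, W_j$ (this is where $n \geq 3$ is used together with... actually $n\geq 2$ already gives two summands when $n$ is even or $n\ge 3$ in general; the hypothesis $n\geq 3$ guarantees we are not in the trivial $n\le 1$ range and ensures there are at least two blocks unless $A$ acts irreducibly on a single $2$-plane — I should check the case where there is only one block). Fix unit vectors $u \in W_i$ and $w \in W_j$. Then for \emph{every} $g, h \in A$ we have $gu \in W_i$ and $hw \in W_j$, so $gu \perp hw$, hence $d_{S^{n-1}}(gu, hw) = \pi/2$. Projecting to the quotient, $d_{A\backslash S^{n-1}}([u],[w]) = \pi/2$ (the quotient distance is the infimum over orbit representatives, and every pair of representatives is at distance exactly $\pi/2$), so the diameter is at least $\pi/2$. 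The one remaining case is when the decomposition has a \emph{single} block, i.e. $A$ acts on all of $\BR^n$ through a single $SO(2)$ (forcing $n=2$, excluded) or through $O(1)$ (forcing $n=1$, excluded); since $n\geq 3$, there must be at least two blocks, so we are always in the good case. Actually I should be slightly more careful: a single $2\times 2$ rotation block on $\BR^n$ with $n\ge 3$ is impossible as a *faithful* picture of the whole space, but $A$ could still have an invariant decomposition $W_1 \oplus W_2$ with $\dim W_1 = 2$, $\dim W_2 = 1$ when $n = 3$ — that is still two blocks, which is exactly what we want.

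**Expected obstacle.** The only real content is the simultaneous block-diagonalization of a commuting family of orthogonal matrices; this is classical (commuting normal operators over $\BC$ are simultaneously unitarily diagonalizable, and one descends to $\BR$ by pairing conjugate eigenspaces), so I would either cite it or spend one line recalling it. The bookkeeping obstacle is making sure that for \emph{every} $n \geq 3$ the invariant decomposition genuinely has two or more orthogonal pieces — but this is automatic since each piece has dimension $\leq 2$ and they sum to $n \geq 3$. After that, the distance computation is immediate from orthogonality. So I expect the proof to be short: structure theorem, extract two orthogonal invariant subspaces, pick a unit vector in each, observe all orbit representatives are mutually orthogonal, conclude $\operatorname{diam} \geq \pi/2$.
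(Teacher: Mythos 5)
Your proposal is correct and is essentially the paper's argument: the paper also simultaneously block-diagonalizes $A$ (phrased as containment in a block subgroup $\BT$ of $O(2)$-blocks plus possibly a $\pm1$), takes the two invariant subspheres in the $x_1x_2$-plane and in the $x_3x_4$-plane or $x_3$-axis, and concludes from their orthogonality that the quotient distance is $\pi/2$. Your handling of the ``at least two blocks'' point and of the quotient metric matches what the paper does implicitly.
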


 Here, the distance between two points in the quotient is the minimal distance in $S^{n-1}$ between points in their preimages. Note that $G\backslash S^{n-1}$ is a path metric space.

\begin {proof}
The subgroup $A$ is contained in a subgroup $\BT \leq O(n)$ of the form 
$$\BT = \begin{pmatrix}
	O(2)  & &  \\
& \ddots &   \\
& & O(2)   
\end{pmatrix} \text{ or } \BT = \begin{pmatrix}
	O(2) & & &  \\
& \ddots & &   \\
& & O(2) &  \\
& & & \pm 1  
\end{pmatrix},$$
written in suitable orthonormal coordinates $(x_1,\ldots,x_n)$ for $\BR^{n}$,  depending on whether $n$ is even or odd. But in these coordinates, the action of $A$ preserves the intersection $I$ of $S^{n-1}$ with the $x_1x_2$-coordinate plane, and it also preserves the intersection $J$ of $S^{n-1}$ with either the $x_3x_4$-coordinate plane or the $x_3$-axis, depending on whether $n\geq 4$ or $n=3$. The distance in $S^{n-1}$ between $I$ and $J$ is $\pi/2$, so the lemma follows.\end{proof}

\begin {proof}[Proof of Proposition \ref{shortgeodesics}]
Pick a universal covering map $\tilde M \longrightarrow M$ and lift the core geodesic $\gamma\subset T$ to a complete geodesic $\tilde \gamma \subset \tilde M$. Let $\tilde T$ be the component of the preimage of $T$ that contains $\tilde \gamma$, and let $g : \tilde M \longrightarrow M$ be a nontrivial deck transformation stabilizing $\tilde \gamma$ that is primitive in the deck group. So, $g$  is determined up to inversion,  the cyclic group $\langle g \rangle$ is the stabilizer of $\tilde T$, and any deck transformation not in $\langle g \rangle$ moves $\tilde T$ completely off itself. 

Pick a point $\tilde p\in \tilde \gamma$ and isometrically identify the fiber $N^1(\tilde\gamma)_{\tilde p}$ of the  unit normal bundle of $\tilde \gamma$ with $S^{n-2}$. Parallel transport then determines a global trivialization
$$\tilde \gamma \times S^{n-2} \longrightarrow N^1(\tilde \gamma),$$
and we can then write the action of $g$ on $N^1(\tilde \gamma)$  in these coordinates as
$$g = \tau \times r,$$
where $\tau$  is a translation by $\ell$ along $\tilde \gamma$ and $r \in O(d-1)$. 

Since $O(d-1)$ is a compact manifold, there is some $c>0$ such that if $S \subset O(d-1)$ is any set of $m^{\dim O(d-1)}$ points, there are $s,t \in S$  such that $$d(s(\xi),t(\xi)) \leq c/ m, \ \ \forall \xi\in S^{d-2}.$$ %(The more natural denominator is $m$, but we can replace it with $m+1$ after doubling $c$, and the latter is more convenient because of the forthcoming $\lfloor$floor$\rfloor$.)
Setting $m=\lfloor \ell^{\frac {-1}{\dim O(d-1) + 1}} \rfloor$, we get that there is some power $r^k, \ k \leq m^{\dim O(d-1)}$ with \begin{equation} d(r^k(\xi),\xi) \leq c/m \leq c \cdot \ell^{\frac 1{\dim O(d-1) + 1}}, \ \ \forall \xi\in S^{d-2}. \label {firstdist}\end{equation}
Note that  we also have \begin{equation}
	\label {seconddist}
 d(\tau^k(\tilde x),\tilde x) \leq \ell \cdot m ^{\dim O(d-1)} \leq \ell \cdot \ell ^{\frac{-\dim O(d-1)}{\dim O(d-1) + 1}} =\ell^{\frac 1{\dim O(d-1) + 1}}, \ \ \forall \tilde x\in \tilde \gamma.\end{equation}

Since $\tilde M$ has sectional curvatures in $[-1,-a^2]$, it follows from the triangle comparison theorems that if two unit speed geodesic segments $\alpha,\beta$ in $\tilde M$ share an endpoint $\alpha(0)=\beta(0)$ at which they intersect with angle $\theta$, then we have that 
\begin {equation}\label {3rd}\theta t \leq d(\alpha(t),\beta(t)) \leq \theta \sinh(t), \ \ \forall t>0.\end {equation} Similarly, by an application of Berger's extension of Rauch's comparison theorem \cite[Theorem 1.34]{Cheegercomparison}, if $\alpha,\beta$ start out with $\alpha(0),\beta(0)\in \tilde \gamma$, and both are perpendicular to $\tilde \gamma$, then \begin {equation}\label {4th}d(\alpha(t),\beta(t)) \leq d(\alpha(0),\beta(0)) \cdot \cosh(t), \ \ \forall t>0.\end{equation} Combining \eqref{firstdist} and \eqref{seconddist}  with the upper bounds in \eqref{3rd}  and \eqref{4th},  and using the decomposition $g=\tau \times r$, we get that for a point $\tilde x\in \tilde M$  that lies at distance $t$ from $\tilde \gamma$, 
$$d(g^k(\tilde x),\tilde x) \leq \ell^{\frac 1{\dim O(d-1) + 1}}\big (c \cdot \sinh(t) + \cosh(t) \big ) \leq 2 \ell^{\frac 1{\dim O(d-1) + 1}}\cdot c \cdot \cosh(t). $$
So if $\ell$ is small, we can set
$L = \cosh^{-1}(\epsilon/(6c\ell^{\frac 1{\dim O(d-1) + 1}})),$ and 
then the $L$-neighborhood of $\tilde \gamma$  will be contained in the subset $\tilde T_{\epsilon/3} \subset \tilde T$  that is the lift of $T \cap M_{\leq \epsilon/3}$.

Since $d\geq 4$, Lemma \ref{onlem} implies that $$\diam(\langle r \rangle \backslash S^{n-2}) \geq \pi /2.$$
Since the quotient is a path metric space, we can then choose  for any $\theta>0$ a set $S$ of $\lfloor \pi/(2\theta)\rfloor$ points in  $S^{n-2}$,  with the property that the $\langle r \rangle$-orbits of  any two distinct points in $S$ are at  least  a distance of $\theta$ from each other in $S^{n-2}$.  Identify $S^{n-2}$  with the  fiber $N^1(\tilde \gamma)_{\tilde p}$  of the unit normal bundle as above, let $\exp_{\tilde p}$ be the Riemannian exponential map, and let $$\mathcal S = \{ \exp_{\tilde p}(L \cdot \xi ) \ | \ \xi \in S\}.$$  
By the lower bound in \eqref{3rd},  we get that the distance between the $\langle g \rangle$-orbits of any two distinct points in $\mathcal S $  is at least $\theta L$.  So taking $\theta = {\epsilon}/{2L},$ the $\langle g \rangle$-orbits of points in $\mathcal S $ are at least $\epsilon/3$ apart in $\tilde M$.

 Now $\tilde T_{\epsilon/3}$ is star-shaped with respect to the geodesic $\tilde \gamma$, so we can now project $\mathcal S \subset \tilde T_{\epsilon/3}$ radially from $\tilde \gamma$ to a subset $\mathcal S' \subset \partial T_{\epsilon/3}$. Since $\tilde M$ has negative curvature, this radial projection cannot decrease the distance between any two points in $\tilde T_{\epsilon/3}$ that are the same distance from $\tilde \gamma$. Hence,  the $\langle g \rangle$-orbits of points in $\mathcal S' $ are still at least $\epsilon/3$ apart in $\tilde M$.  It follows that the covering map $\tilde M \longrightarrow M$ restricts to an embedding on the union of the $\epsilon/3$-balls in $\tilde M$ around the points of $\mathcal S'$. (It is an embedding on each individual ball by definition of $\tilde T_{\epsilon/3}$.) Each of these $\epsilon/3$-balls is contained in $\tilde T$, so volume of $T$ is bounded below by the sum of the volumes of these balls. Each ball has volume at least some $V=V(\epsilon,d,a)$, by the usual comparison arguments, and there are $\lfloor \pi/(2\theta) \rfloor$ balls in total. By our definitions of $\theta$ and $L$,  the number of balls goes to infinity with $\ell$, and the proposition follows. \end{proof}

\subsection {Simplicial approximation of the thick part} \label {thickapproxsec}  Suppose that $M$ is a metric space and  $A \subset M$. Following \cite{Gelanderhomotopy}, we denote the metric $\xi$-neighborhood of $A$ by $(A)_\xi$, and we define the \emph{$\xi$-shrinking} of  $A$ to be the subset 
$$)A(_\xi \ := \ M\setminus  (M\setminus A)_\xi \subset A.$$ 
Fix now $\epsilon,\xi>0$, with $\epsilon$ less than the Margulis constant  $\epsilon(d)$, and let $M$ be a Riemannian  $d$-manifold with curvatures in  $[-1,-a^2]$. The main result of this section is the following, which is an application of techniques of Gelander \cite{Gelanderhomotopy}. Informally, it says that  the shrinking $)M_{\geq \epsilon}(_\xi$ of the $\epsilon$-thick part of $M$ can be simplicially modeled (up to homotopy) by the nerve complex associated to a certain open cover.

\begin{proposition}[(\cite{Gelanderhomotopy})$_\epsilon$]
For any sufficiently small $\epsilon>\epsilon'>0$ and any $c' \geq c \geq 1$, there is a constant $b=b(d,a,\epsilon)>0$ and some small $\delta_0=\delta_0(d,a,c,\epsilon')>0$ such that  the following holds for every $d$-manifold $M$  with curvatures in $[-1,-a^2]$, and all $\delta<\delta_0$.

Set $\xi=\epsilon/2+\delta$ and let $  S$ be a $(\delta,c\delta)$-net in $)M_{\geq \epsilon}(_\xi$. Let $$\rho : S \longrightarrow [(b+c)\delta,(b+c')\delta]$$ be any function and let $N(S,\rho)$ be the nerve of the collection of balls $B_{M}(x,\rho(x)) $, where $x\in S$. Then $N(  S,\rho)$ is homotopy equivalent to $M_{\geq \epsilon}$.\label {gelanderprop}
\end{proposition}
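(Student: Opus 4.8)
## Proof plan for Proposition \ref{gelanderprop}

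The plan is to follow Gelander's strategy from \cite{Gelanderhomotopy} for building a simplicial model of the thick part, keeping careful track of the role of the shrinking operation. The geometric input is the structure of the $\epsilon$-thin part recalled at the start of the section: each component is a Margulis tube or cusp neighborhood, and in either case the thin part deformation retracts onto its core (a closed geodesic or a point/lower-dimensional set), while the thick part $M_{\geq\epsilon}$ deformation retracts onto the shrinking $)M_{\geq\epsilon}(_\xi$ with $\xi = \epsilon/2+\delta$, since $\delta$ is tiny compared to $\epsilon$ and the boundary $\partial M_{\geq\epsilon}$ is a reasonably regular hypersurface (this uses a lower bound on the principal curvatures of the $\epsilon$-level set, which follows from the curvature bounds $[-1,-a^2]$ via standard comparison, as in \cite{Gelanderhomotopy}). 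So it suffices to prove that $N(S,\rho)$ is homotopy equivalent to $)M_{\geq\epsilon}(_\xi$.

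First I would establish the two hypotheses of the Nerve Lemma for the covering $\mathcal U = \{B_M(x,\rho(x)) : x \in S\}$ of $)M_{\geq\epsilon}(_\xi$ — well, of a slightly larger set. The key point, and this is where the choice of the constant $b = b(d,a,\epsilon)$ enters, is that a ball $B_M(x,r)$ of radius $r$ at most a fixed multiple of $\epsilon$, centered at a point $x$ which lies at distance $\geq \epsilon/2$ from the thin part (this is exactly what $x \in )M_{\geq\epsilon}(_\xi$ buys us, since $\xi = \epsilon/2+\delta$), embeds isometrically from the universal cover and is therefore convex and in particular contractible. Taking $b$ large enough that $(b+c')\delta < \delta_0$ forces all radii $\rho(x)$ to be small relative to $\epsilon$; combined with the fact that $S$ is a $(\delta,c\delta)$-net and the radii are all $\geq (b+c)\delta \gg c\delta$, the balls in $\mathcal U$ cover $)M_{\geq\epsilon}(_\xi$ with lots of overlap. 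For finite intersections $\bigcap_{i} B_M(x_i,\rho(x_i))$: when such an intersection is nonempty, all the $x_i$ lie within a bounded multiple of $\delta$ of each other, hence inside a single embedded convex ball lifted from $\tilde M$, where balls are convex (CAT($-a^2$), so intersections of balls are convex) and thus the intersection is contractible — so the covering is good and the Nerve Lemma gives $N(S,\rho) \simeq |\mathcal U| \supseteq )M_{\geq\epsilon}(_\xi$.

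The remaining step, and the one I expect to be the main obstacle, is to show that the union $|\mathcal U| = \bigcup_{x\in S} B_M(x,\rho(x))$ deformation retracts onto $)M_{\geq\epsilon}(_\xi$, equivalently that the balls do not "spill over" too far into the thin part or around the ends in a way that changes the homotopy type. Here one again uses that $b$ is large: since every $x\in S$ has $\inj$-comfortable distance $\geq \epsilon/2$ from the thin part and $\rho(x) = O(\delta)$ with $\delta < \delta_0(d,a,c,\epsilon')$ as small as we like, the entire union $|\mathcal U|$ is trapped in a thin collar $()M_{\geq\epsilon}(_\xi)_{O(\delta)}$ around $)M_{\geq\epsilon}(_\xi$, which retracts onto $)M_{\geq\epsilon}(_\xi$ along the normal exponential flow of the hypersurface $\partial M_{\geq\epsilon}$ — this is exactly the point where the parameter $\epsilon'$ enters, guaranteeing the collar is embedded and the retraction is well-defined. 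One has to check this retraction is compatible with the covering (maps balls into balls, up to homotopy) so that it can be promoted to the statement about $N(S,\rho)$; this bookkeeping is the technical heart, and it is carried out in \cite{Gelanderhomotopy} in essentially this form. Chaining the homotopy equivalences $N(S,\rho) \simeq |\mathcal U| \simeq )M_{\geq\epsilon}(_\xi \simeq M_{\geq\epsilon}$ finishes the proof.
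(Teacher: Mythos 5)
Your top-level skeleton (nerve of convex balls $\simeq$ union of balls $\simeq$ shrinking of the thick part $\simeq$ thick part) does match the shape of the argument, which in the paper is packaged as an application of Gelander's Lemma 4.1. But the steps you defer to ``bookkeeping'' are exactly the content of the proposition, and two of your supporting claims are off.

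First, you misidentify the role of $b$. It is not a smallness parameter ensuring the radii are small relative to $\epsilon$; it is the reciprocal of a uniform transversality bound. Writing the preimage of the thin part in the universal cover as a locally finite union of convex sets $\tilde X_\gamma=\{x: d(x,\gamma x)<\epsilon\}$, one must produce, at every point $x$ within distance $\epsilon$ of the thin part, a single unit vector $n(x)$ with $n(x)\cdot\nabla d(\cdot,\tilde X_\gamma)|_x\ge 1/b$ simultaneously for every $\gamma$ realizing the distance. The radii are taken in $[(b+c)\delta,(b+c')\delta]$ precisely so that the balls reach far enough in the direction $n(x)$ for the pushing-in retraction to work; that is why $b$ appears in the radius range at all. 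Your sketch never produces such a coherent outward direction, and without it the assertion that the union of balls ``retracts onto $)M_{\geq\epsilon}(_\xi$ along the normal exponential flow of $\partial M_{\geq\epsilon}$'' begs the question: $\partial M_{\geq\epsilon}$ is the boundary of a union of convex sets and is in general only Lipschitz, with no well-defined normal flow where several $\tilde X_\gamma$ meet. (Relatedly, the retraction of $M_{\geq\epsilon}$ onto its shrinking comes from the star-shapedness of each thin component about its core geodesic or its point at infinity -- one flows radially outward from the core -- not from regularity of the $\epsilon$-level hypersurface.)

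Second, verifying the gradient condition is where the pinching constant $a$ enters $b=b(d,a,\epsilon)$, and it is the one place where the paper must go beyond citing \cite{Gelanderhomotopy}: Gelander controls $b$ in the cusp case using local symmetry, which is unavailable here. The paper replaces this with a comparison argument (via Heintze--Im Hof) showing that along a geodesic emanating away from the parabolic fixed point, $\frac{d}{dt}d(c(t),\tilde X_\gamma)\ge \epsilon a/2$, so one may take $1/b=\epsilon a/2$ in the cusp case; the Margulis tube case follows Gelander's Lemmas 7.1--7.4. Your proposal contains no substitute for this estimate, so as written it does not close the gap between ``the balls are convex and overlap a lot'' and ``the union of balls has the homotopy type of the thick part.''
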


Here, recall from \S \ref{normbmm} that $S $ is a $(\delta,c\delta)$-net if it is $\delta$-separated and $c\delta$-covers. The key to the above is the following restatement of a result from \cite{Gelanderhomotopy}.

\begin{lemma}[essentially Lemma 4.1 in \cite{Gelanderhomotopy}] \label{tsachiklem} Let $M$ be a complete Riemannian $d$-manifold with  sectional curvatures in  the interval $[-1,0]$, let $M' \subset M $ be a  connected submanifold with boundary  and let $\epsilon,b,c,c'>0,$ with $c<c'$,  be fixed. Suppose that 
\begin{enumerate}
	\item $M'$ is contained in the $\epsilon $-thick part of $M$,
\item $M'$ is homotopy equivalent to $)M'(_{\epsilon/2}$,
\item the preimage $\tilde X$ of $X=M \setminus M'$ under a universal covering map $\tilde M \longrightarrow M$  is a locally finite union $\tilde X = \cup_\gamma \tilde X_\gamma$ of convex open sets  with smooth boundary,
\item  for any point $x \in \tilde M \setminus \tilde X$ with $d(x,\tilde X)\leq \epsilon$,  there is a unit  tangent vector $n(x) \in T_x(\tilde M)$  such that for each $\gamma$ with $d(x,\tilde X)=d(x,\tilde X_\gamma)$, we have
$$n(x) \cdot \nabla d( \cdot, \tilde X_\gamma) |_x \geq 1/b.$$
\end{enumerate}
 Then there is some small $\delta_0=\delta_0(\epsilon,b,d)>0$ such that the following holds for all $\delta<\delta_0$. Let $S$ be any $\delta$-separated subset of $)M'(_{\epsilon/2+\gd}$ that $c\delta$-covers $)M'(_{\epsilon/2+\gd}$, and let $$\rho : S \longrightarrow [(b+c)\delta,(b+c')\delta]$$ be a function. Then the nerve of the collection of balls $$\mathcal C = \{B_M(x,\rho(x)) \ | \ x\in S\}$$ is homotopy equivalent to $M'$.\end{lemma}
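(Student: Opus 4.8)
Write $X = M\setminus M'$ and $U = \bigcup_{x\in S} B_M(x,\rho(x))$, so that $\mathcal C = \{B_M(x,\rho(x)) : x\in S\}$ is an open cover of $U$ with nerve $N(S,\rho)$. The plan is: (i) show $\mathcal C$ is a good cover, so that the Nerve Lemma gives $N(S,\rho)\simeq U$; (ii) sandwich $U$ between two $\xi$-shrinkings of $M'$; (iii) use hypotheses (2)--(4), following \cite{Gelanderhomotopy}, to deformation retract $M'$ onto $U$. The genuinely new content is (i) and (ii) and the bookkeeping of constants; step (iii) is a transcription of the proof of Lemma 4.1 in \cite{Gelanderhomotopy}.

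For (i), first choose $\delta_0=\delta_0(\epsilon,b,c',d)$ small enough that $(b+c')\delta_0<\epsilon/4$ and $(b+c'-1)\delta_0<\epsilon/2$. Granting the sandwich from (ii), the balls of $\mathcal C$ then lie in $)M'(_{\epsilon/2-(b+c'-1)\delta}\subseteq M'$, hence in the $\epsilon$-thick part of $M$ where $\inj_M\geq\epsilon/2$ and $K\leq 0$. By the usual comparison arguments (convexity radius $\geq \inj_M/2$ when $K\leq 0$), each ball of $\mathcal C$ is geodesically convex, and any nonempty finite intersection of them has a unique lift to the Hadamard cover $\tilde M$, where it is an intersection of convex subsets of a strongly convex ball, hence convex and contractible. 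Since $M$ is paracompact and $\mathcal C$ is a locally finite open good cover, the Nerve Lemma yields $N(S,\rho)\simeq U$.

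For (ii), since $S\subseteq )M'(_{\epsilon/2+\delta}$, $S$ $c\delta$-covers $)M'(_{\epsilon/2+\delta}$, and $c\delta\leq \rho(x)\leq (b+c')\delta$, a direct check using $)A(_\xi = \{p : d(p,M\setminus A)\geq\xi\}$ gives
\[
)M'(_{\epsilon/2+\delta}\ \subseteq\ U\ \subseteq\ )M'(_{\epsilon/2-(b+c'-1)\delta}.
\]
For (iii) I would reproduce \cite{Gelanderhomotopy}: hypotheses (3) and (4) provide, on the collar $\{0\leq d(\cdot,\tilde X)\leq\epsilon\}$, a deck-invariant (hence descending) continuous unit vector field $\nu$ with $\nu\cdot\nabla d(\cdot,\tilde X_\gamma)\geq 1/b$ on every nearest piece --- here (3) makes the relevant distance functions $C^1$ off $\tilde X$ and gives local finiteness, while (4) is exactly the assertion that a common ascent direction exists. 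A cutoff of $\nu$ has a complete forward flow $\psi_t$ fixing $)M'(_\epsilon$ pointwise with $d(\psi_t(y),X)\geq\min\{d(y,X)+t/b,\epsilon\}$; since $M'$, $)M'(_{\epsilon/2-(b+c'-1)\delta}$ and $)M'(_{\epsilon/2}$ are $\psi_t$-invariant and are carried into $)M'(_{\epsilon/2+\delta}\subseteq U$ in uniformly bounded time, $\psi_t$ deformation retracts each onto $)M'(_{\epsilon/2+\delta}$, and, flowing the collar $M'\setminus )M'(_{\epsilon/2+\delta}$ until it first meets $U$ while keeping $U$ fixed, deformation retracts $M'$ onto $U$. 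Hypothesis (2) is what lets one replace $)M'(_{\epsilon/2}$ by $M'$ in the conclusion. Combining with (i), $N(S,\rho)\simeq U\simeq M'$.

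The main obstacle is step (iii): constructing $\nu$ continuously across the locus where the set of nearest pieces $\tilde X_\gamma$ jumps, and verifying that the ``time to first enter $U$'' function used in the final deformation retraction is continuous on the collar and bounded --- both handled in \cite{Gelanderhomotopy}. The only thing I must check beyond citing that reference is that nothing there uses the pinched lower curvature bound or any symmetric-space structure, only $K\leq 0$, thickness, and the four listed hypotheses; this is the case because all those arguments are about distance functions to the convex pieces $\tilde X_\gamma$ and the behaviour of geodesics in nonpositive curvature.
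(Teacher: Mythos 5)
Your steps (i) and (ii) are fine, and for what it's worth the paper itself does not reprove this lemma: it cites Lemma 4.1 of \cite{Gelanderhomotopy} and only records the modifications needed (radii varying in $[(b+c)\delta,(b+c')\delta]$ rather than all equal to $(b+1)\delta$, $c\delta$-covering nets rather than maximal separated sets, no use of local symmetry, and a remark identifying the nerve of $\mathcal C$ with the nerve of the cover of $)M'(_{\epsilon/2}$ by the sets $C\cap{)M'(_{\epsilon/2}}$). Your proposal restructures the argument, and the restructured final step is where the gap is. The move ``flow the collar until it first meets $U$, keeping $U$ fixed'' does not produce a deformation retraction of $M'$ onto $U=\bigcup_{x\in S}B_M(x,\rho(x))$. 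The first-hitting-time function $\tau(y)=\inf\{t:\psi_t(y)\in U\}$ is upper but not lower semicontinuous for an open target like a union of metric balls: a flow line that grazes $\partial B(x,\rho(x))$ tangentially before entering $U$ elsewhere forces $\tau$ to jump down on nearby flow lines. (Moreover $\psi_{\tau(y)}(y)\in\partial U$, not $U$.) Hypothesis (4) gives uniform transversality of the flow only to the level sets of $d(\cdot,\tilde X)$, not to the boundaries of the balls, so there is no reason this can be repaired by shrinking $\delta_0$. This step is also not ``handled in \cite{Gelanderhomotopy}'': there the manifold is never retracted onto the union of balls; instead the Nerve Lemma is applied to the cover of the shrinking $)M'(_{\epsilon/2}$ by the intersections $C\cap{)M'(_{\epsilon/2}}$, whose finite intersections are shown to be contractible using the flow, and then hypothesis (2) converts $)M'(_{\epsilon/2}$ into $M'$. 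Note also that your sandwich $A\subseteq U\subseteq B$ with $A\hookrightarrow B$ a homotopy equivalence does not by itself imply $U\simeq A$ (think of $A$ a point, $B$ a disk, $U$ arbitrary), so there is no purely formal rescue.

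A tell-tale sign that something is off is that your sketch never uses the lower bound $\rho(x)\geq(b+c)\delta$ --- only $\rho(x)\geq c\delta$ and the upper bound. That lower bound is exactly what the correct architecture needs: a point $p$ with $d(p,\tilde X)=\epsilon/2$ flows into $)M'(_{\epsilon/2+\delta}$ in time at most $b\delta$ (by hypothesis (4)), hence lies within $b\delta+c\delta$ of some $x\in S$, so the balls of radius at least $(b+c)\delta$ cover all of $)M'(_{\epsilon/2}$ and not merely $)M'(_{\epsilon/2+\delta}$. With that covering in hand one applies the Nerve Lemma to $\{C\cap{)M'(_{\epsilon/2}}\}$ as a cover of $)M'(_{\epsilon/2}$, checks contractibility of the finite intersections (this is the content of Propositions 4.7--4.8 of \cite{Gelanderhomotopy}, and is where the flow is really used), and finally observes --- as the paper does explicitly --- that every nonempty finite intersection of balls in $\mathcal C$ meets $)M'(_{\epsilon/2}$, so this nerve coincides with the nerve of $\mathcal C$ itself. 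I would rewrite step (iii) along these lines rather than trying to retract onto $U$.
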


 We should say that Lemma 4.1 in \cite{Gelanderhomotopy} is not quite stated as above.  The biggest difference is that \emph{maximal} $\delta$-separated subsets of $)M'(_{\epsilon/2+\gd}$ are used in \cite{Gelanderhomotopy} instead of subsets that $c\delta$-cover, and the radii of the balls in the collection $\mathcal C $ are all chosen to be $(b+1)\delta$. However, the proof works just as well if all the $1$'s are replaced by numbers between $c$ and $c'$.  (So for instance, one should use $c'$ instead of $1$ in  Proposition 4.7 of \cite{Gelanderhomotopy}, and allow the radius to vary in  Proposition 4.8.) A purely cosmetic difference is that Lemma 4.1 in \cite{Gelanderhomotopy} is stated for locally symmetric spaces, but local symmetry is not used in its proof.  Finally, the  conclusion of Lemma 4.1 in \cite{Gelanderhomotopy} is that $M'$ is homotopy equivalent to an unnamed simplicial complex, but if one looks at \cite[Proposition 4.8]{Gelanderhomotopy}, one will see that this unnamed complex is just the nerve mentioned above. (The statement of Proposition 4.8 references the cover of $)M'(_{\epsilon/2}$ given by the collection of intersections $C \, \cap \, )M'(_{\epsilon/2}$, rather than the cover by  $C \in \mathcal C$, and a priori the difference matters when constructing the nerve complex.  However, the proof of Proposition 4.8 shows that when a finite subset of $\mathcal C$ has a nonempty intersection, this intersection intersects $)M'(_{\epsilon/2}$, so one gets the same nerve whether one considers the collection $\mathcal C$ referenced in our statement of Lemma \ref{tsachiklem}, or the collection consisting of the intersections of its elements with $)M'(_{\epsilon/2}$, as in \cite{Gelanderhomotopy}.)

\begin {proof}[Proof of Proposition \ref{gelanderprop}]
Set $M'=M_{\geq \epsilon}$.  First, note that $M'$ is homotopy equivalent to its $\epsilon/2$-shrinking, since its complement components are star-shaped neighborhoods of either a closed geodesic or a point at infinity, so we can deformation retract $M'$ to its shrinking by flowing outwards. See the proof of Claim 8.5 of \cite{Gelanderhomotopy} for more details. So, by the Nerve Lemma it suffices to show that $M'$  satisfies the conditions of the lemma above.

 Conditions (1) and (2) are immediate from the definition of $M'$, where $$\tilde X_\gamma = \{ x\in \tilde X \ | \ d(x,\gamma(x)) < \epsilon\}, \ \ \gamma \in \pi_1 M.$$ 
 For condition (3), we define the vector $n(x)$ in two cases. As long as $\epsilon$ is small, we can assume that any $x$ in condition $(3)$ is contained in the preimage of the $\epsilon(d)$-thin part of $M$, where $\epsilon(d)$  is the Margulis constant. If $x$ lies in a component of this preimage that covers a Margulis tube, we define $n(x)$ exactly as in the proof of Lemma 7.4 of \cite{Gelanderhomotopy}, i.e.\ by using Lemma 7.3 with $b=b(d)$ and a unit vector $n(x)$ whose inner products with the gradients $\nabla d( \cdot, \tilde X_\gamma)$ are all at least $1/b$. If $x$ lies in a component that covers a cusp neighborhood, we let $n(x)$ point away from the point at infinity to which the lifted cusp neighborhood accumulates, just as in Section 6 of \cite{Gelanderhomotopy}. In \cite{Gelanderhomotopy} the control on the associated constant $b$ makes use of the fact that only locally symmetric manifolds are considered. Instead, here we use the following:% but in light of the following claim any $b\geq\epsilon \cdot a/2$ works for this case.

\begin{claim}[Moving away from the cusp]
Suppose $\tilde M$  is a simply connected Riemannian  manifold with curvatures in $[-1,-a^2]$ and let $\xi \in \partial_\infty \tilde M$. Let $\gamma$ be a parabolic isometry of $\tilde M$ with $\gamma(\xi)=\xi$, let $x \in \tilde M$ be  a point with $d(x,\gamma(x)) \geq \epsilon$ and let $c : \BR \longrightarrow \tilde M$  be a  unit speed geodesic with $c(-\infty)=\xi$ and $c(0)=x$. Then we have
$$\frac d{dt} d( c(t), \tilde X_\gamma)  |_{t=0} \geq \epsilon \cdot a/2.$$\label {derivative!}
\end{claim}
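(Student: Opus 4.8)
The plan is to reduce the inequality to a lower bound, at a single point of $\partial\tilde X_\gamma$, for the cosine of the angle between $c'(0)$ and the outward unit normal of the convex set $\tilde X_\gamma$, and then to estimate that angle by comparison with the hyperbolic plane of constant curvature $-a^2$.

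First I would record the convexity facts. Since $\gamma$ is an isometry of the Hadamard manifold $\tilde M$, the displacement function $f:=d(\cdot,\gamma\cdot)$ is convex, smooth (there is no cut locus and $\gamma$ has no fixed point), and has no critical points (a convex function with no minimum has none, and $\gamma$ parabolic means $\inf f=0$ is not attained); so $\tilde X_\gamma=\{f<\epsilon\}$ is a genuine smooth convex domain with $\epsilon$ a regular value. Consequently $t\mapsto d(c(t),\tilde X_\gamma)$ is a convex function which vanishes for $t\le t_0$, where $t_0\le 0$ is the time at which $c$ leaves $\tilde X_\gamma$ ($t_0\le 0$ because $d(x,\gamma x)\ge\epsilon$ forces $x\notin\tilde X_\gamma$, and $f(c(t))\to 0$ as $t\to-\infty$ makes $f$ strictly increasing along $c$ near $t_0$, so $\partial\tilde X_\gamma$ is transverse to $c$ there). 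By convexity,
\[
\tfrac{d}{dt}d(c(t),\tilde X_\gamma)\big|_{t=0}\ \ge\ \tfrac{d}{dt}d(c(t),\tilde X_\gamma)\big|_{t=t_0^+}\ =\ \langle c'(t_0),\nu_{p}\rangle ,
\]
where $p:=c(t_0)\in\partial\tilde X_\gamma$ and $\nu_{p}$ is the outward unit normal there. Replacing $x$ by $p$ (the geodesic $c$ still satisfies the hypotheses), I may therefore assume $d(x,\gamma x)=\epsilon$, i.e.\ $x\in\partial\tilde X_\gamma$, and I must bound $\langle c'(0),\nu\rangle$ from below, $\nu=\nu_x$.

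Next I would bring in the horospherical geometry. Let $b=b_\xi$ be the Busemann function at $\xi$ normalized so that $b(x)=0$ and $b$ grows away from $\xi$, so $c'(0)=\nabla b(x)$. Since $\gamma$ is parabolic and fixes $\xi$, it preserves every horosphere centred at $\xi$ (otherwise $b\circ\gamma-b$ would be a nonzero constant and $\gamma$ would translate along the geodesic to $\xi$, hence be hyperbolic); thus $\gamma x$ and $\gamma^{-1}x$ lie on the horosphere $H=b^{-1}(0)$ through $x$. By the first variation formula for length, $\nabla f(x)=-(T+T')$, where $T,T'\in T_x\tilde M$ are the unit vectors pointing from $x$ toward $\gamma x$ and toward $\gamma^{-1}x$; hence $\nu=-(T+T')/|T+T'|$, and using $|T+T'|\le 2$,
\[
\langle c'(0),\nu\rangle\ =\ \frac{-\langle\nabla b(x),T\rangle-\langle\nabla b(x),T'\rangle}{|T+T'|}\ \ge\ \tfrac12\bigl(-\langle\nabla b(x),T\rangle-\langle\nabla b(x),T'\rangle\bigr).
\]
The key input is a lower bound for $-\langle\nabla b(x),T\rangle$ (symmetrically for $T'$): the minimizing geodesic $\sigma$ from $x$ to $\gamma x$ has length $\epsilon$ and joins two points of $H$, so — horoballs being convex — $b\le 0$ along $\sigma$ with equality at the endpoints, giving $-\langle\nabla b(x),T\rangle\ge 0$ at once. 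To make this quantitative I would use Heintze–Im Hof's comparison: in curvature $\le -a^2$ the second fundamental form of $H$ is $\ge a$ times the induced metric, so a length-$\epsilon$ chord between two points of $H$ meets $H$ at least as steeply as a length-$\epsilon$ horocyclic chord does in the curvature-$(-a^2)$ plane, where a direct computation gives that angle to be $\arcsin(\tanh(a\epsilon/2))$. Hence $-\langle\nabla b(x),T\rangle\ge\tanh(a\epsilon/2)$, likewise for $T'$, and therefore $\langle c'(0),\nu\rangle\ge\tanh(a\epsilon/2)$.

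The delicate point — the main obstacle — is pinning down the constant. The crude bound $|T+T'|\le 2$ yields $\tanh(a\epsilon/2)$, which agrees with $a\epsilon/2$ only to leading order; moreover, for parabolics with a nontrivial rotational part (screw motions, which occur once $d\ge 4$) one can check on explicit upper half-space models that $|T+T'|$ need not be small and that $\langle c'(0),\nu\rangle$ can in fact be as small as roughly $\tanh(a\epsilon/2)$, so $a\epsilon/2$ is slightly too optimistic. This is harmless for the application: the only use of the claim in the proof of Proposition~\ref{gelanderprop} is to produce a strictly positive constant $b=b(d,a,\epsilon)$ with $n(x)\cdot\nabla d(\cdot,\tilde X_\gamma)|_x\ge 1/b$, so one should either sharpen the horosphere comparison or simply carry the bound $\tanh(a\epsilon/2)$ — equivalently, any positive $c(a,\epsilon)$ — in place of $\epsilon a/2$ throughout. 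The computational ingredients (the Riccati/Heintze–Im Hof comparison for horospheres and the explicit horocyclic-chord angle) are routine and I would not reproduce them.
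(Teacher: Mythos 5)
Your argument is correct, but it takes a genuinely different route from the paper's, and your suspicion about the constant is well founded. The paper makes the same convexity reduction to the crossing time $s\le 0$ where $c$ meets $\partial \tilde X_\gamma$, but there it bounds $\frac{d}{dt}d(c(t),\gamma\circ c(t))|_{t=s}$ below by $\epsilon\cdot a$ via an exponential-convergence estimate for the two asymptotic geodesics $c$ and $\gamma\circ c$ (citing Heintze--Im Hof), and then passes to the normal derivative using only that $d_\gamma$ is $2$-Lipschitz, i.e.\ $|\nabla d_\gamma|\le 2$. You instead compute $\nabla d_\gamma=-(T+T')$ by first variation and estimate each of $-\langle\nabla b,T\rangle$ and $-\langle\nabla b,T'\rangle$ by the angle that a length-$\epsilon$ geodesic chord of a horosphere makes with that horosphere, using the Heintze--Im Hof bound $\mathrm{II}\ge a$ for horospheres in curvature $\le -a^2$. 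Your route costs you the explicit chord-angle computation (which you correctly identify as $\arcsin(\tanh(a\epsilon/2))$ in the model), but it buys a bound that is actually attainable, whereas the paper's intermediate step is not.

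Indeed, the paper's inequality \eqref{arggh} fails even in the constant-curvature model: for a parabolic fixing $\infty$ in the upper half-space model of curvature $-a^2$, one computes directly that $\frac{d}{dt}d(c(t),\gamma\circ c(t))=2\tanh\bigl(a\, d_\gamma(c(t))/2\bigr)$ along any geodesic emanating from $\infty$, which at the crossing time equals $2\tanh(a\epsilon/2)<a\epsilon$. (Equivalently, the asserted comparison $d(c(t),\gamma\circ c(t))\le d(c(s),\gamma\circ c(s))e^{a(t-s)}$ for $t\le s$ goes the wrong way for chordal, as opposed to horospherical, distance.) Your worked example with a $\pi$-rotation screw parabolic in dimension $\ge 4$, at points far from the axis, shows moreover that $|\nabla d_\gamma|$ really does approach $2$ there, so the normal derivative approaches $\tanh(a\epsilon/2)$ and the stated constant $\epsilon a/2$ is false by a term of order $\epsilon^3$. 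As you say, this is harmless: every use of the claim, in Proposition \ref{gelanderprop} and in Lemma \ref{volumeballs}, requires only some positive lower bound depending on $a$ and $\epsilon$, so one should simply carry $\tanh(a\epsilon/2)$ in place of $\epsilon a/2$.
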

\begin{proof}
Since the geodesics $c(t)$ and  $\gamma \circ c(t) $ are asymptotic to $\xi$ as $t\to -\infty$ and always lie on the same horospheres, \cite[Proposition 4.1]{heintze1977geometry} says that for any fixed $s$,
$$d(c(t),\gamma \circ c(t)) \leq d(c(s),\gamma \circ c(s))\cdot e^{a(t-s)}, \ \ \forall t\leq s.$$
Since the two sides are equal at $t=s$ and we are saying that $t\leq s$,  it follows that
$$\frac d{dt} d(c(t),\gamma \circ c(t)) |_{t=s} \geq \frac d{dt} d(c(s),\gamma \circ c(s)) \cdot e^{a(t-s)} |_{t=s}.$$
Apply this to the (unique) value $s \leq 0$ such that $c(s) \in \partial \tilde X_\gamma.$ Then 
\begin{equation}\label {arggh}\frac d{dt} d(c(t),\gamma \circ c(t)) |_{t=s} \geq 
 \epsilon \cdot a.\end{equation}
On $\partial \tilde X_\gamma$,  the gradients of $d_\gamma$ and $d(\cdot, \tilde X_\gamma)$ are parallel.  As $$d_\gamma(y) \leq d_\gamma(x)+2d(x,y) \ \ \forall x,y,$$ we have $|\nabla d_\gamma| \leq 2$, while $\nabla d(\cdot, \tilde X_\gamma)$  is a unit vector. So, this and \eqref{arggh} imply:
$$\frac d{dt} d( c(t), \tilde X_\gamma)|_{t=s}  = \nabla d(\cdot, \tilde X_\gamma) \cdot c'(0) \geq \frac 12 \nabla d_\gamma \cdot c'(0) \geq 
\frac 12 \, \epsilon \cdot a. $$
Finally, as curvature is nonpositive and $X_\gamma$ is a convex set, $d(c(t), \tilde X_\gamma)$ is a convex function and hence has  increasing derivative. As $s\leq 0$,  the claim follows.
\end{proof}

So,  to finish the proof of Proposition \ref{gelanderprop}, we just take $b$ to be at least the constant $b=b(d)$ from the Margulis tube case, and at least $\epsilon \cdot a /2$. With this $b$ and $n$, the conditions of Lemma \ref{tsachiklem} are satisfied, so the proposition follows.
\end {proof}

 Finally, we prove the following estimate on the volumes of balls in the shrunk thick parts  $)M_{\geq \epsilon}(_{\delta}$, which  is necessary if we want to invoke Theorem \ref{Bowen2}.

\begin {lemma}\label {volumeballs}
 Suppose that $M$  is a complete Riemannian $d$-manifold with sectional curvatures in $[-1,-a^2]$. Fix  $\epsilon < \epsilon(d)$, let $\delta,r<\min\{\epsilon,\epsilon(d) -\epsilon\}/4$ and set $N:=\, )M_{\geq \epsilon}(_{\delta}$. Then there is some $c=c(d,\epsilon,a)>0$ such that 
$$\vol(B_{N}(p,r)) \geq c r^d, \ \forall p\in N.$$
\end {lemma}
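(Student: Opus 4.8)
The plan is to reduce the estimate to a purely local lower bound for volumes of metric balls in the ambient manifold $M$, and then to show that a definite proportion of such a ball survives in $N = \, )M_{\geq\epsilon}(_\delta$ when $r$ and $\delta$ are small relative to the Margulis constant. First I would recall that since $M$ has sectional curvature bounded below by $-1$, the Bishop--Gromov inequality (or the standard Rauch comparison argument) gives a uniform lower bound $\vol(B_M(p,r)) \geq c_0(d)\, r^d$ for all $p\in M$ and all $r$ less than, say, $1$; this is where the constant depending only on $d$ and $a$ (in fact only on $d$ and the lower curvature bound) enters. The issue is that $B_M(p,r)$ need not be contained in $N$, so I must control $B_M(p,r) \cap N$ from below.

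The key geometric point is that for $p \in N = \, )M_{\geq\epsilon}(_\delta$, the metric ball $B_M(p,r)$ with $r < \delta$ is \emph{entirely} contained in $M_{\geq\epsilon}$, by the very definition of the $\delta$-shrinking: $)M_{\geq\epsilon}(_\delta = M \setminus (M\setminus M_{\geq\epsilon})_\delta$ consists of points at distance $>\delta$ from the complement of $M_{\geq\epsilon}$. So the only obstruction to $B_M(p,r)\subset N$ is that part of the ball might lie in the $\delta$-collar $M_{\geq\epsilon} \setminus \, )M_{\geq\epsilon}(_\delta$, i.e.\ within distance $\delta$ of $M_{<\epsilon}$. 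Concretely, I would argue as follows. Since $p\in N$, we have $d(p, M_{<\epsilon}) > \delta$. Each component of $M_{<\epsilon}$ is a Margulis tube or a cusp neighborhood, and the function $x \mapsto d(x, M_{<\epsilon})$ is $1$-Lipschitz; more importantly, because $\epsilon < \epsilon(d)$ and $\delta,r$ are small relative to $\epsilon(d)-\epsilon$, a point $x\in B_M(p,r)$ at distance $\leq \delta$ from some component $T$ of $M_{<\epsilon}$ already lies in the $\epsilon(d)$-thin part, and the nearest-point projection toward $\partial T$ (the boundary of the thin component, which is a level set of injectivity radius / of the relevant displacement function) has nonvanishing radial derivative by the standard convexity of the displacement functions $d_\gamma$ on the thin part. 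This lets me push $B_M(p,r)$ radially \emph{away} from the thin part: there is a point $p' \in N$ with $d(p,p') \leq r/2$, say, such that $B_M(p', r/4) \subset N$. Then $\vol(B_N(p,r)) \geq \vol(B_M(p',r/4)) \geq c_0(d)(r/4)^d$, which is the claimed bound with $c = c_0(d)/4^d$.

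The main obstacle is making the radial-pushing step rigorous with a uniform estimate, i.e.\ showing that one can move a short distance away from the thin part and gain a definite buffer. Rather than invoking limiting arguments I would extract exactly what is needed from the structure of $M_{<\epsilon}$: each component's boundary is a level set of a convex function (the displacement function of the relevant parabolic or hyperbolic isometry, pulled down from $\tilde M$), so the outward normal flow from $\partial M_{<\epsilon}$ moves points away at unit speed in the normal direction and is distance-nondecreasing between level sets, by exactly the nonpositive-curvature convexity arguments already used in the proof of Proposition \ref{gelanderprop} (and in \cite{Gelanderhomotopy}). Alternatively — and this may be cleaner — one can avoid pushing altogether by a direct covering estimate: since $p\in N$, the set $B_M(p,r)\setminus N$ is contained in the $\delta$-neighborhood of $M_{<\epsilon}$ intersected with an $r$-ball, and using the lower curvature bound to compare with the model space, together with the fact that $\partial M_{\geq\epsilon}$ has bounded geometry near scale $\epsilon(d)$, one shows $\vol(B_M(p,r)\setminus N) \leq \tfrac12 \vol(B_M(p,r))$ once $\delta$ is small enough relative to $r$; combined with Bishop--Gromov this again yields $\vol(B_N(p,r)) \geq \tfrac12 c_0(d) r^d$. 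Either route reduces everything to standard comparison geometry plus the qualitative description of the thin part already recorded at the start of \S\ref{pncsec}, so the constant depends only on $d$, $a$, and $\epsilon$, as required.
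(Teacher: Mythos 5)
Your first route is essentially the paper's argument: the paper also reduces to points within $r$ of $\partial N$, produces at $\tilde p$ a unit vector $n$ with $n\cdot\nabla d(\cdot,\tilde X_\gamma)\geq 1/b$ for \emph{all} the convex sets $\tilde X_\gamma$ whose union is the relevant thin component (via Gelander's Lemmas 7.1--7.3 for Margulis tubes and Claim \ref{derivative!} for cusps), and then, instead of recentering the ball, exponentiates the whole cone $V=\{v : |v|<r,\ |v/|v|-n|<1/b\}$ and observes that nonpositive curvature makes $\exp_{\tilde p}$ volume-nondecreasing, giving $\vol \geq c(b,d)r^d$. Two points in your write-up need repair. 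First, the buffer you gain by moving distance $r/2$ in the direction $n$ is $r/(2b)$, not $r/4$: the convexity of $d(\cdot,\tilde X_\gamma)$ only guarantees growth at rate $1/b$, and $b=b(d,a,\epsilon)$ is in general large. This is harmless for the statement (the constant still depends only on $d,a,\epsilon$), but the uniform lower bound $1/b$ \emph{simultaneously for all} $\gamma$ is the actual crux of the lemma --- it is not "standard convexity of the displacement functions" for a single $\gamma$, but requires the Margulis-lemma structure (commuting/nilpotent stabilizer of the tube, so that the gradients $\nabla d(\cdot,\tilde X_\gamma)$ pairwise have nonnegative inner products) in the tube case and the horosphere contraction estimate of Claim \ref{derivative!} in the cusp case. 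Your proposal points at the right machinery but does not isolate this step, and without it no definite cone of good directions exists.

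Second, your alternative route is not correct as stated. The claim $\vol(B_M(p,r)\setminus N)\leq \tfrac12 \vol(B_M(p,r))$ fails for $p$ on (or near) $\partial N$: locally $N$ is the complement of a union of convex sets, and already a half-space model shows that well over half the ball can lie outside $N$; only a cone of some definite but possibly small aperture $\sim 1/b$ is guaranteed. Moreover that route requires "$\delta$ small enough relative to $r$," which is not among the hypotheses --- the lemma allows $\delta$ and $r$ to be unrelated, and the constant $c$ may not depend on either. So you should discard the second route and carry out the first one carefully, with the cone aperture $1/b$ in place of the unjustified $r/4$.
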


 Note that since $M$ is non-positively curved, the volume of any  embedded metric ball $B \subset M$  is at least the volume of a  ball with the same radius in $\BR^d$, see e.g.\ \cite[Theorem~3.101]{gallot1990riemannian}. So, as long as we choose $\rho < \epsilon$, the lemma is trivial for balls $B_{N}(p,r)$ that do not intersect $\partial N$.  The point of the lemma, then, is that the boundary of $N$ is moderate enough  that  balls centered near $\partial N$ still have a  definite amount of volume that is contained in $N$.

\begin {proof}
As described in the paragraph above, it suffices to consider only  points $p \in N $  that are within $r$ of $\partial N$. The fact that $\delta < (\epsilon(d)-\epsilon)/4$ ensures that the radius $r$ ball $B_M(p,r)$ in $M$  around $p$ will be  an embedded ball contained in the $\epsilon (d)$-thin part $M_{< \epsilon(d)}$.  Choose a universal cover$$\pi : \tilde M \longrightarrow M,$$ components $\tilde T_{< \epsilon} \subset \tilde T_{< \epsilon(d)} \subset \tilde M$  of the preimages of $M_{<\epsilon}, M_{<\epsilon(d)}$,  and  a point $$\tilde p \in \tilde T_{< \epsilon(d)} \setminus ( \tilde T_{< \epsilon} )_{\delta}, \ \ \pi(\tilde p)=p.$$
 Then we can write $\tilde T_{<\epsilon}$  as the union$$\tilde T_{<\epsilon} = \cup_{\gamma} \tilde X_\gamma,$$  where  $\gamma$  ranges over the nontrivial elements in the group of deck transformations stabilizing $\tilde T_{<\epsilon}$,  and
$$ \tilde X_\gamma:= \{\tilde x\in \tilde M \ | \ d(\gamma(\tilde x),\tilde x)<\epsilon\}.$$

We claim that there is a unit vector $n \in T\tilde M_{\tilde p}$ and some $b=b(d,a,\epsilon)$  such that
\begin {equation}\label {1b} n \cdot \nabla d(\cdot, \tilde X_\gamma)
|_{\tilde p}  \geq 1/b >0, \ \ \forall \gamma.\end{equation}
Now if $\tilde T_{<\epsilon}$  covers a Margulis tube, any two $\gamma,\gamma'$ as above commute, so we have $$\nabla d(\cdot, \tilde X_\gamma) \cdot \nabla d(\cdot, \tilde X_\gamma') \geq 0$$
by the argument of \cite[Lemmas 7.1 and 7.2]{Gelanderhomotopy} (see also \cite[Lemma 3.5]{bader2016homology}), and then one can construct $n$  as in \cite[Lemma 7.3]{Gelanderhomotopy} (or \cite[Lemma 3.12]{bader2016homology}). If $\tilde T_{<\epsilon}$ covers a cusp neighborhood, we can just let $n$ be the unit vector that points away from the point  $\xi \in \partial_\infty \tilde M$  to which $\tilde T_{<\epsilon}$ accumulates (i.e.\ let $n=c'(0)$ where $c$  is a unit speed geodesic with $c(-\infty)=\xi$  and $ c(0)=\tilde p$) and then the claim follows from Claim \ref{derivative!} above, after setting $b=\epsilon a/2$.

 It follows from \eqref{1b}  that for every $v\in T\tilde M_{\tilde p}$  with $|v-n|<1/b$  we have \begin {align*} v \cdot \nabla d(\cdot, \tilde X_\gamma)   &= n \cdot \nabla d(\cdot, \tilde X_\gamma) + (v-n) \cdot \nabla d(\cdot, \tilde X_\gamma)	 \\
& = 1/b - |v-n|   \\ &>0,
 \end {align*}
 so $v$  points out of the convex subset $(\tilde X_\gamma)_{d(\tilde p,\tilde X_\gamma)} \subset \tilde M$ on whose boundary $\tilde p$  lies. And since 
$$\tilde p \not \in ( \tilde T_{< \epsilon} )_{\delta} \ \implies \ ( \tilde T_{< \epsilon} )_{\delta} \subset \cup_\gamma (\tilde X_\gamma)_{d(\tilde p,\tilde X_\gamma)},$$
we  then have that  for all $v\in T\tilde M_{\tilde p} $ with $|\, v/|v| -n\, |<1/b$, the  Riemannian exponential $$\exp_{\tilde p} (v) \not \in ( \tilde T_{< \epsilon} )_{\delta}.$$  Now as explained in the beginning of the proof, $r$ is small enough so that $B_M(p,r)$  is an embedded ball in $ M_{<\epsilon(d)}$. So, if we let 
$$V = \{ v \in T\tilde M_{\tilde p} \ \ | \ \ |v| < r, \ | \, v/|v| - n \, | <1/b \},$$ the composition $\pi \circ \exp_{\tilde p}$  of the universal covering map and the Riemannian exponential map embeds $V$  as a subset of $N$, where $N=)M_{\geq\epsilon}(_\delta$.  The ratio of the Euclidean volume of $V$ to $r^d$ is certainly bounded below by some constant depending only on $b=b(d,a,\epsilon)$, so nonpositive curvature implies that the same is true of $B_N(p,r)$, \cite[Corollary 11.4]{lee2006riemannian}.
\end {proof}

\subsection{The proof of Theorem \ref{pnc}}\label {pfsec}%The general strategy is as follows. Starting with a  weakly convergent sequence $(M_n)$ of finite volume $d$-manifolds with sectional curvatures in $[-1,-a^2]$, we will show that for a good choice of $\epsilon$, the measures $\mu_{( )(M_n)_{\geq \epsilon}(_\xi, M_n)} / \vol(M_n)$ on $\mathbb M^{ext}$ weakly converge. Theorem~\ref{Bowen2} will show convergence of the expected normalized Betti numbers of these thick parts, which Proposition \ref{gelanderprop} will then relate back to the normalized Betti numbers of the $M_n$.

\vspace{2mm}
 
We will assume everywhere below that  $d\geq 4$, since the theorem follows trivially from the Gauss--Bonnet theorem when $d=2$ and we have assumed that $d\neq 3$. 

Let $\mathcal M^d_a \subset\mathcal M$ be the subset consisting of all pointed Riemannian $d$-manifolds with sectional curvatures in $[-1,-a^2]$. Fix a sequence of finite volume $d$-manifolds $(M_n)$ with curvatures in $[-1,-a^2]$ and  for each $n$, let $\mu_n$ be the measure on $\mathcal M^d_a$ obtained by pushing forward the Riemannian measure on $M_n$ under $p \mapsto (M_n,p)$. By assumption, the sequence  $(\mu_n/\vol(M_n))$  converges weakly to some  probability measure $\mu$  on $\mathcal M^d_a$. 

\begin{claim}
For some $\epsilon_{max}>0$,  we have that $\mu(\mathcal E_\epsilon) =0$  for all but countably many $\epsilon \in (0,\epsilon_{max})$, where here $ \mathcal E_\epsilon$ is the set of all $(M,p) \in \mathcal M^d_a$ such that $M$ has a  primitive closed geodesic with length  exactly $2\epsilon$.\label {good epsilon}
\end{claim}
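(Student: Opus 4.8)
The plan is to prove the (formally stronger) statement with $\epsilon_{max}:=\epsilon(d)/4$, where $\epsilon(d)$ denotes the $d$-dimensional Margulis constant, by reducing to a local count of short closed geodesics near the basepoint and then using that $\mu$ is a probability measure. For $R\in\BN$ let $\mathcal E_\epsilon^R\subseteq\mathcal M^d_a$ consist of those $(M,p)$ for which $M$ has a primitive closed geodesic of length exactly $2\epsilon$ meeting the ball $B_M(p,R)$. Any closed geodesic of $M$ lies at finite distance from $p$, so $\mathcal E_\epsilon=\bigcup_{R\in\BN}\mathcal E_\epsilon^R$ and hence $\mu(\mathcal E_\epsilon)=\sup_R\mu(\mathcal E_\epsilon^R)$; it therefore suffices to show that for each fixed $R$ the set $\{\epsilon\in(0,\epsilon_{max}):\mu(\mathcal E_\epsilon^R)>0\}$ is countable, and then take the union over $R\in\BN$. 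Here the sets $\mathcal E_\epsilon^R$, and the counting functions appearing below, are Borel — the lengths and positions of short closed geodesics in a fixed metric ball around the basepoint depend measurably on $(M,p)$ in the smooth topology — so the measures below make sense.

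The heart of the matter is a pointwise estimate. For $(M,p)\in\mathcal M^d_a$ list, with multiplicity, the lengths $\ell_1(M,p),\ell_2(M,p),\dots$ of all primitive closed geodesics of $M$ of length $<2\epsilon_{max}=\epsilon(d)/2$ that meet $B_M(p,R)$, and set $g_R(M,p):=\sum_i\ell_i(M,p)$. I claim $g_R\le V^*$ for a constant $V^*=V^*(d,a,R)$. Each such geodesic $\gamma$ has length $<\epsilon(d)$, so it lies in the $\epsilon(d)$-thin part of $M$, and not being in a cusp it is the core geodesic of a Margulis tube $T_\gamma$; distinct geodesics give disjoint tubes. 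A standard comparison argument in the universal cover — bounding $d(x,g(x))$ by $\ell(\gamma)\cosh\!\big(d(x,\tilde\gamma)\big)$, where $g$ is the corresponding primitive deck transformation and $\tilde\gamma$ is a lift of $\gamma$, and using $\ell(\gamma)<\epsilon(d)/2$ (cf.\ the comparison estimates in the proof of Proposition~\ref{shortgeodesics}) — shows that the collar $(\gamma)_{\delta_0}$ with $\delta_0:=\cosh^{-1}2$ is contained in $T_\gamma$. Since $T_\gamma$ is embedded and the curvature is nonpositive, $\vol\big((\gamma)_{\delta_0}\big)\ge c(d)\cdot\ell(\gamma)$ for some $c(d)>0$, and this collar is contained in $B_M(p,R+\epsilon_{max}+\delta_0)$, a ball whose volume is at most some $v_{max}=v_{max}(d,a,R)$ since the curvature is at least $-1$. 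As the collars are pairwise disjoint, $\sum_i c(d)\ell_i\le v_{max}$, i.e.\ $g_R\le V^*:=v_{max}/c(d)$.

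Now write $n_\epsilon^R(M,p)$ for the number of indices $i$ with $\ell_i(M,p)=2\epsilon$, so that $g_R=\sum_{\epsilon\in(0,\epsilon_{max})}2\epsilon\,n_\epsilon^R$ pointwise (a countable sum at each point), and $n_\epsilon^R\ge\mathbf 1_{\mathcal E_\epsilon^R}$. For every finite subset $F\subseteq(0,\epsilon_{max})$ we then have
\[
\sum_{\epsilon\in F}2\epsilon\,\mu(\mathcal E_\epsilon^R)\ \le\ \sum_{\epsilon\in F}2\epsilon\,\mathbb E_\mu\!\left[n_\epsilon^R\right]\ =\ \mathbb E_\mu\!\left[\sum_{\epsilon\in F}2\epsilon\,n_\epsilon^R\right]\ \le\ \mathbb E_\mu[g_R]\ \le\ V^*,
\]
using that $\mu$ is a probability measure. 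Hence $\sum_{\epsilon\in(0,\epsilon_{max})}2\epsilon\,\mu(\mathcal E_\epsilon^R)\le V^*<\infty$, so only countably many $\epsilon$ can have $\mu(\mathcal E_\epsilon^R)>0$. Taking the union over $R\in\BN$ finishes the proof.

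The only step with real content is the pointwise bound $g_R\le V^*$: it says that the Margulis tube around a short closed geodesic carries a definite amount of volume per unit of core length in any bounded neighborhood of a point the core passes through. Setting up the comparison geometry behind the $\delta_0$-collar (and the volume-per-length lower bound), together with checking the Borel measurability of the counting functions $n_\epsilon^R$, are the only things requiring care; neither is hard.
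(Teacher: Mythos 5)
Your overall architecture is the same as the paper's: localize to an $R$-ball around the basepoint, prove a summability bound $\sum_\epsilon(\text{weight})\cdot\mu(\mathcal E_\epsilon^R)<\infty$ for each fixed $R$ by a pointwise volume-packing estimate, conclude countability for each $R$, and take a union over $R\in\BN$. The difference is in the packing estimate, and that is where there is a genuine gap. Your claimed inequality $d(x,g(x))\le\ell(\gamma)\cosh\bigl(d(x,\tilde\gamma)\bigr)$ is false in general: it only holds when $g$ acts on the normal bundle of $\tilde\gamma$ as a pure translation. When the holonomy has a rotational part $r\in O(d-1)$, the comparison estimates in the proof of Proposition \ref{shortgeodesics} give only $d(x,gx)\le\ell\cosh(t)+\theta\sinh(t)$ with $\theta$ the rotation angle, which can be as large as $\pi$; already in $\BH^3$ a loxodromic with tiny $\ell$ and $\theta=\pi$ moves points at distance $t$ from its axis by roughly $2t$. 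Consequently the $\delta_0$-collar with $\delta_0=\cosh^{-1}2$ need not lie in $T_\gamma$, and your lower bound $\vol\bigl((\gamma)_{\delta_0}\bigr)\ge c(d)\ell(\gamma)$ is unjustified. Handling this rotational part is precisely the content of the pigeonhole argument in the paper's proof of Proposition \ref{shortgeodesics} (one passes to a power $g^k$, $k\le m^{\dim O(d-1)}$, whose rotation is small, at the cost of multiplying the translation length), and it is not a formality: the whole point of that proposition, and the reason the theorem fails for $d=3$, is that controlling tube geometry near the core is delicate.

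The gap is repairable in two ways, and it is worth noting that the paper's own route is the simpler one. The paper shrinks $\epsilon_{max}$ so that, by Proposition \ref{shortgeodesics}, every $\epsilon(d)$-tube with core length $2\epsilon<2\epsilon_{max}$ has volume at least $1$; then at most $V(d,R)$ such tubes fit disjointly in an $R$-ball, so $\sum_\epsilon\mu(\mathcal E_{\epsilon,R})\le V(d,R)$ with no weighting needed. Alternatively, if you want to keep $\epsilon_{max}=\epsilon(d)/4$ and avoid Proposition \ref{shortgeodesics}, replace your linear-in-$\ell$ collar bound by the elementary bound $\vol(T_\gamma)\ge c(d)\,\ell^d$: a point $x$ on the primitive core $\gamma$ has $\inj_M(x)\ge\ell/2$ (powers of $g$ displace $x$ by at least $\ell$, and elements outside $\langle g\rangle$ by at least $\epsilon(d)$), so $B_M(x,\ell/2)$ is embedded with volume at least that of a Euclidean $\ell/2$-ball. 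Running your first-moment computation with weights $(2\epsilon)^d$ instead of $2\epsilon$ still yields $\sum_\epsilon(2\epsilon)^d\mu(\mathcal E_\epsilon^R)<\infty$, which suffices for countability. As written, however, the one step you identify as carrying the real content does not go through.
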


\begin{proof}
Take $\epsilon_{max} $  less than the Margulis constant $\epsilon(d)$. Using Proposition \ref{shortgeodesics},  we may assume that $\epsilon_{max}$ is small enough so that if $\epsilon\in (0,\epsilon_{max})$, then  any $\epsilon(d)$-Margulis tube with core length $2\epsilon$ has volume at least $1$.  For each $\epsilon\in (0,\epsilon_{max})$ and $R>0$, consider the set
$\mathcal E_{ \epsilon,R}$ of all $ (M,p) \in \mathcal M^d_a$ such that there is an $\epsilon(d)$-Margulis tube with core length $2\epsilon$  that is completely contained in the radius $R$ ball around $p$. 
In any manifold $M$ with sectional curvatures at least $-1$, the radius $R$ ball around any point has volume at most some constant $V(d,R)$, see \cite[Theorem 3.101]{gallot1990riemannian}. So, it follows that  for fixed $R$, any $(M,p)\in \mathcal M^d_a$  can be contained in $\mathcal E_{ \epsilon,R}$ for at most $V(d,R)$-many choices of $\epsilon$. Hence, we have
$$\sum_{\epsilon} \mu(\mathcal E_{ \epsilon,R}) \leq V(d,R),$$
 implying that $\mu(\mathcal E_{ \epsilon,R})\neq 0$ for at most  countably many $\epsilon$. But letting $R\in \BN$, there are then only  countably many \emph {pairs} $(\epsilon,R)$ such that $\mu(\mathcal E_{ \epsilon,R})\neq 0$, and hence only countably many $\epsilon$ such that $\mu(\mathcal E_{ \epsilon,R})\neq 0$ for \emph{some} $R\in \BN$. Since $\mathcal E_\epsilon = \cup_{R \in \BN} \mathcal E_{ \epsilon,R} $, the claim follows.\end{proof}

Fix now some small $\epsilon,\xi>0$, to be determined later, such  that $\mu(\mathcal E_\epsilon)=0$.   Using the notation  and terminology of \S \ref{mmsec}, consider the extended mm-space 
$$
 \mathfrak M_n :=( \ ) (M_n)_{\geq \epsilon}(_\xi,  M_n ),
$$
and let $\mu_{ \mathfrak M_n} $  be the associated measure on $\mathbb M^{ext}$.   Then if
$$\mathcal T = \{ (M,p) \in \mathcal M^d_a \ | \ d(p,M_{<\epsilon})>\xi\},$$
the measure $\mu_{ \mathfrak M_n} $ is just  the push forward of  the restriction $\mu_n |_{\mathcal T}$  under the map
$$\mathcal T \longrightarrow \mathbb M^{ext}, \ \ (M,p) \longmapsto ( \ ) M_{\geq \epsilon}(_\xi, p, M ).$$

\begin{lemma}\label {convergence Lemma}
	The measures $\mu_{ \mathfrak M_n}/ \vol(M_n)$ weakly converge.
\end{lemma}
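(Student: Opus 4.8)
The plan is to test $\mu_{\mathfrak M_n}/\vol(M_n)$ against an arbitrary bounded continuous $f\colon\mathbb M^{ext}\to\BR$ and invoke the Portmanteau/continuous–mapping theorem. Write $\Psi\colon\mathcal T\to\mathbb M^{ext}$ for the map $(M,p)\mapsto(\,)M_{\geq\epsilon}(_\xi,\vol,p,M)$, so that $\mu_{\mathfrak M_n}=\Psi_*(\mu_n|_{\mathcal T})$, and let $g\colon\mathcal M^d_a\to\BR$ be $f\circ\Psi$ on $\mathcal T$ and $0$ elsewhere; then $g$ is bounded and $\int f\,d(\mu_{\mathfrak M_n}/\vol(M_n))=\int g\,d(\mu_n/\vol(M_n))$. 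Since $\mu_n/\vol(M_n)\to\mu$ weakly, it suffices to prove that $g$ is continuous $\mu$-almost everywhere, in which case $\mu_{\mathfrak M_n}/\vol(M_n)$ converges weakly to $\Psi_*(\mu|_{\mathcal T})$ (well defined, since $\Psi$ will be seen to be continuous off the $\mu$-null set $\mathcal E_\epsilon$). Concretely, I would show that the discontinuity set of $g$ is contained in $\mathcal E_\epsilon\cup\mathcal A_\xi\cup\{(M,p):\vol(\{\inj_M=\epsilon/2\})>0\}$, where $\mathcal A_\xi=\{(M,p):d(p,M_{<\epsilon})=\xi\}$: at an interior point of $\mathcal T$ a discontinuity forces $\Psi$ to be discontinuous there, at an exterior point $g$ vanishes locally, and on $\partial\mathcal T$ one uses that $\partial\mathcal T\setminus\mathcal E_\epsilon\subseteq\mathcal A_\xi$ once $(M,p)\mapsto d(p,M_{<\epsilon})$ is known to be continuous off $\mathcal E_\epsilon$.

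The first step is to arrange all three exceptional sets to be $\mu$-null. We already have $\mu(\mathcal E_\epsilon)=0$. I would impose two further genericity conditions, each excluding only countably many parameter values and hence compatible with all other requirements on $\epsilon,\xi$: (i) choose $\epsilon$ so that $\epsilon/2$ is not an atom of the push-forward of $\mu$ under the continuous map $(M,p)\mapsto\inj_M(p)$, so $\mu(\{\inj_M(p)=\epsilon/2\})=0$; (ii) choose $\xi$ so that $\mu(\mathcal A_\xi)=0$ (the sets $\mathcal A_\xi$, $\xi>0$, are pairwise disjoint and $\mu$ is a probability measure). For the remaining set I would use that $\mu$ is unimodular (\cite{Abertunimodular}): the mass transport principle applied to $(M,p,q)\mapsto\mathbf 1[\inj_M(q)=\epsilon/2]$ gives $\int\vol(\{\inj_M=\epsilon/2\})\,d\mu=\int\vol(M)\,\mathbf 1[\inj_M(p)=\epsilon/2]\,d\mu$, which vanishes by (i) since $\vol(M)<\infty$ $\mu$-a.e.; hence $\vol(\{\inj_M=\epsilon/2\})=0$ for $\mu$-a.e.\ $(M,p)$.

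The heart of the argument is continuity of $\Psi$ and of $(M,p)\mapsto d(p,M_{<\epsilon})$ away from $\mathcal E_\epsilon$. Given $(M_n,p_n)\to(M_\infty,p_\infty)$ smoothly with almost isometric maps $\phi_n\colon B_{M_\infty}(p_\infty,R_n)\to M_n$, $R_n\to\infty$, I would proceed as follows. The injectivity radius is continuous in the smooth topology, so $\inj_{M_n}\circ\phi_n\to\inj_{M_\infty}$ uniformly on compacta, which squeezes $\phi_n^{-1}((M_n)_{<\epsilon})$ between $\{\inj_{M_\infty}<\epsilon/2-\eta\}$ and $\{\inj_{M_\infty}<\epsilon/2+\eta\}$ on each fixed ball, for every $\eta>0$ once $n$ is large. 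Since $(M_\infty,p_\infty)\notin\mathcal E_\epsilon$, the Margulis-lemma description of the thin part in pinched negative curvature (see the start of \S\ref{pncsec}, and \cite{Ballmannmanifolds}) shows that inside such a ball $(M_\infty)_{<\epsilon}$ is a disjoint union of non-degenerate Margulis tubes and cusp neighbourhoods with common boundary precisely $\{\inj_{M_\infty}=\epsilon/2\}$; together with $\vol(\{\inj_{M_\infty}=\epsilon/2\})=0$ this upgrades the squeeze to genuine convergence $\phi_n^{-1}((M_n)_{<\epsilon})\to(M_\infty)_{<\epsilon}$ in both Hausdorff distance and symmetric-difference measure. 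In particular $d(p_n,(M_n)_{<\epsilon})\to d(p_\infty,(M_\infty)_{<\epsilon})$, so that distance function is continuous off $\mathcal E_\epsilon$ and $\partial\mathcal T\setminus\mathcal E_\epsilon\subseteq\mathcal A_\xi$; in particular $g$ is continuous at every point of our $\mu$-full set that is interior to $\mathcal T$ or exterior to $\overline{\mathcal T}$. To finish I would pass from convergence of thin parts to convergence of their shrunk thick complements, using that the distance function to a closed subset of a complete Riemannian manifold is $1$-Lipschitz with unit-norm gradient a.e.\ off the set (so every level set $\{d(\cdot,M_{<\epsilon})=\xi\}$ is $\vol$-null), and then feed the $\phi_n$ together with $T_1=\,)(M_\infty)_{\geq\epsilon}(_\xi$ and $T_2=\,)(M_n)_{\geq\epsilon}(_\xi$ into Lemma \ref{lipschitz Lemma2} (the condition \eqref{hausE} being supplied by $M_n\to M_\infty$); this gives $\Psi(M_n,p_n)\to\Psi(M_\infty,p_\infty)$ in $\mathbb M^{ext}$.

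The main obstacle I anticipate is the convergence of the thin parts on compacta: one must invoke the Margulis lemma to know that, off $\mathcal E_\epsilon$, every short-geodesic component of $(M_\infty)_{<\epsilon}$ is a non-degenerate tube (this is exactly what Proposition \ref{shortgeodesics} and Claim \ref{good epsilon} provide) and that no component is created or destroyed under the perturbation $\phi_n$, and then use the exhaustion $R_n\to\infty$ to dispose of components meeting the moving boundary spheres. Everything else — the a.e.\ continuity bookkeeping and the verification of the hypotheses of Lemma \ref{lipschitz Lemma2} — is routine but slightly fiddly.
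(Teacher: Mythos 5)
Your proposal is correct and follows essentially the same route as the paper: test $\mu_{\mathfrak M_n}/\vol(M_n)$ against a bounded continuous $f$, reduce via Portmanteau to $\mu$-a.e.\ continuity of the induced function on $\mathcal M^d_a$, and establish continuity off the exceptional set using continuity of injectivity radius together with Lemma \ref{lipschitz Lemma2}, with the level set $\{d(\cdot,M_{<\epsilon})=\xi\}$ being $\vol$-null supplying condition (3). The one genuine divergence is how you kill the set $\mathcal A_\xi=\{(M,p): d(p,M_{<\epsilon})=\xi\}$ (the paper's $\mathcal D$): the paper proves $\mu(\mathcal D)=0$ for the \emph{given} $\xi$ via the mass transport principle, whereas you choose $\xi$ generically using disjointness of the level sets, which is simpler but only gives the lemma for all but countably many $\xi$ — harmless here, since $\xi=\epsilon/2+\delta$ with $\delta$ ranging over an interval in the application. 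Conversely, you spend the mass transport principle on showing $\vol(\{\inj_M=\epsilon/2\})=0$ $\mu$-a.e., a step the paper avoids by arguing directly with the shrunk thick parts rather than passing through symmetric-difference convergence of the thin parts; your detour is sound but not needed.
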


Note that these are not probability measures. 

\begin{proof}
Let $f : \mathbb M^{ext} \longrightarrow\BR$ be a  bounded, continuous function and define
$$F : \mathcal M^d_a \longrightarrow \BR, \ \ F(M,p) = \begin{cases} f( \ ) M_{\geq \epsilon}(_\xi, p, M ) & (M,p)\in \mathcal T \\ 0 \text{ otherwise}.\end{cases}$$
We have $\int f \, d\mu_{ \mathfrak M_n}= \int F \, d\mu_n$, so it suffices to show that the limit
$$\lim_{n\to \infty} \frac 1{\vol(M_n)} \int F \, d\mu_n$$
exists. Recall that the measures $\mu_n/\vol(M_n) \to \mu$ weakly. So by the Portmanteau theorem,  it suffices to show that $F$ is continuous on a subset of $\mathcal M^d_a$  that has full $\mu$-measure. 

\begin{claim}
	The map $F$  is continuous on the difference $\mathcal M^d_a \setminus (\mathcal E_\epsilon \cup \mathcal D)$, where $$\mathcal D := \{ (M,p) \in \mathcal M^d_a \ | \ d(p,M_{<\epsilon}) = \xi \}$$ and $\mathcal E_\epsilon$ is as in Claim \ref{good epsilon}. \label {convergence claim} \end{claim}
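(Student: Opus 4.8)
The plan is to show that $F$ is continuous away from the two "bad" sets $\mathcal E_\epsilon$ and $\mathcal D$. The key structural fact I would use is that smooth convergence $(M_n,p_n)\to(M_\infty,p_\infty)$ in $\mathcal M^d_a$ gives, for every $R$, a sequence of almost-isometric embeddings $\phi_n: B_{M_\infty}(p_\infty,R)\to M_n$ with $\phi_n^*g_n\to g_\infty$ in $C^\infty$. Fix $(M_\infty,p_\infty)\notin\mathcal E_\epsilon\cup\mathcal D$, so the distance $d(p_\infty, (M_\infty)_{<\epsilon})$ is some value $\neq\xi$, and $M_\infty$ has no primitive closed geodesic of length exactly $2\epsilon$. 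First I would handle the case $(M_\infty,p_\infty)\notin\mathcal T$, i.e. $d(p_\infty,(M_\infty)_{<\epsilon})<\xi$: here $F(M_\infty,p_\infty)=0$, and I claim the same holds for nearby $(M_n,p_n)$. Indeed, there is a short geodesic loop of length $<2\epsilon$ based within $\xi$ of $p_\infty$ (by the Margulis lemma description of the thin part and the definition of the $\epsilon$-thin part via injectivity radius); this loop, being a compact object in $B_{M_\infty}(p_\infty,R)$ for $R$ large, is carried by $\phi_n$ to a nearly-geodesic loop of length $<2\epsilon$ in $M_n$ based near $p_n$, which can be tightened to a genuine short loop, so $p_n\in((M_n)_{<\epsilon})_\xi$ and $F(M_n,p_n)=0$ eventually.

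Next, the main case $(M_\infty,p_\infty)\in\mathcal T$, so $s_\infty:=d(p_\infty,(M_\infty)_{<\epsilon})>\xi$. The heart of the argument is a geometric continuity statement: for any fixed $R$, the subset $)(M_\infty)_{\geq\epsilon}(_\xi\cap B_{M_\infty}(p_\infty,R)$ is carried by $\phi_n$ to something that, viewed inside $M_n$, is Hausdorff-close (and close in measure) to $)(M_n)_{\geq\epsilon}(_\xi\cap B_{M_n}(p_n,R)$. To prove this I would argue that $\inj_{M_n}(\phi_n(x))\to\inj_{M_\infty}(x)$ uniformly on compact sets — this uses the $C^\infty$-convergence of metrics, together with the avoidance of $\mathcal E_\epsilon$ to rule out an injectivity-radius function that is ``stuck'' at the critical value $\epsilon$ on a set of positive measure (since $\inj_{M_\infty}(x)=\epsilon/2$ forces a geodesic loop of length $\epsilon$, and a closed primitive geodesic of length exactly $2\epsilon$ is exactly what $\mathcal E_\epsilon$ forbids; metric spheres in the $\xi$-shrinking also have measure zero by the same transversality). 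Avoidance of $\mathcal D$ similarly ensures that the level set $\{d(\cdot,(M_\infty)_{<\epsilon})=\xi\}$, which controls the boundary of the shrunk thick part, is $\vol_\infty$-null, so the symmetric-difference condition needed below is satisfied. Granting this, the map $\phi_n$ restricted to a large ball in $)(M_\infty)_{\geq\epsilon}(_\xi$ satisfies the three hypotheses of Lemma \ref{lipschitz Lemma2} (with $T_i$ the shrunk thick parts and $M_i$ the ambient manifolds), with $\delta\to0$; hence the extended mm-spaces $( \ )(M_n)_{\geq\epsilon}(_\xi, p_n, M_n)$ are $(\epsilon',R)$-related to $( \ )(M_\infty)_{\geq\epsilon}(_\xi, p_\infty, M_\infty)$ for every $\epsilon',R$ eventually, i.e. they converge in $\mathbb M^{ext}$. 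Since $f$ is continuous on $\mathbb M^{ext}$, this gives $F(M_n,p_n)=f(\ )(M_n)_{\geq\epsilon}(_\xi,p_n,M_n)\to f(\ )(M_\infty)_{\geq\epsilon}(_\xi,p_\infty,M_\infty)=F(M_\infty,p_\infty)$, as desired.

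The main obstacle I anticipate is making precise the uniform convergence of the geometric objects ``shrunk thick part'' under $C^\infty$-convergence of metrics, and in particular verifying hypotheses (2) and (3) of Lemma \ref{lipschitz Lemma2} — that $\phi_n^{-1}(T_2)$ and $T_1$ differ by a set of small volume. This is where avoiding $\mathcal D$ is essential: the distance-to-thin-part function $d(\cdot,(M_n)_{<\epsilon})$ converges uniformly to $d(\cdot,(M_\infty)_{<\epsilon})$ on compacta (granted the uniform injectivity-radius convergence), but its level set at height $\xi$ could a priori have positive measure in $M_\infty$; excluding $\mathcal D$ rules out $p_\infty$ lying on such a set, and a standard Sard/transversality argument (using that the distance function is not locally constant, as curvature is bounded) shows the $\xi$-level set is always $\vol_\infty$-null, so the volume of the symmetric difference tends to $0$. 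A secondary technical point is that ``shrinking'' is defined using distances in the ambient $M_\infty$, not the (partially-defined) pulled-back metric, so one must be slightly careful that for points well inside $B_{M_\infty}(p_\infty,R)$ the relevant distances are computed within the domain of $\phi_n$ — but this is handled by taking $R$ large relative to $s_\infty$ and $\xi$, exactly as in the discussion following \eqref{fig}.
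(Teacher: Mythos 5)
Your proposal is correct and follows essentially the same route as the paper: the same case split on whether $d(p,M_{<\epsilon})$ is below or above $\xi$ (with $\mathcal E_\epsilon$-avoidance used to perturb a point whose injectivity radius sits exactly at the critical value), followed by an application of Lemma \ref{lipschitz Lemma2} to the shrunk thick parts to get convergence in $\mathbb M^{ext}$ and hence convergence of $F$ via continuity of $f$. The only cosmetic differences are that the paper cites Ehrlich for continuity of the injectivity radius rather than re-tightening loops by hand, and verifies condition (3) of Lemma \ref{lipschitz Lemma2} by choosing a small-volume neighborhood of the compact, measure-zero set $\partial T\cap B_N(p,R)$ rather than invoking a Sard-type argument.
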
 
\begin{proof}[Proof of Claim \ref{convergence claim}]
Suppose that we have a convergent sequence $$(N_n,p_n) \to (N,p) \in \mathcal M^d_a \setminus (\mathcal E_\epsilon \cup \mathcal D).$$

Assume first that $d(p,N_{<\epsilon}) < \xi$. By a result of Ehrlich \cite{ehrlich1974continuity}, injectivity radius is continuous under smooth convergence, so  it follows that $d(p_n,(N_n)_{<\epsilon}) < \xi$ as well for large $n$.  In  this case, the continuity of $F$ along our sequence is obvious, since for large $n$, $$0 = F(N_n,p_n) \to F(N,p)=0.$$

So, assume that $d(p,N_{<\epsilon}) > \xi$, i.e.\ that $(N,p) \in \mathcal T$. First, we claim that $(N_n,p_n) \in \mathcal T$  for large $n$. If not, then after passing to a subsequence there would be points $q_n \in N_n$ with $d(p_n,q_n)\leq\xi$ and $\inj_{N_n}(q_n) \leq \epsilon$. Again by continuity of injectivity radius, we can take a subsequential limit of the $q_n$ to  produce some $q \in N$ with $d(p,q)\leq\xi$ and $\inj_N(q)\leq \epsilon$.
 If $\inj_N(q)$ is \emph{less than} $\epsilon$, then this contradicts that $d(p,N_{<\epsilon}) > \xi$. So assume $\inj_N(q)=\epsilon$. Since $(N,p) \not \in \mathcal E_\epsilon$, the point $q$ cannot lie on a closed geodesic of length exactly $2\epsilon$, so $q$ can be perturbed to a point $q'$ with $\inj_N(q')<\epsilon$. Taking the perturbation small enough so that $d(p,q')  < d(p,N_{<\epsilon})$,  we have a contradiction.

 In order to avoid a debauch of parentheses, set $T_n = ) (N_n)_{\geq \epsilon}(_\xi $ and define $T \subset N$ similarly. To prove that $F$ is continuous along $(N_n,p_n) \to (N,p)$, it suffices to show that \begin {equation}(T_n , p_n, N_n ) \ \to \ ( T  , p, N ) \in \mathbb M^{ext}.
 \label {conv--dope}	
 \end {equation}
Fixing some large $R>0$, choose a sequence of embeddings
$$\phi_n : B_N(p,R) \longrightarrow N_n, \ \ \phi_n(p)=p_n,$$
 such that the pullback metrics $\phi_n^*(g_n) \to g$ in the smooth topology, as described in the appendix of \cite{Abertunimodular}.  To prove \eqref{conv--dope}, we would like to apply Lemma \ref{lipschitz Lemma2} to say that for a given $\alpha>0$, the triples in \eqref{conv--dope} are $(\alpha,R)$-related for large $n$.  This requires proving that for an arbitrary $\delta>0$, conditions (1)--(3) in Lemma \ref{lipschitz Lemma2} hold for large $n$.

Condition (1) in Lemma \ref{lipschitz Lemma2} is immediate, since the maps $\phi_n$ are nearly isometries when $n$ is large.   The proof of condition (2) in Lemma~\ref{lipschitz Lemma2} is similar to the first two paragraphs of the current claim. Namely, suppose that the first part of condition (2) fails for infinitely many $n$. Then  for infinitely many $n$, there are points $$q_n  \in  T_n  \cap \phi(B_N(p,R)), \ \ d(\phi_n^{-1}(q_n),T)>\delta.$$ Passing to a subsequence, we can assume that $\phi_n^{-1}(q_n)\to q\in B_N(p,R)$,  and  by continuity of injectivity radius we have $q\in T$, a contradiction.  The second part of condition (2) is similar, although as we did above one has to use that there are no closed geodesics of length exactly $2\epsilon$ in $N$. So, it  remains to prove condition (3) of Lemma~\ref{lipschitz Lemma2}, i.e.\ that $$\vol(\phi_n^{-1}(T_n) \triangle T)<\delta$$ for large $n$.  Pick a neighborhood $U \supset \partial T \cap B_N(p,R)$ with  volume less than $\delta.$ If $n$ is large, then the same arguments as above show that 
$\phi_n^{-1}(T_n) \triangle T \subset U,$
so we are done.
\end{proof}

By our choice of $\epsilon$,  we have $\mu(\mathcal E_\epsilon) = 0.$ So, to prove Lemma \ref{convergence Lemma} it suffices to show that $\mu(\mathcal D)=0$.  Essentially, the point is that $d(p,M_{<\epsilon})=\xi$ is  a measure zero condition within each fixed $M$, and as a weak limit of measures constructed using Riemannian measures on finite volume manifolds, $\mu$ is distributed on each `leaf' $$\mathcal L_M = \{(M,p) \ | \ p\in M\} \subset \mathcal M^d_a$$ 
 according to the Riemannian measure of $M$. (This is not quite precise, the leaves may be highly singular, but one can  make this argument work in the foliated `desingularization' of $\mathcal M$  constructed in \cite[Theorem 1.6]{Abertunimodular}). However, an easier approach is to use that $\mu$  satisfies the \emph {mass transport principle}, see \cite[(1)]{Abertunimodular}.  Namely, define a Borel function
$$\varphi : (\mathcal M^d_a)_2 \longrightarrow \{0,1\}, \ \  \varphi(M,p,q)= \begin{cases}
	1 & d(p,M_{<\epsilon})=\xi \text{ and } d(p,q) \leq \epsilon \\ 
0 & \text {otherwise}
\end{cases},$$
where $(\mathcal M^d_a)_2$  is the space of \emph{doubly pointed} $d$-manifolds with curvature in $[-1,-a^2]$,  endowed with the natural version of smooth convergence, see \cite{Abertunimodular}.   Note that
$$d(p,M_{<\epsilon})=\xi \ \implies\  \int_{q\in M} \varphi(M,p,q) \, d\vol \geq \vol \,  B_{\BR^d}(0,\epsilon),$$
since embedded $\epsilon$-balls in a $d$-manifold of nonpositive curvature have volume at least that of an $\epsilon$-ball in $\BR^d$, c.f.\ \cite[Theorem~3.101]{gallot1990riemannian}.  Then
\begin{align*}
	\mu(\mathcal D) &\leq 1/B_{\BR^d}(0,\epsilon) \cdot  \int_{(M,p)\in \mathcal M^d_a} \int_{q\in M} \varphi(M,p,q) \, d\vol_M \, d\mu  \\
&= 1/B_{\BR^d}(0,\epsilon) \cdot\int_{(M,p)\in \mathcal M^d_a} \int_{q\in M} \varphi(M,q,p) \, d\vol_M \, d\mu  \\
&= 0,
\end{align*}
 where the first equality is the {mass transport principle} \cite[(1)]{Abertunimodular}, and the last equality is because  for small $\epsilon$, the set of points exactly at distance $\xi$ from the $\epsilon$-thin part has measure zero in any manifold with  negative curvature.
\end{proof}

 We now know that  the sequence of measures $\mu_{ \mathfrak M_n}/ \vol(M_n)$ weakly converges, and we would like to apply Theorem \ref{Bowen2}, or really  Corollary \ref{Bowen3}. Lemma \ref{volumeballs} will give the lower bound on ball volumes needed in Theorem \ref{Bowen2} (1), and the upper bound needed in (2) comes from the uniform lower sectional curvature bound, see e.g.\ \cite[Theorem 3.101 on p.\ 169]{gallot1990riemannian}. 
   The key, though, is to use our work in \S \ref{simpsec} to  define the  appropriate $r_0,r_1,r_2,r_3$.  Namely, take $\epsilon,\delta>0$ small enough so that they work in Proposition \ref{gelanderprop}, set $\xi = \epsilon/2+\delta$, and let $b$ be as given in Proposition \ref{gelanderprop}  for $c=3,c'=4,$ say. If $$r_0=\delta, \ r_1=3\delta, \ r_2 = (b+6)\delta, \ r_3= (b+7)\delta,$$
then Proposition \ref{gelanderprop} says that 
the nerve $N_{M_n}(S_n,\rho_n)$ in $M_n$  associated to any $[r_2,r_3]$-weighted $(r_0,r_1)$-net $(S_n,\rho_n)$ in $)(M_n)_{\geq \epsilon}(_{\xi}$ is homotopy equivalent to $(M_n)_{\geq \epsilon}$. So, applying Corollary \ref{Bowen3} to the sequence  of extended mm-spaces $\mathfrak M_n$, with $B_n = b_k((M_n)_{\geq \epsilon})$, $V_n = \vol(M_n)$  and $r_0,r_1,r_2,r_3$ as above, we get that the  limit
\begin{equation}\lim_{n\to \infty} \frac{b_k((M_n)_{\geq \epsilon})}{  \vol (M_n)} =L \in [0,\infty).
\label {secconv} \end{equation}
But Proposition \ref{shortgeodesics} says that  the number of components of the $\epsilon$-thin part of $M_n$ is at most $\vol(M_n)/C$, where $C=C(\epsilon,d,a)\to \infty$ as $\epsilon\to 0$. Removing a cusp neighborhood from $M_n$ does not change the homotopy type, and by Mayer--Vietoris removing a Margulis tube can only change Betti numbers by $1$. So, we get that for each $n$ and $k$, 
$$
 |b_k((M_n)_{\geq \epsilon}) - b_k(M_n)| \leq \vol(M_n)/C.
$$
Combining this with \eqref{secconv}, we get that
$$ L-1/C \leq \liminf_{n\to \infty} \frac{b_k(M_n)} { \vol M_n} \leq \limsup_{n\to \infty} \frac{b_k(M_n)} { \vol M_n} \leq L+1/C,$$
so sending $\epsilon\to 0$, and hence $C\to \infty$,  proves the theorem.

%
%
%
%
%\section{Non-uniform arithmetic manifolds}
%
%This interesting particular case is straightforward from:
%
%\begin{itemize}
%\item Theorem \ref{TEB}.
%\item The fact that in the non-uniform arithmetic case $\mathcal{R}_{M_n}$ is homotopy equivalent to $M_n$ \cite[Theorem ??]{hv}.
%\end{itemize}
%
%
%
%

\section{Manifolds of nonpositive curvature and Theorem \ref{npc}}

In this section we prove Theorem \ref{npc}, i.e.\ the convergence of normalized Betti numbers for BS-convergent sequences of analytic $d$-manifolds of nonpositive curvature without Euclidean factors, when the limit is thick. 
For that purpose it will be more convenient to work not with the standard thick thin decomposition but a close variant of it, introduced in \cite{Ballmannmanifolds}, which we call a `stable' thick thin decomposition:

\subsection{A stable thick thin decomposition}\label{stablesec}
Suppose that $M $ is a finite volume,  real analytic $d$-manifold with sectional curvatures in the interval $[-1,0]$ and that the universal cover $X$ of $M$ has no Euclidean deRham factors. 
Write $M=\Gamma \backslash X$.
Then $\Gamma$ operates freely and the displacement functions $d_\gamma$ ($\gamma \in \Gamma$) are analytic. In particular the convex sets 
$$\mathrm{Min}(\gamma ) = \{ x \in X \; | \; d_\gamma (x) = \mathrm{min} (d_\gamma ) \}$$
are complete submanifolds. An element $\gamma \in \Gamma$ is called \emph{$J$-stable} if we have $$\Min(\gamma^i) = \Min(\gamma), \ \ \forall i=1,\ldots,J.$$
%Intuitively, isometries of $X$ can have translational and normal $O(n)$ parts, and being stable means that no small power of the $O(n)$ part fixes a point on the sphere. For instance, if $\gamma $ is a screw motion in $\BR^3$ with angle $2\pi / n$, then $\gamma$ is $J$-stable exactly when $n < J$. 

Let $\epsilon$ be less than the Margulis constant, and $I$ the index constant in the Margulis lemma, and fix also the constants $ \delta, I_\delta, $ and $J = I_\delta \cdot I$ defined at the beginning of \cite[\S 13.4]{Ballmannmanifolds}, \emph{but using $\epsilon$ instead of the actual Margulis constant}. The interested reader can refer to \cite{Ballmannmanifolds} if necessary, but it is not necessary to know what these constants are to read our proof below. (As at the top of pg 141 of \cite{Ballmannmanifolds}, though, we note that $0 < \delta < \epsilon / I_\delta$.) As in  \cite{Ballmannmanifolds}, let
\begin{align*}\Delta_0 &:= \{\gamma \in \Gamma \setminus \{1\} \ | \ \gamma \text{ is } J\text{-stable and }  \inf_{x\in X} d_\gamma(x) \leq \delta\}, \text { and } \\
&\ \ \ \ \ \ \ \ \ \ \ \ \ \ \Delta := \{ \gamma^1, \ldots, \gamma^{I_\delta} \  | \ \gamma \in \Delta_0 \}.
\end{align*}

We now define two subsets $X_\pm \subset X$ with $X_- = \overline{X \setminus X_+}$ by
$$X_+ := \{ x\in X \ | \ d_\gamma(x) \geq \epsilon \ \  \forall \gamma \in \Delta \}, \ \ X_- := \{ x\in X \ | \ d_\gamma(x) \leq \epsilon \  \text{for some } \gamma \in \Delta \},$$
and we define the \emph{stable $\epsilon$-thick part} $M_+$ and the \emph{stable $\epsilon$-thin part} $M_-$ by
$$M_+ := \Gamma \backslash X_+ , \ \ M_- = \Gamma \backslash X_- $$

\begin{lemma}\label{itsamanifold}
$M_+,M_-$ are topological submanifolds of $M$ and their common boundary $\partial M_- = \partial M_+$ is compact.
\end{lemma}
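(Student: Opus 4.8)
The plan is to establish the statement first in the universal cover $X$ and then push it down along the covering map $X\to M$, which loses nothing because $\Gamma$ acts freely; compactness of the boundary is a separate, softer point. So I would first show that every $x\in X$ has a neighbourhood $U$ for which $(U,\,U\cap X_-)$ is homeomorphic to $(\BR^d,\BR^d)$, or to $(\BR^d,\emptyset)$, or to $(\BR^d,\BR^{d-1}\times[0,\infty))$ with $U\cap X_+$ the complementary closed half-space; since $X_-=\overline{X\setminus X_+}$, the same neighbourhood then also exhibits $X_+$ as a topological submanifold with the same boundary.

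\textbf{Local input.} For $\gamma\in\Gamma$ the function $d_\gamma^2$ is analytic on $X$: the squared Riemannian distance is analytic on $X\times X$ (as $X$ is an analytic Hadamard manifold, $\exp$ is an analytic diffeomorphism), and $\gamma$ is an analytic isometry. Since $\Gamma$ acts freely, $d_\gamma^2>0$, so $d_\gamma=\sqrt{d_\gamma^2}$ is analytic on all of $X$; it is convex by nonpositivity of the curvature, so $\nabla d_\gamma$ vanishes exactly on the convex set $\Min(\gamma)$ on which $d_\gamma$ attains its minimum. For every $\gamma\in\Delta$ one has $\min d_\gamma<\epsilon$: elements of $\Delta$ are powers $\gamma^i$, $i\le I_\delta$, of elements of $\Delta_0$, and for $x_0\in\Min(\gamma)$ one gets $\min d_{\gamma^i}\le d(x_0,\gamma^i x_0)\le i\,d(x_0,\gamma x_0)\le I_\delta\delta<\epsilon$ since $\delta<\epsilon/I_\delta$. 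Hence $\epsilon$ is a regular value of each $d_\gamma$ with $\gamma\in\Delta$, so $W_\gamma:=\{d_\gamma=\epsilon\}$ is an analytic embedded hypersurface and $\{d_\gamma\le\epsilon\}$, $\{d_\gamma\ge\epsilon\}$ are smooth submanifolds of $X$ with boundary $W_\gamma$. The family $\bigl\{\{d_\gamma\le\epsilon\}:\gamma\in\Delta\bigr\}$ is locally finite: if $K\subset X$ is compact and $d_\gamma(x)\le\epsilon$ for some $x\in K$, then $\gamma(K)_\epsilon\cap(K)_\epsilon\ne\emptyset$, which only finitely many $\gamma\in\Gamma$ can achieve. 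Consequently $X_-$ is closed, $X\setminus X_+$ is open, any $x$ with $d_{\gamma_0}(x)<\epsilon$ for some $\gamma_0\in\Delta$ lies in the interior of $X_-$, and $\partial X_-=\partial X_+$ is exactly the set of $x$ with $d_\gamma(x)\ge\epsilon$ for all $\gamma\in\Delta$ and $d_{\gamma_0}(x)=\epsilon$ for at least one $\gamma_0\in\Delta$ (the last equality forces $x\in\overline{X\setminus X_+}$ because $\nabla d_{\gamma_0}(x)\ne0$).

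\textbf{The crux: normal form at a boundary point.} Fix $x\in\partial X_-$ and let $\Delta_x=\{\gamma\in\Delta:d_\gamma(x)\le\epsilon\}$, a finite set by local finiteness; near $x$ we have $X_-=\bigcup_{\gamma\in\Delta_x}\{d_\gamma\le\epsilon\}$. Each summand is a smooth half-space, but a locally finite union of smooth half-spaces need not even be a topological manifold with boundary, and excluding this is precisely what the $J$-stability and the particular definition of $\Delta$ are for. Here I would appeal to the structure theory of stable elements in \cite[\S 13.4]{Ballmannmanifolds}: since $\epsilon$ is below the Margulis constant, $\Delta_x$ generates an almost-nilpotent subgroup of $\Gamma$, and $J$-stability (with $J=I_\delta\cdot I$) makes the sets $\Min(\gamma)$, $\gamma\in\Delta_x$, and their finite intersections genuine totally geodesic submanifolds admitting a common nonempty minimal set $C$, with respect to which the functions $d_\gamma$, $\gamma\in\Delta_x$, vary \emph{coherently} (in a single monotone fashion) in the directions normal to $C$ near $x$. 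The content I would need to extract from \cite[\S 13.4]{Ballmannmanifolds} is exactly that this coherence makes $\bigcup_{\gamma\in\Delta_x}\{d_\gamma\le\epsilon\}$ locally homeomorphic to $\BR^{d-1}\times[0,\infty)$ near $x$, with boundary a topological $(d-1)$-manifold. Granting this normal form at every boundary point and passing to the quotient by the free $\Gamma$-action, $M_+$ and $M_-$ are topological submanifolds of $M$ with common boundary $\partial M_+=\partial M_-$.

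\textbf{Compactness.} The common boundary is a closed subset of $M_+$, so it is enough to know that the stable $\epsilon$-thick part $M_+$ is compact — which is again part of the package of \cite[\S 13.4]{Ballmannmanifolds}: because $M$ has finite volume and $X$ has no Euclidean de Rham factor, the stable thin part $M_-$ contains a neighbourhood of each end of $M$ (equivalently $M_+$ lies in a bounded neighbourhood of the ordinary $\epsilon$-thick part, which is compact), so $M_+$, and hence $\partial M_-$, is compact. The main obstacle is the boundary normal form where several walls $W_\gamma$ meet: convexity of displacement functions by itself does not give it, and one truly needs the common minimal set structure supplied by the stability condition.
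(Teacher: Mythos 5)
Your overall architecture matches the paper's: work in $X$, establish a local normal form for $X_-$ at each frontier point, push down by the free $\Gamma$-action, and get compactness of the boundary from compactness of $M_+\subset M_{\geq\epsilon'}$ (finite volume). Your preliminary observations (local finiteness, $\min d_\gamma<\epsilon$ for $\gamma\in\Delta$ so that each $\{d_\gamma\le\epsilon\}$ is a smooth convex codimension-zero submanifold near the frontier) are also correct and are exactly what the paper records.

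However, at what you yourself call the crux — the normal form at a point $p$ of the frontier where several walls $\{d_\gamma=\epsilon\}$ meet — you do not give an argument; you defer to an unspecified ``structure theory of stable elements'' and describe a mechanism (a common minimal set $C$ for $\Delta_p$ with the $d_\gamma$ varying coherently normal to $C$) that is not what is actually used and is not obviously available. The paper's mechanism is more pointwise and goes the opposite way: one first checks that $p\notin\Min(\gamma^i)$ for all relevant $\gamma$ and all $i\le I$ (here is where $J$-stability with $J=I_\delta\cdot I$ enters, combined with $\inf d_\gamma\le\delta I_\delta<\epsilon\le d_\gamma(p)$), and then invokes part (2) of the Lemma on p.~96 of \cite{Ballmannmanifolds} to produce a \emph{single} tangent vector $v\in T_pX$ with $\langle\nabla d_\gamma,v\rangle>0$ for every $\gamma$ in the finite set $\mathcal F$ of walls meeting a neighborhood of $p$. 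With such a $v$ in hand, one chooses coordinates sending $v$ to $(0,1)\in\BR^{d-1}\times\BR$, applies the implicit function theorem to write each $\{d_\gamma\le\epsilon\}$ locally as a subgraph $\{t\le f_\gamma(x)\}$, and observes that the union is $\{t\le\max_{\gamma\in\mathcal F}f_\gamma(x)\}$, whose frontier is the graph of a continuous function — hence a topological submanifold. This common transverse direction is precisely what rules out the pathological unions of half-spaces you rightly worry about, and it is the step your proposal leaves unproved. You should supply it (or at least cite the specific statement, namely the Lemma on p.~96 of \cite{Ballmannmanifolds} together with the $I$-stability verification) rather than gesture at \S 13.4 as a whole.
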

\begin{proof}
 We work  mostly with $X_\pm$,  and address $M_\pm$  at the end. By definition, $X_-$ is the union over all $\gamma \in \Delta$ of the sets 
$$U_{\gamma,\epsilon}:= \{x \in X \ | \ d_\gamma(x) \leq \epsilon \}.$$
Since $X$ is a Hadamard manifold, the second variational formula implies that the distance function $d : X \times X \longrightarrow \BR$ is convex. Moreover, each $d_\gamma$ is a submersion except along $\mathrm{Min}(\gamma)$, see \cite[Lemma, pg 96]{Ballmannmanifolds}. This implies that $U_{\gamma,\epsilon}$ is a smooth, convex codimension zero submanifold of $X$.

Let $N$ be the frontier of $X_+$ and $X_-$ in $X$.  We claim that $N$ is a topological submanifold of $X$. So, pick some $p\in N$. By discreteness of $\Gamma$, there is a small open neighborhood $W \subset X$ of $p$ and a finite subset $\mathcal F \subset \Delta$  such that $W \cap U_{\gamma,\epsilon}\neq \emptyset$ only when $\gamma \in \mathcal F$. By the Margulis lemma, $\mathcal F$ generates a subgroup of $\Gamma$ that has a nilpotent subgroup with index at most $I$. Now $p \not \in \mathrm{Min}(\gamma)$ for any $\gamma \in \Gamma$, since $$\inf d_\gamma(x) \leq \delta \cdot I_\delta < \epsilon / I_\delta \cdot I_\delta = \epsilon $$
while the fact that $p\in F$ implies that $d_\gamma(p) \geq \epsilon$ for all $\gamma \in \Gamma$. As every $\gamma \in \Delta$ is $I$-stable, this means that $p\not \in \mathrm{Min}(\gamma^i)$ for any $i\leq I$, so \cite[Lemma, part (2), pg 96]{Ballmannmanifolds} says that there is some $v \in TX_p$ such that \begin{equation} \label{goodv} \langle \nabla d_\gamma , v \rangle >0, \ \ \forall \gamma \in \mathcal F.\end{equation}
Note that at $p$, the gradient $\nabla d_\gamma$ is just the outward normal vector to the set $U_{\gamma,\epsilon}$. 

Shrinking $W$ if necessary, pick a chart $\phi : W \longrightarrow \BR^d = \BR^{d-1} \times \BR$ with $\phi(p)=(0,0)$ and such that $d\phi(v) = (0,1)$. After shrinking $W$ further, the implicit function theorem and \eqref{good epsilon} imply that for each $\gamma\in\mathcal F$, we have $$\phi(W \cap U_{\gamma,\epsilon}) = \{ (x,t) \in \phi(W) \subset \BR^{d-1} \times \BR \ | \ t \leq f_\gamma(x) \},$$
where $f_\gamma $ is a smooth function defined on a neighborhood of $0 \in \BR^{d-1}$. Hence, $$\phi(W \cap N) = \{ (x,t) \in \phi(W) \subset \BR^{d-1} \times \BR \ | \ t = \max_{\gamma \in \mathcal F} f_\gamma(x) \},$$
which is the graph of a continuous function. Hence, $N$ is a submanifold of $X$.

The frontier $\partial M_- = \partial M_+$ is the projection of $N$ to $M$, and hence is a topological submanifold of $M$. It follows that $M_\pm$ are  topological submanifolds with boundary.  Finally, frontiers are always closed, and since $M$ has finite volume $M_+ \subset M_{\geq \epsilon'} $ is compact, so $\partial M_\pm$ is compact.  \end{proof}

By \cite[Corollary 12.5]{Ballmannmanifolds}, there is some integer $m=m(J)$ such that for any $\gc\in\gC\setminus\{ 1\}$, there is some $j \leq m$ such that $\gc^{j}$ is $J$-stable. So, if $\gep'=\epsilon/m$ we have
 \begin{equation}\label{weirdthickcontained}
 	M_{<\gep'}\subset M_-\subset M_{<\epsilon}, \ \ M_{\geq \epsilon}\subset M_+\subset M_{\geq' \gep}.
 \end{equation}

\vspace{2mm}

 The following proposition is a modification of \cite[Theorem 13.1]{Ballmannmanifolds}.

\begin{proposition}\label{thinbettis}
Suppose that $M $ is a finite volume,  real analytic $d$-manifold with sectional curvatures in the interval $[-1,0]$ and that the universal cover $X$ of $M$ has no Euclidean de Rham factors. Then there is some  $C=C(d,\epsilon)$ such that for all $k \in \BN$, both 
$b_k(M_-)$ and $ b_k(\partial M_-)$ are less than or equal to $C \vol(M_{<2 \epsilon}).$
\end{proposition}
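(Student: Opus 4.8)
The plan is to adapt Gromov's linear-volume bound on Betti numbers (in the form presented in Ballmann--Brin--Eberlein and reworked in \cite[\S 13]{Ballmannmanifolds}), which bounds $b_k$ of an analytic manifold of curvature in $[-1,0]$ with no Euclidean factor linearly in volume, and localize it to the thin part. The point of working with the \emph{stable} thick-thin decomposition is exactly that $M_-$ and its boundary are built out of the convex sets $\Min(\gamma)$ and $U_{\gamma,\epsilon}$ for stable elements $\gamma\in\Delta$, and these behave well under the simplicial/covering arguments in \cite{Ballmannmanifolds}; in particular, by \eqref{weirdthickcontained} we have $M_{<\epsilon'}\subset M_-\subset M_{<\epsilon}$, so $M_-$ and $\partial M_-$ sit inside a thin region whose volume is comparable to $\vol(M_{<2\epsilon})$.

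First I would recall the mechanism of Gromov's proof as organized in \cite[\S\S 11--13]{Ballmannmanifolds}: one covers $M$ (or the relevant piece) by a controlled number of convex sets — roughly, one works in the universal cover $X=\widetilde M$, takes the orbit of a suitable fundamental-domain-type family under $\Gamma$, and uses that the nerve of this cover computes the homotopy type (Nerve Lemma) while the number of cells is bounded by a constant times the number of covering sets, which in turn is bounded by a constant times the volume. The number of covering sets meeting a given point is bounded by a dimension/Margulis constant, which is where analyticity and the no-Euclidean-factor hypothesis enter (they guarantee the $\Min(\gamma)$ are genuinely lower-dimensional and the combinatorics of the Margulis lemma applies). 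I would then restrict this cover to the thin part: since $M_-\subset M_{<\epsilon}$ and $\partial M_-\subset M_{<\epsilon}$ are open (resp. closed) submanifolds, the covering sets meeting $M_-$ are the ones centered within bounded distance of $M_-$, hence within bounded distance of $M_{<\epsilon}$, hence (for $\epsilon$ small relative to the Margulis constant) within $M_{<2\epsilon}$. Therefore the number of covering sets relevant to $M_-$ is at most $C\cdot\vol(M_{<2\epsilon})$, and the nerve of the restricted cover — which is homotopy equivalent to $M_-$ by the local convexity established in Lemma \ref{itsamanifold}, i.e. the same vectors $v$ as in \eqref{goodv} give a deformation-retraction-type argument — has at most $C\cdot\vol(M_{<2\epsilon})$ simplices in each dimension, so $b_k(M_-)\le C\vol(M_{<2\epsilon})$. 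The bound on $b_k(\partial M_-)=b_k(\partial M_+)$ follows the same way: $\partial M_-$ is the projection of the submanifold $N\subset X$ from Lemma \ref{itsamanifold}, it is covered by (traces of) the same convex sets, all centered in $M_{<2\epsilon}$, and its nerve again has a linearly-bounded number of simplices.

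The main obstacle I expect is not the counting of covering sets — that is essentially bookkeeping once one quotes the constants $I, I_\delta, J, m$ from \cite{Ballmannmanifolds} — but rather verifying that the cover restricted to $M_-$ (respectively to a collar of $\partial M_-$) still satisfies the hypotheses of the Nerve Lemma, i.e. that all nonempty finite intersections of the covering sets, intersected with $M_-$, remain contractible. In Gromov's global argument the covering sets are convex in $X$, so intersections are convex; here, after intersecting with the non-convex region $X_-$, one must instead use that $X_-$ is star-shaped (or deformation-retracts onto a shrinking) in the directions transverse to $\partial X_\pm$ — exactly the content of the vectors $v$ with $\langle\nabla d_\gamma,v\rangle>0$ produced via \cite[Lemma, pg 96]{Ballmannmanifolds} in the proof of Lemma \ref{itsamanifold}. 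So the technical heart is to push that pointwise transversality to a genuine homotopy equivalence between $M_-$ and the nerve of the restricted cover, uniformly in the constants; this is where I would follow \cite[\S 13]{Ballmannmanifolds} most closely, or alternatively invoke Proposition \ref{gelanderprop}-style reasoning adapted to the thin side. A secondary point to be careful about: one must ensure the constant $C$ depends only on $(d,\epsilon)$ and not on $M$, which is automatic since all the intermediate constants ($I, I_\delta, J, m$, the multiplicity bound, the volume-ratio bounds from curvature $\ge -1$) depend only on $d$ and $\epsilon$.
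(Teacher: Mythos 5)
There is a genuine gap, and it sits exactly where you try to localize the covering argument to the thin part. Your central step is to cover $M_-$ by a collection of convex sets whose cardinality is at most $C\cdot\vol(M_{<2\epsilon})$ and whose nerve computes the homotopy type of $M_-$. But $M_-$ contains the genuinely thin regions of $M$ (cusp neighborhoods, deep tubes), where the injectivity radius degenerates to $0$. Any embedded convex ball there has radius at most the local injectivity radius, hence volume roughly $\inj^d$, while a unit slab of a cusp has volume roughly $\inj^{d-1}$; so the number of balls needed to cover a cusp neighborhood is \emph{not} bounded by its volume --- it is infinite. This is precisely the phenomenon that forces Ballmann--Gromov--Schroeder to replace naive ball covers by the Morse theory of the function $f=\sum_{\gamma\in\Delta}g\circ d_\gamma$ and the ``essential volume'' of the immersed submanifolds $V_{x_{ij}}$, which discounts small Euclidean directions. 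Your proposed fix (using the transversality vectors from \eqref{goodv} to make intersections with $X_-$ contractible) addresses a secondary obstacle, not this one: even with perfectly contractible intersections, the count of covering sets fails.

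The paper's proof is structured quite differently. For $b_k(\partial M_-)$ it does run a nerve argument, but only on a small neighborhood $\mathcal U$ of $\partial M_-$ inside $int(M_-)$, built from convex sets $B_{\alpha,\gamma}=\tilde B_\alpha\cap int(U_{\gamma,\epsilon})$; this is legitimate because $\partial M_-\subset M_{\geq\epsilon'}$, so the injectivity radius is bounded below there and the count is controlled. It then transfers the bound from $\mathcal U$ to $\partial M_-$ via Lemma \ref{atlem} (Poincar\'e--Lefschetz duality plus the long exact sequence of the pair, applied to the manifold-with-boundary $\mathcal U\cup\partial M_-$, not to $M_-$), thereby avoiding the delicate question of whether the traces of the cover on the merely topological submanifold $\partial M_-$ have contractible intersections. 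For $b_k(M_-)$ it runs the Morse-theoretic argument of \cite[Theorem 13.1]{Ballmannmanifolds} restricted to $f^{-1}(0,\infty)=int(M_-)$: the sum $\sum_{i,j,\,i\neq 0} ess\text{-}vol(V_{x_{ij}})$ is shown to be bounded by $C\vol(M_{<\epsilon})$ because the balls in the BGS construction are centered in $M_{<\epsilon/2}$, and the ``initial'' sublevel set $f^{-1}(0,r)$ is handled via a collar of $\partial M_-$ together with the boundary estimate just obtained. If you want to salvage your write-up, you should abandon the direct nerve cover of $M_-$ and follow this two-step route: the boundary estimate first (nerve near $\partial M_-$ plus duality), then the localized Morse theory with essential volume.
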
 

It is necessary to assume here that $M$ is analytic and that $X$ has no Euclidean de Rham factors. If $M = N \times S^1$ for some $(d-1)$-manifold $N$, we can scale the $S^1$-factor so that $M=M_{\leq \epsilon}=M_-$ and $\vol(M)\approx 0$. And unless we assume analyticity (or some weaker alternative, see \cite[\S A2]{Ballmannmanifolds}) there are finite volume manifolds with sectional  curvatures in $[-1,0]$ where the thin parts have infinite Betti numbers, see \cite[\S 11.1]{Ballmannmanifolds}.

 Before starting the proof,  we record a brief algebraic topology lemma.

\begin{lemma}\label{atlem}
	 Suppose that $N$  is a (possibly noncompact) topological manifold with compact boundary. If $b_k(N,\BR) \leq C$ for all $k$, then $b_k(\partial N,\BR) \leq 2C$ for all $k$.
\end{lemma}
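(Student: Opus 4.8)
# Proof Proposal for Lemma \ref{atlem}

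The plan is to use the long exact sequence of the pair $(N, \partial N)$ together with Poincaré--Lefschetz duality for the manifold-with-boundary $N$. Since $\partial N$ is compact but $N$ may be noncompact, I would work with a combination of ordinary (singular) homology of $\partial N$ and, for $N$ itself, whichever homology/cohomology theory makes duality available; in practice the cleanest route is to fix a collar of $\partial N$ and use the long exact sequence of the pair with real coefficients, and to bound the homology of $\partial N$ by sandwiching it between the homology of $N$ and the homology of $(N,\partial N)$.

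First I would recall the long exact sequence of the pair $(N,\partial N)$ with real coefficients:
\begin{equation*}
\cdots \longrightarrow H_{k+1}(N,\partial N) \longrightarrow H_k(\partial N) \longrightarrow H_k(N) \longrightarrow H_k(N,\partial N) \longrightarrow \cdots
\end{equation*}
Exactness immediately gives that $H_k(\partial N)$ is an extension of a subspace of $H_k(N)$ by a quotient of $H_{k+1}(N,\partial N)$, hence
\begin{equation*}
b_k(\partial N) \leq b_k(N) + \dim H_{k+1}(N,\partial N).
\end{equation*}
So it remains to bound $\dim H_{k+1}(N,\partial N)$ by $C$. Here I would invoke Poincaré--Lefschetz duality for the $d$-manifold $N$ with boundary: $H_{k+1}(N,\partial N;\BR) \cong H^{d-k-1}_c(N;\BR)$, and then dualize once more, $H^{d-k-1}_c(N;\BR) \cong H_{d-k-1}(N;\BR)^*$ when the latter is finite-dimensional (this last step is the universal coefficient / finiteness statement for real coefficients — over a field, compactly supported cohomology is dual to ordinary homology for a manifold, provided the relevant groups are finite-dimensional, which holds here since all $b_k(N)\le C<\infty$). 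This yields $\dim H_{k+1}(N,\partial N) = b_{d-k-1}(N) \leq C$. Combining, $b_k(\partial N) \leq C + C = 2C$, as claimed.

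The main obstacle I anticipate is the noncompactness of $N$: one must be careful that Poincaré--Lefschetz duality is applied in the correct form (ordinary relative homology is dual to \emph{compactly supported} cohomology, not ordinary cohomology), and that the finiteness hypothesis $b_k(N)\le C$ is genuinely enough to conclude $\dim H^{*}_c(N;\BR)<\infty$ and to identify it with the dual of ordinary homology. A clean way to sidestep delicate point-set issues is to note that in the application $N$ will be $M_-$ or one of its pieces, which is a finite union of Margulis tubes and cusp neighborhoods, hence homotopy equivalent to a finite CW complex and admitting a compact exhaustion by codimension-zero submanifolds with boundary; one can then either run the duality argument on a large enough compact piece (whose inclusion into $N$ is a homotopy equivalence on the relevant range) or simply quote the version of Lefschetz duality valid for such manifolds. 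Modulo this bookkeeping, the argument is the three-line exact-sequence-plus-duality estimate above.
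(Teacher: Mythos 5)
Your opening move --- the homology long exact sequence of the pair, giving $b_k(\partial N)\le b_k(N)+\dim H_{k+1}(N,\partial N;\BR)$, followed by Lefschetz duality $H_{k+1}(N,\partial N;\BR)\cong H^{d-k-1}_c(N;\BR)$ --- is fine, and in the compact case the rest of your argument is the standard one. But the step that is supposed to handle noncompactness, namely the identification $H^{d-k-1}_c(N;\BR)\cong H_{d-k-1}(N;\BR)^*$, is false: compactly supported cohomology is dual to Borel--Moore (locally finite) homology, not to ordinary singular homology. For $N=S^1\times[0,\infty)$, a surface with compact boundary $S^1$, one has $H^1_c(N;\BR)=0$ while $H_1(N;\BR)=\BR$; for $N=\BR^d$ one has $H^d_c=\BR$ while $H_d=0$. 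Worse, the only duality that is actually available for $H^{d-k-1}_c(N)$ is $H^{d-k-1}_c(N)\cong H_{k+1}(N,\partial N)$ --- exactly the group you started from --- so once the false identification is removed your argument is circular and yields no bound on $\dim H_{k+1}(N,\partial N)$ in terms of the $b_i(N)$. The fallback you sketch (pass to a compact core) would prove the lemma only for manifolds admitting a compact codimension-zero submanifold containing $\partial N$ whose inclusion is a homotopy equivalence; this is not among the hypotheses, and in the paper's application the lemma is applied to a union of small convex balls along $\partial M_-$, not to a space where such a core is evident.

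The paper's proof gets around precisely this issue by running the long exact sequence in \emph{cohomology}, $H^k(N)\to H^k(\partial N)\overset{\delta}{\to}H^{k+1}(N,\partial N)$, and using the compactness of $\partial N$ to factor the connecting map as $H^k(\partial N)\cong H^k_c(\partial N)\to H^{k+1}_c(N,\partial N)\to H^{k+1}(N,\partial N)$. The middle group is isomorphic to $H_{d-k-1}(N;\BR)$ by the form of Lefschetz duality that does hold for noncompact $N$ (ordinary homology paired with compactly supported \emph{relative} cohomology), so the rank of $\delta$ is at most $C$, while $\dim H^k(N;\BR)=b_k(N)\le C$ by universal coefficients over a field; these two bounds give $2C$. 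Any repair of your homological version must import this same factorization through compactly supported cohomology; as written, the proposal has a genuine gap at its key step.
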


\begin{proof}
By Poincar\'e-Lefschetz duality, using cohomology with compact support, 
\begin {equation}\label {pl}
C \geq b_*(N) = \dim H_*(N;\BR) = \dim H^{d-*}_c(N,\partial N;\BR) .	
\end {equation}
But by the long exact sequence of a pair,  for every $k$ we have %$$\cdots \longrightarrow H^k(N,\partial N) \longrightarrow H^k(N) \longrightarrow H^k(\partial N) \longrightarrow H^{k+1}(N,\partial N) \longrightarrow \cdots, $$
\begin{align*}
	\dim H^k(\partial N;\BR) & \leq  \dim H^k(N;\BR) + \dim  \mathrm{Image} \big (  H^k(\partial N;\BR) \longrightarrow H^{k+1}(N,\partial N;\BR)  \big ) \\
& \leq C+ \dim  \mathrm{Image} \big ( H^k(\partial N;\BR) \longrightarrow
 H^{k+1}(N,\partial N;\BR)  \big ) .
\end{align*}
 However,  since $\partial N $  is compact, the map whose image we are interested in factors as
$$H^k(\partial N;\BR) \cong H^k_c(\partial N;\BR) \longrightarrow H^{k+1}_c(N,\partial N;\BR) \longrightarrow H^{k+1}(N,\partial N;\BR).$$
So, the  dimension of the image is at most $C$, by \eqref{pl}. Hence, $$b_k(\partial N) = 	\dim H^k(\partial N;\BR) \leq 2C.\qedhere $$
\end{proof}

 So for instance, to prove Proposition \ref{thinbettis}  it would suffice to just estimate the Betti numbers $b_k(M_-)$, and the estimates for $ b_k(\partial M_-)$ would follow.  It turns out, however, that this is not logically how the proof will go, since one needs an a priori  estimate for $ b_k(\partial M_-)$ in order to calculate $b_k(M_-)$. We will still apply Lemma \ref{atlem} to  estimate $ b_k(\partial M_-)$, though, but the manifold $N$ in the lemma  will not be $M_-$.

\medskip

 We are now ready for the proof.

\begin{proof}[Proof of  Proposition \ref{thinbettis}]
We first estimate the Betti numbers of $\partial M_-$. By \eqref{weirdthickcontained}, 
$$M_{<\gep'}\subset M_-\subset M_{<\epsilon},$$
where $\epsilon'<\epsilon$ depends only on $\epsilon,d$.
Let $\{B_\alpha\}$ be a collection of open $\epsilon'/2$-balls in $M$ with centers on $\partial M_-$, such that the  same centers determine a maximal collection of pairwise disjoint $\epsilon'/4$-balls centered on $\partial M_-$.   Since $\inj(p) \geq \epsilon'$ on $\partial M_-$, each $B_\alpha$ is embedded and convex, and contained in $M_{<2\epsilon}$.  In particular, we have 
\begin{equation}
	\# \{B_\alpha\} \leq  C \cdot \vol(M_{<2\epsilon}), \text{ where } C=C(\epsilon,d).
\label{numberballs}
\end{equation}

Choose  arbitrary lifts $\tilde B_\alpha \subset X$ of each ball. If $X_- \subset X$ is the preimage of $M_-$, then as discussed in the proof of Lemma~\ref{itsamanifold}, we have
$$int(X_-) = \cup_{\gamma \in \Delta} int(U_{\gamma,\epsilon}) , \text{ where } int(U_{\gamma,\epsilon}):= \{x \in X \ | \ d_\gamma(x) < \epsilon \}.$$
Since $int(U_{\gamma,\epsilon})$ is convex, each intersection $\tilde B_\alpha \cap int(U_{\gamma,\epsilon})$  projects to a convex open subset $B_{\alpha,\gamma} \subset int(M_-)$. Note that for a given $\alpha$, only some $N=N(\epsilon,d)$ of the sets $B_{\alpha,\gamma}$ are nonempty, for instance by  Corollary 3.4 of \cite{bader2016homology}, so \eqref{numberballs}  implies that the number of nonempty $B_{\alpha,\gamma}$ is bounded by $C \cdot \vol(M_{<2\epsilon})$,  after adjusting $C=C(\epsilon,d)$. It then follows from the Nerve Lemma (see the proof of Claim \ref{diagram} below) that the Betti numbers of the union 
$$\mathcal U = \cup_{\alpha,\gamma} B_{\alpha,\gamma}$$
are bounded above by $C \cdot \vol(M_{<2\epsilon})$ as well. But $\mathcal U \cup \partial M_-$ is a manifold with compact boundary by Lemma \ref{itsamanifold}, so  it follows from Lemma \ref{atlem} that \begin{equation}\label{firstthing}
b_k(\partial M_-) \leq C \cdot \vol(M_{<2\epsilon})
 \end{equation}
for some $C=C(\epsilon,d)$ as well.

\medskip

The estimate for $b_k(M_-)$ closely follows the proof of \cite[Theorem 13.1]{Ballmannmanifolds}. Let $$g : (0,\infty) \longrightarrow [0,\infty) $$ be a $C^{\infty}$ function with
\begin{itemize}
	\item $g(t)>0, g'(t) < 0$ for $t\in (0,\epsilon)$,
\item $g(t)=0$ for $t\geq \epsilon$, 
\item $g(t)\to \infty$ as $t\to 0$,
\item $g(\delta)=1$.
\end{itemize}  
and with $\Delta$ as in the beginning of \S \ref{stablesec}, consider the smooth function
$$F : X \longrightarrow [0,\infty), \ \ F(x) = \sum_{\gamma\in \Delta} g \circ d_\gamma(x).$$
Since $\Delta$ is conjugation invariant in $\Gamma$, this $F$  descends to a smooth function $f : M \longrightarrow \BR$. On \cite[pg 145]{Ballmannmanifolds}, it is shown that $f$ and $F$ have finitely many critical values $$0=r_0 < r_1 < \ldots < r_s.$$
 Note that the $0$-critical set $f^{-1}(0)$ is exactly $M_+$, and that $f^{-1}(0,\infty) = int(M_-)$.

In \cite{Ballmannmanifolds}, Ballmann--Gromov--Schroeder use this function $f$ to give a linear upper bound \begin{equation}\label {BGS theorem} b_k(M) \leq C \cdot \vol(M),  \ \ C=C(d).\end{equation} We will describe this argument, then indicate how to modify it to prove that $b_k(M_-) \leq C \vol(M_{2\epsilon})$. First, at the top of pg 148 in  \cite{Ballmannmanifolds}, the authors prove:

\begin{equation}b_k(M)  \leq \sum_{i,j} b_k\big (\{f_{x_{ij}} < r_i + \rho\} \big ).	\label {Morrissey}
\end{equation}
In the summation, each index $i$ corresponds to a critical value $r_i$ of $f$, and the indices $j$ correspond to different pieces of the critical set $f^{-1}(r_i)$. More precisely, there is a collection of complete immersed submanifolds $V_{x_{ij}} \looparrowright M$ as follows\footnote{In \cite{Ballmannmanifolds}, they set $V_x := Y_x / \Gamma_x$, but mostly use the latter notation in proofs.}. For each $j$, let $f_{x_{ij}} := f |_{V_{x_{ij}}}$ be the restriction. Then the minimum value of $f_{x_{ij}}$ is $r_i$, this minimum is achieved on the set $\{f_{x_{ij}} =r_i\}$, which has nonempty interior in $V_{x_{ij}}$, and $f^{-1}(r_i)$ decomposes as:
$$f^{-1}(r_i) = \cup_{ij} \{f_{x_{ij}} =r_i\}.$$
So, in \eqref{Morrissey} the set $\{f_{x_{ij}} < r_i + \rho\}$ is just a small neighborhood of $\{f_{x_{ij}} =r_i\}$ in $V_{x_{ij}}$, since $\rho>0$ is small. 
The proof of \eqref{Morrissey} is essentially via Morse theory,  applied to the function $f$: one considers the homology of the sublevel set $\{f < r\}$, starting with $r<0$ where $\{f < r\} = \emptyset$, and one shows that passing through the critical point $r=r_i$  contributes at most the corresponding index-$i$ terms of the summation in \eqref{Morrissey}  to the Betti numbers. 

%Now on \cite[pg 145]{Ballmannmanifolds},  the authors show not only that there are finitely many critical values, but that the number of terms in the summation \eqref{Morrissey} is bounded above by some constant depending only on $d$.  

To derive \eqref{BGS theorem}, the authors show in \cite[pg 148, (16)]{Ballmannmanifolds} that each term in \eqref{Morrissey} is bounded above by a constant times the \emph{essential volume}\footnote{We are suppressing some constants in our notation. Really, essential volume depends on a choice of $\epsilon$ and $a>0$, and is written $\epsilon\text{-}ess^a\text{-}vol$ in \cite{Ballmannmanifolds}.} of the immersed submanifold $V_{x_{ij}}$:
\begin {equation}\label {bettivol} 
 b_k(\{f_{x_{ij}} < r_i + \rho\}) \leq C \cdot  ess\text{-}vol( V_{x_{ij}}), \ \ C = C(d).
\end{equation} Here, the $ess\text{-}vol(V)$ is an integer that estimates volume up to some fixed multiplicative constant, but in a way that ignores small volume Euclidean factors, see \cite[\S 12.8]{Ballmannmanifolds}.  Finally, in \cite[Theorem 12.11]{Ballmannmanifolds}  they show\footnote{The conclusion of \cite[Theorem 12.11]{Ballmannmanifolds} is about essential volume, but note that for $M$ itself, essential volume agrees with volume up to a dimensional constant, since $M$ has no Euclidean factors.} that
\begin {equation} \label {essential volume estimate}\sum_{ij} ess\text{-}vol( V_{x_{ij}}) \leq C \cdot \vol(M), \ \ C = C(d),\end {equation} so it follows from \eqref{Morrissey}, \eqref{bettivol} and \eqref{essential volume estimate} that $b_k(M) \leq C \cdot \vol(M)$, where $C=C(d)$.

\medskip

We now adapt this argument to $M_-$. It suffices to estimate the Betti numbers of 
$$int(M_-) = f^{-1}(0,\infty),$$
since $M_-$ is a manifold, by Lemma \ref{itsamanifold}, and so is homotopy equivalent to $int(M_-)$. The idea is to run the Morse theory argument proving \eqref{Morrissey}, but  only on the interval $(0,\infty)$. %The same argument as in \cite{Ballmannmanifolds} will say that when we increase $r$, the Betti numbers of  the sublevel sets $\{0<f<r\}$ increase when $r $ passes through $r_i$ by the index $i$ terms in \eqref{Morrissey}. 

There are two main differences in the argument. {First,} we are no longer starting the Morse theory argument with an empty sublevel set, so we need to estimate independently the Betti numbers of $f^{-1}(0,r)$ when $r>0$ is small.  As long as $r<r_1$, Morse theory implies that $f^{-1}(0,r)$ is homeomorphic to a product $Z \times (0,r)$. The union $f^{-1}(0,r) \cup \partial M_-$ is a manifold with boundary by Lemma \ref{itsamanifold}, so there is a collar neighborhood  $\partial M_- \times [0,1) \hookrightarrow f^{-1}(0,r)$. Since this collar gives an end neighborhood of $f^{-1}(0,r) \cong Z \times (0,r)$, there is some $t \approx 0$  such that $f^{-1}(t) \subset \partial M_- \times [0,1)$. But as the composition
$$Z \cong f^{-1}(t) \hookrightarrow \partial M_- \times [0,1) \hookrightarrow f^{-1}(0,r) \cong Z \times (0,r)$$  is a homotopy equivalence, it follows that the homology of $Z$ injects into  the homology of the collar $\partial M_- \times [0,1)$, and therefore  by \eqref{firstthing} we have 
\begin{equation}\label{firstthing2}
	b_k(f^{-1}(0,r)) = b_k(Z) \leq  b_k(\partial M_-) \leq C \cdot \vol(M_{<2\epsilon}).
\end{equation}
Above, we are avoiding saying that $Z$ is homeomorphic to $\partial M_-$, which is what you would expect in the current situation. This is probably true, but it is less obvious than the  estimate in \eqref{firstthing2},  which is all we need.

{Second,} we claim that for some $C=C(d,\epsilon)$, we have
\begin{equation}
	\sum_{i,j, \ i\neq 0} ess\text{-}vol( V_{x_{ij}}) \leq C \cdot \vol(M_{<\epsilon}). \label{2ndthing}
\end{equation}
 This follows from the arguments in \cite{Ballmannmanifolds}.  Namely, their proof of \eqref{essential volume estimate} in \cite[Theorem 12.11]{Ballmannmanifolds} is stronger than the statement: if $N := \sum_{ij} ess\text{-}vol( V_{x_{ij}})$, the authors construct a collection of $N$ injectively embedded $r$-balls centered at points of $M_{<\epsilon/2}$\footnote{See \cite[(3) pp. 132--133]{Ballmannmanifolds}. In their construction of the  centers $z$ of these balls there exists an element $\beta_z \in \Gamma$ such that 
$d_{\beta_z} (z) = \epsilon/2$, see top of page 132.} that overlap with uniformly bounded multiplicity, where here $r>0$ depends only on $d$. Hence, this shows $N$  is at most a dimensional constant times $ \vol(M_{<\epsilon})$ as desired.

The Proposition now follows from \eqref{firstthing2} and \eqref{2ndthing}. Namely,
\begin{align*}
	b_k(M_-) &\leq b_k(f^{-1}(0,r)) + \sum_{i,j,\ i\neq 0} b_k\big (\{f_{x_{ij}} < r_i + \rho\} \big ) \\
&\leq b_k(f^{-1}(0,r)) + \sum_{i,j,\ i\neq 0} ess\text{-}vol( V_{x_{ij}}) \big ) \\
& \leq C \cdot  \vol(M_{<\epsilon}),
\end{align*}
 where the first inequality is the Morse theory argument from \cite{Ballmannmanifolds}, the second  inequality is \eqref{bettivol}, and the third is \eqref{firstthing2} and \eqref{2ndthing}.\end{proof}

\subsection{Proof of Theorem \ref{npc}}

Let $\epsilon_0>0$ and let $(M_n)$  be a sequence of real analytic, finite volume Riemannian $d$-manifolds with sectional curvatures in the interval $[-1,0]$, and assume the universal covers of the $M_n$ do not have Euclidean de Rham-factors.  Assume $(M_n)$ BS-converges to a  measure $\mu$ on $\mathcal M$  that is supported on $\epsilon_0$-thick manifolds. Here,  recall that BS-convergence means that if $\mu_n$ are the associated measures on $\mathcal M^d$, then
$$\mu_n/\vol(M_n) \to \mu$$ weakly. We want to show that the following limit exists for all $k$: $$\lim_{n\to \infty} b_k(M_n)/\vol(M_n).$$

\smallskip

First, here  is the reason we assume that $\mu$  is supported on $\epsilon_0$-thick manifolds.  %The following claim will allow us to chop off (part of) the thin part of each $M_n$ without affecting the weak limit $\mu$, and without affecting the Betti numbers too much.

\begin{claim}\label {thinpartsmall}
	For all $R>0$ and $0<\epsilon<\epsilon_0$, we have $$\vol( \{ x \in M_n \ | \ d(x,M_{\leq \epsilon})\leq R \} )/\vol(M_n)\to 0.$$
\end{claim}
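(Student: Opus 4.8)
The plan is to prove the claim by a standard Portmanteau-type argument, exploiting that the limiting measure $\mu$ is supported on $\epsilon_0$-thick manifolds and that the sets in question are ``open-like'' with negligible boundary. Fix $R>0$ and $0<\epsilon<\epsilon_0$. Consider the subset
$$\mathcal T_{\epsilon,R} := \{(M,p)\in\mathcal M^d \ | \ d(p,M_{\leq\epsilon})\leq R\}\subset\mathcal M^d.$$
First I would observe that, since injectivity radius is continuous under smooth convergence (Ehrlich \cite{ehrlich1974continuity}), the function $(M,p)\mapsto d(p,M_{\leq\epsilon})$ is upper semicontinuous, so $\mathcal T_{\epsilon,R}$ is closed. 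The key point is that $\mu(\mathcal T_{\epsilon,R})=0$: if $(M,p)$ is in the support of $\mu$ then $M$ is $\epsilon_0$-thick, so $M_{\leq\epsilon}=M_{\leq\epsilon_0}=\emptyset$, hence $d(p,M_{\leq\epsilon})=+\infty$ and $(M,p)\notin\mathcal T_{\epsilon,R}$. By the Portmanteau theorem, $\limsup_n (\mu_n/\vol(M_n))(\mathcal T_{\epsilon,R})\leq\mu(\mathcal T_{\epsilon,R})=0$.

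It then remains to translate this statement about the measures $\mu_n$ back into a statement about volumes inside $M_n$. By definition, $\mu_n$ is the push forward of the Riemannian measure on $M_n$ under $p\mapsto(M_n,p)$, so
$$(\mu_n/\vol(M_n))(\mathcal T_{\epsilon,R}) = \frac{\vol(\{x\in M_n \ | \ d(x,(M_n)_{\leq\epsilon})\leq R\})}{\vol(M_n)},$$
and the claim follows immediately.

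The only subtlety I anticipate is the precise bookkeeping around the definition of $(M_n)_{\leq\epsilon}$ versus the various thin/thick parts used elsewhere — one must make sure the function $(M,p)\mapsto d(p,M_{\leq\epsilon})$ is genuinely upper semicontinuous. This is where continuity of injectivity radius under smooth convergence does all the work: if $(N_n,p_n)\to(N,p)$ and $\liminf_n d(p_n,(N_n)_{\leq\epsilon}) = L<\infty$, one passes to a subsequence and picks $q_n\in N_n$ with $d(p_n,q_n)\to L$ and $\inj_{N_n}(q_n)\leq\epsilon/2$, then extracts a subsequential limit $q\in N$ with $d(p,q)=L$ and $\inj_N(q)\leq\epsilon/2$, so $d(p,N_{\leq\epsilon})\leq L$. (If one prefers to work with a closed set one can equally well use the convention $(M_n)_{\leq\epsilon}=\{x : \inj_{M_n}(x)\leq\epsilon/2\}$, which changes nothing essential.) Since $\mathcal T_{\epsilon,R}$ is then closed and has $\mu$-measure zero, the Portmanteau inequality above gives the result; no control on boundary measure or further regularity is needed because the whole set is already $\mu$-null.
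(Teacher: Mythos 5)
Your proof is correct and is essentially the paper's proof: the paper likewise sets $D=\{(M,p)\in\mathcal M^d \ | \ d(p,M_{\leq\epsilon})\leq R\}$, notes it is closed via continuity of the injectivity radius under smooth convergence, and concludes by the Portmanteau theorem together with $\mu(D)=0$, which holds because $\mu$ is supported on $\epsilon_0$-thick manifolds and $\epsilon<\epsilon_0$. One small label slip: the sequential argument you spell out shows that $(M,p)\mapsto d(p,M_{\leq\epsilon})$ is \emph{lower} (not upper) semicontinuous, which is exactly the property making the sublevel set $\mathcal T_{\epsilon,R}$ closed.
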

\begin{proof}
By the continuity of injectivity radius with respect to smooth convergence \cite{ehrlich1974continuity}, $$D:=\{ (M,p) \in \mathcal M^d \ | \ d(p,M_{\leq\epsilon}) \leq R \} \subset\mathcal M^d$$ is closed, so by the Portmanteau theorem, $$\limsup_n \frac{\vol( \{ x \in M_n \ | \ d(x,M_{\leq \epsilon})\leq R \} )}{\vol(M_n)} \leq \limsup_n \mu_n( D) \leq \mu(D) = 0,$$
so the limit of the sequence on the left is zero. 
\end{proof}

Pick some $\epsilon >0$ that is less than $\epsilon_0$ and also less than the Margulis constant, and fix some $\xi < \epsilon/20$. With the notation $( \ )_+$ of the last section, with input $\epsilon$, let $$N_n := \overline{ ((M_n)_{+} )_\xi} = \{ x \in M_n \ | \ d(x,(M_n)_{+}) \leq \xi\},$$
and using the notation and terminology of \S \ref{mmsec},  consider the extended mm-spaces
$$\mathfrak M_n := (M_n, M_n), \ \mathfrak N_n := (N_n, M_n)$$
and their associated measures $\mu_{\mathfrak M_n}, \mu_{\mathfrak N_n}$ on $\mathbb{M}^{ext}$. (Note that the space $N_n$ may be disconnected, but since it has finitely many components, it is special and hence our work in \S \ref{normbmm} still applies to $\mathfrak N_n$.) Here, if $$\iota : \mathcal M^d \longrightarrow \mathbb{M}^{ext}, \ 
(M,p) \longmapsto (M,p,M),$$ is the natural continuous map (see Corollary \ref{cnts}), then $\mu_{\mathfrak M_n}= \iota_*(\mu_n)$, so  $$\mu_{\mathfrak M_n}/\vol(M_n) = \iota_*\big (\mu_n / \vol(M_n)\big ) \to \iota_*(\mu).$$

\begin{claim}\label{too}
	We have $\mu_{\mathfrak N_n}/\vol(M_n) \to \iota_*(\mu)$ as well.
\end{claim}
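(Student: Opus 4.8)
The plan is to compare $\mu_{\mathfrak N_n}$ directly with $\mu_{\mathfrak M_n}=\iota_*(\mu_n)$, exploiting two facts: that $N_n$ differs from $M_n$ only inside the $\epsilon$-thin part, which carries a vanishing fraction of the volume by Claim~\ref{thinpartsmall}; and that away from a large neighbourhood of the thin part the extended mm-spaces $(N_n,\vol_n|_{N_n},p,M_n)$ and $(M_n,\vol_n,p,M_n)$ literally coincide on the $R$-ball about $p$.

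First I would record the relevant estimates. Since $N_n\supseteq (M_n)_+\supseteq (M_n)_{\geq\epsilon}$ by \eqref{weirdthickcontained}, the complement $M_n\setminus N_n$ lies in the $\epsilon$-thin part; more relevantly, for $R>0$ set $D_{n,R}:=\{p\in M_n:d(p,(M_n)_{\leq\epsilon})>R\}$, so that $D_{n,R}\subseteq (M_n)_{\geq\epsilon}\subseteq N_n$, and Claim~\ref{thinpartsmall} gives $\vol_n(M_n\setminus D_{n,R})/\vol_n(M_n)\to 0$ for each fixed $R$. Next, if $p\in D_{n,R}$ then $B_{M_n}(p,R)$ misses $(M_n)_{\leq\epsilon}$, hence lies in $(M_n)_{\geq\epsilon}\subseteq N_n$; since $N_n$ carries the restricted metric and measure of $M_n$ and both extended mm-spaces have ambient space $M_n$, one checks directly — taking the common space $Z=M_n$ with the two identity embeddings — that $\mathfrak N_n^p:=(N_n,\vol_n|_{N_n},p,M_n)$ and $\mathfrak M_n^p:=\iota(M_n,p)$ are $(\alpha,R)$-related for every $\alpha>0$.

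Now fix a bounded continuous $f:\mathbb M^{ext}\to\BR$. Since $\mu_{\mathfrak M_n}/\vol(M_n)\to\iota_*(\mu)$, it suffices to show $\vol_n(M_n)^{-1}\bigl(\int f\,d\mu_{\mathfrak N_n}-\int f\,d\mu_{\mathfrak M_n}\bigr)\to 0$. Writing both integrals over $M_n$ against $\vol_n$, splitting $N_n=D_{n,R}\sqcup(N_n\setminus D_{n,R})$ and $M_n=D_{n,R}\sqcup(M_n\setminus D_{n,R})$, and using $N_n\setminus D_{n,R}\subseteq M_n\setminus D_{n,R}$, this difference is bounded in absolute value by
\[
\frac{1}{\vol_n(M_n)}\int_{D_{n,R}}\bigl|f(\mathfrak N_n^p)-f(\mathfrak M_n^p)\bigr|\,d\vol_n
\;+\;2\|f\|_\infty\,\frac{\vol_n(M_n\setminus D_{n,R})}{\vol_n(M_n)}.
\]
For each fixed $R$ the second term tends to $0$, so everything reduces to the first. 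Here I would use tightness: $\mathcal M^d$ is Polish and the probability measures $\mu_n/\vol(M_n)$ converge weakly to $\mu$, so given $\tau>0$ there is a compact $C\subseteq\mathcal M^d$ with $(\mu_n/\vol(M_n))(C)\geq 1-\tau$ for all $n$; put $K:=\iota(C)$, compact in $\mathbb M^{ext}$. A routine compactness argument, using that the topology on $\mathbb M^{ext}$ is generated by $(\alpha,R)$-relations, yields for any $\eta>0$ some $R_0$ such that $|f(x)-f(y)|<\eta$ whenever $x\in K$ and $y$ is $(\alpha,R_0)$-related to $x$ for every $\alpha>0$. Taking $R=R_0$ and invoking the previous paragraph, the contribution of points $p\in D_{n,R_0}$ with $\iota(M_n,p)\in K$ is at most $\eta\,\vol_n(M_n)$, while $\vol_n(\{p:\iota(M_n,p)\notin K\})=\mu_{\mathfrak M_n}(\mathbb M^{ext}\setminus K)\leq\mu_n(\mathcal M^d\setminus C)\leq\tau\,\vol_n(M_n)$ contributes at most $2\|f\|_\infty\tau\,\vol_n(M_n)$. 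Hence the first term is $\leq\eta+2\|f\|_\infty\tau$; letting $n\to\infty$ and then $\eta,\tau\to 0$ finishes the proof.

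I expect the only genuinely delicate point to be this last step: upgrading the metric- and measure-level agreement of $\mathfrak N_n^p$ and $\mathfrak M_n^p$ on $R$-balls to honest closeness of the values of an arbitrary continuous $f$. It is handled by restricting to a compact set on which $f$ is uniformly continuous — which tightness of the weakly convergent sequence makes available — so that $R_0$ can be chosen independently of $n$ and of $p$. (An alternative, slightly slicker but requiring a measurability check, is to realise $\mathfrak N_n^p$ as $G(M_n,p)$ for a fixed Borel map $G:\mathcal M^d\to\mathbb M^{ext}$ which is continuous at every $\epsilon_0$-thick point and equals $\iota$ there, and then apply the mapping theorem for weak convergence to the measures $\mu_n$ restricted to $\{(M_n,p):p\in N_n\}$, divided by $\vol(M_n)$, which still converge weakly to $\mu$ because $\vol_n(M_n\setminus N_n)=o(\vol_n(M_n))$.)
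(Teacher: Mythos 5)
Your proof is correct and follows essentially the same route as the paper's: both compare $\mu_{\mathfrak N_n}$ with $\mu_{\mathfrak M_n}$ pointwise, observe that $(N_n,p,M_n)$ and $(M_n,p,M_n)$ are $(\delta,R)$-related whenever $p$ lies at distance greater than $R$ from the thin part, and dispose of the complementary set of points using Claim~\ref{thinpartsmall}. The only real difference is how the uniform closeness of $f$-values is obtained: you use tightness of $(\mu_n/\vol(M_n))$ and uniform continuity on the compact set $K=\iota(C)$, whereas the paper applies the Portmanteau theorem directly to the open sets $C_{\delta,R}=\{\mathfrak M \ | \ \mathfrak M \text{ is } (\delta,R)\text{-related to } \mathfrak N \Rightarrow |f(\mathfrak M)-f(\mathfrak N)|<\alpha\}$, which increase to all of $\mathbb M^{ext}$ --- a slightly more economical device that sidesteps Prokhorov and the ``routine compactness argument'' you defer.
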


Here, note  that by Claim \ref{thinpartsmall},  we have that $\vol(N_n) / \vol(M_n) \to 1$, so one could replace the normalizing factor by $\vol(N_n)$ if desired.

\begin{proof}
Let $f : \mathbb M^{ext} \longrightarrow [0,m]$ be a continuous function, and  fix $\alpha>0$. Given $\delta,R$, let 
$$C_{\delta,R} = \{ \mathfrak M \in \mathbb M^{ext} \ \ | \ \  \mathfrak M \text{ is } (\delta,R)\text{-related to } \mathfrak N \in \mathbb M^{ext} \implies |f(  \mathfrak M) - f(\mathfrak N) | <\alpha\}.$$
Since the sets $C_{\delta,R}$ are open,  are nested when $\delta$  is decreased and $R$ is increased, and union to all of $\mathbb M^{ext}$,  we can choose $\delta,R$  such that $$\iota_*(\mu)(C_{\delta,R}) > 1-\alpha.$$ By the Portmanteau theorem, $\liminf_n \mu_{\mathfrak M_n}(C_{\delta,R}) >  1-\alpha$, so there is some $N$ such that 
$$
\mu_{\mathfrak M_n}(C_{\delta,R})>1-\alpha, \ \ \forall n\geq N.
$$
Furthermore, in light of Claim \ref{thinpartsmall} and \eqref{weirdthickcontained}, we can also assume that
$$\frac{\vol( \{ x \in M_n \ | \ d(x,(M_n)_-)\leq R \} )}{\vol(M_n)} < \alpha, \ \ \forall n\geq N.$$
 Combining the above two estimates, we see that the $\vol / \vol(M_n)$-measure of the set of points $p\in M_n$  such that \emph{both} $d(x,(M_n)_-) > R$ and $(M,p,M) \in C_{\delta,R}$ is at least $(1-2\alpha)$. Now at any such point $p$, we have $p\in N_n$ as well, and the pointed extended mm-spaces $(N_n,p,M_n)$ and $ (M_n,p,M_n)$
are obviously $(\delta,R)$-related. Hence, at any such $p$, we have
\begin{equation} 
\label{close!}	|f(N_n,p,M_n) - f(M_n,p,M_n)| < \alpha.
\end{equation}
Breaking the domains of the following integrals in two, and using the upper bound $m \geq f$ on the piece where \eqref{close!} is not helpful, we see that
$$ \left | \int f \, d\mu_{\mathfrak N_n} - \int f \, d\mu_{\mathfrak M_n} \right | \leq (1-2\alpha) \cdot \alpha + \alpha \cdot 2m, \ \ \forall n\geq N.$$
 So, since $\alpha>0$ was arbitrary and $\int f \, d\mu_{\mathfrak M_n} \to \int f \, d\iota_*(\mu)$, we have that $\int f \, d\mu_{\mathfrak N_n} \to \int f \, d\iota_*(\mu)$ as well, and the claim follows.
\end{proof}

 We now want to apply Corollary \ref{Bowen3} to the sequence $\mu_{\mathfrak N_n}/\vol(M_n)$, in order to say something about normalized Betti numbers. We'll apply it with $r_0=4\xi, r_1=5 \xi, r_2=10\xi$ and $r_3=11\xi$, with $B_n = b_k(M_n)$ and $V_n = \vol(M_n)$. So, let's verify its hypotheses.

For condition (1) of Corollary \ref{Bowen3}, just note that any point $p\in N_n$ is within $\xi$ of  a point $q$ in $(M_n)_{+}$, so $B_{N_n}(p,2\xi)$  contains an embedded $\xi$-ball around $q$, which by nonpositive curvature has volume at least that of a $\xi$-ball in $\BR^d$, see e.g.\  \cite[Theorem~3.101]{gallot1990riemannian}.   Similarly, for condition (2)  the lower  curvature bound implies that any $r$-ball in $N_n$ has volume at most that of an $r$-ball in $\BH^d$, again see \cite[Theorem 3.101]{gallot1990riemannian}. 

 For condition (3),  we need to prove the following.

\begin{lemma}\label {lem3}
If $(S_n ,\rho_n )$  is a sequence of $[10\xi,11\xi]$-weighted $(4\xi,10\xi)$-nets in $N_n$, then
$$\frac{\big | b_k(N_{M_n}(S_n ,\rho_n )) - b_k(M_n) \big | }{\vol(M_n)}\to 0.$$
\end{lemma}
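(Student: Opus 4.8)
The plan is to compare $b_k(N_{M_n}(S_n,\rho_n))$ and $b_k(M_n)$ in two stages, interpolating through $b_k(N_n)$ and then $b_k((M_n)_+)$. First I would invoke Proposition~\ref{gelanderprop}, with $c=10, c'=11$ (after matching notation, noting $\xi = \epsilon/2 + \delta$ with $\delta = 4\xi/\,(\text{something})$ — concretely one must set $\delta$ so that $r_0 = \delta$, which forces a relabeling of the net parameters as multiples of $\delta$ rather than of $\xi$; this is just bookkeeping and I would carry it out carefully but not belabor it here). The upshot of Proposition~\ref{gelanderprop} is that for any $[r_2,r_3]$-weighted $(r_0,2r_1)$-net $(S_n,\rho_n)$ in $)(M_n)_{\geq\epsilon'}(_\xi$ — and $S_n \subset N_n$ is such a net after we adjust which thick part we use, since $N_n$ is sandwiched between shrinkings of stable and ordinary thick parts via \eqref{weirdthickcontained} — the nerve $N_{M_n}(S_n,\rho_n)$ is homotopy equivalent to $(M_n)_{\geq \epsilon'}$, hence $b_k(N_{M_n}(S_n,\rho_n)) = b_k((M_n)_{\geq\epsilon'})$. (One subtlety: I need the cover by $M_n$-balls $B_{M_n}(x,\rho_n(x))$ appearing in the nerve to be exactly the cover Proposition~\ref{gelanderprop} controls; since $E_n = M_n$ here, this is automatic.)

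The second stage is to show $b_k((M_n)_{\geq\epsilon'})$ and $b_k(M_n)$ differ by $o(\vol(M_n))$. This is where Proposition~\ref{thinbettis} enters: it gives $b_k((M_n)_-), b_k(\partial (M_n)_-) \leq C\cdot\vol((M_n)_{<2\epsilon})$, and by Claim~\ref{thinpartsmall} (using that $\mu$ is $\epsilon_0$-thick, $\epsilon < \epsilon_0$) we have $\vol((M_n)_{<2\epsilon})/\vol(M_n) \to 0$. Now write $M_n = (M_n)_+ \cup (M_n)_-$ with intersection a neighborhood of $\partial(M_n)_-$, which by Lemma~\ref{itsamanifold} is a compact manifold of small Betti numbers; Mayer--Vietoris then bounds $|b_k(M_n) - b_k((M_n)_+) - b_k((M_n)_-)|$ by a sum of Betti numbers of $\partial (M_n)_-$, all of which are $\leq C\vol((M_n)_{<2\epsilon}) = o(\vol(M_n))$. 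Hence $b_k(M_n) = b_k((M_n)_+) + o(\vol(M_n))$. The same Mayer--Vietoris trick, comparing $(M_n)_{\geq\epsilon'}$ with $(M_n)_+$ (both sandwiched by \eqref{weirdthickcontained}, their difference contained in $(M_n)_{<2\epsilon}$, glued along a compact hypersurface whose Betti numbers we again bound via Proposition~\ref{thinbettis} and Lemma~\ref{atlem}), gives $b_k((M_n)_{\geq\epsilon'}) = b_k((M_n)_+) + o(\vol(M_n))$.

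Finally I would also note $b_k(N_n) = b_k((M_n)_+)$, since $N_n$ is a closed $\xi$-neighborhood of the stable thick part $(M_n)_+$ and deformation retracts onto it (one can flow outward along the same gradient-type vector field used in the proofs of Lemma~\ref{itsamanifold} and Proposition~\ref{gelanderprop}, exactly as $M_{\geq\epsilon}$ retracts onto its shrinking). Chaining: $b_k(N_{M_n}(S_n,\rho_n)) = b_k((M_n)_{\geq\epsilon'}) = b_k((M_n)_+) + o(\vol(M_n)) = b_k(M_n) + o(\vol(M_n))$, which is exactly the claimed estimate.

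The main obstacle I anticipate is \emph{not} any single deep input — all the geometry is packaged in Propositions~\ref{gelanderprop}, \ref{thinbettis} and Claim~\ref{thinpartsmall} — but rather the careful matching of three different thick/thin decompositions ($M_{\geq\epsilon'}$, the stable $(M_n)_+$, and the neighborhood $N_n$) so that (a) a $(4\xi,10\xi)$-net in $N_n$ really is a net in the shrunk thick part that Proposition~\ref{gelanderprop} requires, with radii in the correct window, and (b) all the ``gluing hypersurfaces'' in the Mayer--Vietoris steps are compact manifolds sitting inside $(M_n)_{<2\epsilon}$ so that Proposition~\ref{thinbettis} applies to bound their Betti numbers. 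Getting the constants $\epsilon,\epsilon',\xi,\delta$ consistent across these three pictures, and checking the relevant neighborhoods are the ``good'' neighborhoods on which the various Nerve Lemma / retraction arguments run, is the part that needs genuine care.
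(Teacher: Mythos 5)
Your second stage (comparing $b_k((M_n)_+)$ with $b_k(M_n)$ via Mayer--Vietoris, Proposition~\ref{thinbettis} and Claim~\ref{thinpartsmall}) is sound and matches half of the paper's argument. The first stage, however, has a genuine gap: Proposition~\ref{gelanderprop} is a statement about manifolds with \emph{pinched negative} curvature $[-1,-a^2]$, and its proof goes through Lemma~\ref{tsachiklem}, whose hypothesis (4) is verified using the Margulis tube/cusp structure of the thin part and, in the cusp case, Claim~\ref{derivative!}, whose lower bound $\epsilon a/2$ degenerates as $a\to 0$. In the setting of Theorem~\ref{npc} the curvature is only in $[-1,0]$, the thin part has no such structure (this is precisely why \S4 needs the Ballmann--Gromov--Schroeder machinery of Proposition~\ref{thinbettis}), and there is no available vector field $n(x)$ with which to run Gelander's argument. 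So you cannot conclude that $N_{M_n}(S_n,\rho_n)$ is homotopy equivalent to any thick part, and indeed the paper never claims this. (There are also mismatches of detail you wave off as bookkeeping that are not: $S_n$ is a net in the \emph{neighborhood} $N_n=\overline{((M_n)_+)_\xi}$ of the stable thick part, not in a \emph{shrinking} of the ordinary thick part, and the radii $[10\xi,11\xi]$ are not of the form $[(b+c)\delta,(b+c')\delta]$ for the specific gradient constant $b$.)

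What the paper actually does is weaker and more robust. Since the balls are convex, the Nerve Lemma only gives that the nerve is homotopy equivalent to the union of balls $U_n$, which is a neighborhood of $N_n$ possibly carrying \emph{extra} homology that dies in $M_n$. The proof then splits into two estimates on the map $H_k(U_n)\to H_k(M_n)$: the image has dimension $b_k(M_n)+o(\vol(M_n))$ (this is your Mayer--Vietoris step, using that $(M_n)_+\hookrightarrow M_n$ factors through $U_n$), and the kernel has dimension $o(\vol(M_n))$. The kernel bound is the part your proposal is missing entirely: it requires extending $S_n$ by a second net $T_n$ of $M_n\setminus N_n$ with radii $\tfrac12\inj(t)$, so that $N_{S_n\cup T_n}\simeq M_n$, and then proving the homotopy-commutative diagram of Claim~\ref{diagram} to transfer the problem to the combinatorial kernel of $H_k(N_{S_n})\to H_k(N_{S_n\cup T_n})$, which is controlled because only $o(\vol(M_n))$ vertices of $N_{S_n}$ meet the added balls. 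Without this step (or a valid substitute for the homotopy equivalence you assert), the asserted equality $b_k(N_{M_n}(S_n,\rho_n))=b_k((M_n)_{\geq\epsilon'})$ is unjustified.
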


Assuming the lemma,  the hypotheses of  Corollary \ref{Bowen3} are satisfied, so $$B_n/V_n = b_k(M_n) / \vol(M_n)$$  converges,  proving Theorem \ref{npc}.  So, it remains to prove the lemma.

\begin {proof}[Proof of Lemma \ref{lem3}]

Since $\inj : M_n \longrightarrow \BR $ is $2$-lipschitz, we have 
$$\forall x\in S_n \subset N_n :=\overline{((M_n)_{+})_\xi}, \ \ \inj(x) \geq \epsilon-2\xi  > \frac{9 \epsilon}{10 } > \frac{11\epsilon}{20} > 11\xi \geq \rho_n(x). $$
Nonpositive curvature then implies that the balls $B_{\rho(x)}(x)$ are convex, so the Nerve Lemma (c.f.\ \cite[Corollary 4G.3]{Hatcheralgebraic}) says that $N_{S_n} := N_{M_n}(S_n ,\rho_n )$ is homotopy equivalent to $$U_n := \cup_{x\in S_n } B_{\rho_n (x)}(x).$$ So to prove the claim, it suffices to show the following:
\begin{enumerate}
	\item[(a)] If $D_{k,n}$ is the dimension of the image of the map $H_k(U_n ,\BR) \longrightarrow H_k(M_n,\BR)$ induced by inclusion, then $b_k(M_n) = D_{k,n} + o(\vol(M_n)).$
\item[(b)] If $K_{k,n}$ is the dimension of the kernel of the map $H_k(U_n ,\BR) \longrightarrow H_k(M_n,\BR)$ induced by inclusion, then $K_{k,n} = o(\vol(M_n)).$
\end{enumerate}

For (a), apply Mayer--Vietoris to $M_n = \overline{(M_n)_{-}} \cup (M_n)_{+}$, giving the long exact sequence
$$\cdots  \longrightarrow H_k(\overline{(M_n)_{-}};\BR) \oplus H_k((M_n)_{+};\BR) \longrightarrow H_k(M_n;\BR)\longrightarrow H_{k-1}(\partial (M_n)_{-},\BR) \longrightarrow \cdots. $$ By Proposition \ref{thinbettis} and Claim \ref{thinpartsmall}, $b_k( \overline{(M_n)_{-}})$ and $b_{k-1}(\partial (M_n)_{-})$ are  $o(\vol(M_n)),$ so 
$$b_k(M_n) = \dim \mathrm{Im}\Big ( H_k((M_n)_{+};\BR) \longrightarrow H_k(M_n;\BR) \Big ) + o(\vol(M_n)).$$
But the inclusion map $(M_n)_{+} \longrightarrow M_n$ factors through $U \longrightarrow M_n$, so we have $$b_k(M_n) \geq D_{k,n} \geq b_k(M_n)-o(\vol(M_n))$$ as well, proving (a).

For (b), let $T_n \subset M_n \setminus N_n $  be a maximal collection of points such that $$d (s,t)\geq \frac 1 {3} \min\{\inj(s),\inj(t)\}, \ \ \forall s,t \in T_n . $$
Since $\inj$  is continuous, $T_n$ is locally finite.  Moreover,  suppose $x \in M_n \setminus N_n $ and $x \not \in T_n$. By maximality, there must be some $t\in T_n$ with $$d(x,t) \leq \frac 1 {3} \min\{\inj(x),\inj(t)\} \leq \frac 1 {3} \inj(t), $$
 so the open balls of  radius $\rho_n (t) := \frac 1{2} \inj(t)$ around all $t\in T_n$ cover $M_n \setminus N_n$.  Let $N_{S_n \cup T_n }$ be the nerve  complex associated to the cover of $M_n$ by the collection of all  such balls $B_{\rho_n (t)}(t), \ t\in T_n$, \emph {together with  the balls} $B_{\rho_n (x)}(x), \ x \in S_n$. As all these balls are convex, $N_{S_n \cup T_n }$ is homotopy equivalent to $M_n$. In fact, more is true:
\begin{claim}\label {diagram}
 There is a diagram of maps
$$\begin{tikzcd} 
	U_n  \arrow[hookrightarrow]{d} \ar{rr}{\Phi} & & N_{S_n} \arrow[hookrightarrow]{d} \\ M_n & & \arrow{ll}{F} N_{S_n \cup T_n }
\end{tikzcd}
$$
that is commutative up to homotopy, where the vertical maps are the natural inclusions and the horizontal maps $\Phi,F$ are homotopy equivalences.
\end{claim}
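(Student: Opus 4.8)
The plan is to deduce the claim from the Nerve Lemma together with its naturality with respect to refinements of open covers. Set $\mathcal A := \{B_{\rho_n(x)}(x)\}_{x\in S_n}$, a cover of $U_n$, and $\mathcal C := \mathcal A \cup \{B_{\rho_n(t)}(t)\}_{t\in T_n}$, a cover of $M_n$ (it does cover $M_n$: the $T_n$-balls cover $M_n\setminus N_n$ by maximality of $T_n$, as noted just above the claim, and the $S_n$-balls cover $N_n$ since $S_n$ is a $(4\xi,10\xi)$-net there and $\rho_n\geq 10\xi$ on $S_n$). The injectivity radius estimates recorded just before the claim give $\rho_n(x)<\inj(x)$ for every $x\in S_n$ and $\rho_n(t)=\tfrac12\inj(t)<\inj(t)$ for every $t\in T_n$; since $M_n$ is nonpositively curved, each of these balls, and every nonempty finite intersection of them, is convex and hence contractible. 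Thus $\mathcal A$ and $\mathcal C$ are good open covers of the paracompact manifolds $U_n$ and $M_n$, and by construction $N_{S_n}$ and $N_{S_n\cup T_n}$ are their nerves. I would then take $\Phi\colon U_n\to N_{S_n}$ and $\psi\colon M_n\to N_{S_n\cup T_n}$ to be the nerve maps associated to partitions of unity subordinate to $\mathcal A$ and $\mathcal C$, which are homotopy equivalences by the Nerve Lemma (\cite[Corollary 4G.3]{Hatcheralgebraic}), and let $F$ be a homotopy inverse of $\psi$.

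Two bookkeeping points make the square meaningful. First, for a finite $\sigma\subset S_n$ the balls $B_{\rho_n(x)}(x)$, $x\in\sigma$, are literally the same subsets whether regarded inside $U_n$ or inside $M_n$, so such a collection has nonempty common intersection as part of $\mathcal A$ iff it does as part of $\mathcal C$; hence $N_{S_n}$ is precisely the full subcomplex of $N_{S_n\cup T_n}$ on the vertex set $S_n$, and the right-hand vertical arrow of the diagram is this inclusion, while the left-hand one is $U_n\hookrightarrow M_n$. Second, I would build $\Phi$ from the restriction to $U_n$ of a fixed partition of unity $\{\phi_i\}_{i\in S_n\cup T_n}$ subordinate to $\mathcal C$, keeping only the indices in $S_n$ and renormalizing to sum to $1$ on $U_n$ (this is possible because the $S_n$-balls already cover $N_n\supset U_n$); this choice is the mechanism that forces the naturality square to commute.

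Indeed, the heart of the matter is the homotopy-commutativity of
$$\begin{tikzcd}
U_n \ar{r}{\Phi} \arrow[hookrightarrow]{d} & N_{S_n} \arrow[hookrightarrow]{d} \\
M_n \ar{r}{\psi} & N_{S_n\cup T_n}.
\end{tikzcd}$$
For $y\in U_n$ the point $\psi(y)$ is supported on $\{i\in S_n\cup T_n : y\in C_i\}$, which spans a simplex of $N_{S_n\cup T_n}$, and the image of $\Phi(y)$ under the vertical inclusion is supported on the subset $\{x\in S_n : y\in B_{\rho_n(x)}(x)\}$ of that same simplex. Since both points always lie in a common closed simplex, the straight-line homotopy between them is a well-defined homotopy of maps $U_n\to N_{S_n\cup T_n}$, so the square commutes up to homotopy. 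Composing with the homotopy inverse $F$ of $\psi$ then yields
$$F\circ(\mathrm{incl})\circ\Phi\ \simeq\ F\circ\psi\circ(U_n\hookrightarrow M_n)\ \simeq\ (U_n\hookrightarrow M_n),$$
which is exactly the asserted homotopy-commutativity of the triangle; and $\Phi$, $F$ are homotopy equivalences as noted.

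The step I expect to need the most care is precisely this naturality argument: one must choose the partitions of unity coherently — take one on $M_n$ and restrict — so that the two composite maps $U_n\to N_{S_n\cup T_n}$ genuinely land, pointwise, in a common simplex, rather than appealing vaguely to "functoriality of the nerve". Everything else — convexity and hence contractibility of the balls and their finite intersections, the resulting fact that $\mathcal A$ and $\mathcal C$ are good covers, and the identification of $N_{S_n}$ with a full subcomplex of $N_{S_n\cup T_n}$ — is a routine consequence of nonpositive curvature together with the injectivity-radius estimates established just before the claim.
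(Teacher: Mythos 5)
Your argument is correct and follows essentially the same route as the paper's: both unwind the proof of the Nerve Lemma, take $\Phi$ to be the partition-of-unity map, and use convexity of the balls to produce a straight-line homotopy (you perform it inside $N_{S_n\cup T_n}$ after taking $F$ to be an abstract homotopy inverse of the nerve map $\psi$, while the paper constructs $F$ explicitly skeleton-by-skeleton so that simplices land in the corresponding ball intersections and then runs the straight-line homotopy inside $M_n$). One small remark: the coherent choice of partitions of unity that you single out as the delicate step is not actually needed --- your verification only uses that $\Phi(y)$ and $\psi(y)$ are supported on subsets of $\{i : y\in C_i\}$, which holds for arbitrary subordinate partitions of unity, and dropping the restriction-and-renormalization device also avoids the (real but minor) issue that a partition of unity for $\mathcal{C}$ may assign total weight zero to the $S_n$-indices at some point of $U_n$ even though the $S_n$-balls cover it.
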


The claim  does not assert that  the pairs $(M_n,U_n )$ and $(N_{S_n \cup T_n },N_{S_n})$ are homotopy equivalent, although it is certainly a result along those lines.  We should note that there is at least one `Relative Nerve Lemma' for pairs in the literature, see e.g.\ \cite[Lemma 2.9]{bader2016homology}, but this does not apply in our situation since $U_n \hookrightarrow M_n$  is not a cofibration. One can get around this, but the fix is not particularly pretty, and it is much more direct just to prove the claim above without referencing any citations.

Before proving the claim, let us quickly indicate how to finish the proof of (b). Any point $x\in S_n$ such that $B_{\rho_n (x)}(x)$ intersects a ball $B_{\rho_n (y)}(y)$, where $y\in T_n$, must lie  close to the $\epsilon$-thin part of $M_n$. By Claim \ref{thinpartsmall} the volume of any fixed $R$-neighborhood of $(M_n)_{-}$ is $o(\vol(M_n))$, so this means that there are only $o(\vol(M_n))$-many vertices of $N_{S_n}$ that are adjacent to vertices of $N_{S_n \cup T_n } \setminus N_{S_n}$. So, Mayer--Vietoris  implies that  the kernel of the map $$H_k(N_{S_n} ;\BR) \longrightarrow H_k( N_{S_n \cup T_n }; \BR)$$
 induced by inclusion has rank $o(\vol M_n )$. Therefore, Claim \ref{diagram}  implies that the same is true for the kernel of the map on homology induced by $U_n  \hookrightarrow M_n.$

\begin {proof}[Proof of Claim \ref{diagram}]
Let's review the proof of the Nerve Lemma. For a much more general proof that essentially specializes to the one below, see Hatcher \cite[4G]{Hatcheralgebraic}.

 We start with a Riemannian manifold $X$ and an open cover $\mathcal O$ by small convex balls. If $N$ is the nerve complex of the cover $\mathcal O$, we can define homotopy inverses $$\alpha: X \longrightarrow N, \ \ \beta : N \longrightarrow X$$ as follows. Pick a partition of unity $\{\phi_O \ | \ O \in \mathcal O\}$  subordinate to $\mathcal O$, and define
$$\alpha : X \longrightarrow N, \ \ \ \alpha(p) = \hspace{-2mm} \sum_{\substack{O\in \mathcal O, \  p \in O} }\phi_O(p) \cdot O  \ \in  N.$$ 
Here, the values $\phi_O(p)$ are the barycentric coordinates of $\alpha(p)$, within the simplex of $N$ spanned by those $O$ containing $p$. The map $\beta$ is defined inductively on the $i$-skeleta $BN^i$ of the first barycentric subdivision $BN$ of $N$. Before starting the construction, note that every  vertex $v$ of $BN$ is the barycenter of a simplex of $N$, 
which corresponds to some finite $$\mathcal F_v \subset \mathcal O, \ \ \cap_{O \in \mathcal{F}_v} O \neq \emptyset,$$ 
and if $\Delta$ is a simplex of $BN$, there is one vertex $v(\Delta)$ of $\Delta$ such that $\mathcal F_{v(\Delta)} $ is contained in $\mathcal F_w$ for every other vertex $w$ of $\Delta$. (This $v(\Delta)$ is just the vertex that is the barycenter of the simplex of $N$ with minimal dimension.)
 For $i=0,1,2,\ldots$, we now construct the map $\beta$ on $BN^i$ in such a way that for any $i$-simplex $\Delta$, 
\begin{equation} \label{ittt} \beta(\Delta) \subset \cap_{O \in \mathcal F_{v(\Delta)}} O.	
\end{equation}
If $v$ is a vertex of $BN$, just pick $\beta(v) \in \cap_{O \in F_v} O$ arbitrarily. In general, assuming $\beta$ has been defined on $\partial \Delta$, it follows from the definition of $v(\Delta)$ and \eqref{ittt} that $$\beta(\partial \Delta) \subset \cap_{O \in \mathcal F_{v(\Delta)}} O.$$  This intersection is contractible, so there is some extension of $\beta$ to $\Delta$ satisfying  \eqref{ittt}. The homotopy $\alpha \circ \beta \simeq 1$ is  constructed inductively on the skeleta of $BN$, using the homotopy extension principle at each step.  To see that $\beta \circ \alpha   \simeq 1$, one just notes that if $p \in X$, then $\alpha(p)$  is in some simplex $\Delta$ of $BN$ that has as a vertex some $O \ni p$, so by \eqref{ittt}, $\beta\circ \alpha(p) \in O$.  In other words, $p$ and $\beta\circ \alpha(p)$ are both contained in one of the small convex balls in our cover, so we can just take a straight line homotopy from $\beta \circ \alpha $ to $1$.

 With  the above presentation  of the proof of the Nerve Lemma (which we could not  find a reference for) the claim becomes trivial. Namely, let $\Phi : U_n  \longrightarrow N_{S_n}$  be the map called $\alpha$ above,  where the manifold is $U_n$ and the cover is by the $\rho_n (x)$-balls around $x\in S_n$. Let $F : N_{S_n \cup T_n} \longrightarrow M_n$   be the map called $\beta$ above, where   the manifold is $M_n$ and the cover is by the $\rho_n (x)$-balls around $x\in S_n \cup T_n$.    These are both homotopy equivalences, and just as above the straight-line homotopy connects $F \circ \Phi$ to the inclusion $U_n \longrightarrow M_n$.\end{proof}

 Now that we have proved the claim, the lemma follows. \end {proof}

And so does the theorem. \qed

\bibliographystyle{amsplain}
\bibliography{total}

\end{document}